\documentclass[12pt]{article}
\usepackage{amsthm}

\usepackage[ansinew]{inputenc}
\usepackage{graphicx}
\usepackage{color}
\usepackage{soul}
\usepackage[colorlinks]{hyperref}

\usepackage{enumerate,latexsym}
\usepackage{latexsym}
\usepackage{amsmath,amssymb}
\usepackage{graphicx}
\usepackage{times}

\newfont{\bb}{msbm10 at 11pt}
\newfont{\bbsmall}{msbm8 at 8pt}

\def\R{\mathbb{R}}

\def\N{\mathbb{N}}
\def\Z{\mathbb{Z}}
\def\S{\mathbb{S}}
\def\C{\mathbb{C}}

\def\Hip{\mathbb{H}}

\def\D{\mathbb{D}}

\def\esf{\mathbb{S}}

\newcommand{\la}{\looparrowright}

\newcommand{\ben}{\begin{enumerate}}
\newcommand{\bit}{\begin{itemize}}
\newcommand{\een}{\end{enumerate}}
\newcommand{\eit}{\end{itemize}}
\newcommand{\wh}{\widehat}
\newcommand{\Int}{\mbox{\rm Int}}

\newcommand{\Div}{\mbox{\rm div}}

\newcommand{\wt}{\widetilde}

\newcommand{\ed}{\end{document}}
\newcommand{\ov}{\overline}

\def\a{{\alpha}}

\def\t{{\theta}}

\def\g{{\gamma}}
\def\G{{\Gamma}}
\def\l{{\lambda}}

\def\de{{\delta}}
\def\be{{\beta}}
\def\ve{{\varepsilon}}

\def\cU{\mathcal{U}}

\def\cB{\mathcal{B}}

\def\cM{\mathcal{M}}

\newcommand{\chM}{\mathcal{M}^1}

\def\cF{\mathcal{F}}

\let\8=\infty \let\0=\emptyset  
\let\hat=\widehat

\let\landa=\lambda
\let\alfa=\alpha

\let\parc=\partial

\def\ep{\varepsilon}

\def\landa{\lambda}

\def\flecha{\rightarrow}
\def\esiz{\langle}
\def\esde{\rangle}

\def\cte.{\mathop{\rm cte.}\nolimits}

\def\det{\mathop{\rm det}\nolimits}

\def\cosh{\mathop{\rm cosh }\nolimits}

\def\N{\mathbb{N}}
\def\E{\mathbb{E}}

\def\R{\mathbb{R}}
\def\Z{\mathbb{Z}}
\def\C{\mathbb{C}}

\def\D{\mathbb{D}}

\def\H{\mathbb{H}}
\def\S{\mathbb{S}}

\def\psl{{\rm PSL}(2,\mathbb{R})}

\newtheorem{theorem}{Theorem}[section]
\newtheorem{lemma}[theorem]{Lemma}
\newtheorem{proposition}[theorem]{Proposition}

\newtheorem{remark}[theorem]{Remark}
\newtheorem{corollary}[theorem]{Corollary}
\newtheorem{definition}[theorem]{Definition}

\newtheorem{assertion}[theorem]{Assertion}

\newtheorem{claim}[theorem]{Claim}




\newcommand{\calf}{{\mathcal F}}

\newcommand{\su}{{\rm SU}(2)}


\renewcommand{\sl}{\wt{\mathrm{SL}}(2,\R)}

\numberwithin{equation}{section}

\textheight 22cm
\topmargin -1.3cm
\usepackage{fullpage}

\usepackage{pdfsync}
\definecolor{pp}{rgb}{.5,0,.7}

\begin{document}

\begin{title}
{Constant mean curvature spheres in homogeneous three-manifolds}
\end{title}

\begin{author}
{William H. Meeks III\thanks{This material  is based upon
 work for the NSF under Award No. DMS -
  1309236. Any opinions, findings, and conclusions or recommendations
 expressed in this publication are those of the authors and do not
 necessarily reflect the views of the NSF.}
\and Pablo Mira\thanks{The second author was partially supported
by MINECO/FEDER, Grant no. MTM2016-80313-P, and Programa
de  Apoyo a la Investigacion, Fundacion Seneca-Agencia de
Ciencia y Tecnologia Region de
Murcia, 19461/PI/14.}
 \and Joaqu\'\i n P\' erez
\and Antonio Ros\thanks{Research of the third and fourth authors partially supported by
a MINECO/FEDER grant no. MTM2014-52368-P. } }
\end{author}
\maketitle
\vspace{-.6cm}

\begin{abstract}
We prove that two spheres of the same constant mean curvature
in an arbitrary homogeneous three-manifold only differ by an ambient isometry,
and we determine the values of the mean curvature for which such spheres exist.
This gives a complete classification of immersed constant
mean curvature spheres in three-dimensional homogeneous manifolds.

\vspace{.3cm}

\noindent{\it Mathematics Subject Classification:} Primary 53A10,
   Secondary 49Q05, 53C42

\noindent{\it Key words and phrases:} Constant mean curvature,
Hopf uniqueness,
homogeneous three-manifold, Cheeger constant.
\end{abstract}
\maketitle

\section{Introduction}  \label{sec:introduction}
In this manuscript we  solve the fundamental problem of classifying constant mean curvature spheres in
an arbitrary homogeneous three-manifold,
where by a \emph{sphere}, we mean a closed immersed surface of genus zero:

\begin{theorem}\label{main}
Let $M$ be a Riemannian homogeneous three-manifold, $X$ denote its
Riemannian universal cover, and let ${\rm Ch}(X)$ denote the Cheeger constant of $X$. Then,
any two spheres in $M$ of the same constant mean curvature differ by an isometry of $M$. Moreover:
 \begin{enumerate}[(1)]
 \item
If $X$ is not diffeomorphic to $\R^3$, then, for every $H\in \R$, there
exists a sphere of constant mean curvature $H$ in $M$.
 \item
If $X$ is diffeomorphic to $\R^3$, then the values $H\in \R$\, for which there exists a
sphere of constant mean curvature $H$ in $M$ are exactly those with $|H|>{\rm Ch}(X)/2$.
 \end{enumerate}
\end{theorem}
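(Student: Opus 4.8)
The plan is to reduce everything to the Riemannian universal cover $X$ and then split the analysis according to the size of $\mathrm{Isom}(X)$ and the topology of $X$. Since a sphere $f\colon\S^2\to M$ is simply connected, the immersion lifts to $X$, so classifying CMC spheres in $M$ up to $\mathrm{Isom}(M)$ reduces to classifying them in $X$ up to $\mathrm{Isom}(X)$, provided one checks that the connecting isometry of $X$ descends to $M$ (i.e.\ normalizes the deck group $\G$ with $M=X/\G$). By the classification of simply connected homogeneous three-manifolds, $X$ is either $\S^2(\kappa)\times\R$ or a simply connected three-dimensional Lie group $G$ with a left-invariant metric, so I would treat these two families.

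For uniqueness, when $\dim\mathrm{Isom}(X)\ge 4$ (equivalently, $X$ is a space form or one of the $\Ek$ spaces) there is an Abresch--Rosenberg-type holomorphic quadratic differential on any CMC surface; on a genus-zero surface it vanishes, forcing the sphere to be rotationally invariant and hence unique up to ambient isometry. The genuinely hard case is a metric Lie group $G$ with $\dim\mathrm{Isom}(G)=3$ (e.g.\ $\sol$), where no holomorphic Hopf differential is available. Here I would replace Hopf's argument by a global study of the moduli space $\cM_H$ of CMC-$H$ spheres: first establish a priori curvature and diameter estimates for CMC-$H$ spheres via a blow-up/compactness argument exploiting homogeneity, then prove that every such sphere is embedded, has index one, and is nondegenerate modulo the ambient isometries. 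Nondegeneracy makes $\bigcup_H\cM_H$ a real-analytic one-manifold on which $H$ restricts to a proper submersion (openness from the implicit function theorem, closedness from the estimates), and connectedness inherited from the large-$|H|$ family then yields exactly one congruence class for each attained value of $H$.

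For existence I would run a continuity method in $H$, anchored by the fact that small isoperimetric regions of $X$ are strictly convex spheres of large mean curvature, so CMC-$H$ spheres exist for all sufficiently large $|H|$. As $|H|$ decreases, the a priori estimates keep the family compact, with controlled geometry, as long as $|H|$ stays above the critical value, so the construction continues. When $X$ is not diffeomorphic to $\R^3$ (the compact cases $\S^3$ and the Berger spheres, and the case $\S^2(\kappa)\times\R$, where $\mathrm{Ch}(X)=0$ and $\S^2(\kappa)\times\{0\}$ is a minimal sphere) the estimates never degenerate and one obtains spheres for every $H\in\R$, proving part (1). When $X$ is diffeomorphic to $\R^3$ the threshold is $\mathrm{Ch}(X)/2$: for $|H|>\mathrm{Ch}(X)/2$ the continuity method produces a sphere, while for $|H|\le\mathrm{Ch}(X)/2$ I would rule out closed CMC-$H$ surfaces by comparison with a foliation of $X$ by entire CMC-$(\mathrm{Ch}(X)/2)$ hypersurfaces, equivalently by a calibration/divergence-theorem bound tied to the dual description of the Cheeger constant: a compact surface has an extreme tangency with a leaf, where the maximum principle contradicts $|H|\le\mathrm{Ch}(X)/2$. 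This gives the sharp range in part (2).

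The main obstacle is the uniqueness statement for metric Lie groups with three-dimensional isometry group. The absence of any holomorphic quadratic differential means Hopf's classical mechanism is unavailable, so the entire weight falls on the index-one/nondegeneracy theory together with a priori estimates that must remain uniform all the way down to the critical value $\mathrm{Ch}(X)/2$; controlling the degeneration of CMC spheres precisely at that value, while simultaneously establishing embeddedness and the index-one property in a space with so little symmetry, is where the real difficulty lies. A secondary technical point is the descent from $X$ to $M$: one must verify that the isometry of $X$ matching two lifted spheres can be chosen to normalize $\G$, so that it induces a genuine isometry of $M$.
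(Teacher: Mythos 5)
Your architecture is broadly the same as the paper's (lift to the universal cover $X$, Abresch--Rosenberg differential when the isometry group has dimension at least four, a continuity method on the moduli space of index-one spheres when it has dimension three, and a foliation plus the maximum principle to exclude $|H|\leq {\rm Ch}(X)/2$), but there is a genuine gap at the crux. Your continuity method, by itself, only produces and classifies spheres for $|H|>h_0(X)$, where $h_0(X)$ is by definition the value at which the method stops; a priori one only knows $h_0(X)\geq {\rm Ch}(X)/2$. You assert that ``a priori curvature and diameter estimates via a blow-up/compactness argument exploiting homogeneity'' keep the family compact all the way down to ${\rm Ch}(X)/2$, but no such uniform estimates exist: as $H\searrow h_0(X)$ the areas of the spheres $S_H$ necessarily diverge (this already happens in $\H^3$ as $H\to 1$), so the degeneration is real and cannot be excluded by compactness; it must be analyzed. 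Identifying $h_0(X)$ with ${\rm Ch}(X)/2$ is precisely the content of the paper's Theorem~\ref{topR3case}, whose proof occupies Sections~\ref{sec:limits}--\ref{sec:sl2R}: when areas diverge, suitable left translations of compact pieces of the spheres converge to a complete surface invariant under a right invariant Killing field; one then shows (via laminations of Gauss map images, a CMC flux nonvanishing theorem, and Bishop volume comparison to rule out annular limits) that this limit is an \emph{entire} Killing graph. Left-translating that graph along the associated $1$-parameter subgroup gives a foliation of $X$ by congruent CMC-$h_0(X)$ surfaces, and the comparison principle then forces $h_0(X)=H(X)={\rm Ch}(X)/2$. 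Without this (or an equivalent mechanism), your part (2) only gives existence for $|H|>h_0(X)$ with $h_0(X)$ unidentified, and the theorem is not proved.

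Two secondary gaps. First, your uniqueness argument classifies spheres lying in the connected, nondegenerate, index-one family, but the theorem asserts uniqueness among \emph{all} immersed CMC spheres; the paper obtains this from \cite[Theorem~4.1]{mmpr4} via the left invariant Gauss map being a diffeomorphism on $S_H$ together with G\'alvez--Mira-type uniqueness, not by proving every CMC sphere has index one (also, embeddedness is never established in general, only Alexandrov embeddedness). Second, your descent step runs in the wrong direction: an isometry of $X$ matching two lifted spheres need not normalize the deck group, and there is no reason one can choose it to do so directly. The paper instead proves maximal symmetry and defines a \emph{center of symmetry} for each sphere in $X$; given two spheres in $M$, it takes an isometry of $M$ carrying one projected center to the other (which exists by homogeneity of $M$), lifts it to $X$, and uses uniqueness-given-center upstairs to conclude the images agree. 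Your plan would need this device, or an equivalent one, to close the descent.
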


Our study of  constant
mean curvature spheres in homogeneous three-manifolds provides a natural parameterization
of their moduli space
and  fundamental information about their geometry and symmetry, as explained in the next theorem:

\begin{theorem}
\label{main2}
Let $M$ be a Riemannian homogeneous three-manifold, $X$ denote its
Riemannian universal cover and
let $S_H$ be a sphere of constant mean curvature $H\in \R$ in $M$. Then:
\begin{enumerate}[(1)]
\item
$S_H$ is \emph{maximally symmetric}; that is, there exists a point $p\in M$, which we call
the \emph{center of symmetry} of $S_H$, with the property that any isometry $\Phi$ of $M$ that
fixes $p$ also satisfies $\Phi(S_H)=S_H$. In particular,
constant mean curvature spheres are totally umbilical if $M$ has constant sectional curvature,
and are spheres of revolution if $M$ is rotationally symmetric.
 \item
If $H=0$ and $X$ is a Riemannian product  $\esf^2(\kappa)\times \R$, where
$\esf^2(\kappa)$ is a sphere
of constant curvature $\kappa>0$, then it is well known that $S_H$ 
is totally geodesic, stable and has nullity $1$ for its stability
operator.  Otherwise,
$S_H$ has index $1$ and nullity $3$ for its stability operator and the immersion of $S_H$
into $M$ extends as the boundary of an isometric immersion $F\colon B \to M$ of a Riemannian
 3-ball $B$ which is mean convex\footnote{That the ball $B$ is mean convex means that
 the mean curvature of the boundary $\partial B$ with respect to its inward pointing normal vector
 is nonnegative.}.
\end{enumerate}
Moreover, if $\cM_M(p)$ denotes the space of spheres of nonnegative constant mean curvature
in $M$ that have a base point $p\in M$ as a center of symmetry, then
the map $S_H\in \cM_M(p)\mapsto H\in \R$ that assigns to each sphere $S_H$ its mean curvature
is a homeomorphism 
between: (i) $\cM_M(p)$ and $[0,\8)$ if $X$ is
not diffeomorphic to $\R^3$, or (ii) $\cM_M(p)$ and
$({\rm Ch}(X)/2,\8)$ if $X$ is diffeomorphic to $\R^3$.
This homeomorphism is real analytic except at $H=0$ when
$X=\S^2(\kappa)\times \R$.
\end{theorem}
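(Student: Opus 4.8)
The plan is to deduce everything from the uniqueness in Theorem~\ref{main}, working in the universal cover $X$ (symmetries and the stability operator descend to $M=X/\Gamma$, and any genus-zero immersion into $M$ lifts to one into $X$ since $\esf^2$ is simply connected). Write $G=\mathrm{Iso}(X)$, fix a sphere $S=S_H$ of constant mean curvature $H$, and set $K=\{g\in G:g(S)=S\}$. By Theorem~\ref{main} the set $\Sigma_H$ of all such spheres in $X$ is the single orbit $G\cdot S$, and since $G$ acts properly and $S$ is compact, $K$ is a compact subgroup.

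For part~(1) I would first locate a center of symmetry $p$ and then show $G_p\subseteq K$, where $G_p$ denotes the isotropy group at $p$. Since $K$ is compact it has a fixed point $p$ in each model at hand (Cartan's fixed-point theorem when $X$ is nonpositively curved, and a direct check for the compact models $\S^3$ and the Berger spheres), giving $K\subseteq G_p$. For the reverse inclusion I would run an Alexandrov-type reflection argument: each reflectional isometry of $X$ fixing $p$ carries $S$ to another sphere of mean curvature $H$, which by Theorem~\ref{main} is a $G$-image of $S$; combining the maximum principle with uniqueness forces that image to be $S$ itself, so the whole of $G_p$ preserves $S$. Equivalently, since $\Sigma_H=G\cdot S$ has dimension equal to $\dim X=3$, one gets $\dim K=\dim G-3=\dim G_p$, and $K\subseteq G_p$ becomes an equality on identity components. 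Specializing $G_p$ to $O(3)$ in the constant-curvature case shows $S$ is an $SO(3)$-orbit, hence a totally umbilical round sphere, and to $SO(2)$ in the rotationally symmetric case shows $S$ is a sphere of revolution.

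For part~(2) the lower bounds are clean. Choosing Killing fields $Y_1,Y_2,Y_3$ spanning a complement of $\mathrm{Lie}(K)=\mathrm{Lie}(G_p)$ in $\mathrm{Lie}(G)$, the normal parts $u_i=\langle Y_i,N\rangle$ are Jacobi fields, and they are linearly independent since no nontrivial combination can be everywhere tangent to $S$ (that would place it in $\mathrm{Lie}(K)$); hence the nullity is at least $3$. A constant test function gives $Q(1)=-\int_S(|A|^2+\Ric(N,N))$, which is negative throughout the existence range (for instance $|A|^2+\Ric(N,N)=2(H^2-1)>0$ for the round spheres in $\HH^3$, where $H>{\rm Ch}(X)/2=1$), so the index is at least $1$; in the exceptional case $X=\esf^2(\kappa)\times\R$ with $H=0$ the sphere is the totally geodesic slice, the Jacobi operator reduces to $\Delta$ on $\esf^2(\kappa)$, and one reads off index $0$, nullity $1$ and stability directly. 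The hard part is the matching upper bounds. For the index I would show that $S_H$ is volume-constrained stable, i.e. $Q\ge 0$ on the codimension-one space $\{\int_S u\,dA=0\}$; since Killing fields are divergence-free the $u_i$ lie in this space, and volume-constrained stability forces at most one negative eigenvalue, so the index is exactly $1$. For the nullity I would argue that every Jacobi field is induced by an ambient Killing field: by Theorem~\ref{main} the zero set of the mean-curvature operator near $S$ is exactly the smooth $3$-dimensional orbit $\Sigma_H$, so any Jacobi field ought to integrate to a genuine family of such spheres, which must then be tangent to this orbit, giving nullity $\dim\Sigma_H=3$. I expect this unobstructedness/integrability step to be the main obstacle, since a priori the kernel of the Jacobi operator can exceed the dimension of the moduli space; resolving it requires using both the global uniqueness and the real-analyticity of the family rather than a formal dimension count.

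The mean-convex filling and the parameterization then follow. By maximal symmetry every $S_{H'}$ may be taken centered at the same $p$, and monotonicity in $H'$ makes the family $\{S_{H'}:H'\ge H\}$ a foliation of a ball $B$ that collapses to $p$ as $H'\to\8$; orienting by the inward normal, the leaf mean curvatures $H'\ge H\ge 0$ give the mean-convex isometric immersion $F\colon B\to M$ with $\partial B=S_H$. For the moduli statement, the map $S_H\in\cM_M(p)\mapsto H$ is well defined, injective (two spheres of the same mean curvature centered at $p$ differ by an isometry fixing $p$, which lies in $G_p\subseteq K$, so they coincide), and surjective onto $[0,\8)$ when $X$ is not diffeomorphic to $\R^3$ and onto $({\rm Ch}(X)/2,\8)$ otherwise, exactly by the existence ranges of Theorem~\ref{main}; continuity together with properness upgrades this to a homeomorphism. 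Real-analyticity away from the exceptional point comes from the implicit function theorem in the real-analytic category applied to the mean-curvature operator restricted to $G_p$-invariant normal variations, on which the Jacobi operator is invertible once the center is fixed (this removes the translational kernel); the analyticity breaks down at $H=0$ for $X=\esf^2(\kappa)\times\R$ precisely because the symmetry type and the nullity jump there.
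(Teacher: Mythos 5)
There is a genuine gap, and it sits at the heart of part~(1). Your argument has two steps: find a fixed point $p$ of the compact group $K=\{g\in \mbox{Iso}(X): g(S)=S\}$, and then prove the reverse inclusion $\mbox{Stab}_p(X)\subseteq K$. Neither step is delivered by the tools you invoke. Cartan's fixed point theorem needs nonpositive curvature, but the spaces that are the whole point of this theorem --- Nil$_3$, Sol$_3$, every left invariant metric on $\sl$, and generic metric semidirect products $\R^2\rtimes_A\R$ --- all have sectional curvatures of both signs; moreover, a priori $K$ could be trivial, in which case every point is a fixed point of $K$ and the statement to be proved, $\mbox{Stab}_p(X)\subseteq K$, is exactly as open as before. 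That reverse inclusion is the entire content of maximal symmetry, and your two proposals for it fail precisely in the generic case $\dim \mbox{Iso}(X)=3$: there $\mbox{Stab}_p(X)$ is a \emph{finite} group (dihedral-type, by \cite[Proposition~2.24]{mpe11}), so the dimension count $\dim K=\dim G-3=\dim \mbox{Stab}_p(X)$ compares identity components only and reads $0=0$; and the ``Alexandrov-type reflection'' is unavailable because the spheres are merely immersed, $\mbox{Stab}_p(X)$ need not contain any reflection, and you never produce the interior tangency with matching normals that the maximum principle requires. What makes the paper's proof work is a rigidity mechanism absent from your proposal: uniqueness up to \emph{left translation} (item~2 of Proposition~\ref{propsu2}) combined with the fact that the left invariant Gauss map of $S_H$ is a diffeomorphism (item~3) and the structure of $\mbox{Stab}_e(X)$ (Proposition~\ref{prop:mass}). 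Given $\psi\in\mbox{Stab}_e(X)$ one writes $\psi(S_H)=l_a(S_H)$ and then forces $a=e$ by tracking the unique points where the Gauss map attains the values $\pm\G'(0)$ along the distinguished $1$-parameter subgroup $\G$ (the case analysis (B1)--(B4)); without this there is no way to upgrade ``$\psi(S_H)$ is congruent to $S_H$'' to ``$\psi(S_H)=S_H$''.

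Part~(2) cannot be deduced from Theorem~\ref{main} at all, since that theorem carries no spectral information; the paper does not deduce it from uniqueness but from the construction: index one is built into the component $\mathcal{C}$ of Proposition~\ref{propsu2} (item~1, quoting \cite{mmpr4}), and nullity is then \emph{exactly} three by Cheng's theorem \cite[Theorem~3.4]{cheng1} --- this single citation replaces both of your missing lemmas. Your volume-constrained stability is not known (the spheres $S_H$ with $H$ near $h_0(X)$ are not known to be isoperimetric), and the integrability of Jacobi fields, which you yourself flag as the main obstacle, is precisely what Cheng's theorem circumvents; as written, your upper bounds presuppose that the moduli space near $S_H$ is a $3$-manifold, i.e.\ they assume a form of the conclusion. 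Two further steps need repair: the mean convex filling does not follow from a nested family $\{S_{H'}: H'\geq H\}$ --- nestedness would require the normal variation field of the family to have a sign, which is not automatic for index-one spheres --- and in the paper it is instead the Alexandrov embeddedness of item~1 of Proposition~\ref{propsu2}, again imported from \cite[Corollary~4.4]{mmpr4}; and the injectivity of $S_H\mapsto H$ on $\cM_M(p)$ requires a non-ambiguous definition of the center of symmetry (Remark~\ref{rem:center}), because when $\mbox{Iso}(X)$ is three-dimensional and $X$ is nonunimodular there are many points whose full isotropy group preserves $S_H$, so ``the center'' is not characterized by the maximal symmetry property alone.
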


The classification of constant mean curvature spheres is an old problem. In the 19th century, Jellet~\cite{je1}
proved that any constant mean curvature sphere in $\R^3$ that is star-shaped with respect to
some point is a round sphere. In 1951, Hopf~\cite{hf1} introduced a holomorphic quadratic
differential for any constant mean curvature surface in $\R^3$ and then used it to prove that
constant mean curvature spheres in $\R^3$ are round. His proof also worked in
the other simply-connected three-dimensional spaces of constant sectional curvature,
which shows that these spheres are, again, totally umbilical (hence, they are the
boundary spheres of geodesic balls of the space); see e.g.~\cite{Che5,che6}.

In 2004,  Abresch and Rosenberg~\cite{AbRo1,AbRo2} proved that any constant mean curvature
sphere in the product spaces $\H^2(\kappa)\times \R$ and $\S^2(\kappa)\times \R$, 
where $\esf^2(\kappa)$ (resp. $\H^2(\kappa)$)
denotes the two-dimensional sphere (resp. the hyperbolic plane) of
constant Gaussian curvature $\kappa\neq 0$, and more generally, in any
simply connected Riemannian homogeneous three-manifold with an isometry group of dimension four,
is a rotational sphere. Their theorem settled an old problem posed by Hsiang-Hsiang~\cite{hshs1},
and reduced the classification of constant mean curvature spheres in these homogeneous spaces to
an ODE analysis. For their proof, Abresch and Rosenberg introduced a \emph{perturbed} Hopf
differential, which turned out to be holomorphic for constant mean curvature surfaces in these spaces.
The Abresch-Rosenberg theorem was
the starting point for the development of the theory of constant mean curvature surfaces in
rotationally symmetric homogeneous three-manifolds; see e.g.,~\cite{dhm1,fm4} for a survey
on the beginnings of this theory.

It is important to observe here that a generic homogeneous three-manifold $M$ has an isometry
 group of dimension three, and that the techniques used by Abresch and Rosenberg in the rotationally
symmetric case do not work to classify constant mean curvature spheres in such
an $M$.

In 2013, Daniel and Mira~\cite{dm2} introduced a new method for studying constant mean curvature
spheres in the homogeneous manifold ${\rm Sol}_3$ with its usual Thurston geometry. Using this
method, they classified constant mean curvature  spheres in ${\rm Sol}_3$ for values $H$ of the
mean curvature greater than ${\large \frac{1}{\sqrt{3}}}$,  and reduced the general classification problem to the
obtention of area estimates for the family of spheres of  constant mean curvature greater
than any given positive number. These
crucial area estimates were subsequently proved by  Meeks~\cite{me34}, which completed the
classification of constant mean curvature spheres in  ${\rm Sol}_3$: \emph{For any $H>0$ there is a
unique constant mean curvature sphere $S_H$ in ${\rm Sol}_3$ with mean curvature $H$; moreover,
$S_H$ is maximally symmetric, embedded, and has index one}  \cite{dm2,me34}.

In~\cite{mmpr4} we extended the Daniel-Mira theory in~\cite{dm2} to arbitrary simply connected homogeneous
three-manifolds $X$ that cannot be expressed as a Riemannian product of
a two-sphere of constant curvature with a line. However, the area estimates problem in this general setting
cannot be solved following Meeks' approach in ${\rm Sol}_3$, since the proof by Meeks uses in an
essential way special properties of ${\rm Sol}_3$ that are not shared by a general $X$.

In order to solve the area estimates problem in any homogeneous three-manifold,
the authors have developed in previous works an extensive theoretical background for the
study of constant mean curvature surfaces in homogeneous
three-manifolds, see \cite{mpe11,mmpr4,mmpr2,mmp1,mmp2}.
Specifically, in~\cite{mpe11} there is a
detailed presentation of the geometry of \emph{metric Lie groups}, i.e.,
simply connected homogeneous three-manifolds given by a Lie group endowed with a left
invariant metric. In~\cite{mmpr4,mpe11} we described the basic theory of constant mean
curvature surfaces in metric Lie groups. Of special importance for our study here is~\cite{mmpr4},
where we extended the Daniel-Mira theory~\cite{dm2} to arbitrary metric Lie groups $X$,
and we proved Theorems~\ref{main} and \ref{main2} in the case that $M$ is
a homogeneous three-sphere. In~\cite{mmp2} we studied the geometry of spheres in metric Lie groups whose left invariant
Gauss maps are diffeomorphisms. In~\cite{mmpr2} we proved that any metric
Lie group $X$ diffeomorphic to $\R^3$ admits a foliation by leaves of
constant mean curvature of value ${\rm Ch}(X)/2$. In~\cite{mmp1} we established uniform radius estimates for
certain stable minimal surfaces in metric Lie groups. All these works are used in the
present manuscript to solve the area estimates problem.

We note that the uniqueness statement in Theorem~\ref{main} does not follow
from the recent G\'alvez-Mira uniqueness theory for immersed spheres in
three-manifolds in~\cite{gm1}. Indeed, their results imply in our context
(see Step~2 in the proof of Theorem~4.1 in~\cite{mmpr4}) that two spheres
$\Sigma_1,\Sigma_2$ of the same constant mean curvature in a metric Lie
group $X$ are congruent provided that the left invariant Gauss map of
one of them is a diffeomorphism (see Definition~\ref{defG} for the notion of
left invariant Gauss map); Theorem~\ref{main} does not assume this additional
hypothesis.

The main results of this paper, Theorems~\ref{main} and~\ref{main2},
can be reduced to demonstrating Theorem~\ref{topR3case} below; this reduction
is explained in Section~\ref{sec:outline}. To state Theorem~\ref{topR3case},
we need the notion of entire Killing graph given in the next definition, where $X$ is a simply
connected homogeneous three-manifold.

\begin{definition}
\label{defKgraph}
{\em
Given a nowhere zero Killing field $K$ on $X$, we say that an
immersed surface $\Sigma\subset X$ is a {\em Killing graph}
with respect to $K$ if whenever $\Sigma$ intersects an integral curve of $K$, this intersection
is transversal and consists of a single point. If additionally $\Sigma $ intersects
every integral curve of $K$, then we say that $\Sigma $ is an {\em entire} Killing graph.
}
\end{definition}

 By classification, any homogeneous manifold $X$ diffeomorphic to $\R^3$ is
isometric to a metric Lie group.
In this way, any nonzero vector field $K$ on $X$ that is right invariant for such Lie group
structure is a nowhere-zero Killing field on $X$. We note that any integral curve of $K$ is a
properly embedded curve in $X$ diffeomorphic to $\R$.

\begin{theorem}
\label{topR3case}
Let $X$ be a homogeneous manifold diffeomorphic to $\R^3$, and let $S_n$
 be a sequence of
constant mean curvature spheres in $X$ with $\mbox{\rm Area}(S_n)\to \8$.
Then, there exist isometries $\Phi_n$ of $X$ and compact domains $\Omega_n\subset \Phi_n(S_n)$
with the property that a subsequence of the $\Omega_n$ converges uniformly
on compact subsets of $X$ to an entire Killing graph
with respect to some nowhere zero Killing vector field on $X$, which in fact is right
invariant with respect to some Lie group structure on $X$.
\end{theorem}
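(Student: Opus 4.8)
The plan is to realize the limit surface through a compactness argument centered at a carefully chosen sequence of points of the $S_n$, to use stability to extract a smooth limit, and then to identify that limit with a leaf of the canonical constant mean curvature foliation of $X$ furnished by \cite{mmpr2}, which is automatically an entire Killing graph for a right invariant Killing field. Throughout I identify $X$ isometrically with a metric Lie group (possible by classification, as recalled before Theorem~\ref{topR3case}) and denote by $e$ its identity element.

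First I would set up the normalization. Write $H_n\ge 0$ for the mean curvature of $S_n$. By the structure theory for these spheres in \cite{mmpr4}, each $S_n$ is stable away from a single point of instability; I would pick $p_n\in S_n$ in the stable region at maximal intrinsic distance from that point, apply the left translation $\Phi_n$ of $X$ sending $p_n$ to $e$, set $\widetilde S_n=\Phi_n(S_n)$, and pass to a subsequence so that the unit normals of $\widetilde S_n$ at $e$ converge. The compact domains $\Omega_n\subset\widetilde S_n$ will be the intrinsic geodesic disks of radius $R_n$ about $e$, with the divergence $R_n\to\infty$ justified in the next step.

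The crux, which I expect to be the main obstacle, is obtaining uniform second fundamental form estimates for the $\widetilde S_n$ on fixed extrinsic balls of $X$. As a homogeneous manifold $X$ has bounded geometry, but it admits no dilations, so curvature blow-up cannot be controlled by rescaling the ambient metric; the estimates must come instead from the intrinsic theory of stable constant mean curvature surfaces in metric Lie groups developed in \cite{mmp1,mmpr4}. Concretely, I would invoke the uniform radius and curvature estimates for stable surfaces of \cite{mmp1} to bound the norm of the second fundamental form of the stable part of $\widetilde S_n$, uniformly in $n$, on each fixed ball about $e$. Since $\mathrm{Area}(S_n)\to\infty$ while $X$ has bounded geometry, these estimates force the intrinsic stable radius about $p_n$ to diverge, giving $R_n\to\infty$ and pushing the instability point to infinity in $\widetilde S_n$.

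With uniform curvature and local area bounds, standard elliptic compactness for the constant mean curvature equation (interior Schauder estimates together with Arzel\`a--Ascoli) yields a subsequence of the $\Omega_n$ converging uniformly on compact subsets of $X$ to a complete, connected, immersed constant mean curvature surface $\Sigma_\infty$ through $e$, of mean curvature $H_\infty=\lim H_n$. Since stability passes to such limits and the instability point has escaped, $\Sigma_\infty$ is complete and stable, so $2H_\infty\le {\rm Ch}(X)$. Because the areas blow up, I expect $\Sigma_\infty$ to be a noncompact leaf saturating this bound, i.e. $H_\infty={\rm Ch}(X)/2$. The proof then concludes by identifying $\Sigma_\infty$ with a leaf of the constant mean curvature foliation of \cite{mmpr2}: each such leaf is an entire Killing graph with respect to a right invariant Killing field on $X$, which is exactly the asserted conclusion. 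The two points demanding the most care are the divergence of the stable radius in the curvature step and this final rigidity identification, which must upgrade ``complete stable critical surface'' to ``entire Killing graph'' rather than leaving open the possibility of a non-graphical complete limit.
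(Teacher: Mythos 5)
There is a genuine gap, in fact two. First, your normalization rests on a false premise: an index-one sphere is \emph{not} ``stable away from a single point of instability.'' Index one means the stability operator has exactly one negative eigenvalue; since a point has zero capacity in dimension two, deleting a point does not change the index, so there is no distinguished ``instability point'' and no ``stable region'' whose intrinsic radius you can force to diverge. What index one actually gives---and what the paper exploits---is that the sphere cannot contain \emph{two disjoint} strictly unstable compact domains; this is how stability of limits under divergent left translations is obtained (Proposition~\ref{rank:limits}, via \cite[Corollary~5.4]{mmpr4}), and it is also the engine of the nodal-domain argument in the proof of Proposition~\ref{DeltaF}. The curvature estimates you need do exist, but not as stable-surface estimates applied to a ``stable part'': they are uniform estimates for the index-one spheres themselves (item~5 of Proposition~\ref{propsu2}, valid because diverging area forces $H_n\to h_0(X)$ by item~6), so this part of your argument is repairable but not as written. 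Relatedly, your inequality $2H_\infty\le {\rm Ch}(X)$ for the stable limit is unjustified and essentially circular: the limit has mean curvature $h_0(X)$, and the identity $h_0(X)={\rm Ch}(X)/2$ is \emph{deduced from} Theorem~\ref{topR3case} in Section~\ref{sec3.1}; before the theorem is proved one only knows $h_0(X)\ge {\rm Ch}(X)/2$ (equation~(\ref{hoche})).

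The second gap is fatal: your concluding ``rigidity identification'' of $\Sigma_\infty$ with a leaf of the CMC foliation of \cite{mmpr2} is asserted, not proved, and it is precisely the content of the bulk of the paper. The result of \cite{mmpr2} provides \emph{some} foliation by surfaces of mean curvature ${\rm Ch}(X)/2$; it neither says that its leaves are entire Killing graphs for right invariant fields, nor---more importantly---that every complete stable surface of that mean curvature is a leaf of it. A complete stable limit could a priori be a non-graphical immersed annulus or a periodic plane, and ruling this out is exactly what Sections~\ref{sec:limits}--\ref{sec:sl2R} do: one first proves the limit is everywhere tangent to a right invariant Killing field via a constant-rank theorem for the left invariant Gauss map (Proposition~\ref{DeltaF}, whose proof needs the index-one nodal argument and Bers-type asymptotics, plus the Transversality Lemma); one then shows the Gauss map image can be chosen to be a closed embedded curve (Theorem~\ref{g_f closed}), reducing to the trichotomy (S2.1)--(S2.3) of Corollary~\ref{thm6.11}; and finally the two annular cases are excluded by a Bishop volume-comparison argument on the Alexandrov-embedded balls $B_n$ and by the nonvanishing CMC-flux theorem of the Appendix (Theorem~\ref{flux}), separately for semidirect products and for $\sl$. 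None of these steps is replaced by anything in your proposal, so the passage from ``complete stable limit'' to ``entire Killing graph with respect to a right invariant field'' remains entirely open.
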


\section{Organization of the paper.}

We next explain the organization of the paper and  outline of the strategy of the proof of
the main theorems. In our study in~\cite{mmpr4} of the space of constant mean curvature spheres of
index one in metric Lie groups $X$, we proved that when  $X$ is diffeomorphic to $\esf^3$,
Theorems~\ref{main} and \ref{main2} hold for $M=X$. In that same paper
we proved that if $X$ is a metric Lie group not diffeomorphic
to $\S^3$ (and hence, diffeomorphic to $\R^3$),
then there exists a constant $h_0(X)\geq 0$ such that: (i) for any $H>h_0(X)$
there is an index-one sphere $S_H$ of constant mean curvature
$H$ in $X$; (ii) any sphere of constant mean curvature $H>h_0(X)$ is a left
translation of $S_H$, and (iii) if $H_n\searrow h_0(X)$, then the areas of the spheres $S_{H_n}$ diverge to $\8$
as $n\to \8$; furthermore, the spheres $S_n$ bound isometrically immersed mean convex
Riemannian three-balls $\wt{f}_n\colon B_n\to X$.

In Section~\ref{sec:outline} we explain how Theorems~\ref{main} and \ref{main2} can be
deduced from Theorem~\ref{topR3case} and from the result stated in the last paragraph.

Section~\ref{sec:gauss} is an introductory section; in it we prove some basic properties of
constant mean curvature surfaces $f\colon\Sigma \la X$ in a metric Lie
group $X$ that are \emph{invariant}, i.e., such that $f(\Sigma)$ is
everywhere tangent to a nonzero right invariant Killing vector field $K_\Sigma$ on $X$ (equivalently,
$f(\Sigma )$ is invariant under the flow of $K_\Sigma$),
and we explain in more detail the geometry of metric Lie groups diffeomorphic to $\R^3$.
For this analysis, we divide these metric Lie groups into two categories: \emph{metric semidirect products}
and those of the form $(\sl,\esiz,\esde)$, where $\sl$ is the universal cover of the special
linear group ${\rm SL}(2,\R)$ and $\esiz,\esde$ is a left invariant metric; here the metric semidirect products $X$
under consideration are the
semidirect product of a normal subgroup $\R^2$ with $\R$
and we refer to the cosets in $X$ of $\R^2$ as being horizontal
planes.

In Section~\ref{sec:limits} we prove that if $\{S_n\}_n$ is a sequence of index-one spheres with
constant mean curvatures of values $H_n\searrow h_0(X)$ in a metric Lie group $X$ diffeomorphic
to $\R^3$, then, after taking limits of an appropriately chosen subsequence of left translations of
these spheres, there exists an invariant
surface $f\colon\Sigma \la X$ of constant mean curvature $h_0(X)$ that is a limit of compact
domains of the spheres $S_n$. Moreover, $f$ is complete, stable,
$\Sigma $ has the topology of a plane or a cylinder,
and if $f(\Sigma)$ is not a coset of a two-dimensional subgroup of $X$,
then the closure $\overline{\g_f}$ of its left invariant Gauss map image $\g_f$ in $\S^2$
(see Definition~\ref{defG} for this notion)
has the structure of a one-dimensional lamination.
Another key property  proved in Section~\ref{sec:limits} is that
whenever $f(\Sigma)$ is tangent to some coset of a two-dimensional subgroup of $X$,
then $f(\Sigma)$ lies in one of the two closed \emph{half-spaces} bounded by this coset;
see Corollary~\ref{cor:unique}.

In Section~\ref{sec:periodic} we prove in
Theorem~\ref{g_f closed} that the invariant limit
surface $f(\Sigma)$ can be chosen so that its left invariant Gauss map image $\g_f\subset \S^2$
is a point or a closed embedded regular curve. In the case $\g _f$ is a point, Theorem~\ref{topR3case}
follows easily from the fact that $f(\Sigma )$ is a coset of a two-dimensional subgroup of $X$. In the case that
$\g _f$ is a closed curve, one can choose $f$ so that one of the following exclusive possibilities occurs:
 \begin{enumerate}
 \item
$\Sigma$ is diffeomorphic to an annulus.
 \item
$\Sigma$ is diffeomorphic to a plane, and $f(\Sigma)$ is an immersed annulus in $X$.
 \item
$\Sigma$ is diffeomorphic to a plane and there exists an element $a\in X$ such that the left
translation by $a$ in $X$ leaves $f(\Sigma)$ invariant,
but this left translation does not lie in the $1$-parameter subgroup of isometries generated by 
the nonzero right invariant Killing vector field $K_{\Sigma}$ that is everywhere
tangent to $f(\Sigma)$.
 \end{enumerate}

In Section~\ref{sec:semidirect} we prove Theorem~\ref{topR3case} in the case that $X$ is a
metric semidirect product. For that, it suffices to prove that $f(\Sigma)$ is an entire
Killing graph for some right invariant Killing vector field $V$ in $X$. This is proved as follows.
First, we show that if case 3 above holds, then the Killing field $K_{\Sigma}$ which
is everywhere tangent to $f(\Sigma)$ is horizontal with respect to the semidirect product structure of $X$,
and we prove that $f(\Sigma)$ is an entire
graph with respect to any other horizontal Killing field $V$ linearly independent from
$K_{\Sigma}$. So, it suffices to rule out cases 1 and 2 above. We prove that case 2 is
impossible by constructing in that situation geodesic balls of a certain fixed radius
$R^*>0$ in the abstract Riemannian three-balls $B_n$ that the Alexandrov-embedded index-one
spheres $S_n$ bound, and whose volumes tend to infinity as $n\to \8$.
This unbounded volume result eventually provides a contradiction with Bishop's theorem.
Finally, we show that case 1 is impossible by constructing an abstract
three-dimensional cylinder bounded by $\Sigma$ that submerses
isometrically into $X$ with boundary
$f(\Sigma)$, and then proving that a certain CMC flux
of $\Sigma$ in this abstract cylinder is
different from zero. This gives a contradiction with the homological invariance of the
CMC flux and the fact that $f(\Sigma)$ is a limit of the (homologically trivial)
Alexandrov-embedded constant mean curvature spheres $S_n$.

In Section~\ref{sec:sl2R} we prove Theorem~\ref{topR3case} in the remaining case
where $X$ is isomorphic to $\sl$. The arguments and the basic strategy of the proof in
this situation follow closely those from the previous Section~\ref{sec:semidirect}.
However, several of these arguments are by necessity different from those in
Section~\ref{sec:semidirect}, as many geometric properties of metric semidirect products
do not have analogous counterparts in $\sl$.

The paper finishes with an Appendix that can be read independently of the rest of the manuscript.
In it, we prove a general nonvanishing result for the CMC flux that is used in
Sections~\ref{sec:semidirect} and \ref{sec:sl2R} for ruling out case 1 above,
as previously explained.

\section{Reduction of Theorems~\ref{main}
and \ref{main2} to Theorem~\ref{topR3case}}
\label{sec:outline}

In order to show that Theorems~\ref{main} and \ref{main2} are implied by Theorem~\ref{topR3case},
we will prove the next two assertions in Sections~\ref{sec3.1} and \ref{sec3.2}, respectively.

\begin{assertion}\label{ass31}
Assume that Theorem~\ref{topR3case} holds. Then Theorems~\ref{main} and \ref{main2} hold for the
particular case that the homogeneous three-manifold $M$ is simply connected.
\end{assertion}

\begin{assertion}\label{ass32}
Theorems~\ref{main} and \ref{main2} hold provided they hold for the particular case that
the homogeneous three-manifold $M$ is simply connected.
\end{assertion}

In what follows, $M$ will denote a homogeneous three-manifold and $X$ will denote
its universal Riemannian cover; if $M$ is simply connected, we will identify $M=X$.
By an $H$-\emph{surface} (or $H$-\emph{sphere}) in $M$ or $X$ we will mean an
immersed surface (resp. sphere) of constant mean curvature of value $H\in \R$ immersed in $M$ or $X$.

The \emph{stability operator} of an $H$-surface $f\colon \Sigma \la M$ is the Schr\"{o}dinger operator 
$L=\Delta + q$, where $\Delta$ stands for the Laplacian with respect to the induced metric in $\Sigma$, 
and $q$ is the smooth function given by $$q=|\sigma|^2 + {\rm Ric}(N),$$ where $|\sigma|^2$ 
denotes the squared norm of the second fundamental form of $f$, and $N$ is the unit normal vector field to $f$. 
The immersion $f$ is said to be \emph{stable} if $-L$ is a nonnegative operator. 
When $\Sigma$ is closed, the \emph{index} of $f$ is defined as the number of negative eigenvalues 
of $-L$, and the \emph{nullity} of $f$ is the dimension of the kernel of $L$. We say that a function 
$u\in C^{\8}(\Sigma)$ is a \emph{Jacobi function} if $Lu=0$. Functions of the type $u=\esiz N,F\esde$ 
where $F$ is a Killing vector field on the ambient space $M$ are always Jacobi functions on $\Sigma$.

\subsection{Proof of Assertion~\ref{ass31}}\label{sec3.1}

Let $M=X$ denote a simply connected homogeneous three-manifold. It is well known that if $X$ is not
isometric to a Riemannian product space $\S^2(\kappa )\times \R$, where $\S^2(\kappa )$ denotes a sphere of
constant Gaussian curvature $\kappa >0$, then $X$ is diffeomorphic to $\R^3$ or to $\S^3$.

When $X$ is isometric to $\S^2(\kappa )\times \R$, the statements contained in
Theorems~\ref{main} and \ref{main2} follow from the Abresch-Rosenberg theorem that
states that constant mean curvature spheres in $\S^2(\kappa)\times \R$ are rotational spheres, and from
an ODE analysis of the profile curves
of these spheres. More specifically, an explicit expression of the rotational $H$-spheres
in $\S^2(\kappa)\times \R$ can be found in Pedrosa and Ritor\'e~\cite[Lemma~1.3]{pri1},
or in~\cite{AbRo1}.
From this expression it follows that if $H\neq 0$, then each of these
rotational $H$-spheres is embedded, and bounds a unique compact
subdomain in $X$ which is a mean convex ball that is invariant under all the
isometries of $X$ that fix the mid point in the ball of the revolution axis of
the sphere (this point can be defined as the \emph{center of symmetry} of the sphere).
If $H=0$, the corresponding rotational sphere is a slice $\S^2(\kappa)\times \{t_0\}$,
that is trivially stable, and hence has index zero and nullity one. Moreover, any point
of $\S^2(\kappa)\times \{t_0\}$ can be defined as the center of symmetry of the sphere
in this case. That the rotational $H$-spheres in  $\S^2(\kappa )\times \R$
for $H\neq 0$ have index one and nullity
three is shown in Souam~\cite[proof of Theorem 2.2]{so3}.
Also, the rotational $H$-spheres for $H> 0$
with a fixed center of symmetry $p_0$ converge as $H\to 0$
to a double cover of the slice $\S^2(\kappa)\times \{t_0\}$
that contains $p_0$.
So, from all this information
and the Abresch-Rosenberg theorem, it follows that
Theorems~\ref{main} and \ref{main2} hold
when $X$ is isometric to $\S^2(\kappa)\times \R$.

When $X$ is diffeomorphic to $\S^3$, the statements in Theorem~\ref{main}
and \ref{main2} were proven in our previous work~\cite{mmpr4}. 

So, from now on in this Section~\ref{sec3.1},
$X$ will denote a homogeneous manifold diffeomorphic to
$\R^3$. It is well known that $X$ is isometric to a \emph{metric Lie group},
i.e., a three-dimensional Lie group endowed with a left invariant Riemannian metric.
In the sequel, $X$ will be regarded as a metric Lie group, and $e$ will denote its identity element.
Given $x\in X$, we will denote by $l_x,r_x\colon X\to X$ the  left and right translations
by $x$, respectively given by $l_x(y)=xy$, $r_x(y)=yx$ for all $y\in X$.
Thus, $l_x$ is an isometry of $X$ for every $x\in X$.

\begin{definition}
\label{defG}
{\rm Given an immersed oriented surface $f\colon \Sigma
\looparrowright X$ with unit normal vector field $N\colon \Sigma \to
TX$ (here $TX$ refers to the tangent bundle of $X$), we define the
{\it left invariant Gauss map} of $f$ to be the map
$G\colon \Sigma \to \esf^2\subset T_eX$ that assigns to each $p\in
\Sigma $, the unit tangent vector $G(p)$ to $X$ at the identity element $e$
given by $(dl_{f(p)})_e(G(p))=N_p$. }
\end{definition}

The following proposition follows from a rearrangement of ideas taken from~\cite{mmpr4}, but for
the sake of clarity, we will include a proof here.

\begin{proposition}
\label{propsu2}
Let $X$ be any metric Lie group $X$ diffeomorphic to $\R^3$. Then, there exists a number $h_0(X)\geq 0$ such that:
\begin{enumerate}[(1)]
\item
For every $|H|>h_0(X)$, there exists an $H$-sphere $S_H$ in $X$
with index one and nullity three for its stability operator. Furthermore, $S_H$ is Alexandrov embedded,
i.e., $S_H$ is the boundary of a (unique) mean convex immersed ball $\wt{f}_n\colon B_n\to X$ in $X$.
 \item
If $\Sigma$ is an $H$-sphere in $X$ for some $|H|>h_0(X)$, then $\Sigma$ is a left translation of
$S_H$.
 \item
The left invariant Gauss map $G\colon S_H\flecha \S^2$ of $S_H$ 
is an orientation-preserving diffeomorphism.
 \item
Each $S_H$ lies inside a real-analytic family $\{\Sigma_{H'}\,  | \, H'\in (H-\ep,H+\ep)\}$ of
index-one spheres in $X$ for some $\ep>0$, where $\Sigma_H=S_H$ and $\Sigma_{H'}$ has
constant mean curvature  of value $H'$.
In particular, there exists a unique component $\mathcal{C}$ of the space of index-one spheres with
constant mean curvature in $X$ such that $S_H\in \mathcal{C}$ for all $H>h_0(X)$.
 \item
For every $H_0>h_0(X)$ there exists some positive constant $C=C(X,H_0)$ such that
if $|H| \in (h_0(X),H_0]$, then the norm of the second fundamental form of $S_H$ is at most $C$.
 \item
For every $H_0>h_0(X)$ there exists some positive constant $D=D(X,H_0)$ such that
if $|H|>H_0$, then the area of $S_H$ is at most $D$.
 \item
If $S_n$ is a sequence of constant mean curvature spheres in $X$ whose
mean curvatures $H_n> h_0(X)$ satisfy $H_n\to h_0(X)$,
then ${\rm Area} (S_n)\to \8$.
\end{enumerate}
\end{proposition}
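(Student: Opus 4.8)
The plan is to establish items (1)--(7) by assembling results already proven in the cited works~\cite{mmpr4,mmpr2,mmp1} and organizing them into a coherent statement about the distinguished component $\mathcal{C}$ of index-one spheres. First I would recall from~\cite{mmpr4} the construction of the constant $h_0(X)\geq 0$ and the existence, for each $H>h_0(X)$, of an index-one $H$-sphere $S_H$ that is Alexandrov embedded; this gives item (1) directly (with the $|H|<-h_0(X)$ range following by reversing orientation). The uniqueness statement (2), asserting that every $H$-sphere with $|H|>h_0(X)$ is a left translation of $S_H$, should also be quoted from the Daniel-Mira theory as extended in~\cite{mmpr4}. For item (3), the key input is that index-one $H$-spheres in this range have a left invariant Gauss map that is a diffeomorphism; this is exactly the setting studied in~\cite{mmp2}, and I would invoke the orientation-preserving property there, tying it to the index-one and nullity-three hypotheses.

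Next I would address the real-analytic family in item (4). The natural mechanism is the implicit function theorem applied to the constant mean curvature equation, linearized at $S_H$: because $S_H$ has nullity three coming precisely from the three-dimensional space of Killing Jacobi fields (the functions $\esiz N,F\esde$ for $F$ in the Lie algebra of right invariant Killing fields), the cokernel of the Jacobi operator is controlled, and after quotienting by the ambient isometry group one obtains a nondegenerate problem whose solutions vary real-analytically in $H$. This produces the local family $\{\Sigma_{H'}\}$ and, by uniqueness from (2) together with connectedness, identifies a single component $\mathcal{C}$ containing all $S_H$ for $H>h_0(X)$. The curvature estimate (5) on compact mean-curvature intervals $[h_0(X)+\delta,H_0]$ should follow from a standard compactness/smooth-convergence argument: if the second fundamental forms were unbounded, a blow-up would produce a nonflat minimal or CMC limit contradicting the index-one and uniqueness structure, so $|\sigma|$ is uniformly bounded away from $h_0(X)$.

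For item (6), the area bound for $|H|>H_0$, the idea is that large mean curvature forces small spheres: using the uniform local geometry of the homogeneous space $X$ together with a monotonicity or isoperimetric-type comparison, an $H$-sphere with $H$ large is contained in a ball of radius $O(1/H)$, giving $\mathrm{Area}(S_H)=O(1/H^2)$ and in particular a uniform bound $D$. Finally, item (7) is the heart of the matter and the step I expect to be the main obstacle: the area divergence $\mathrm{Area}(S_n)\to\infty$ as $H_n\searrow h_0(X)$. This is the precise phenomenon around which the whole paper is built, since the limit foliation of~\cite{mmpr2} at mean curvature $\mathrm{Ch}(X)/2$ suggests the spheres degenerate. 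I would argue by contradiction: if a subsequence had uniformly bounded area, then combined with the curvature bound (5) on the relevant range one could extract a smooth limit $H$-sphere of mean curvature $h_0(X)$ lying in $\mathcal{C}$, contradicting either the minimality of $h_0(X)$ as the infimum of mean curvatures of index-one spheres or the nonexistence of such a limiting sphere. The delicate point is ensuring the curvature estimate survives uniformly up to $h_0(X)$; since (5) is only stated on $(h_0(X),H_0]$ and may degenerate at the endpoint, the contradiction must instead exploit the structural results of~\cite{mmpr4} (the isometrically immersed mean convex balls and the behavior of $h_0(X)$) rather than a naive compactness argument, which is why (7) genuinely requires the machinery developed across the earlier papers.
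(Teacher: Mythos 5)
Your overall strategy --- assembling the proposition from the results of \cite{mmpr4} --- is the same as the paper's: items (2), (3), (4), (5) are quoted there verbatim from items 1, 2, 3, 4 of Theorem~4.1 of \cite{mmpr4}, and Alexandrov embeddedness from its Corollary~4.4 (note that item (3) comes from \cite{mmpr4}, not from \cite{mmp2}, which \emph{assumes} that the left invariant Gauss map is a diffeomorphism rather than proving it). But there are concrete gaps in your treatment. First, the nullity: the right invariant Killing fields $F$ of $X$ only show that the functions $\langle N,F\rangle$ span a three-dimensional space of Jacobi functions, i.e.\ nullity $\geq 3$; the statement (and your implicit-function-theorem argument in item (4)) requires nullity \emph{exactly} three, and the missing upper bound is precisely what the paper imports from Cheng \cite[Theorem~3.4]{cheng1} (an index-one sphere has nullity at most three). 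Second, item (6): a monotonicity or isoperimetric-type comparison does not bound the diameter (hence the area) of an \emph{immersed} $H$-sphere --- even in $\R^3$ one needs Hopf's theorem for such a conclusion --- so your proposed mechanism fails as stated. The paper instead deduces (6) from the fact, proved in \cite{mmpr4}, that for $H$ large enough the spheres $S_H$ bound small isoperimetric regions, combined with the uniqueness of item (2) and the analyticity of $H\mapsto S_H$ from item (4) to handle the compact range of intermediate mean curvature values.

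Concerning item (7), your diagnosis is inverted. The curvature estimate (5) provides a single constant valid on the whole half-open interval $(h_0(X),H_0]$, so there is no ``degeneration at the endpoint'' to worry about; on the other hand, the paper never runs the compactness argument you propose. In \cite{mmpr4} the number $h_0(X)$ is \emph{defined} through item~5 of Theorem~4.1, which already carries with it the area blow-up property for sequences $\{S_{H_n}\}_n\subset\mathcal{C}$ with $H_n\to h_0(X)$ (this is property (7)$'$ in the paper's proof); the only new step in the paper is to invoke uniqueness (item (2)) so that an \emph{arbitrary} sequence of spheres with $H_n\searrow h_0(X)$ consists of left translations of the canonical ones and hence inherits the divergence of areas. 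Your contradiction argument could be made to work --- the limit sphere would have index one (index $\geq 1$ for any closed CMC sphere in $X$, since nullity $\geq 3$ prevents $0$ from being the simple first eigenvalue, and index $\leq 1$ by semicontinuity), so it would lie in $\mathcal{C}$ and the implicit function theorem would produce spheres in $\mathcal{C}$ with mean curvature below $h_0(X)$ --- but this re-proves the content of the cited theorem rather than using it, and the contradiction only closes if you pin down the characterization of $h_0(X)$ as the infimum of the mean curvature map $\mathcal{H}$ on $\mathcal{C}$, which your proposal leaves unspecified.
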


\begin{proof}
After a change of orientation if necessary, it clearly suffices to prove all statements for the case where $H\geq 0$.
By~\cite[item~3 of Theorem~4.1]{mmpr4},
there exists a unique component $\mathcal{C}$ of the space of index-one spheres with constant
mean curvature in $X$ such that the values of the mean curvatures
of the spheres in $\mathcal{C}$ are not bounded from above.
As $X$ is diffeomorphic to $\R^3$, then~\cite[item~6 of Theorem~4.1]{mmpr4} ensures that the map
$\mathcal{H}\colon \mathcal{C}\to \R$ that assigns to each
 sphere in $\mathcal{C}$ its mean curvature is not surjective. By item~5 in the same theorem,
 there exists $h_0(X) \geq 0$ such that for every $H>h_0(X)$, there is an index-one sphere
$S_H\in \mathcal{C}$, and the following property holds:
\begin{enumerate}[(7)']
\item The areas of any sequence $\{ S_{H_n} \} _n \subset \mathcal{C}$ with $H\to h_0(X)$
satisfy $\lim _{n\to \infty }\mbox{\rm Area}(S_{H_n})=\infty $.
\end{enumerate}
Since the sphere $S_H$ has index one, then Cheng~\cite[Theorem~3.4]{cheng1} gives that the
nullity of $S_H$ is three. Alexandrov embeddedness of $S_H$ follows from~\cite[Corollary~4.4]{mmpr4}.
Now item~1 of Proposition~\ref{propsu2} is proved.
\newline
\mbox{}\qquad
Once this item 1 has been proved, items 2, 3, 4, 5 of Proposition~\ref{propsu2} are proved in items 1, 2, 3, 4
of~\cite[Theorem~4.1]{mmpr4}. Item 6 of Proposition~\ref{propsu2} is a direct consequence of the existence,
uniqueness and analyticity properties of the spheres $S_H$ stated in items 1,2,4,
and of the fact also proved in~\cite{mmpr4} that,
for $H$ large enough, the spheres $S_H$ bound small isoperimetric regions in $X$.
Finally, item~7 of Proposition~\ref{propsu2} follows from property (7)' above and
from the uniqueness given by the already proven item~2 of Proposition~\ref{propsu2}. Now the proof
is complete.
\end{proof}

A key notion in what follows is the {\it critical mean curvature of $X$.}
\begin{definition}
\label{defH(X)}
{\rm
Let $X$ be a homogeneous manifold diffeomorphic to $\R^3$, let $\mathcal{A}(X)$
be the collection of all closed, orientable immersed  surfaces in $X$,
and given a surface $\Sigma \in \mathcal{A}(X)$, let $|H_{\Sigma }|\colon \Sigma \to [0,\infty )$
 stand for the absolute mean curvature function of $\Sigma $. The \emph{critical mean curvature} of
$X$ is defined as
\[
H(X) =\inf \left\{ \max_{\Sigma }|H_{\Sigma }|\ : \ \Sigma \in \mathcal{A}(X)\right\} .
\]
}
\end{definition}

Item~2 of Theorem 1.4 in~\cite{mmpr2} gives that Ch$(X)=2H(X)$.
By definition of $H(X)$, every compact $H$-surface in $X$ satisfies $|H|\geq H(X)$.
Observe that Proposition~\ref{propsu2} gives that for every $H>h_0(X)$ there is an $H$-sphere
$S_H$ in $X$. Thus,
\begin{equation}\label{hoche}
h_0(X)\geq H(X)={\rm Ch}(X)/2.
\end{equation}

In order to prove Assertion~\ref{ass31}, {\bf in the remainder of this section we will assume that Theorem~\ref{topR3case} holds.} 

We will prove next that $h_0(X)=H(X)$.
If $S_n$ are constant mean curvature spheres satisfying item~7
of Proposition~\ref{propsu2},
 then by Theorem~\ref{topR3case} we obtain the existence of a surface $\Sigma_0$ in $X$ of
 constant mean curvature $h_0(X)$ that is an entire Killing graph with respect to some nonzero
 right invariant Killing field $K$. Let
$\Gamma=\{ \Gamma(t)\ | \ t\in \R \} $ be the $1$-parameter subgroup of $X$ given by $\Gamma(0)=e$, $\Gamma'(0)=K(e)$.
As left translations are isometries of $X$, it follows that
$\{ l_{a} (\Sigma_0 )\ | \ a\in \Gamma \} $ defines a
foliation of $X$ by congruent surfaces of constant mean curvature $h_0(X)$,
and this foliation is topologically a product foliation.
A standard application
of the mean curvature comparison principle shows then that there are no closed surfaces
$\Sigma$ in $X$ with $\max_{\Sigma }|H|< h_0(X)$ (otherwise, we left translate $\Sigma_0$ until $\Sigma$ is
contained in the region of $X$ on the mean convex side of $\Sigma _0$,
 and then start left translating $\Sigma_0$ towards its
mean convex side until it reaches a first contact point with $\Sigma$; this provides a contradiction
with the mean curvature comparison principle). Therefore, we conclude that
$h_0(X)=H(X)$ 
and that, by item 1 of Proposition~\ref{propsu2}, the values $H\in \R$ for which there exists a sphere of constant
mean curvature $H$ in $X$ are exactly those with $|H|>H(X)$. This fact together with item~2 of
Proposition~\ref{propsu2} proves item~2 of Theorem~\ref{main}, and thus
it completes the proof of Theorem~\ref{main} in the case $X$ is diffeomorphic to $\R^3$ (assuming that Theorem~\ref{topR3case} holds).

Regarding the proof of Theorem~\ref{main2} when $X$
is diffeomorphic to $\R^3$, similar arguments as the one in the preceding paragraph prove item~2 of Theorem~\ref{main2}.
To conclude the proof of Theorem~\ref{main2} (and hence of Assertion~\ref{ass31})
we need to show that all spheres $S_H$ in $X$ are maximally symmetric,
something that we will prove next. We note that the `\emph{Moreover}' part in the statement of Theorem~\ref{main2}
follows directly from the existence of a center of symmetry of each sphere $S_H$, and the analyticity properties in item 4
of Proposition~\ref{propsu2}.

\vspace{0.1cm}

In the case that the isometry group $I(X)$ of $X$ is of dimension 6, all constant mean
curvature spheres in $X$ are totally umbilical; in particular they are maximally symmetric,
and the definition of the center of symmetry of the sphere is clear. So,
in  the remainder of this section we will let $X$ be a metric Lie group diffeomorphic to $\R^3$ whose
isometry group $I(X)$ has dimension $3$ or $4$. Let $\mbox{Stab}_e(X)$ denote the group of
isometries $\psi$ of $X$ with $\psi(e)=e$, and let $\mbox{Stab}^+_e(X) $ denote the
index-two subgroup of orientation-preserving isometries in $\mbox{Stab}_e(X)$.
The next proposition gives some basic properties of the elements of
$\mbox{Stab}_e(X)$ that will be needed for proving the maximal symmetry of $H$-spheres in $X$.
Its proof follows from the analysis of metric Lie groups in~\cite{mpe11}.

\begin{proposition}
\label{prop:mass}
There exists a $1$-parameter subgroup $\G $ of $X$ such that:
 \begin{enumerate}[(1)]
 \item
There exists an isometry $\phi\in \mbox{Stab}_e^+(X)$ of order two in $X$ that
is a group automorphism of $X$, and whose fixed point set is
$\Gamma$, i.e., $\Gamma=\{x\in X : \phi(x)=x\}$.
\item
Every $\psi\in \mbox{Stab}_e(X)$ leaves $\Gamma$ invariant, in the sense that $\psi(\Gamma)=\Gamma$.
\end{enumerate}
Moreover, if there exists another $1$-parameter subgroup $\hat{\Gamma}\neq \Gamma$ of $X$ that
satisfies the previous two properties, then $X$ has a three-dimensional isometry group,
and $\phi(\hat{\Gamma})=\hat{\Gamma}$, where $\phi$ is given with respect to $\Gamma$ by
item (1) above.
\end{proposition}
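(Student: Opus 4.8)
The plan is to reduce the statement to the structural classification of three-dimensional metric Lie groups carried out in \cite{mpe11}, and to produce $\Gamma$ and $\phi$ explicitly from the canonical orthonormal frame attached to $X$. Since $I(X)$ has dimension $3$ or $4$, the stabilizer $\mbox{Stab}_e(X)$ has dimension $0$ or $1$, and in either case $d\psi_e$ (for $\psi\in\mbox{Stab}_e(X)$) is an orthogonal transformation of $T_eX\cong\algG$ preserving the Ricci form $\Ric_e$ together with the algebraic invariants of the Lie bracket at $e$. I would first record the canonical frame: in the unimodular case this is a Milnor frame $\{E_1,E_2,E_3\}$ simultaneously diagonalizing the metric and the bracket, with $[E_1,E_2]=c_3E_3$ and its cyclic permutations; in the non-unimodular case ($X$ a metric semidirect product $\XA$, or $X\cong\sl$) it is the frame adapted to the distinguished normal subgroup $\R^2$, with the transverse direction $E_3$ orthogonal to the horizontal plane. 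In all cases I take $\Gamma=\exp(\R E_3)$ to be the one-parameter subgroup tangent to a distinguished principal axis: the rotation axis when $\dim I(X)=4$, and any one of the three bracket-principal axes when $\dim I(X)=3$.

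For item (1), I would let $\phi$ be the order-two rotation by angle $\pi$ about $E_3$, that is, $d\phi_e(E_1)=-E_1$, $d\phi_e(E_2)=-E_2$, $d\phi_e(E_3)=E_3$. A direct check on the structure constants shows that $d\phi_e$ preserves every bracket relation, so $d\phi_e$ is a Lie algebra automorphism; since $X$ is simply connected it integrates to a group automorphism $\phi$ of $X$, and because $d\phi_e\in SO(3)$ is orthogonal for the left-invariant metric, $\phi$ is an orientation-preserving isometry fixing $e$, hence $\phi\in\mbox{Stab}_e^+(X)$. Its fixed-point set is the connected subgroup tangent to $\ker(d\phi_e-\mathrm{Id})=\R E_3$, namely $\Gamma=\exp(\R E_3)$; this proves (1).

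For item (2), I would show that every $\psi\in\mbox{Stab}_e(X)$ commutes with $\phi$ and then deduce $\psi(\Gamma)=\Gamma$ from $\Gamma=\mathrm{Fix}(\phi)$. The key input, drawn from the classification in \cite{mpe11}, is that the axis direction $\R E_3$ is metrically distinguished and hence preserved up to sign by every stabilizer isometry: when $\dim I(X)=4$ it is the rotation axis, equivalently the simple eigendirection of the non-scalar Ricci form $\Ric_e$; when $\dim I(X)=3$ it is one of the principal axes of the canonical frame, singled out by the pairwise-distinct algebraic invariants of the bracket. In all cases $d\psi_e(E_3)=\pm E_3$. Since $d\phi_e$ equals $+1$ on $\R E_3$ and $-\mathrm{Id}$ on $(\R E_3)^\perp$, this forces $d\psi_e\,d\phi_e=d\phi_e\,d\psi_e$. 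As two isometries fixing $e$ with the same differential at $e$ must coincide, we get $\psi\phi\psi^{-1}=\phi$, and therefore $\psi(\Gamma)=\psi(\mathrm{Fix}\,\phi)=\mathrm{Fix}(\psi\phi\psi^{-1})=\Gamma$, which establishes (2).

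Finally, for the \emph{Moreover} clause, suppose a second subgroup $\hat\Gamma\neq\Gamma$ also satisfies (1) and (2). If $\dim I(X)=4$, then $\mbox{Stab}_e(X)$ contains the full circle of rotations $\{R_\theta\}$ about the axis $\R E_3$; property (2) applied to $\hat\Gamma$ forces $d(R_\theta)_e$ to send the tangent line $\R\hat\Gamma'(0)$ to itself for all $\theta$, which is possible only if $\hat\Gamma'(0)\in\R E_3$, i.e. $\hat\Gamma=\Gamma$, a contradiction; hence $\dim I(X)=3$. Since $\phi\in\mbox{Stab}_e(X)$, applying property (2) for $\hat\Gamma$ to $\psi=\phi$ gives $\phi(\hat\Gamma)=\hat\Gamma$ immediately. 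I expect the main obstacle to be the non-unimodular dimension-three case: there the canonical frame from \cite{mpe11} is less symmetric, the principal axes need not all be Ricci eigendirections, and one must verify both that $E_3$ is genuinely fixed up to sign by the finite stabilizer and that the chosen rotation $\phi$ remains a metric automorphism. This is precisely where the explicit normal-form analysis of metric semidirect products and of $\sl$ in \cite{mpe11} is indispensable.
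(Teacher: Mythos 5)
Your construction for item~(1) is essentially the paper's: the $\pi$-rotation $d\phi_e=(+1)\oplus(-\mathrm{Id})$ about a canonical axis is checked to be a Lie algebra automorphism and integrated to an isometric group automorphism, exactly as in the paper's $\phi$ (nonunimodular case) and $\phi_i$ (unimodular case). Your mechanism for item~(2) --- show $d\psi_e(\pm E_3)=\pm E_3$, deduce that $d\psi_e$ commutes with $d\phi_e$, conclude $\psi\phi\psi^{-1}=\phi$ and hence $\psi(\mathrm{Fix}\,\phi)=\mathrm{Fix}\,\phi$ --- is a genuinely slicker packaging than the paper's element-by-element verification, and your derivation of $\phi(\hat\Gamma)=\hat\Gamma$ in the \emph{Moreover} clause (apply property~(2) of $\hat\Gamma$ to $\psi=\phi\in\mbox{Stab}_e(X)$) is correct and cleaner than the paper's explicit check that $\phi_i(\Gamma_j)=\Gamma_j$.

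However, there is a genuine gap in the input to that mechanism. Your claims that one may take $\Gamma$ tangent to \emph{any} of the three principal axes when $\dim I(X)=3$, and that ``in all cases $d\psi_e(E_3)=\pm E_3$'' because the axis is singled out by pairwise-distinct bracket invariants, are false without a case analysis: for $\mathrm{Sol}_3$ with its standard metric (the paper's case (A2)$'$), the full stabilizer contains the orientation-reversing isometries $\psi_3(x,y,z)=(y,-x,-z)$ and $\psi_4(x,y,z)=(-y,x,-z)$, which interchange the principal axes $\Gamma_1\leftrightarrow\Gamma_2$; so $\Gamma_1,\Gamma_2$ satisfy item~(1) (the automorphisms $\phi_1,\phi_2$ exist) but fail item~(2), and only the choice $\Gamma=\{(0,0,z)\}$ works. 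The ``pairwise-distinct invariants'' justification collapses precisely when invariants coincide, which is exactly when extra reflections appear; likewise in the nonunimodular groups isomorphic and homothetic to $\R^2\rtimes_{A(b)}\R$ (case (A2)) the reflections $\psi_1(x,y,z)=(-x,y,z)$, $\psi_2(x,y,z)=(x,-y,z)$ must be exhibited and checked to preserve the $z$-axis --- there is no shortcut around the explicit determination of $\mbox{Stab}_e(X)$ in \cite[Proposition~2.24]{mpe11}, which is the real content of the paper's proof of item~(2) and of the uniqueness assertion in the \emph{Moreover} clause. You flag dependence on this normal-form analysis, but you locate the difficulty in the wrong place (the nonunimodular case, where item~(1) already forces $\Gamma$ since $\mbox{Stab}^+_e(X)=\{1_X,\phi\}$), and you misclassify $\sl$ as nonunimodular with a normal $\R^2$ subgroup --- it is unimodular and has no such subgroup, so it must be treated with the Milnor frame. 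Finally, your assertion that $\mathrm{Fix}(\phi)$ equals the connected subgroup $\exp(\R E_3)$, rather than some larger closed subgroup with that identity component, is stated without proof; it is true here but requires verification in each model, as done in \cite[proof of Proposition~2.21]{mpe11}.
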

\begin{proof}
First suppose that $I(X)$ has dimension 4. By~\cite[item~2 of Proposition~2.21]{mpe11}, there exists a
unique principal Ricci eigendirection $w\in T_eX$ whose associated eigenvalue is simple, and Stab$^+_e(X)$
contains an $\esf^1$-subgroup $A$, all whose elements have differentials that fix $w$. Let $\G $ be the
1-parameter subgroup of $X$ generated by $w$, and let $\phi $ be the orientation-preserving automorphism of
$X$ whose differential satisfies $d\phi _e(w)=w$, $(d\phi _e)|_{\langle w\rangle ^{\perp}}=-1_{\langle w
\rangle ^{\perp }}$ (see~\cite[proof of Proposition~2.21]{mpe11} for the construction of $\phi $). Clearly
$\phi $ satisfies item~1 of Proposition~\ref{prop:mass}. Regarding item~2, observe that
Stab$^+_e(X)=A\cup B$, where $B$ is the set of $\pi $-rotations about the geodesics $\g$
of $X$ such that $\g (0)=e$ and $\g'(0)$ is orthogonal to $w$. Therefore, every $\psi \in \mbox{Stab}^+_e(X)$ satisfies $\psi
(\G )=\G $. If Stab$^+_e(X)= \mbox{Stab}_e(X)$, then item~2 of Proposition~\ref{prop:mass} holds.
Otherwise,  Stab$_e(X)-\mbox{Stab}^+_e(X)\neq \mbox{\O }$ and in this case, item~2 of
Proposition~2.24 of~\cite{mpe11} ensures that $X$ is isomorphic and homothetic to $\Hip ^2\times \R $
endowed with its standard product metric (hence $w$ is a nonzero vertical vector and $\G $ is the vertical
line passing through $e$), and every $\psi \in \mbox{Stab}_e(X)-\mbox{Stab}^+_e(X)$
is either a reflectional symmetry with respect to a vertical plane or it is the composition of a reflectional symmetry
with respect to $\Hip ^2\times \{ 0\} $ with a rotation about $\G$. Thus, $\psi $ leaves $\G $ invariant
and item~2 of Proposition~\ref{prop:mass} holds. Note that the moreover part of the proposition cannot hold in
this case of $I(X)$ being four-dimensional, because given any 1-parameter subgroup of $X$ different from $\G $,
there exists $\psi \in B$ such that $\psi (\G )\neq \G $.

Next suppose that $I(X)$ has dimension 3. First suppose that
the underlying Lie group structure $Y$ of $X$ is not
unimodular\footnote{A Lie group with Lie algebra $\mathfrak{g}$
is called unimodular if for all $V\in \mathfrak{g}$, the Lie algebra endomorphism
ad$_V\colon \mathfrak{g}\to \mathfrak{g}$ given by ad$_V(W)=[V,W]$ has trace zero.}.
Hence $Y$ is isomorphic to some semidirect product $\R^2\rtimes _A\R$ for some matrix $A\in {\cal M}_2(\R)$
with trace$(A)\neq 0$ and $\det(A)\neq0$ since the dimension of $I(X)=3$,
see Section~\ref{subsecsemdirprod} for this notion of nonunimodular metric Lie group.
Then~\cite[item~4 of Proposition~2.21]{mpe11} gives
that $\mbox{Stab}^+_e(X)=\{ 1_X,\phi \} $ where $\phi (x,y,z)=(-x,-y,z)$. Therefore, item~1 of
Proposition~\ref{prop:mass} holds with the choice $\G =\{ (0,0,z)\ | \ z\in \R\} $ and the above $\phi$.
 Regarding item~2, we divide the argument into two cases in this nonunimodular case for $Y$.
Given $b\in \R-\{0\}$, consider the matrix
\begin{equation}
\label{ASol}
A(b)=\left( \begin{array}{cc}
1 & 0\\
0 & b
\end{array}\right) .
\end{equation}
\begin{enumerate}[({A}1)]
\item If $X$ is not isomorphic and homothetic to any $\R^2\rtimes _{A(b)}\R $ with $A$ given by (\ref{ASol}) with
$b\neq -1$, then~\cite[item~3 of Proposition~2.24]{mpe11} gives that
 $ \mbox{Stab}_e(X)=\mbox{Stab}^+_e(X)$.
 \item   If $X$ is isomorphic and homothetic to $\R^2\rtimes _{A(b)}\R $
for some $b\neq -1,0$, then~\cite[item~(3a) of Proposition~2.24]{mpe11} implies that
then $\mbox{Stab}_e(X)-\mbox{Stab}^+_e(X)=\{ \psi _1,\psi _2\}$ where $\psi _1(x,y,z)=(-x,y,z)$ and $\psi _2(x,y,z)=(x,-y,z)$.
In the case that $b=0$, then $\det(A(b))=0$, which is not the case presently under consideration as the isometry group
of $X$ for $b=0$ has dimension four.
\end{enumerate}
In both cases (A1) and (A2), item~2 of Proposition~\ref{prop:mass}  holds with $\G $ being the
$z$-axis. The moreover part of Proposition~\ref{prop:mass} also holds, because item~1 only holds for
$\G=\{ (0,0,z)\ | \ z\in \R \} $.

If dim$(I(X))=3$ and $Y$ is unimodular,
then there exists a frame of left invariant vector fields $E_1,E_2,E_3$ on $Y$
that are eigenfields of the Ricci tensor of $X$, independently of the
left invariant metric on $X$, see~\cite[Section~2.6]{mpe11}.
By the proof of Proposition~2.21 of~\cite{mpe11}, for each $i=1,2,3$
there exists an orientation-preserving automorphism $\phi_i\colon X\to X$ whose differential
satisfies $(\phi_i)_*(E_i)=E_i$ and $(\phi_i)_*(E_j)=-E_j$ whenever $j\neq i$. Furthermore,
$\phi $ is an isometry of every left invariant metric on $Y$ (in particular, of the metric of $X$).
Also, item~1 of Proposition~2.24 in~\cite{mpe11} gives that
$\mbox{Stab}_e^+(X)$ is the dihedral group $\{ 1_X,\phi _1,\phi _2,\phi _3\}$.
Therefore, item~1 of Proposition~\ref{prop:mass} holds with any of the choices
$\phi =\phi _i$ and $\G =\G_i$, for $i=1,2,3$,
where $\G_i$ is the $1$-parameter subgroup of $X$ generated by $(E_i)_e$. Regarding item~2, we again
divide the argument into two cases in this unimodular case for $Y$.
\begin{enumerate}[({A}1)']
\item If $X$ is not isomorphic and homothetic to $\R^2\rtimes _{A(-1)}\R $
(recall that $\R^2\rtimes _{A(-1)}\R $ is Sol$_3$ with its standard metric),
then~\cite[item~3 of Proposition~2.24]{mpe11} ensures that $ \mbox{Stab}_e(X)=\mbox{Stab}^+_e(X)$.
Thus item~2 of Proposition~\ref{prop:mass} holds for every choice of the form
$\G=\G_i$ and $\phi=\phi _i$ with $i=1,2,3$. Once here, and since it is immediate that $\phi_i(\Gamma_j)=\Gamma_j$
for any $i,j\in \{1,2,3\}$, we conclude that the moreover part of Proposition~\ref{prop:mass} holds in this case.
\item
If $X$ is isomorphic and homothetic to $\R^2\rtimes _{A(-1)}\R $,
then~\cite[item~3b of Proposition~2.24]{mpe11} gives that
$\mbox{Stab}_e(X)-\mbox{Stab}^+_e(X)=\{ \psi _1,\psi _2,\psi _3,\psi _4\} $, where
\[
\begin{array}{ll}
\psi _1(x,y,z)=(-x,y,z), & \psi _2(x,y,z)=(x,-y,z),\\
\psi _3(x,y,z)=(y,-x,-z), & \psi _4(x,y,z)=(-y,x,-z).
\end{array}
\]
Therefore, item~2 of Proposition~\ref{prop:mass}
only holds for the choice $\G=\{ (0,0,z) \ | \ z\in \R \} $ and $\phi(x,y,z)=(-x,-y,z)$
(in particular, the moreover part of Proposition~\ref{prop:mass} also holds).

\end{enumerate}
Now the proof is complete.
\end{proof}

We are now ready to prove the existence of the center of symmetry of any $H$-sphere in $X$.
Let $f_H\colon S_H\la X$ denote an $H$-sphere, with left invariant Gauss map $G
\colon S_H\to \esf^2$. By item~3 of Proposition~\ref{propsu2} and $h_0(X)=H(X)$, 
we know $G$ is an orientation preserving diffeomorphism. Without loss of generality we can assume that
$f_H(q_H)= e$, where $q_H\in S_H$ is the unique point such that $G(q_H)=v:=\Gamma'(0)$
and $\G \colon \R \to X$ is an arc length parameterization of the 1-parameter subgroup of $X$ that
appears in Proposition~\ref{prop:mass}.
Let $\phi \in \mbox{Stab}_e^+(X)$ be an isometry associated to $\G $ in the conditions of item~1
of Proposition~\ref{prop:mass}. Clearly $d\phi _e(v)=v$, hence $f_H(S_H)$
and $(\phi\circ f_H)(S_H)$ are two $H$-spheres in $X$ passing through $e$ with the same left invariant
Gauss map image at $e$. By the uniqueness of $H$-spheres up to left translations in item~2 of
Proposition~\ref{propsu2}, we conclude that $f_H(S_H)=(\phi\circ f_H)(S_H)$.
Let $q_H^*$ be the unique point in $S_H$ such that $G(q_H^*)=-v$.
Since $d\phi_e (-v)=-v$, we see that the Gauss map image of $\phi\circ f_H\colon S_H\la X$ at
$q_H^*$ is also $-v$. Since $f_H(S_H)=(\phi\circ f_H)(S_H)$, this implies
(by uniqueness of the point $q_H^*$) that $f_H(q_H^*)$ is a fixed point of $\phi$,
which by item 1 of Proposition~\ref{prop:mass} shows that $f_H(q_H^*)\in \Gamma$.

Let $x_H\in \Gamma$ be the midpoint of the connected arc of $\Gamma$ that connects $e$
with $f_H(q_H^*)$, and define $\hat{f}_H:=l_{x_H^{-1}}\circ f_H\colon S_H\la X$.
As the family $\{f_H\colon S_H\la X : H>H(X)\}$ is real analytic with respect to $H$ (by items 2 and 4
of Proposition~\ref{propsu2}), then the family $\{\hat{f}_H\colon S_H\la  X \ | \ H>H(X)\}$
is also real analytic in terms of $H$. Also, note that the points $p_H:=\hat{f}_H(q_H)$ and $p_H^*:=\hat{f}_H(q_H^*)$
both lie in $\Gamma$, and are equidistant from $e$ along $\Gamma$.
This property and item~2 of Proposition~\ref{prop:mass} imply that every
$\psi\in \mbox{Stab}_e (X)$ satisfies $\psi(\{ p_H,p_H^*\})=\{ p_H,p_H^*\} $, and hence either
$\psi (p_H)=p_H$ and $\psi(p_H^*)=p_H^*$, or alternatively $\psi(p_H)=p_H^*$ and $\psi(p_H^*)=p_H$.

Take $\psi \in \mbox{Stab}_e(X)$. We claim that $\psi$ leaves $\hat{f}_H(S_H)$ invariant.
To see this, we will distinguish four cases.
\begin{enumerate}[({B}1)]
\item
Suppose that $\psi$ preserves orientation on $X$ and satisfies $\psi(p_H)=p_H$, $\psi(p_H^*)=p_H^*$.
The first condition implies that $(\psi\circ \hat{f}_H)(S_H)$ is an $H$-sphere
in $X$.  So, by items 2 and 3 of Proposition~\ref{propsu2} we have  $l_a(\hat{f}_H(S_H))=(\psi\circ \hat{f}_H)(S_H)$
for some $a\in X$, and that $l_a(p_H)$ corresponds to the unique point
of $(\psi\circ \hat{f}_H)(S_H)$ where its Gauss map takes the value $v$. But now,
from the second condition we obtain that $\psi $ restricts to $\G $ as the identity map,
which implies that the value of the left invariant Gauss map image of $(\psi\circ \hat{f}_H)(S_H)$ at $p_H$ is $v$.
Thus, $l_a(p_H)=p_H$, hence $a=e$ and $\hat{f}_H(S_H)=(\psi\circ \hat{f}_H)(S_H)$,
that is, $\psi$ leaves $\hat{f}_H(S_H)$ invariant.
\item
Suppose that $\psi$ reverses orientation on $X$ and satisfies $\psi(p_H)=p_H^*$, $\psi(p_H^*)=p_H$.
The first condition implies that after changing the orientation of
$(\psi\circ \hat{f}_H)(S_H)$, this last surface is an $H$-sphere in $X$.
Since by the second condition $\psi $ restricts to
$\G $ as minus the identity map, then the left invariant Gauss map image of
$(\psi\circ \hat{f}_H)(S_H)$ (with the reversed orientation) at $p_H$ is $v$. Now we
deduce as in case (B1) that $\psi$ leaves $\hat{f}_H(S_H)$ invariant.
\item
Suppose that $\psi$ reverses orientation on $X$ and satisfies $\psi(p_H)=p_H$,
 $\psi(p_H^*)=p_H^*$. As in case (B2), we change the orientation on
$(\psi\circ \hat{f}_H)(S_H)$ so it becomes an $H$-sphere in $X$. Item~2 of
Proposition~\ref{propsu2} then gives that there exists $a\in X$ such that
$(\psi\circ \hat{f}_H)(S_H)=l_a(\hat{f}_H(S_H))$.
As $p_H^*\in \hat{f}_H(S_H)\cap \G$ and $\psi $ fixes $\G$ pointwise, then
$p_H^*\in (\psi\circ \hat{f}_H)(S_H)$. Moreover, the value of the left invariant Gauss map
image of $(\psi\circ \hat{f}_H)(S_H)$ (with the reversed orientation) at $p_H^*$ is $v$.
Since the only point of $l_a(\hat{f}_H(S_H))$ where its left invariant Gauss map image
(with the original orientation) takes the value $v$ is $l_a(p_H)$, we deduce that
$l_a(p_H)=p_H^*$. Similarly, $p_H=l_a(p_H^*)$. This is only possible if $a=a^{-1}$,
which, in our setting that $X$ is diffeomorphic to $\R^3$, implies that $a=e$
(note that this does not happen when $X$ is diffeomorphic to the three-sphere, because
then $X$ is isomorphic to $\su$, which has one element of order 2). The claim then
holds in this case.
\item
Finally, assume that $\psi$ preserves the orientation on $X$ and satisfies $\psi(p_H)=p_H^*$,
 $\psi(p_H^*)=p_H$. As in case (B1), $(\psi\circ \hat{f}_H)(S_H)$ is
 an $H$-sphere in $X$ and so there exists $a\in X$ such that $(\psi\circ \hat{f}_H)(S_H)=l_a(\hat{f}_H(S_H))$.
As $p_H\in \hat{f}_H(S_H)$, then $p_H^*=\psi(p_H)\in (\psi\circ \hat{f}_H)(S_H)$.
Since $\psi $ restricts
to $\G $ as minus the identity map, then the value of the left invariant Gauss map
image of $(\psi\circ \hat{f}_H)(S_H)$  at $p_H^*$ is $v$.  Since the only
point of $l_a(\hat{f}_H(S_H))$ where its left invariant Gauss map image
takes the value $v$ is $l_a(p_H)$, we deduce that $l_a(p_H)=p_H^*$. Now we finish as in case (B3).
\end{enumerate}

To sum up, we have proved that any $\psi\in \mbox{Stab}_e(X)$ leaves $\hat{f}_H(S_H)$ invariant;
this proves that the $H$-sphere $\hat{f}_H(S_H)$ is maximally symmetric with respect to the
identity element $e$, in the sense of item~1 of Theorem~\ref{main2}.
So, we may define $e$ to be the \emph{center of symmetry} of $\hat{f}_H(S_H)$.
We next show that the above definition of center of symmetry 
does not depend on the choice of the
 1-parameter subgroup $\G $ of $X$ that appears in Proposition~\ref{prop:mass}. Suppose
 $\wh{\G}$ is another $1$-parameter subgroup of $X$ satisfying the conclusions of
Proposition~\ref{prop:mass}.
As we explained in the paragraph just after  Proposition~\ref{prop:mass},
 $\wh{\G}$ determines two points $\wh{q}, \wh{q}^*\in S_H$ with $\wh{f}_H(\wh{q}),\wh{f}_H(\wh{q}^*)\in \wh{\G }$,
 such that $G(\wh{q})=\wh{\G }'(0)$, $G(\wh{q}^*)=-\wh{\G }'(0)$, where $G$ is the
 left invariant Gauss map of $\hat{f}_H$. To show that the above definition of center of symmetry
 of $\hat{f}_H(S_H)$ does
 not depend on $\G $, we must prove that  the mid point of the arc of $\wh{\G}$ between
 $\hat{f}_H(\wh{q})$ an  $\hat{f}_H(\wh{q}^*)$ is $e$. To do this,
let $\phi \in \mbox{\rm Stab}^+_e(X)$ be an order-two isometry
 that satisfies item~1 of Proposition~\ref{prop:mass} with respect to $\G $. By Proposition~\ref{prop:mass},
 $\phi(\hat{\G})= \hat{\G}$.
 As  $\phi $ leaves $\hat{f}_H(S_H)$ invariant and $G$ is a diffeomorphism, then
 $\phi (\{ \hat{f}_H(\wh{q}),  \hat{f}_H(\wh{q}^*) \} )=\{ \hat{f}_H(\wh{q}),\hat{f}_H
 (\wh{q}^*)\} $. Since the set of fixed points of $\phi $ is $\G $ and $\wh{f}_H(\wh{q}),\wh{f}_H(\wh{q}^*)\notin \G $
 (because two different 1-parameter subgroups in $X$ only intersect at $e$, here we are
 using again that $X$ is diffeomorphic to $\R^3$), then $\phi (\hat{f}_H(\wh{q}))=
 \hat{f}_H(\wh{q}^*)$. As $\phi$ is an isometry with $\phi(e)=e$
 and $\phi(\hat{\Gamma})=\hat{\Gamma}$, we deduce that the lengths of the
 arcs of $\hat{\G}$ that join $e$ to $\hat{f}_H(\wh{q})$,
 and $e$ to $\hat{f}_H(\wh{q}^*)$, coincide. This implies that the mid point of the arc of $\wh{\G}$
 between $\hat{f}_H(\wh{q}))$ an $\hat{f}_H(\wh{q}^*)$ is $e$, as desired.
As every $H$-sphere in $X$ is a left translation of $\hat{f}_H(S_H)$,
we conclude that all $H$-spheres in $X$ are maximally symmetric
with respect to some point $p\in X$, which is obtained by the
corresponding left translation of the identity element $e$,
and thus can be defined as the center of symmetry 
of the sphere. This completes the proof of Theorem~\ref{main2} (assuming that
Theorem~\ref{topR3case} holds) for the case that $X$ is diffeomorphic to $\R^3$.
Thus, the proof of Assertion~\ref{ass31} is complete.

The next remark gives a direct definition of the center of symmetry of any $H$-sphere in $X$:

\begin{remark}[Definition of the center of symmetry] \label{rem:center}{\em
Let $f\colon S_H\la X$ be an $H$-sphere in a homogeneous manifold $X$
diffeomorphic to $\R^3$ with an isometry group of dimension three or four.
Let $q,q^*\in S_H$ be given by $G(q)=v$, $G(q^*)=-v$,
where $G\colon S_H\flecha \S^2$ denotes the left invariant Gauss map of $S_H$
and $v:=\Gamma'(0)$, with $\Gamma(t)$ a $1$-parameter subgroup satisfying the conditions
in Proposition~\ref{prop:mass}.
Then, by our previous discussion, the left coset $\alfa:=l_{f(q)} (\Gamma)$ passes through both $f(q)$ and $f(q^*)$,
and we define the \emph{center of symmetry} of $S_H$ as the
midpoint $p\in \alfa$ of the subarc of $\alfa$ that joins $f(q)$ with $f(q^*)$.

It is worth mentioning that this definition gives a nonambiguous definition of the center of symmetry
of an $H$-sphere; however, when $X$ has an isometry group of dimension three and is nonunimodular,
there are many points $p\in X$ besides this center of symmetry such that the $H$-sphere
is invariant under all isometries of $X$ that fix $p$.  Nevertheless,
in order to make sense of the analyticity properties of the
family of constant mean curvature spheres with a fixed center of symmetry (see Theorem~\ref{main2}),
we need to make the definition of center of symmetry nonambiguous.}
\end{remark}

\subsection{Proof of Assertion~\ref{ass32}}\label{sec3.2}
Let $M$ be  a homogeneous
three-manifold with universal covering space $\Pi\colon X\to M$.
To prove Assertion~\ref{ass32}, {\bf we will assume in this Section~\ref{sec3.2}
that Theorems~\ref{main} and \ref{main2} hold for $X$}.

We first prove Theorem~\ref{main} in $M$. Since every
constant mean curvature  sphere  $f\colon S \la M$ is the projection via $\Pi$ of some lift
$\wt{f}\colon S\la X$ of $f$ with the same mean curvature, then clearly
the first and second items in Theorem~\ref{main} hold in $M$, since they are true by hypothesis in $X$.
In order to prove the uniqueness statement in Theorem~\ref{main}, let $f_1\colon S_1\la M$,
$f_2\colon S_2\la M$ be two spheres with the same constant mean curvature in $M$. For $i=1,2$,
choose $\wt{f}_i$ respective lifts of $f_i$, let $\wt{p}_i$ be the centers of symmetry of $\wt{f}_i$,
denote $p_i=\Pi(\wt{p}_i)$, and let  $I\colon M\to M$ be an isometry with $I(p_1)=p_2$. Let $\wt{I}\colon X\to X$
be the lift of $I$ that takes $\wt{p}_1$ to $\wt{p}_2$. It follows that the $H$-spheres $\wt{I}\circ \wt{f}_1$
and $\wt{f}_2$ have the same center of symmetry in $X$ and so these immersions have the same images. In particular,
it follows that $I\circ f_1$ and $f_2$ also have the same images, which completes the proof of Theorem~\ref{main} in $M$.

We next prove Theorem~\ref{main2} in $M$. Suppose $f\colon S \la M$ is an oriented $H$-sphere, and
let  $\wt{f}\colon S\la X$ denote some lift of $f$. Since the stability operators of $f$ and $\wt{f}$
are the same, the index and nullity of $f$ and $\wt{f}$ agree. Also, note that if the
immersion $\wt{f}$ extends to an isometric immersion $F\colon B\flecha X$ of a mean convex Riemannian
three-ball $B$ into $X$, then $\Pi\circ F$ is an isometric immersion of $B$ into $M$ that extends $f$.
These two trivial observations prove that item 2 of Theorem~\ref{main2} holds in $M$.

If $M$ is covered by $\esf^2(\kappa)\times \R$ and $f(S)$ is totally geodesic,
then any point of the image sphere satisfies the property of being a center of symmetry of $f$.
This observation follows by the classification of homogeneous three-manifolds
covered by $\esf^2(\kappa)\times \R$ as being those spaces
isometric to  $\esf^2(\kappa)\times \R$,
$\esf^2(\kappa)\times \esf^1(R)$, $\mathbb{P}^2(\kappa)\times \R$ or $\mathbb{P}^2(\kappa)\times\esf^1(R)$,
where $\esf^1(R)$ is a circle of circumference $R$ for some $R>0$ and $\mathbb{P}^2(\kappa) $ is the projective plane of
constant Gaussian curvature $\kappa>0$.

Suppose now that
$M$ is not covered by $\esf^2(\kappa)\times \R$ with $f(S)$ being totally geodesic.
In order to complete the proof of item~1 of Theorem~\ref{main2}, we next define the center of symmetry of $f$ in $M$
as $p:=\Pi(\wt{p})$, where $\wt{p}$ is the center of symmetry of a lift $\wt{f}$ in $X$.
Since the center of symmetry of the lift $\wt{f}$
is uniquely defined by $\wt{f}$ and any other lift is equal to the composition of $\wt{f}$ with
an isometry $\sigma \colon X\to X$ that is a covering transformation, then by uniqueness of the center of
symmetry in $X$,
$\sigma$ maps the center of symmetry of $\wt{f} $  to the center of symmetry of the composed oriented
immersion $\sigma \circ \wt{f}$; hence $p$ is independent of the choice of the lift of $f$ to $X$.
We now show  that $f(S)$ is
invariant under all isometries of $M$ that fix $p$. Let $I\colon M\to M$
be any such isometry, and let $\wt{I}\colon X\to X$
be the lift of $I$ such that $\wt{I}(\wt{p})=\wt{p}$. As $\wt{p}$ is a center of symmetry of
$\wt{f}$, then, $\wt{I}$ induces an isometry of $\wt{f}\colon S\la X$.
It follows that $I$ induces an isometry of the $H$-sphere $f$, which proves item~1 of Theorem~\ref{main2}
in $M$.
Once here, the homeomorphism and analyticity properties in the last statement of Theorem~\ref{main2} follow trivially
from the validity of the corresponding statement in $X$. This completes the proof of Theorem~\ref{main2} in $M$,
and thus also the proof of Assertion~\ref{ass32}.

\subsection{Constant mean curvature spheres in complete locally homogeneous three-manifolds}
\label{sec3.3}
As a consequence of Theorems~\ref{main} and \ref{main2} and of the discussion in 
Sections \ref{sec3.1} and \ref{sec3.2} we can provide a description of the space of constant 
mean curvature spheres in complete, locally homogeneous three-manifolds. Specifically, 
let $Y$ be a complete, connected, locally homogeneous Riemannian three-manifold. 
Then, the universal Riemannian covering space $\Pi\colon X\to Y$ is a simply connected homogeneous three-manifold. 
Let ${\rm Ch}(X)$ be the Cheeger constant of $X$. Remark~\ref{rem:center},
the proof of Proposition~\ref{prop:mass} and the results on the existence and uniqueness of
the center of symmetry for any $H$-sphere in $X$
imply that associated to any $H$-sphere $S_{H} $ in $Y$ with $H> 0$,
there is a unique point $p({S_{H}})\in Y$ that is the image by $\Pi$
of the center of symmetry of any lift of $S_{H}$ to $X$; that $p(S_H)$ does not depend on the lift of $S_H$ 
can be proved following the arguments used in Section \ref{sec3.2}.
The following properties of spheres with nonzero constant mean 
curvature in $Y$ are then easy to check, and provide a parameterization of the space of such constant mean curvature spheres.
\begin{enumerate}
\item
For any $H>{\rm Ch}(X)/2$ and any $p_0\in Y$ there exists a unique $H$-sphere $S_H$ in $Y$ with $p({S_{H}})=p_0$.
 \item
If ${\rm Ch}(X)>0$, there are no $H$-spheres in $Y$ for $0<H\leq {\rm Ch}(X)/2$.
\item The space $\cM_H(Y)$ of all immersed $H$-spheres in $Y$ for a given $H>{\rm Ch}(X)/2$
is naturally diffeomorphic to $Y$ (by the map that sends every $S_{H}$ into $p(S_{H})$), 
where $H$-spheres in $Y$ with distinct images are considered to be different elements in $\cM_H(Y)$.
 \item
Any $H$-sphere in $Y$ for $H>0$ has index one and nullity three for its stability operator.
 \item
Any $H$-sphere $S_H$ in $Y$ for $H>0$ is invariant under every isometry of $Y$ that fixes $p(S_H)$.
\item Given any two $H$-spheres $S^0_H,S^1_H\in \cM_H(Y)$ for some $H>0$, there exist an isometry 
$\phi\colon  S^0_H\flecha S^1_H$ that also preserves the second fundamental forms of $S^0_H$ and $S^1_H$ 
at corresponding points. Moreover, given any path $\a\colon [0,1]\to Y$ joining
$p({S^0_H})$ to $p({S^1_H})$, there is an associated family of isometric $H$-spheres $S_H({t})$, $t\in [0,1]$, in $Y$
with  $S^0_H =S_H(0)$ and $S^1_H=S_H(1)$, such that $p(S_H({t}))=\a(t)$ for every $t$.
\end{enumerate}

We observe that item 6 above is the natural generalization to complete locally homogeneous 
three-manifolds of the uniqueness result in Theorem \ref{main} that any two spheres of the 
same constant mean curvature in a homogeneous three-manifold differ by an ambient isometry.

\section{Background on constant mean curvature surface theory in metric Lie groups}
\label{sec:gauss}

In this section we will collect some basic material about metric Lie groups diffeomorphic to
$\R^3$ and immersed surfaces of constant mean curvature in these spaces, that will be needed
in later sections. For the purposes of this paper, it is worth dividing
metric Lie groups $X$ diffeomorphic to $\R^3$ into two classes
depending on whether the underlying group structure of $X$ is that of a
semidirect product, or it is the one of $\sl $, that can be naturally identified as
the universal cover of the group $\psl$ of orientation-preserving isometries of the hyperbolic plane.

We  divide this section into three parts: in Section~\ref{backgroundCMC}
we describe some geometric aspects of constant mean curvature surfaces in a metric
Lie group~$X$ that are invariant under the flow of a nonzero right invariant vector field on $X$.
Sections~\ref{subsecsemdirprod} and~\ref{subsecsl}
describe the ambient geometry and particular aspects of surface theory in a metric Lie group
isomorphic to a semidirect product or to $\sl $, respectively. We will use the contents of Section~\ref{backgroundCMC} in
Section~\ref{sec:limits}, but Sections~\ref{subsecsemdirprod} and~\ref{subsecsl} will not be needed
until Section~\ref{sec:periodic}.
The basic references for this material are Milnor~\cite{mil2}, the notes~\cite{mpe11} and the paper~\cite{mmpr4}.

\subsection{Invariant constant mean curvature surfaces}
\label{backgroundCMC}
Given a unit vector $v\in T_e X$ in the Lie algebra of $X$, we can extend $v$ by left translation to a
left-invariant unit vector field $V$ globally defined on $X$. By using right translations in $X$ in a
similar way, we can extend $v$ to a right-invariant vector field $K$ in $X$. The vector field $K$ will
not be, in general, of unit length anymore, but it is a  nowhere-zero Killing field on $X$.

By~\cite[Corollary~3.17]{mpe11},  left cosets of two-dimensional subgroups
of $X$ are characterized by their left invariant Gauss map
being constant, and they all have constant mean
curvature with absolute value at most $H(X)$.
As every right coset $\Delta x$ of a two-dimensional subgroup $\Delta $ of $X$ is the left coset of
a conjugate subgroup of $\Delta $ (namely $\Delta x=x\Delta _1$ where $\Delta _1=
x^{-1}\Delta x$), then we deduce:
\begin{enumerate}[(C)]
\item The left invariant
Gauss map of a left or right coset of a two-dimensional subgroup of
$X$ is constant.
\end{enumerate}

The next proposition is a reformulation of Corollary~3.8 in~\cite{mmpr4}.
We remark that Corollary~3.8 in~\cite{mmpr4} was written in terms of the $H$-potential of $X$, a
concept that we will not introduce here, but one can easily translate that formulation
into the one below by using~\cite[Corollary~3.21]{mpe11}. 

\begin{proposition}
\label{propos3.1}
Let $f\colon \Sigma \looparrowright X $ be a complete immersed $H$-surface in $X$
and let $G\colon \Sigma \to \esf^2\subset T_eX$ be its left invariant Gauss map.
Assume that there are no two-dimensional subgroups in $X$ with mean curvature $H$ whose
(constant) left invariant Gauss map lies in the Gauss map image $G(\Sigma)$. Then:
\begin{enumerate}[(1)]
\item The differential $dG$ of $G$ has rank at most 1 everywhere on $\Sigma $ if and only if $f$ is invariant
under the flow of a nonzero right invariant vector field $K$ on $X$.
\item If $f$ is invariant under the flow of a nonzero right invariant
vector field of $X$, then rank$(dG)=~1$ everywhere on
$\Sigma $, and $G(\Sigma)$ is a regular curve in $\S^2$.
\end{enumerate}
\end{proposition}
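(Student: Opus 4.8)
The plan is to reduce everything to two ingredients: the behaviour of the left invariant Gauss map under left translations, and the representation of an $H$-surface in terms of its Gauss map encoded by the $H$-potential (the content of \cite[Corollary~3.8]{mmpr4}, which I would import after translating it into the present metric language via \cite[Corollary~3.21]{mpe11}). First I would record the key elementary fact: the left invariant Gauss map is unchanged by left translating the surface. Indeed, if $l_a$ is a left translation, then $l_a\circ f$ has unit normal $(dl_a)(N)$, and since $l_{a\,f(p)}=l_a\circ l_{f(p)}$ one checks directly from Definition~\ref{defG} that the Gauss map of $l_a\circ f$ at $p$ equals $G(p)$. Granting this, the implication \emph{invariant} $\Rightarrow \operatorname{rank}(dG)\le 1$ of item~(1) is immediate: the flow of a right invariant Killing field $K$ with $K(e)=v$ is the one-parameter family of left translations $l_{\gamma(t)}$, where $\gamma$ is the subgroup with $\gamma'(0)=v$; if $f(\Sigma)$ is invariant under this flow, then along each orbit the point and its normal direction, and hence $G$, are constant, so $dG$ kills the nowhere-zero tangential direction of $K$ and $\operatorname{rank}(dG)\le 1$ everywhere.

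For the converse in item~(1) I would argue as follows. Assume $\operatorname{rank}(dG)\le 1$ on $\Sigma$. First I would rule out $\operatorname{rank}(dG)=0$ on any open set: there $G$ is locally constant, so by \cite[Corollary~3.17]{mpe11} the corresponding piece of $f(\Sigma)$ is an open subset of a left coset of a two-dimensional subgroup; such a coset has constant mean curvature of absolute value at most $H(X)$ and its (constant) Gauss value lies in $G(\Sigma)$. Since $f$ has mean curvature $H$, this would produce a two-dimensional subgroup of mean curvature $H$ whose Gauss image lies in $G(\Sigma)$, contradicting the hypothesis. Hence $\operatorname{rank}(dG)=1$ on a dense open subset, where $\ker dG$ is a smooth line field. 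The crux is then to show that this line field is tangent to the orbits of a \emph{single} right invariant Killing field $K$, so that $f(\Sigma)$ is $K$-invariant. This is exactly where the $H$-potential representation enters: writing $df$ in terms of $G$ and $dG$ through the potential, the degeneracy $\operatorname{rank}(dG)=1$ forces the direction $df(\ker dG)$ to be a right invariant direction whose Lie-algebra generator is constant along $\Sigma$; this is the reformulation of \cite[Corollary~3.8]{mmpr4}. I expect this step---passing from the pointwise rank-one degeneracy to a globally defined right invariant Killing field---to be the main obstacle, and it is the reason the hypothesis excluding $H$-subgroups with Gauss value in $G(\Sigma)$ is needed to keep the potential from degenerating.

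Finally, for item~(2) I would start from invariance under a nonzero right invariant $K$, which by the first paragraph already gives $\operatorname{rank}(dG)\le 1$ and, via the translation-invariance of $G$, shows that $G$ factors as $G=\bar G\circ\pi$ through the projection $\pi$ onto the one-dimensional space of $K$-orbits. To upgrade this to $\operatorname{rank}(dG)=1$ everywhere I would note that a point with $dG=0$ would force $d\bar G=0$ at the corresponding orbit, i.e.\ $dG$ would vanish on a whole orbit, a one-dimensional subset of $\Sigma$; since the left invariant Gauss map of an $H$-surface solves an elliptic harmonic-map-type equation, its critical points are isolated, so $dG$ cannot vanish along a curve unless $G$ is constant, which was already excluded. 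Therefore $\bar G$ is an immersion of the one-dimensional orbit space, $\operatorname{rank}(dG)=1$ on all of $\Sigma$, and $G(\Sigma)=\bar G(\pi(\Sigma))$ is a regular curve in $\S^2$, as claimed.
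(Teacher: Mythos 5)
Your proposal is correct and, at its core, takes the same route as the paper: the paper's entire proof of Proposition~\ref{propos3.1} consists of observing that it is a reformulation of \cite[Corollary~3.8]{mmpr4}, with the nonvanishing $H$-potential hypothesis translated into the no-$H$-subgroup hypothesis via \cite[Corollary~3.21]{mpe11} --- exactly the reduction you make for the hard implication of item~(1). Your supplementary direct arguments (the left-translation invariance of $G$ for the easy direction, and the orbit-factorization plus isolated-critical-points argument for item~(2), which implicitly uses that the nonvanishing potential makes the Gauss map equation nondegenerate) are sound, but they reprove parts already contained in the cited corollary rather than constituting a genuinely different approach.
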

\begin{remark}
\label{rem7.1}
{\em If $K$ is a nonzero right invariant vector field on a metric Lie group $X$
diffeomorphic to $\R^3$,
 then
each integral curve of $K$ is diffeomorphic to $\R$ and the quotient space $X/K$ of
integral curves of $K$ inherits a two-dimensional differentiable structure that
makes the natural projection $\Pi _K\colon X\to X/K$ a submersion with $\mbox{\rm ker}
[d(\Pi_K)_x]=\mbox{Span}\{ K(x)\} $ for all $x\in X$. 
Therefore, when $f\colon \Sigma \looparrowright X $ satisfies the hypothesis in item~2 of
Proposition~\ref{propos3.1}, we can consider $f(\Sigma )$ as the surface in $X$ obtained
by pulling back via $\Pi _{K_{\Sigma }}$ an immersed curve $\be $ contained in
$X/K_{\Sigma }$.
Moreover, after identifying in the standard way $X/K_{\Sigma}$ with any entire Killing graph
$S_0\subset X$ with respect to $K_{\Sigma}$, the invariant surface $f\colon\Sigma\looparrowright X$
can be parameterized locally (and even globally if $\Sigma$ is simply connected) as
\[
f(s,t)=l_{\Gamma(t)} (\beta(s)), \hspace{1cm} (s,t)\in I\times \R,
\]
where $\beta(s)\colon I\subset \R\flecha X/K_{\Sigma}\equiv S_0$ is the immersed curve above,
and $\Gamma=\Gamma(t)$ is the $1$-parameter subgroup of $X$ given by $\Gamma'(0)=K_{\Sigma}(e)$.}
\end{remark}

Let $f\colon\Sigma\la X$ be an immersed $H$-surface in $X$ whose left invariant Gauss map $G\colon\Sigma\flecha\S^2$
has rank one at every point. It is shown in~\cite[proof of Corollary 3.8]{mmpr4} that around any $z_0\in \Sigma$
there exist conformal parameters $(s,t)$ on $\Sigma$ such that $G$ does not depend on $t$, so we can write $G=G(s)$.
With respect to these coordinates, the Gauss map $G(s)$ satisfies a second order autonomous ODE of the form
\begin{equation}
\label{gaussecu}
 G''=\Psi (G,G'),
 \end{equation}
where $\Psi\colon\S^2\times \R^3\flecha \R^3$ is real analytic; this follows directly from
the ODE (4.3) in~\cite{mmpr4}. The special form of this ODE has the following trivial consequence:

\begin{enumerate}[(D)]
\item
If $\hat{g}(s)$ is a solution of the ODE (4.3) in~\cite{mmpr4},
then $\hat{g}(\delta_1 s+ \delta_2)$, $\delta_1\neq 0$, is also a solution of the same ODE.
Consequently, the same property holds for a solution $G(s)$ of \eqref{gaussecu}.
\end{enumerate}

\begin{lemma}\label{tangin}
Let $f_i\colon\Sigma_i\la X$, $i=1,2$ denote two complete immersed $H$-surfaces in $X$ whose Gauss map images
$\gamma_i:=G_i(\Sigma_i)$ are regular curves in $\S^2$. Assume that there exist points $p_i\in \Sigma_i$
with $f_1(p_1)=f_2(p_2)$  and $G_1(p_1)=G_2(p_2)=:v\in \S^2$, so that $\gamma_1,\gamma_2$ intersect tangentially at
$v$. Then $f_1(\Sigma_1)=f_2(\Sigma_2)$.
\end{lemma}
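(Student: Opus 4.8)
The plan is to exploit the autonomous second-order ODE \eqref{gaussecu} satisfied by the left invariant Gauss map, together with the fact that both surfaces are invariant under right-invariant Killing flows (which holds by item~2 of Proposition~\ref{propos3.1}, since the Gauss map images are regular curves of rank one). The key principle is a uniqueness statement: an $H$-surface invariant under such a flow is essentially determined, up to the ambiguity \eqref{gaussecu}~(D) in reparametrizing the Gauss map, by the point value $f(p)$, the Gauss map value $G(p)=v$, and the tangent direction of the Gauss map curve $\g$ at $v$. Since both $f_1$ and $f_2$ pass through the same point with the same Gauss image $v$ and tangentially intersecting Gauss curves, I expect them to coincide.

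First I would invoke Proposition~\ref{propos3.1}: because each $\g_i$ is a regular curve in $\S^2$, the Gauss map has rank one everywhere, so by item~2 each $f_i$ is invariant under the flow of a nonzero right invariant Killing field $K_i$. Using Remark~\ref{rem7.1}, I would parameterize each surface as $f_i(s,t)=l_{\G_i(t)}(\be_i(s))$, where $\G_i$ is the $1$-parameter subgroup with $\G_i'(0)=K_i(e)$ and $\be_i$ is the profile curve, and where the conformal coordinate $s$ is chosen (as in~\cite[proof of Corollary 3.8]{mmpr4}) so that $G_i=G_i(s)$ solves \eqref{gaussecu}. Next I would normalize: using left translation invariance of the construction and the reparametrization freedom (D), I would arrange that the initial conditions match, i.e. that $G_1$ and $G_2$ are solutions of the \emph{same} autonomous ODE \eqref{gaussecu} (the ODE depends only on $X$ and $H$, not on the individual surface) with the same initial data $G_i(0)=v$ and with $G_1'(0)$ and $G_2'(0)$ parallel. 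The tangential intersection of $\g_1,\g_2$ at $v$ is precisely what guarantees $G_1'(0)\parallel G_2'(0)$, and after rescaling $s$ via (D) I can make these initial velocities equal.

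With matched initial conditions, the Picard--Lindel\"of uniqueness theorem for the real-analytic ODE \eqref{gaussecu} forces $G_1(s)=G_2(s)$ for all $s$ in a common interval, hence the two Gauss map curves coincide as parameterized curves. Because the $H$-surface equation, once the Gauss map is known, reduces to reconstructing the profile curve $\be_i$ from $G_i$ together with the condition $f_1(p_1)=f_2(p_2)$, I would argue that the profile curves also coincide: the direction of the right-invariant Killing field $K_i$ is itself determined by the Gauss map data (the flow direction is read off from the degenerate direction of $dG$), so $K_1=K_2$ and $\G_1=\G_2$, and then the common point value propagates to give $\be_1=\be_2$ locally. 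Completeness of both immersions then upgrades this local agreement to the global equality $f_1(\Sigma_1)=f_2(\Sigma_2)$.

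The main obstacle I anticipate is the reconstruction step: showing that equality of the Gauss maps together with one matching point value rigidly determines the whole invariant surface, including that the two Killing directions agree. Concretely, I must verify that the profile curve $\be$ is recovered from $G(s)$ by a first-order ODE (coming from the structure equations relating the Gauss map, the normal, and the position along the surface) whose solution is unique once $f(p)$ is fixed, and that the right-invariant field $K_\Sigma$ tangent to the surface is intrinsically encoded by $G$. This is where I would lean most heavily on the explicit formulas in~\cite{mmpr4,mpe11} linking the left invariant Gauss map to the geometry of invariant surfaces, rather than on soft arguments; the potential subtlety is ensuring no extra integration constants survive beyond the single point-matching condition, so that the ODE uniqueness genuinely closes the argument.
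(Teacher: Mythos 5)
Your proposal follows essentially the same route as the paper's proof: view the Gauss maps locally as solutions $G_i(s)$ of the autonomous ODE \eqref{gaussecu}, use the tangential intersection of $\gamma_1,\gamma_2$ at $v$ together with the rescaling property (D) and uniqueness for the Cauchy problem to conclude that $G_1$ and $G_2$ agree after an affine reparameterization, and then recover the surfaces from their common Gauss map and the common point $f_1(p_1)=f_2(p_2)$. The reconstruction step you flag as the main obstacle is not something you need to re-derive: it is precisely~\cite[Theorem~3.7]{mmpr4} (the left invariant Gauss map of a conformally parameterized $H$-surface determines the immersion up to left translation in $X$, via the first-order equation~\eqref{eq:4.12}, i.e.~\cite[equation~(3.4)]{mmpr4}), which the paper simply cites and which closes the argument because the condition $f_1(p_1)=f_2(p_2)$ forces the ambient left translation to be trivial.
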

\begin{proof}
By the previous comments, we can view $f_1,f_2$ locally around $p_1,p_2$ as conformal immersions
$f_i(s,t)$ into $X$, and $G_1,G_2$ as regular parameterized curves $G_i(s)\colon J_i\flecha \S^2$,
both of them satisfying the ODE \eqref{gaussecu} on an open interval  $J_i\subset \R$.
The conditions in the statement of the lemma imply that
$G_1(s_1)=G_2(s_2)$ and $(G_1)'(s_1)
=\pm (G_2)'(s_2)$ for some $s_i\in J_i$. But once here, property (D) above and the uniqueness of
solution to the Cauchy problem for ODEs imply
that $G_1(s)=G_2(\delta_1 s+ \delta_2)$ for adequate constants $\delta_1\neq 0$ and
$\delta_2$. In particular, the two Gauss maps coincide after conformal reparameterization of $f_1$ or $f_2$.
 Recall now that, by~\cite[Theorem 3.7]{mmpr4}, the Gauss map of a conformally parameterized
$H$-surface in $X$ determines the surface up to left translation in $X$. Since $f_1(p_1)=f_2(p_2)$, this left translation
is trivial in our case, and so we conclude that $f_1(\Sigma_1)=f_2(\Sigma_2)$.
\end{proof}

\subsection{Metric semidirect products}
\label{subsecsemdirprod}
Given a  real $2\times2$ matrix $A\in \cM_2(\R)$, the semidirect product
$\R^2\rtimes_A \R$ is the Lie group $(\R^3\equiv \R^2\times \R,*)$ endowed with
the group operation
$({\bf p}_1,z_1)*({\bf p}_2,z_2)=({\bf p}_1+ e^{z_1 A}\  {\bf
 p}_2,z_1+z_2);$
here $e^B=\sum _{k=0}^{\infty }\frac{1}{k!}B^k$ denotes the usual
exponentiation of a matrix $B\in \cM_2(\R )$. Let

\begin{equation}
\label{equationgenA}
A=\left(
\begin{array}{cr}
a & b \\
c & d \end{array}\right) ,\qquad
 e^{zA}=\left(
\begin{array}{cr}
a_{11}(z) & a_{12}(z) \\
a_{21}(z) & a_{22}(z)
\end{array}\right) .
\end{equation}
Then, a left invariant frame $\{ E_1,E_2,E_3\} $ of $X$ is given by
\begin{equation}
\label{eq:6*}
 E_1(x,y,z)=a_{11}(z)\partial _x+a_{21}(z)\partial _y,\quad
E_2(x,y,z)=a_{12}(z)\partial _x+a_{22}(z)\partial _y,\quad
 E_3=\partial _z.
\end{equation}
Observe that $\{ E_1,E_2,E_3\} $ is the left invariant extension of the
canonical basis $(\partial_x)_e$, $(\partial _y)_e$, $(\partial _z)_e$ of the tangent
space $T_eX$ at the identity element $e=(0,0,0)$.
The right invariant extensions on $X$ of the same vectors of $T_eX$ define the
frame $F_1,F_2,F_3$ where
\begin{equation}
\label{eq:6}
 F_1=\partial _x,\quad F_2=\partial _y,\quad F_3(x,y,z)=
(ax+by)\partial _x+(cx+dy)\partial _y+\partial _z.
\end{equation}
In terms of $A$, the Lie bracket relations are:
\[
\label{eq:8a} [E_1,E_2]=0, \quad [E_3,E_1]=aE_1+cE_2, \quad
 [E_3,E_2]=bE_1+dE_2.
\]

\begin{definition}
 \label{def2.1}
 {\rm
We define the {\it canonical left invariant metric} on $\R^2\rtimes _A\R $ to be that one for which the
left invariant frame $\{ E_1,E_2,E_3\} $ given by (\ref{eq:6*}) is
orthonormal. Equivalently, it is the left invariant extension to
$X=\R^2\rtimes _A\R $ of the inner product on
$T_eX$ that makes $(\partial _x)_e,(\partial _y)_e,(\partial _z)_e$ an orthonormal basis.
}
\end{definition}

Some basic properties of the canonical left
invariant metric $\langle ,\rangle $ on $\R^2\rtimes_A \R$ are the following ones.

\begin{enumerate}[(E1)]
\item The vector fields $F_1,F_2,F_3$ are Killing.
\item The mean curvature of each leaf of the foliation $\mathcal{F}=
\{ \R^2\rtimes _A\{ z\} \mid z\in \R \}$ with respect to the unit
normal vector field $E_3$ is the constant $H=\mbox{trace}(A)/2$. All
the leaves of the foliation $\mathcal{F}$ are intrinsically flat.
Moreover, the Gauss map image of each leaf of $\cF$ with respect to the orientation
given by $E_3$ (resp. $-E_3$) is the North (resp. South) pole of $\S^2$ with
respect to the left invariant frame $\{E_1,E_2,E_3\}$.
\item The change (\ref{eq:6*})  from the orthonormal basis $\{ E_1,E_2,E_3\} $ to the
basis $\{ \partial _x,\partial _y,\partial _z\} $
produces the following expression of $\langle ,\rangle $ in the $x,y,z$ coordinates of $X$:
\begin{equation}
\label{eq:13}
 \left.
\begin{array}{rcl}
\langle ,\rangle & =&
 \left[ a_{11}(-z)^2+a_{21}(-z)^2\right] dx^2+
\left[ a_{12}(-z)^2+a_{22}(-z)^2\right] dy^2 +dz^2 \\
& + & \rule{0cm}{.5cm} \left[
a_{11}(-z)a_{12}(-z)+a_{21}(-z)a_{22}(-z)\right] \left( dx\otimes
dy+dy\otimes dx\right) .
\end{array}
\right.
\end{equation}

\item The $\pi$-rotation about any of the integral curves of $\partial _z=E_3$
(vertical lines in the $(x,y,z)$-coordinates) is an
order-two orientation preserving isometry.
\end{enumerate}

\begin{lemma}
\label{lem:asin}
If $A$ is a singular matrix, then $X=\R^2\rtimes_A \R$ is isometric to $\R^3$ or to an
$\E(\kappa ,\tau)$-space
with $\kappa \leq 0$ and $\tau \in  \R$.
\end{lemma}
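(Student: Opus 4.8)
The plan is to classify the metric Lie group $X=\R^2\rtimes_A\R$ when $A$ is singular by reducing to a normal form for $A$ under the operations that preserve the isometry type of $X$. The key observation is that applying a linear change of coordinates in the $\R^2$ factor conjugates $A$ to $PAP^{-1}$ and rescales the metric, while preserving the semidirect product structure; more precisely, as explained in \cite{mpe11}, two matrices $A$ and $A'$ that are conjugate (and positive rescalings thereof) give rise to isometric or homothetic metric Lie groups. Since $\det(A)=0$, the matrix $A$ has $0$ as an eigenvalue, so its Jordan normal form is one of three types: the zero matrix, a rank-one diagonalizable matrix $\diag(0,\lambda)$ with $\lambda\neq 0$, or a single nontrivial Jordan block $\left(\begin{smallmatrix}0&1\\0&0\end{smallmatrix}\right)$ (up to scaling).

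First I would dispose of the case $A=0$: here the group operation is just vector addition, so $X=\R^3$ with its flat metric, giving the first alternative in the statement. Next I would treat the rank-one diagonalizable case, normalizing by a positive rescaling so that $A=\diag(0,c)$ for some $c\in\R$; I would recognize this family as the standard models of the $\E(\kappa,\tau)$-spaces with $\tau=0$ and $\kappa\le 0$. Indeed, when $\tau=0$ the space $\E(\kappa,\tau)$ is the product $\H^2(\kappa)\times\R$ for $\kappa<0$, or $\R^3$ for $\kappa=0$, and these are exactly realized as semidirect products $\R^2\rtimes_A\R$ with $A$ having trace and determinant matching the prescribed curvature data. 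The remaining case is the nilpotent Jordan block, which I would identify with the Heisenberg group $\mathrm{Nil}_3$; this is the $\E(\kappa,\tau)$-space with $\kappa=0$ and $\tau\neq 0$, again falling within the allowed range $\kappa\le 0$.

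The main obstacle I anticipate is not the algebraic classification of singular $2\times 2$ matrices, which is elementary, but rather matching each normal form precisely to an $\E(\kappa,\tau)$-space and verifying that the induced canonical left invariant metric \eqref{eq:13} coincides (up to isometry and scaling) with the standard $\E(\kappa,\tau)$ metric. This requires either citing the explicit correspondence between semidirect product models and the $\E(\kappa,\tau)$-family from \cite{mpe11}, or computing the curvature invariants (the base curvature $\kappa$ and the bundle curvature $\tau$) directly from $A$ and checking that singularity of $A$ forces $\kappa\le 0$. I would lean on the detailed classification of three-dimensional metric Lie groups in \cite{mpe11}, where these identifications are carried out, so that the proof amounts to observing $\det(A)=0$ constrains the relevant invariants into the stated range and then invoking the known normal forms.

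A clean way to organize the argument is therefore: reduce $A$ to Jordan normal form up to conjugation and scaling, observe that $\det(A)=0$ leaves exactly the three normal forms above, and match each to $\R^3$ (when $A=0$) or to an $\E(\kappa,\tau)$-space with $\tau=0,\ \kappa\le 0$ (diagonalizable rank-one case) or $\kappa=0,\ \tau\neq 0$ (nilpotent case), in all cases with $\kappa\le 0$ and $\tau\in\R$ as claimed.
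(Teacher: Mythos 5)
There is a genuine gap in your argument: the reduction to Jordan normal form rests on the claim that matrices that are conjugate (up to positive rescaling) give rise to isometric or homothetic metric Lie groups. That is false, and it is not what \cite{mpe11} proves: conjugating $A$ to $PAP^{-1}$ produces an \emph{isomorphic} Lie group, but the canonical metric of $\R^2\rtimes_{PAP^{-1}}\R$ is isometric to that of $\R^2\rtimes_A\R$ only when $P$ can be taken orthogonal (up to scaling); indeed, the whole point of the theory in \cite{mpe11} is that a fixed Lie group carries many non-isometric left invariant metrics, parameterized by non-orthogonal conjugates of $A$. The distinction matters precisely in your ``diagonalizable rank-one'' case: for $b\neq 0$ the matrix $A_b=\left(\begin{array}{cc} 2 & 0\\ 2b & 0\end{array}\right)$ is conjugate to $\diag(2,0)$, yet $\R^2\rtimes_{A_b}\R$ with its canonical metric is \emph{not} isometric to $\H^2(-4)\times\R=\R^2\rtimes_{\diag(2,0)}\R$. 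As explained in \cite[Section~2.8]{mpe11}, the family $A_b$, $b>0$, realizes the $\E(\kappa,\tau)$-spaces with $\kappa<0$ and $\tau\neq 0$ (left invariant metrics on $\sl$) as \emph{nonunimodular} semidirect products. So your proof both uses an invalid reduction and reaches a conclusion that is wrong as a classification: the singular, nonzero-trace case does not yield only the products $\H^2(\kappa)\times\R$ with $\tau=0$; it misses exactly the spaces with $\tau\neq 0$, which is why the lemma is stated with arbitrary $\tau\in\R$.

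The correct dichotomy, which is the one the paper uses, is by the trace (i.e., unimodularity) rather than by Jordan type: if $A=0$, then $X$ is flat $\R^3$; if $A\neq 0$ and ${\rm trace}(A)=0$, then $A$ is nilpotent and \cite[Theorem~2.15]{mpe11} gives that $X$ is isometric to ${\rm Nil}_3=\E(0,\tau)$ (in this case orthogonal conjugation plus scaling does suffice to normalize $A$, so your treatment of this case is essentially sound); and if ${\rm trace}(A)\neq 0$, then \cite[Section~2.8]{mpe11} shows that $X$ is isometric to some $\E(\kappa,\tau)$ with $\kappa<0$ and $\tau\in\R$ arbitrary. Your argument can be repaired by replacing the Jordan-form reduction with this trace-based case analysis and citing the two results of \cite{mpe11} just mentioned, or alternatively by computing the curvature invariants of the canonical metric directly from $A$ (not from its conjugacy class) in the nonzero-trace case.
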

\begin{proof}
If $A=0$, then $X$ is the standard $\R^3$. If $A\neq 0$ and
trace$(A)=0$, then~\cite[Theorem~2.15]{mpe11} gives that either $\det(A)\neq 0$
(which contradicts our hypothesis) or $X$ is
isometric to the Heisenberg space Nil$_3$ (recall that Nil$_3$ admits a $1$-parameter family of homogeneous
metrics, all of which are homothetic).
Finally, if trace$(A)\neq0$ then $X$ is isometric to some
$\E (\kappa ,\tau )$ with $\kappa <0$ and $\tau \in \R $
as explained in~\cite[Section~2.8]{mpe11}.
\end{proof}

A simple consequence of equations (\ref{eq:6}) and (\ref{eq:13}) is that every
horizontal right invariant vector field (i.e., every linear combination of $\partial _x,\partial _y$
with constant coefficients) is bounded in $\R^2\rtimes _A[z_1,z_2]$, for all $z_1,z_2\in \R$
with $z_1\leq z_2$.
The following lemma guarantees the existence of at least one nonzero
horizontal right invariant vector field in $\R^2\rtimes _A\R$ which is bounded in a
horizontal halfspace.

\begin{lemma}
\label{cor:bounded}
Let $X$ be a semidirect product $\R^2\rtimes _A\R $ endowed with its
canonical metric, where $A\in \mathcal{M}_2(\R )$. 
Then, there exists a nonzero horizontal right
invariant vector field which is bounded in $\R^2\rtimes _A[0,\infty
)$.
\end{lemma}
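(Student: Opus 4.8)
The plan is to translate the statement into a question about the linear flow generated by $A$ and then settle it via the real Jordan canonical form. First I would make the norm of a horizontal right invariant field completely explicit. By (\ref{eq:6}), a horizontal right invariant vector field is $V=\alpha\,\partial_x+\beta\,\partial_y$ for constants $(\alpha,\beta)\neq(0,0)$; writing $\mathbf v=(\alpha,\beta)^{T}\in\R^2$, formula (\ref{eq:13}) shows that the horizontal block of the canonical metric $\langle,\rangle$ at height $z$ is exactly $(e^{-zA})^{T}e^{-zA}$, so that
\[
|V|^2_{(x,y,z)}=|e^{-zA}\mathbf v|^2 ,
\]
a quantity depending only on $z$. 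Hence the lemma is equivalent to the purely linear-algebraic claim that there exists $\mathbf v\in\R^2\setminus\{0\}$ with $\sup_{z\ge 0}|e^{-zA}\mathbf v|<\8$.

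Next I would reduce to a normal form for $A$. For any $P\in\mathrm{GL}_2(\R)$ we have $e^{-zA}=P\,e^{-zP^{-1}AP}\,P^{-1}$, and since $P,P^{-1}$ are fixed bounded linear maps, $z\mapsto e^{-zA}\mathbf v$ is bounded on $[0,\8)$ if and only if $z\mapsto e^{-zP^{-1}AP}(P^{-1}\mathbf v)$ is. Therefore I may assume $A$ equals its real Jordan form, which is one of: (i) $\mathrm{diag}(\lambda_1,\lambda_2)$ with $\lambda_i\in\R$; (ii) a Jordan block $\left(\begin{smallmatrix}\lambda&1\\0&\lambda\end{smallmatrix}\right)$ with $\lambda\in\R$; or (iii) $\left(\begin{smallmatrix}\mu&-\nu\\ \nu&\mu\end{smallmatrix}\right)$ with $\nu\neq0$, corresponding to the eigenvalues $\mu\pm i\nu$.

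In each case I would exhibit an explicit bounded direction by choosing the eigendirection attached to an eigenvalue of largest real part. In case (iii), $e^{-zA}=e^{-z\mu}R(z)$ with $R(z)$ a rotation, so $|e^{-zA}\mathbf v|=e^{-z\mu}|\mathbf v|$ for every $\mathbf v$, which is bounded on $[0,\8)$ as soon as $\mu\ge 0$. In cases (i) and (ii) I would take $\mathbf v$ to be an eigenvector for the eigenvalue $\lambda$ of largest real part (in case (ii) the first basis vector is such an eigenvector), giving $e^{-zA}\mathbf v=e^{-z\lambda}\mathbf v$, which is bounded on $[0,\8)$ precisely when $\lambda\ge 0$. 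In particular, when $A$ is singular (the situation of Lemma~\ref{lem:asin}) we have $0\in\mathrm{spec}(A)$, so any $\mathbf v\in\ker A$ yields a field of constant norm, the cleanest case of all. Thus in every case the field $V$ associated to $\mathbf v$ is bounded in $\R^2\rtimes_A[0,\8)$, provided $A$ has an eigenvalue of nonnegative real part.

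The one genuinely delicate point, which I expect to be the main obstacle, is guaranteeing that $\max\{\Re\lambda:\lambda\in\mathrm{spec}(A)\}\ge 0$ and matching it to the correct half-space. This is automatic when $\mathrm{trace}(A)\ge 0$; in particular it always holds in the unimodular case $\mathrm{trace}(A)=0$, where the eigenvalues are $\{\lambda,-\lambda\}$ or $\{\pm i\nu\}$ and hence always include one with nonnegative real part. The orientation convention enters here: replacing the coordinate $z$ by $-z$ converts $A$ into $-A$ and interchanges the two horizontal half-spaces, so once the standing normalization $\mathrm{trace}(A)\ge 0$ is fixed, the half-space $[0,\8)$ is the correct one and the eigendirection selected above does the job. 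Getting this sign bookkeeping right — pairing the eigenvalue of nonnegative real part with the half-space $z\ge 0$ — is the crux; the remainder is a direct computation with matrix exponentials.
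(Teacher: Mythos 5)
Your proposal is correct, and it takes a genuinely different route from the paper's own proof. The paper argues by cases on $\mbox{trace}(A)$: when $\mbox{trace}(A)\neq 0$ it simply cites an external result (item 2 of Proposition~6.1 in~\cite{mmp1}), and when $\mbox{trace}(A)=0$ it invokes the classification of unimodular semidirect products (item 2 of Theorem~2.15 in~\cite{mpe11}) to reduce to the three normal forms corresponding to $\widetilde{\mathrm{E}}(2)$, $\mathrm{Sol}_3$ and $\mathrm{Nil}_3$, computing $|F_1|,|F_2|$ explicitly from (\ref{eq:6}) and (\ref{eq:13}) in each case. You instead extract from (\ref{eq:13}) the single identity $|V|^2_{(x,y,z)}=|e^{-zA}\mathbf{v}|^2$ for $V=\alpha\partial_x+\beta\partial_y$, $\mathbf{v}=(\alpha,\beta)^T$ (correct: the horizontal block of the metric at height $z$ is $(e^{-zA})^Te^{-zA}$), which converts the lemma into the linear-algebra statement that some nonzero $\mathbf{v}$ has $e^{-zA}\mathbf{v}$ bounded on $[0,\infty)$, and you settle that via the real Jordan form by taking a genuine eigenvector for the eigenvalue of largest real part. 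Your recipe reproduces the paper's explicit fields exactly: in the $\mathrm{Sol}_3$ case the eigenvector $\mathbf{v}=(c,1)$ gives the paper's $\widehat{F}_2$ with $|\widehat{F}_2|=\sqrt{1+c^2}\,e^{-z}$, and in the $\mathrm{Nil}_3$ case $\mathbf{v}\in\ker A$ gives $F_1$ with $|F_1|=1$. What your route buys is self-containedness and uniformity (no appeal to the classification or to~\cite{mmp1}); what the paper's route buys is brevity. Finally, the subtlety you flag is real and you resolve it the right way: boundedness on $z\geq 0$ forces $A$ to have an eigenvalue of nonnegative real part, which can fail if $\mbox{trace}(A)<0$ (for $A=-I$, every nonzero horizontal field satisfies $|V|=e^{z}|\mathbf{v}|$, bounded only on $z\leq 0$), so the lemma as literally stated tacitly relies on the normalization $\mbox{trace}(A)\geq 0$ inherited from~\cite{mpe11} ($\mbox{trace}(A)=2$ after rescaling in the nonunimodular case, $\mbox{trace}(A)=0$ in the unimodular case) — equivalently, on the isometry $(x,y,z)\mapsto(x,y,-z)$ identifying $\R^2\rtimes_A\R$ with $\R^2\rtimes_{-A}\R$ while swapping the two half-spaces, which is harmless where the lemma is applied in Section~\ref{sec:noa}. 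This is precisely the sign bookkeeping you describe, so your proof is complete under the same convention the paper's citations presuppose.
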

\begin{proof}
If trace$(A)\neq 0$, the lemma follows from~\cite[item 2 of Proposition~6.1]{mmp1}. If
trace$(A)=0$, then~\cite[item 2 of Theorem 2.15]{mpe11} implies that
there are three possible cases for $A$ up to rescaling the metric. We will prove the lemma
by studying each case separately.
\begin{itemize}
\item $A=\left( \begin{array}{cc}
0 & -c \\
1/c & 0 \end{array}
\right) $ for some $1\leq c<\infty $. This case corresponds to the group structure of
$\widetilde{\rm E}(2)$, the universal cover of the group of orientation-preserving rigid motions of the
Euclidean plane. A direct computation using  (\ref{eq:6}) and (\ref{eq:13}) gives
$|F_1|^2=\cos ^2z+c^{-2}\sin ^2z$ and $|F_2|^2=\cos ^2z+c^2\sin ^2z$, both bounded.

\item $A=\left( \begin{array}{cc}
0 & c \\
1/c & 0  \end{array}
\right) $ for some $1\leq c<\infty $. In this case, $X$ is isomorphic to Sol$_3$.
A similar reasoning as in the preceding case gives that
the basis of horizontal right invariant vector fields $\widehat{F}_1=-c\partial _x+\partial _y$,
$\widehat{F}_2=c\partial_x+\partial _y$ satisfies
$|\widehat{F}_1|=\sqrt{1+c^2}e^z$, $|\wh{F}_2|=\sqrt{1+c^2}e^{-z}$.
\item $A=\left( \begin{array}{cc}
0 & 1 \\
0 & 0 \end{array}
\right) $ and $X$ is isomorphic to the Heisenberg space Nil$_3$. Using again
(\ref{eq:6}) and (\ref{eq:13}) we have that $|F_1|=1$.
\end{itemize}
\par
\vspace{-.7cm}
\end{proof}

\subsection{Metric Lie groups isomorphic to $\sl$}
\label{subsecsl}

The Lie group $\sl $ is the universal cover of the special linear group
$\mathrm{SL}(2,\R )=\{ A\in \mathcal{M}_2(\R )\ | \ \det A=1\} $,
and of the projective special linear group
$\mathrm{PSL}(2,\R)=\mathrm{SL}(2,\R )/\{ \pm I_2\} $.
The Lie algebra of any of the groups $\sl$, $\mathrm{SL}(2,\R )$, and $\mathrm{PSL}(2,\R)$ is
$\mathfrak{sl}(2,\R ) = \{ B\in \mathcal{M}_2(\R )\ | \ \mbox{trace}(B)=0\} $.
It is worth recalling that
$\mathrm{PSL}(2,\R)$ has several isomorphic models, that we will use in the sequel:
\begin{enumerate}[(F1)]
\item The group of orientation-preserving isometries of the hyperbolic plane,
\[
\mbox{\hspace{1cm}} z\in \Hip ^2\equiv (\R^2)^+ \mapsto
\frac{az+b}{cz+d} \in (\R^2)^+\qquad (a,b,c,d\in \R ,\ ad-bc=1).
\]
\item The group of conformal automorphisms of the unit disc
$\D:=\{z\in \C: |z|<1\}$; these automorphisms are the
M\"{o}bius transformations of the type
$\phi(z)=\xi \frac{z+a}{1+\overline{a} z},$ with $a\in \D$
and $|\xi|=1$. The universal cover map $\theta\mapsto \xi:=
e^{i\theta}\colon \R\flecha \S^1$ gives $\sl$ a structure of an
$\R$-bundle over $\D$.
\item The unit tangent bundle of $\Hip^2$.
This interpretation of PSL$(2,\R )$ as an $\esf^1$-bundle over $\Hip ^2$ (and of
$\sl$ as an $\R $-bundle over $\Hip ^2$) produces a submersion from
$\sl $ to $\Hip^2$.
\end{enumerate}
There are three types of 1-parameter subgroups of $\sl$:
\begin{itemize}
\item {\em Elliptic subgroups.} Elements of any of these subgroups correspond to liftings to
$\sl $ of rotations around
any fixed point in $\Hip ^2$. These groups of rotations
fix no points in the boundary at infinity
$\partial _{\infty }\Hip ^2$.
Any two elliptic 1-parameter subgroups are conjugate.

\item {\em Hyperbolic subgroups.} These subgroups correspond to liftings to $\sl $ of
translations along any fixed geodesic 
in $\Hip ^2$. In the Poincar\'e disk model of $\H^2$, the hyperbolic
subgroup associated to a geodesic $\G $ fixes the two points at
infinity given by the end points of $\G $.
As in the elliptic case, any two 1-parameter hyperbolic subgroups are conjugate.

\item {\em Parabolic subgroups.}
They correspond to liftings to $\sl $ of rotations about any fixed point
$\theta \in \partial_{\infty }\Hip ^2$. Any of these rotation subgroups only fixes the point $\theta $ at infinity,
and leaves invariant the 1-parameter family of horocycles based at
$\theta $. As in the previous cases, 1-parameter parabolic subgroups are all
conjugate by elliptic rotations of the Poincar\'e disk.
\end{itemize}

The \emph{character} of a $1$-parameter subgroup $\Gamma$ of
$\sl$ refers to the property of $\Gamma$ being elliptic,
parabolic or hyperbolic. The character is invariant by
conjugation, i.e., if $\Gamma$ is a $1$-parameter subgroup of
$\sl$ and we define $\Gamma_a := a \Gamma a^{-1} =
l_a (r_{a^{-1}} (\Gamma))$ for some $a\in \sl$, then $\Gamma_a$
has the same character as $\Gamma$.

Nonzero right invariant vector fields $K$ are in one-to-one correspondence with
1-parameter subgroups $\G $ and with tangent vectors at the identity element $e$ of $\sl$
by the formula $K_e=\G'(0)$. We will say that a nonzero right invariant vector field $K$ on $\sl $
is elliptic (resp. hyperbolic, parabolic) when the related 1-parameter subgroup $\G $ in $\sl $ is
elliptic (resp. hyperbolic, parabolic).

The matrices in the Lie algebra $\mathfrak{sl}(2,\R )$ given by
\begin{equation}
\label{eq:lframesl}
(E_1)_{e}=\left(
\begin{array}{rr}
1 & 0 \\
0 & -1\end{array}\right) ,\quad
(E_2)_{e}=\left( \begin{array}{cc}
0 & 1 \\
1 & 0\end{array}\right) ,\quad
(E_3)_{e}=\left(
\begin{array}{rr}
0 & -1 \\
1 & 0\end{array}\right)
\end{equation}
define a left invariant frame $\{ E_1,E_2,E_3\}$ on $\sl $ with the
property that
$ [E_1,E_2]=-2E_3$,
$[E_2,E_3]=2E_1$,
$[E_3,E_1]=2E_2$.
The fields $E_1,E_2,E_3$ are respectively generated by the
1-parameter subgroups of $\mathrm{SL}(2,\R )$ given by
\begin{equation}\label{1parsu}
\left(
\begin{array}{cc}
e^t & 0 \\
0 & e^{-t}\end{array}\right) ,
\quad
\left(
\begin{array}{cc}
\cosh t & \sinh t \\
\sinh t & \cosh t\end{array}\right) ,
\quad
\left(
\begin{array}{cc}
\cos t & -\sin t \\
\sin t & \cos t\end{array}\right) ,\quad t\in \R ,
\end{equation}
where the first two are hyperbolic subgroups and the third one
is an elliptic subgroup.

The center $Z$ of $\sl$ is an infinite cyclic subgroup (hence isomorphic to $\Z$) whose elements correspond to
liftings to $\sl $ of rotations by angles that are multiples of $2\pi $;
 if $\Lambda\colon \sl\flecha {\rm PSL}(2,\R)$ denotes the universal covering map,
then $Z=\Lambda^{-1}(e)$ where $e$ is the identity element of ${\rm PSL}(2,\R)$.
In this way, $Z$ is contained in the integral curve $\Gamma_3\subset \sl$
of the left invariant vector field $E_3$ given by \eqref{1parsu}
(indeed, $Z$ is contained in every elliptic subgroup of $\sl$).

By declaring the left invariant frame $\{
E_1,E_2,E_3\} $ to be orthonormal, we define a left invariant metric
$\langle ,\rangle $ on $\sl $ such that the metric Lie group
$(\widetilde{\mbox{\rm SL}}(2,\R ),\langle ,\rangle )$ is isometric
to the $\E (\kappa ,\tau )$-space with base curvature $\kappa =-4$
and bundle curvature $\tau ^2=1$. Let
\begin{equation}
\label{eq:Ekt}
\Pi_0\colon \E (-4,1)\to \Hip ^2(-4)
\end{equation}
 be the associated Riemannian
submersion onto the hyperbolic plane endowed with the metric of
constant curvature $-4$ (this submersion can be naturally identified with any
of the submersions described in properties (F2) and (F3) above).
Let $\phi \colon (\sl ,\langle ,
\rangle )\to \E (-4,1)$ be a Riemannian isometry.
Then, the image by $\phi $ of every integral curve of $E_3$ is a $\Pi _0$-vertical
geodesic. The composition
\begin{equation}
\label{eq:PI}
\Pi =\Pi _0\circ \phi \colon \sl \to \Hip ^2(-4)
\end{equation}
defines a submersion which is Riemannian with respect to the particular
left invariant metric $\langle ,\rangle $ defined above.

The next result follows from~\cite[Section~2.7]{mpe11} and describes the geometry of each 1-parameter subgroup of
$\sl$ in terms of the coordinates of
its velocity vector at the identity element $e$ with respect to the basis $\{
E_1,E_2,E_3\} $.

\begin{lemma}\label{caracsl}
Consider a vector
$\G'(0)\in T_e\sl $ with coordinates $(a,b,c)$ with respect to the basis
given in (\ref{eq:lframesl}), and let $\Gamma$ denote the $1$-parameter
subgroup generated by $\Gamma'(0)$. Then:
\begin{enumerate}
\item
$\Gamma$ is elliptic if and only if $a^2+b^2<c^2$. If $a=b=0$, then $\G$ is the lift to
$\widetilde{\mbox{\rm SL}}(2,\R ) $ of the elliptic subgroup of
rotations of $\Hip ^2(-4)$ around the point $\Pi (e)$.  If
$0<a^2+b^2<c^2$, then $\Pi (\G)$ is a constant geodesic curvature
circle passing through $\Pi (e)$ and completely contained in $\Hip
^2(-4)$.
 \item
$\Gamma$ is hyperbolic if and only if $a^2+b^2>c^2$. In this case, $\Pi (\G)$ is a
constant geodesic curvature arc passing through
$\Pi (e)$ with two end points in the boundary at infinity of $\Hip
^2(-4)$.
\item
$\Gamma$ is parabolic if and only if $a^2+b^2=c^2$. In this case, then $\Pi (\G)$ is a horocycle in $\Hip ^2(-4)$.
\end{enumerate}
\end{lemma}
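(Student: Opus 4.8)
The plan is to reduce the trichotomy in items (1)--(3) to the classical classification of one-parameter subgroups of $\mathrm{SL}(2,\R)$ by the sign of a determinant, and then to read off the geometric description of each $\Pi(\G)$ from the orbit picture of the associated isometries of $\Hip^2(-4)$.

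First I would write the velocity vector explicitly as a traceless matrix. Setting $M:=\G'(0)=a\,(E_1)_e+b\,(E_2)_e+c\,(E_3)_e$ and using (\ref{eq:lframesl}) gives
\[
M=\left(\begin{array}{cc} a & b-c \\ b+c & -a\end{array}\right),
\qquad \det(M)=c^2-a^2-b^2 .
\]
Since $\mbox{trace}(M)=0$, the eigenvalues of $M$ are $\pm\sqrt{-\det(M)}$, so the conditions $a^2+b^2<c^2$, $a^2+b^2=c^2$ (with $M\neq 0$) and $a^2+b^2>c^2$ correspond precisely to $M$ having purely imaginary, zero, or real nonzero eigenvalues. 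For the corresponding matrices $\exp(tM)\in\mathrm{SL}(2,\R)$ this yields respectively $|\mbox{trace}\,\exp(tM)|<2$ (a conjugate of a rotation), $\mbox{trace}\,\exp(tM)=2$ (a unipotent element), or $|\mbox{trace}\,\exp(tM)|>2$ (a conjugate of a positive diagonal matrix), which by the standard trichotomy for $\mathrm{SL}(2,\R)$ are exactly the elliptic, parabolic and hyperbolic cases. The sign conventions are pinned down by the three generators themselves: $(E_3)_e$ has $\det=1>0$ and generates the elliptic rotation subgroup, while $(E_1)_e,(E_2)_e$ have $\det=-1<0$ and generate the two hyperbolic subgroups displayed in (\ref{1parsu}). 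This establishes the equivalences.

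For the geometric statements I would use that the submersion $\Pi$ of (\ref{eq:PI}) is equivariant, namely $\Pi(\G(t))=\Lambda(\G(t))\cdot\Pi(e)$, so that $\Pi(\G)$ is the orbit of the base point $\Pi(e)$ under the one-parameter group of isometries $\Lambda(\G(t))$ of $\Hip^2(-4)$. In the elliptic case these are rotations about a fixed point $p$: when $a=b=0$ the field $M$ is a multiple of $(E_3)_e$, whose subgroup fixes $\Pi(e)$, giving the lift of the rotation subgroup about $\Pi(e)$; when $0<a^2+b^2<c^2$, $M$ is not proportional to $(E_3)_e$ so $p\neq\Pi(e)$ (the Lie algebra of the stabilizer of $\Pi(e)$ being $\mbox{Span}\{(E_3)_e\}$), and the orbit is the constant geodesic curvature circle about $p$ through $\Pi(e)$. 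In the hyperbolic case the isometries translate along a geodesic, and the orbit of $\Pi(e)$ is a curve of constant geodesic curvature through $\Pi(e)$ with two endpoints at infinity; in the parabolic case they fix one boundary point $\theta$ and the orbit is the horocycle based at $\theta$ through $\Pi(e)$. These are standard orbit computations in $\Hip^2$.

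The determinant computation is routine; the points demanding care are matching the sign of $\det(M)$ to the correct character, which is handled by evaluating on the basis (\ref{eq:lframesl}), and justifying the equivariance $\Pi\circ l_{\G(t)}=\Lambda(\G(t))\circ\Pi$ that turns $\Pi(\G)$ into an isometry orbit. This last identity, together with the identification of $\Pi$ with the submersions in properties (F2)--(F3), is where the cited analysis of \cite[Section~2.7]{mpe11} does the essential work, and it is the step I would expect to be the main (if mild) obstacle.
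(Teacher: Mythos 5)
Your proposal is correct. Note first that the paper itself gives no proof of this lemma: it is stated as a consequence of the analysis in \cite[Section~2.7]{mpe11}, so there is no internal argument to compare against. What you supply is a self-contained proof, and it is the standard (and almost certainly the intended) one: writing $\G'(0)$ as the traceless matrix $M=\left(\begin{smallmatrix} a & b-c\\ b+c & -a\end{smallmatrix}\right)$, computing $\det(M)=c^2-a^2-b^2$, and using that a traceless $2\times 2$ matrix has eigenvalues $\pm\sqrt{-\det(M)}$ reduces the trichotomy to the classical elliptic/parabolic/hyperbolic classification of one-parameter subgroups of $\mathrm{PSL}(2,\R)$, with the sign convention correctly anchored by evaluating on $(E_1)_e,(E_2)_e,(E_3)_e$ and matching against (\ref{1parsu}). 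The geometric descriptions then follow from the equivariance $\Pi\circ l_g=\Lambda(g)\circ\Pi$, which exhibits $\Pi(\G)$ as the orbit of $\Pi(e)$ under the projected isometry group; your identification of the elliptic fixed point via the stabilizer Lie algebra $\mathrm{Span}\{(E_3)_e\}$ correctly separates the two subcases of item (1). You are also right to flag the equivariance of $\Pi$ as the one step that is not purely computational: since the paper defines $\Pi$ through the isometry $\phi$ with $\E(-4,1)$ rather than directly as the bundle projection of (F3), the identification of the two (asserted after (\ref{eq:PI})) is exactly what licenses the orbit picture, and this is the content borrowed from \cite{mpe11}. One cosmetic remark: in the elliptic case $|\mathrm{trace}\,\exp(tM)|\leq 2$ with equality at isolated values of $t$, so the clean statement is that the subgroup $\{\exp(tM)\}$ is conjugate in $\mathrm{SL}(2,\R)$ to the rotation subgroup (which is what your parenthetical ``a conjugate of a rotation'' conveys); this does not affect the argument.
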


Every left invariant metric in $\sl $
can be obtained by choosing numbers $\l _1,\l _2,\l _3>0$ and declaring
the length of the left invariant vector field $E_i$ to be $\l _i$, $1,2,3$,
while keeping them orthogonal; see~\cite[Proposition 4.2]{mmpr2}.
For the remainder of this section, we will suppose
that the metric Lie group $X$ is isometric and isomorphic to $\sl $ endowed with the
left invariant metric given by an (arbitrary but fixed) choice of $\l _i>0$, $i=1,2,3$.

\begin{lemma}
\label{lemma4.1}
Let $\g \subset \Hip ^2(-4)$ be a geodesic. Denote by $\Sigma (\g )=
\Pi ^{-1}(\g )$ the surface in $\sl $ obtained by pulling back $\g $ through $\Pi $.
Then:
\begin{enumerate}[(1)]
\item If \, $\G \subset \sl $ is an integral curve of $E_3$, then $\phi (\G )$
is an integral curve of the vertical vector field on $\E (-4,1)$ that
generates the kernel of the differential of $\Pi _0$.
Furthermore, if $R^{\G }\colon \E (-4,1)\to \E (-4,1)$ denotes the rotation of angle $\pi $ around
$\phi (\G )$ (i.e., the  lifting through $\Pi _0$ of the rotation $\psi _{\Pi (\G )}$ by angle $\pi $ around
the point $\Pi (\G )\in \Hip ^2(-4)$), then the map $R_{\G }=\phi^{-1}\circ R^{\G}\circ \phi$
 is an order-two, orientation-preserving isometry of $X$ that fixes $\G$.

\item $\Sigma (\g )$ is ruled by geodesics of $X$ which are integral curves
of $E_3$, and each  integral curve $\G $ of $E_3$ contained in $\Sigma (\g )$ is the fixed
point set of the isometry $R_{\G }\colon X\to X$, which restricts to an order-two, orientation-reversing
isometry on $\Sigma (\g)$.

\item $\Sigma (\g )$ is a stable minimal surface in $X$.

\item Given a curve $\g _r\subset \Hip ^2(-4)$ at constant distance $r>0$ from
$\g $, the surface $\Sigma (\g _r)=\Pi ^{-1}(\g _r)$ has
mean curvature vector of length bounded away from zero (not  necessarily
constant\footnote{The mean curvature of $\Pi_0^{-1}(\g _r)\subset \E(-4,1)$ is constant
equal to half of the geodesic curvature of $\g _r$ in $\Hip^2(-4)$.})
and pointing towards $\Sigma (\g )$.
\item The critical mean curvature of  $X$ (see Definition~\ref{defH(X)}) satisfies $H(X)\geq\de$,
where $\de$ is any positive lower bound for
the mean curvature function of $\Sigma (\g _r)$ for any given $r>0$.
  \end{enumerate}
\end{lemma}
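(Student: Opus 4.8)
The plan is to run through the five items in order, using the submersion $\Pi\colon\sl\to\Hip^2(-4)$ and its fibers (the integral curves of $E_3$) as the main tool, and reserving a direct curvature computation for item~(4). The first observation, needed for (1) and (2), is that each integral curve $\G$ of $E_3$ is a geodesic of $X$ for \emph{every} choice of $\l_i$: the Koszul formula applied to the brackets $[E_3,E_1]=2E_2$, $[E_3,E_2]=-2E_1$ gives $\nabla_{E_3}E_3=0$, and since the fibers of $\Pi$ are exactly these integral curves, $\Sigma(\g)=\Pi^{-1}(\g)$ is ruled by them. For the rotation $R_\G$ I would first treat the fiber $\G_3$ through $e$: the map $\sigma$ with $\sigma(E_1)=-E_1$, $\sigma(E_2)=-E_2$, $\sigma(E_3)=E_3$ preserves all three bracket relations, so it is a Lie algebra automorphism and integrates to a group automorphism of $\sl$; as $\sigma$ preserves the inner product at $e$ for every $\l_i$, and a group automorphism whose differential at $e$ is a linear isometry preserves the whole left-invariant metric, $\sigma$ is an order-two orientation-preserving isometry of $X$ fixing $\G_3$ pointwise. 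Comparing values and differentials at $e$ in the canonical metric identifies $\sigma$ with $\phi^{-1}\circ R^{\G_3}\circ\phi$, and a general fiber $\G=l_a(\G_3)$ gives $R_\G=l_a\circ\sigma\circ l_a^{-1}$, again an order-two orientation-preserving isometry fixing $\G$. Since $R_\G$ covers $\psi_{\Pi(\G)}$, whose only fixed point is $\Pi(\G)$, its fixed-point set equals $\G$; being an involutive isometry of $\Sigma(\g)$ whose fixed set is the curve $\G$, it restricts to an orientation-reversing reflection there.

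For item~(3), minimality follows from the symmetry with no computation: given $p\in\Sigma(\g)$, the rotation $R_{\G_0}$ about the fiber $\G_0\ni p$ fixes $p$, preserves $\Sigma(\g)$ (because $\psi_{\Pi(p)}$ fixes the geodesic $\g$ setwise), and acts as $-1$ on the normal line to $\Sigma(\g)$ at $p$ (the normal is transverse to the fiber and projects to the normal of $\g$ in $\Hip^2(-4)$, which $\psi_{\Pi(p)}$ reverses); hence $\vec H_p=dR_{\G_0}(\vec H_p)=-\vec H_p$, so $\vec H_p=0$. For stability I would realize $\Sigma(\g)$ as a leaf of a minimal foliation: extend $\g$ to the foliation of $\Hip^2(-4)$ by all geodesics asymptotic to one ideal endpoint of $\g$, and pull it back by $\Pi$ to a foliation of $X$ whose leaves $\Sigma(\g')$ are all minimal by the argument just given. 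A leaf of a minimal foliation is stable.

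Item~(4) is where the real work lies. The bound away from zero I would obtain from symmetry. Writing $\Delta$ for the hyperbolic one-parameter subgroup with $\Pi(\Delta)=\g$, the equivariance $\Pi\circ l_g=\Lambda(g)\circ\Pi$ shows that for $g\in\Delta$ the isometry $l_g$ covers a hyperbolic translation along $\g$, which preserves each equidistant curve $\g_r$; together with the central deck translations $z\in Z$ (which satisfy $\Lambda(z)=e$ and hence preserve every fiber), these isometries preserve $\Sigma(\g_r)$ and act on it with compact orbit space, a circle. Thus the mean curvature function of $\Sigma(\g_r)$ descends to a continuous function on $S^1$, and \emph{provided it is nowhere zero} it is automatically bounded away from zero. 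The nonvanishing, together with the claim that $\vec H$ points towards $\Sigma(\g)$, is the main obstacle: here the rotation trick fails, since $\psi_{\Pi(\G)}$ does not preserve the non-geodesic curve $\g_r$. I would therefore compute the second fundamental form of $\Sigma(\g_r)$ directly in the orthonormal $\l_i$-frame adapted to $\Pi$, using that $\g_r$ has constant nonzero geodesic curvature with curvature vector pointing towards $\g$; the footnote's canonical value $\tfrac12 k_g(\g_r)$ serves as a check for the case $\l_i=1$. The delicate point is that for a general left-invariant metric $\Pi$ is no longer a Riemannian submersion, so one must track the mixing of the fiber and horizontal directions and confirm that the sign of $\langle\vec H,\cdot\rangle$ is preserved.

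Finally, item~(5) follows from (4) by a barrier argument that sidesteps any monotonicity in $r$. Given a closed surface $\Sigma\in\cA(X)$, its projection $\Pi(\Sigma)$ is compact, so by sliding a one-parameter family of geodesics across $\Hip^2(-4)$ and applying the intermediate value theorem I can choose a geodesic $\g$ for which the maximal signed distance from $\Pi(\Sigma)$ to $\g$ equals $r$; then $\g_r$ supports $\Pi(\Sigma)$ from outside, with $\Pi(\Sigma)$ on the $\g$-side of $\g_r$ and touching it. The fiber over the contact point realizes a tangency of $\Sigma$ with $\Sigma(\g_r)$ along which $\Sigma$ lies on the mean-convex side; the mean curvature comparison principle then gives $\max_\Sigma|H_\Sigma|\ge\de$, and since $\Sigma$ was arbitrary, $H(X)\ge\de$.
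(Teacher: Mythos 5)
Your items (1)--(3) and (5) are essentially sound. The Koszul computation showing that the integral curves of $E_3$ are geodesics for every choice of $\l_1,\l_2,\l_3$, and the construction of $R_{\G}$ by integrating the Lie algebra involution $(E_1,E_2,E_3)\mapsto (-E_1,-E_2,E_3)$, give items (1)--(2) in substance (the paper simply cites~\cite{mpe11} for this). Your minimality argument is the paper's. For stability you take a genuinely different route: you realize $\Sigma(\g)$ as a leaf of the minimal foliation of $X$ obtained by pulling back through $\Pi$ the foliation of $\Hip^2(-4)$ by geodesics asymptotic to a fixed endpoint of $\g$, and invoke the calibration argument for leaves of minimal foliations; the paper instead exhibits the positive Jacobi function $\langle K,N\rangle$ coming from a family of left translations. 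These are two standard ways of exploiting essentially the same structure, and both are valid. Your item (5) is also a correct variant: the paper pushes the closed surface into the mean convex side of $\Sigma(\g_r)$ by left translations and slides it to a first contact point, while you slide the geodesic in $\Hip^2(-4)$ and use the intermediate value theorem to produce a supporting equidistant curve; both end with the same tangency and the same application of the comparison principle.

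The genuine gap is in item (4), and it sits precisely on the step that carries the analytic content of the lemma. That the mean curvature of $\Sigma(\g_r)$ is nowhere zero and points toward $\Sigma(\g)$ for an \emph{arbitrary} left invariant metric is the claim to be proved; your double-periodicity argument (invariance under the lifts of the hyperbolic translations along $\g$ and under the center $Z$, giving a compact quotient) correctly upgrades positivity to a uniform lower bound, exactly as in the paper, but the positivity itself is deferred to a frame computation that you do not carry out and that you yourself flag as the delicate point --- precisely because $\Pi$ is not a Riemannian submersion when the $\l_i$ are unequal, so the footnote's value (half the geodesic curvature of $\g_r$) is unavailable. The missing idea is that no computation is needed: your own item (3) supplies the comparison surfaces. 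Given $q\in \g_r$, let $\hat{\g}\subset \Hip^2(-4)$ be the geodesic tangent to $\g_r$ at $q$; since $\g_r$ has constant nonzero geodesic curvature, $\hat{\g}$ lies entirely on the side of $\g_r$ not containing $\g$. Then $\Sigma(\hat{\g})$ is minimal, tangent to $\Sigma(\g_r)$ along the whole fiber $\G=\Pi^{-1}(\{q\})$, and lies on one side of it, so the mean curvature comparison principle gives that the mean curvature of $\Sigma(\g_r)$, with respect to the normal pointing toward $\Sigma(\g)$, is nonnegative along $\G$. Strict positivity requires one further observation (also in the paper): if the mean curvature vanished at a point of $\G$, then both second fundamental forms would be trace free there and both vanish on $(\G',\G')$ because both surfaces contain the geodesic $\G$; hence their difference, in an orthonormal basis starting with $\G'$, has zero diagonal and therefore eigenvalues $a,-a$. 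If $a\neq 0$ this contradicts one-sidedness, and if $a=0$ the two second fundamental forms coincide along $\G$, forcing $\g_r$ and $\hat{\g}$ to have second order contact at $q$, which is false. Without this (or your unexecuted computation), items (4) and (5) --- the parts of the lemma actually used later to conclude $H(X)>0$ --- remain unproven.
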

\begin{proof}
Item~1 is well known and follows from the fact that  $E_3$ is a
principal Ricci curvature direction at every point in $X$; see~\cite[Proposition~2.21]{mpe11}
for details.

To prove item 2, consider the integral curve $\G \subset \sl $ of $E_3$
that passes through a given point $p\in \Sigma (\g )$. As $\Pi (\G )=\Pi (p)\in \g $,
  then $\G $ is entirely contained in $\Sigma (\g )$. Since
$R^{\G }\colon \E (-4,1)\to \E (-4,1)$ projects through $\Pi _0$ to
the rotation  $\psi _{\Pi (\G )}\colon \Hip ^2(-4)\to \Hip ^2(-4)$ of angle $\pi $
around $\Pi(\G)\in \g$, and $\psi _{\Pi (\G )}$ leaves
invariant the geodesic $\g $, then $R_\G (\Sigma (\g ))=\Sigma (\g )$.
This proves that item~2 holds.

Minimality in item~3 follows since the mean curvature vector of
$\Sigma (\g )$ maps to its negative under the differential of $R_\G $. $\Sigma (\g )$ is
stable because it admits a positive Jacobi function (namely, $\langle K,N\rangle $ where
$K$ is the right invariant vector field on $\sl $ generated by the hyperbolic
translations along a geodesic orthogonal to $\g $ in $\Hip ^2(-4)$, and $N$ is
a unit normal field to $\Sigma (\g )$).

To prove item~4, take a curve $\g _r$ at
distance $r>0$ from a geodesic $\g \subset
\Hip ^2(-4)$. Given a point $q\in \g _r$, there
exists a geodesic $\hat{\g }\subset \Hip ^2(-4)$ passing
through $q$ which is tangent to $\g _r$ at $q$, and
$\hat{\g }$ lies entirely on the side of
$\g _r$ that does not contain $\g$. By item~3,
the surface $\Sigma (\hat{\g })=\Pi ^{-1}(\hat{\g })$ is minimal.
Since $\Sigma (\hat{\g })$ is tangent to $\Sigma (\g _r)$ and lies at one side of
$\Sigma (\g _r)$, then the usual mean curvature comparison principle implies
that the mean curvature of $\Sigma (\g _r)$ with respect to the
unit normal vector field that points towards $\Sigma (\g )$, is nonnegative
along $\G =\Pi ^{-1}(\{ q\} )$. In fact, the mean curvature
of $\Sigma (\g _r)$ is strictly positive along $\G$
as we explain next: otherwise the second
fundamental forms $II$ of $\Sigma (\g _r)$ and $\wh{II}$ of $\Sigma (\hat{\g})$ would have
the same trace and the same value in the direction given by the tangent vector $\G'$
to $\G $; choosing $\G'$ as the first vector of an orthonormal basis $\mathcal{B}$ of the
common tangent plane of both surfaces at any point of $\G$, the matrix of the difference $II-\wh{II}$ with respect
to $\mathcal{B}$ is symmetric with zero diagonal entries; therefore, $II-\wh{II}$ has opposite eigenvalues
$a,-a$. If $a\neq 0$, then we contradict that $\Sigma (\wh{\g })$ lies at one side of $\Sigma (\g _r)$;
hence $II=\wh{II}$ along $\G $, which in turn implies that
$\g _r$ would have a point of second order contact with $\hat{\g }$, which is false.
Therefore, the mean curvature function of $\Sigma (\g _r)$ is strictly positive.

To see that the mean curvature function of $\Sigma (\g _r)$ is bounded away from zero,
consider the 1-parameter subgroup $G$ of hyperbolic translations of $\sl $ associated
to the translations of $\Hip^2(-4)$ along $\g $. Then, $\Sigma (\g _r)$ is invariant
under every element in $G$. Also, $\Sigma (\g _r)$ is invariant under
left translation by every element
in the center of $\sl $ (recall that elements in this center correspond to liftings to $\sl $
of rotations by multiples of $2\pi $ around a point in $\Hip ^2(-4)$). Hence
$\Sigma (\g _r)$ can be considered to be a doubly periodic surface. This property, together
with the positivity of the mean curvature function of $\Sigma (\g _r)$, imply that item~4
holds.

Finally, item~5 is a direct consequence of item~4 and of the mean curvature comparison
principle by the following argument. Let $\Delta \subset X$ be any closed
surface and take $r>0$. Since the mean convex side of $\Sigma(\g_r)$
contains arbitrarily large balls of $X$ in its interior, then after a left translation of $\Delta $,
we can assume that $\Delta $ lies on the mean convex side of $\Sigma(\g_r)$.
After further continuous left translations inside the mean convex side of $\Sigma(\g_r)$
applied to $\Delta $, we can assume that $\Delta $ is also tangent to $\Sigma(\g_r)$ at some common point $p$.
By the mean curvature comparison principle, the absolute mean curvature
function of $\Delta $ at $p$ is at least as large as any lower positive bound $\de$ of the
positive mean curvature function of $\Sigma(\g_r)$. Hence, $H(X)\geq \de$, which completes the proof.
\end{proof}

\begin{lemma}
\label{lemma3.6}
The push-forward by any left translation in $\sl$ of a right invariant elliptic (resp. hyperbolic, parabolic) vector field
is again right invariant elliptic (resp. hyperbolic, parabolic) vector field. Furthermore,
given a nonzero elliptic or parabolic vector field $F$ on $\sl $, the inner product of $F$
and the left invariant vector field $E_3$ given by (\ref{eq:lframesl}), has constant
nonzero sign on $\sl $, independently of the left invariant metric on $\sl $.
\end{lemma}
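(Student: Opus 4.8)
The plan is to treat the two assertions separately, the common engine being the adjoint action of $\sl$ on its Lie algebra.

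For the first assertion, I would start from the fact that left and right translations commute, $l_a\circ r_g=r_g\circ l_a$ for all $a,g\in\sl$, since $a(xg)=(ax)g$. By functoriality of the push-forward it follows that if $K$ is right invariant then so is $(l_a)_*K$, because $(r_g)_*(l_a)_*K=(l_a)_*(r_g)_*K=(l_a)_*K$ for every $g$. To identify which right invariant field is obtained, I would evaluate it at $e$: writing $K(x)=(dr_x)_e(v)$ with $v=K(e)$, a direct computation gives $((l_a)_*K)(e)=(dl_a)_{a^{-1}}\circ(dr_{a^{-1}})_e(v)=d(l_a\circ r_{a^{-1}})_e(v)=\mathrm{Ad}_a(v)$, since $l_a\circ r_{a^{-1}}$ is conjugation by $a$. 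Hence $(l_a)_*K$ is the right invariant field generated by $\mathrm{Ad}_a(v)$, whose associated $1$-parameter subgroup is $a\Gamma a^{-1}$, the conjugate of the subgroup $\Gamma$ generating $K$. As the character is invariant under conjugation (recalled just before Lemma~\ref{caracsl}), $(l_a)_*K$ has the same character as $K$, which is the first assertion.

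For the second assertion, I would first express the inner product pointwise in the left invariant frame. Writing $F(x)=\sum_i f_i(x)\,E_i(x)$, the coordinates $(f_1,f_2,f_3)(x)$ are obtained by transporting $F(x)=(dr_x)_e(v)$ back to $T_e\sl$ via $(dl_x)_e^{-1}=(dl_{x^{-1}})_x$; since $l_{x^{-1}}\circ r_x$ is conjugation by $x^{-1}$, this yields $(f_1,f_2,f_3)(x)=\mathrm{Ad}_{x^{-1}}(v)$ in the basis $\{(E_i)_e\}$. Because every left invariant metric makes $E_1,E_2,E_3$ orthogonal with $|E_3|^2=\lambda_3^2>0$, we have $\langle F,E_3\rangle=\lambda_3^2\,f_3$, so the sign of $\langle F,E_3\rangle$ coincides with that of $f_3(x)=[\mathrm{Ad}_{x^{-1}}(v)]_3$ and is independent of the metric. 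It thus suffices to prove that $f_3$ never vanishes and has constant sign.

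The crux is the nonvanishing of $f_3$, which I would control with the $\mathrm{Ad}$-invariant form $Q(Y,Z)=\mathrm{tr}(YZ)$ on $\mathfrak{sl}(2,\R)$. Evaluating $Q$ on the basis \eqref{eq:lframesl} gives $Q((E_1)_e,(E_1)_e)=Q((E_2)_e,(E_2)_e)=2$ and $Q((E_3)_e,(E_3)_e)=-2$ with vanishing off-diagonal terms, so $Q(v,v)=2(a^2+b^2-c^2)$ for $v=(a,b,c)$; by Lemma~\ref{caracsl} this is negative in the elliptic case and zero in the parabolic case. Setting $w=\mathrm{Ad}_{x^{-1}}(v)=(a',b',c')$ and using $Q(w,w)=Q(v,v)$, the elliptic case forces $c'^2>a'^2+b'^2\ge 0$, hence $f_3(x)=c'\neq 0$, while the parabolic case gives $a'^2+b'^2=c'^2$ with $w\neq 0$ (as $\mathrm{Ad}_{x^{-1}}$ is invertible and $v\neq 0$), again forcing $c'\neq 0$. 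Since $\sl$ is connected and $f_3$ is continuous and nowhere vanishing, it has constant sign, completing the proof. I expect this to be the only genuinely delicate point: the same computation shows that $Q(v,v)>0$ in the hyperbolic case permits $c'=0$, so $f_3$ may there vanish, which is precisely why the statement is restricted to elliptic and parabolic fields.
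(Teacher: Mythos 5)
Your proposal is correct, and its overall skeleton is the paper's: the first part is the same conjugate-subgroup argument, and in the second part both you and the paper reduce the claim to the nonvanishing of the $E_3$-coordinate of $\mathrm{Ad}_{x^{-1}}(v)$ — indeed, the paper's auxiliary field $F^x$, defined by $(l_x)_*(F^x)=F$, satisfies $F^x_e=\mathrm{Ad}_{x^{-1}}(v)$, which is exactly the quantity you compute. The one genuine difference is how the inequality $a'^2+b'^2\le c'^2$ for that vector is justified. The paper invokes the already-proved first statement of the lemma ($F^x$ has the same character as $F$) together with Lemma~\ref{caracsl}; you instead use the $\mathrm{Ad}$-invariance of the trace form $Q(Y,Z)=\mathrm{tr}(YZ)$, whose signature in the basis (\ref{eq:lframesl}) gives $Q(v,v)=2(a^2+b^2-c^2)$. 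Your route is more self-contained and proves something slightly stronger — the quantity $a^2+b^2-c^2$ is exactly preserved, which is the algebraic reason the character is conjugation-invariant in the first place — while the paper's is shorter because it recycles its own first part. Your treatment of the metric-independence is also the sharper one: the identity $\langle F,E_3\rangle=\lambda_3^2 f_3$, valid because every left invariant metric on $\sl$ is diagonal in the frame $\{E_1,E_2,E_3\}$, is precisely what is needed, whereas the paper asserts sign-independence of the inner product of \emph{arbitrary} pairs of tangent vectors, which taken literally is too strong (e.g.\ $\langle E_1+E_3,E_1-E_3\rangle=\lambda_1^2-\lambda_3^2$ changes sign as the $\lambda_i$ vary); what its proof actually uses is again your identity. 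Your closing remark that $Q(v,v)>0$ in the hyperbolic case allows the third coordinate to vanish correctly pinpoints why the statement excludes hyperbolic fields.
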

\begin{proof}
The fact that the push-forward by a left translation of a right invariant vector
field is again right invariant holds in any
Lie group, and is well known.

Consider now $F_a:= (l_a)_* (F)$, where $F$ is a right invariant vector field and $a\in \sl$.
If $\Gamma$ denotes the $1$-parameter subgroup with $\Gamma'(0)=F(e)$,
then the corresponding $1$-parameter subgroup $\Gamma_a$ that generates the right invariant vector field $F_a$
is given by $\Gamma_a=a \Gamma a^{-1}=l_a(r_{a^{-1}}(\Gamma))$.
In particular, $\Gamma_a$ and $\Gamma$ are conjugate subgroups,
and so they have the same character. Thus, $F_a$ and $F$ also have the same character, as claimed.

Finally, observe that from the paragraph just before the statement of Lemma~\ref{lemma4.1}
we deduce that the sign of the inner product of a pair of tangent vectors $v,w\in T_x\sl $ at any $x\in X$ does not
depend on the left invariant metric on $\sl $. Therefore, in order to prove the last sentence of the lemma,
it suffices to choose a nonzero elliptic or parabolic vector field $F$ and
prove that the function $\langle F,E_3\rangle \colon \sl \to \R$ has no zeros, where
$\langle,\rangle $  is the metric defined just before \eqref{eq:Ekt}.

Given $x\in \sl $, let $F^x$ be the vector field on $\sl $ defined by $(l_x)_*(F^x)=F$. By the first sentence of
Lemma~\ref{lemma3.6}, $F^x$ is a nonzero right invariant vector field of the same character as $F$ (i.e.,
elliptic or parabolic), and
\[
\langle F,E_3\rangle (x)=\langle (dl_x)_e(F_e^x),(E_3)_x\rangle =\langle F^x_e,(E_3)_e\rangle
=\langle F^x,E_3\rangle (e),
\]
where we have used that $E_3$ is left invariant and that $l_x$ is an isometry of $\langle ,\rangle $.
As $F^x$ is elliptic or parabolic, by Lemma~\ref{caracsl}, the coordinates $(a,b,c)$ of $F^x_e$ satisfy $a^2+b^2\leq c^2$,
which implies that $\langle F,E_3\rangle (x)$ cannot vanish, as desired. Now the proof of
Lemma~\ref{lemma3.6} is complete.
\end{proof}

We will finish this section with some considerations about two-dimensional subgroups of
$\sl $ and their constant left invariant Gauss map images.

Given $\t \in \partial _{\infty }\Hip ^2$, let $\Hip ^2_{\theta }\subset \sl $ be the set of
liftings to $\sl $ of the orientation-preserving isometries of $\Hip ^2$ that fix~$\theta $.
Elements in $\Hip^2_{\t }$ are of two types: liftings to $\sl $ of hyperbolic translations along geodesics
one of whose end points is $\t $, and liftings to $\sl $ of parabolic rotations around $\t $.
$\Hip ^2_{\theta }$ is a noncommutative, two-dimensional simply connected  subgroup of $\sl $, and $\Hip ^2_{\theta }$
contains a unique $1$-parameter parabolic subgroup, namely the liftings to $\sl $ of
parabolic rotations around $\t $. Every two-dimensional subgroup of $\sl $ is of the form $\Hip ^2_{\t}$
for some $\t \in \partial _{\infty }\Hip ^2$, see the paragraph around equation~(2.30)
in~\cite{mpe11} for more details.

Given a hyperbolic $1$-parameter subgroup $\G $ of $\sl $, by Lemma~\ref{caracsl}, the image of $\G $ through
the map $\Pi $ defined in (\ref{eq:PI}) is an arc in $\Hip ^2(-4)$ of constant geodesic curvature that
has two extrema $\t _1,\t _2\in \partial _{\infty }\Hip ^2$.
These two end-point values $\t _1,\t _2$ determine the two two-dimensional subgroups
$\Hip ^2_{\t _1}$, $\Hip ^2_{\t _2}$ of $\sl $ that contain $\G $.

The following result describes the image under the left invariant Gauss map of the circle
family $\{ \Hip ^2_{\theta }\ | \ \theta \in \partial _{\infty }\Hip ^2\} $ (recall that
the left invariant Gauss map depends on the left invariant metric on $\sl$, and thus, it depends on
the numbers $\l _1,\l _2,\l _3>0$ chosen just before Lemma~\ref{lemma4.1}).

\begin{lemma}
\label{lemma5.3}
The constant value $G_{\theta }\in \esf^2$ of the left invariant Gauss map of
$\Hip ^2_{\t }$  is (up to a sign coming from a change of orientation):
\begin{equation}
\label{Gsl}
G_{\t }=\frac{\pm 1}{\sqrt{\l _2\l _3\sin ^2\t +\l _1\l _3\cos ^2\t +\l _1\l _2}}
\left( -\sqrt{\l _2\l _3}\sin \t , \sqrt{\l _1\l _3}\cos \t ,-\sqrt{\l _1\l _2}\right) ,
\end{equation}
in coordinates with respect to the orthonormal frame
$\{ \l _i^{-1/2}(E_i)_e\ | \ i=1,2,3\} $ (see equation~(\ref{eq:lframesl})). The set
$\{ \pm G_{\t }\ | \ \t\in  \partial _{\infty }\Hip ^2\} $ is a
pair of antipodal, simple closed curves $\Upsilon \cup (-\Upsilon )\subset \esf^2$,
and it is invariant under the $\pi $-rotations in $\esf^2$ in the directions of
$(E_1)_e,(E_2)_e,(E_3)_e$, see Figure~\ref{slGauss}.
\end{lemma}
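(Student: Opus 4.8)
The plan is to reduce the statement to a linear-algebra computation in the Lie algebra $\mathfrak{sl}(2,\R)$. By property (C) above, the left invariant Gauss map of the two-dimensional subgroup $\Hip^2_\theta$ is constant. Since $\Hip^2_\theta$ passes through the identity $e$ with tangent space the Lie subalgebra $\mathfrak{h}_\theta:=T_e\Hip^2_\theta$, and the left invariant Gauss map at $e$ is by Definition~\ref{defG} simply the unit normal $N_e$ (as $l_e=\mathrm{id}$), we conclude that $G_\theta$ is, up to sign, the $\langle,\rangle$-unit normal to $\mathfrak{h}_\theta$ inside $T_e\sl$. Thus everything reduces to two tasks: (i) identifying the plane $\mathfrak{h}_\theta\subset T_e\sl$, and (ii) taking its orthogonal complement for the metric that makes $\{\lambda_i^{-1/2}(E_i)_e\}$ orthonormal, i.e. for which $\langle (E_i)_e,(E_j)_e\rangle=\lambda_i\delta_{ij}$.

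For task (i), I would invoke Lemma~\ref{caracsl}: in the coordinates $(a,b,c)$ relative to the basis in (\ref{eq:lframesl}), the parabolic directions are exactly the null cone $\cC=\{a^2+b^2=c^2\}$ of the quadratic form $a^2+b^2-c^2$. Since $\Hip^2_\theta$ contains a unique $1$-parameter parabolic subgroup, and every two-dimensional subalgebra of $\mathfrak{sl}(2,\R)$ is a plane tangent to $\cC$ along its parabolic line, one gets $\mathfrak{h}_\theta=\{(a,b,c):a_0a+b_0b-c_0c=0\}$, where $(a_0,b_0,c_0)\in\cC$ is the parabolic direction attached to $\theta$. To parametrize these null lines by $\theta\in\partial_\infty\Hip^2$, I would use that the elliptic flow $\mathrm{Ad}(\exp(tE_3))$ rotates the plane spanned by $(E_1)_e,(E_2)_e$ and fixes $(E_3)_e$ (this follows from the bracket relations after (\ref{eq:lframesl}), whence $\mathrm{ad}_{E_3}$ acts as a rotation there), and projects through $\Pi$ to the rotation of $\Hip^2(-4)$ about $\Pi(e)$, hence acts as a rotation of $\partial_\infty\Hip^2$. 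Normalizing the identification so that it matches the angle $\theta$ yields $(a_0,b_0,c_0)=(-\sin\theta,\cos\theta,1)$; this can be anchored by computing one reference subgroup directly, e.g. the two subalgebras containing the hyperbolic direction $(E_1)_e$ are cut out by $a_0=0$ and so have parabolic lines $(0,\pm1,1)$, i.e. $\theta\in\{0,\pi\}$.

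For task (ii), since $\langle (E_i)_e,(E_j)_e\rangle=\lambda_i\delta_{ij}$, the metric dual of the covector $(a_0,b_0,-c_0)$ that cuts out $\mathfrak{h}_\theta$ is $n=(a_0/\lambda_1,\,b_0/\lambda_2,\,-c_0/\lambda_3)$ in the $E_i$-coordinates; rewriting $n$ in the orthonormal frame $\{\lambda_i^{-1/2}(E_i)_e\}$ gives the coordinates $(a_0\lambda_1^{-1/2},\,b_0\lambda_2^{-1/2},\,-c_0\lambda_3^{-1/2})$. Substituting $(a_0,b_0,c_0)=(-\sin\theta,\cos\theta,1)$, clearing the common factor $\sqrt{\lambda_1\lambda_2\lambda_3}$, and normalizing then produces exactly formula (\ref{Gsl}), the normalizing constant working out to $\sqrt{\lambda_2\lambda_3\sin^2\theta+\lambda_1\lambda_3\cos^2\theta+\lambda_1\lambda_2}$. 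For the description of $\Upsilon\cup(-\Upsilon)$, as $\theta$ ranges over $\R/2\pi\Z$ formula (\ref{Gsl}) with a fixed sign defines a smooth closed curve $\Upsilon$; it is simple because the ratio of its first two coordinates determines $\tan\theta$ while their signs fix the quadrant, so $\theta\in[0,2\pi)$ is recovered uniquely, and $\Upsilon\cap(-\Upsilon)=\emptyset$ because the third coordinate of $G_\theta$ has constant sign. The invariance under the three $\pi$-rotations is then a direct check on (\ref{Gsl}): the rotation about $(E_3)_e$ corresponds to $\theta\mapsto\theta+\pi$, the rotation about $(E_1)_e$ to $G_\theta\mapsto-G_{-\theta}$, and the rotation about $(E_2)_e$ to $G_\theta\mapsto-G_{\pi-\theta}$.

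I expect the main obstacle to be task (i): rigorously pinning down the correspondence $\theta\mapsto(a_0,b_0,c_0)$, including the factor of $2$ in the elliptic adjoint action and the sign and orientation conventions relating $\sl$, the model $\E(-4,1)$, the Riemannian submersion $\Pi$, and the chosen identification of $\partial_\infty\Hip^2$ with $S^1$. Once this parametrization of the parabolic null lines is fixed, the passage through tasks (ii) and the curve-theoretic conclusions are routine.
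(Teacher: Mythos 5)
Your proposal is correct, reaches exactly formula (\ref{Gsl}), and its final curve-theoretic checks (simplicity via recovering $\theta$ from the first two coordinates, antipodality via the sign of the third coordinate, and the three $\pi$-rotation invariances) are precisely what the paper leaves as ``direct consequences'' of (\ref{Gsl}). However, your middle step differs from the paper's in a genuine way. Both proofs reduce the lemma to computing the unit normal of the subalgebra $T_e\mathbb{H}^2_{\theta}\subset T_e\sl$ in the orthonormal frame $\{\lambda_i^{-1/2}(E_i)_e\}$, and both rest on Lemma~\ref{caracsl}'s coordinates; but the paper determines the plane $T_e\mathbb{H}^2_{\theta}$ by exhibiting \emph{two} spanning vectors -- the parabolic $v^P=(-\sin\theta,\cos\theta,1)$ \emph{and} the hyperbolic $v^H=(\cos\theta,\sin\theta,0)$, the latter identified via its $\Pi$-projection onto the geodesic with endpoints $\pm e^{i\theta}$ -- and then takes the normalized cross product of these in the orthonormal frame, whereas you determine the plane by a single defining equation, using the algebraic fact that every two-dimensional subalgebra of $\mathfrak{sl}(2,\mathbb{R})$ is the tangent plane to the nilpotent cone $\{a^2+b^2=c^2\}$ along its unique parabolic line, and then take the metric dual of that covector. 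The trade-off: the paper needs a second geometric identification (which hyperbolic subgroups lie in $\mathbb{H}^2_\theta$, asserted via the conventions of~\cite{mpe11}), while you need the cone-tangency fact -- which you assert but should justify in a line, e.g.\ by Ad-invariance of the trace form plus the computation for one Borel subalgebra, since both objects are $2$-dimensional and the Borel contains and is trace-orthogonal to its nilpotent line -- and in exchange you concentrate all convention-dependence in the single anchoring step $\theta\mapsto(-\sin\theta,\cos\theta,1)$. That anchoring, which you correctly flag as the main obstacle and resolve by a reference computation with $(E_1)_e$, is exactly the assertion the paper also makes without proof in its ``Furthermore'' sentence, so on this point the two arguments are at comparable levels of rigor; your dualization in task (ii) ($\lambda_i n_i$ proportional to the coefficients of the defining equation, then rescaling into the orthonormal frame) is correct and reproduces the paper's normalizing constant.
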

\begin{figure}
\begin{center}
\includegraphics[height=4.5cm]{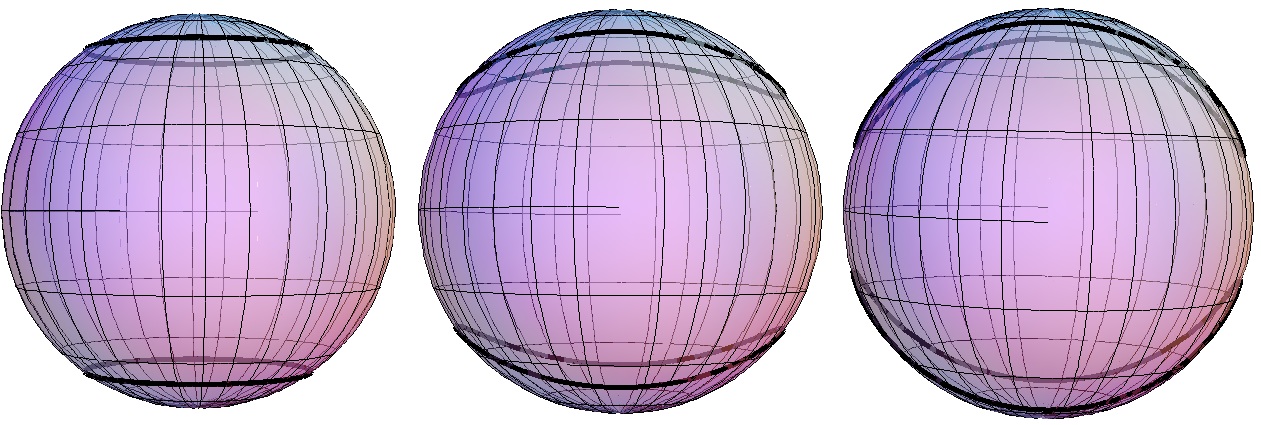}
\caption{Gauss map image of the family of two-dimensional subgroups $\Hip ^2_{\t }$
of $\sl $, with three different left invariant metrics. Left: $\l _1=\l _2=2,\l _3=1$ ($\E (\kappa ,
\tau )$-metric). Center: $\l _1=0.5$, $\l _2=2$, $\l _3=1$. Right: $\l _1=0.1$, $\l _2=4$, $\l _3=1$.}
 \label{slGauss}
\end{center}
\end{figure}
\begin{proof}
Let $v\in T_e\sl $ be a tangent vector with coordinates $(a,b,c)$ with respect to
$\{ (E_1)_e$, $(E_2)_e,(E_3)_e\} $. By Lemma~\ref{caracsl},
the tangent vector $v^P\in T_e\sl $ with coordinates $(a,b,c)=(-\sin \t ,\cos \t ,1)$
(resp. $v^H\in T_e\sl $ with $(a,b,c)=(\cos \t ,\sin \t ,0)$) produces
a parabolic (resp. hyperbolic) 1-parameter subgroup $\G^{P}$
(resp. $\G^H$) of $\sl $. Furthermore, the projection of $\G^P$
(resp. of $\G^H$)
through the map $\Pi $ defined in (\ref{eq:PI}) is the horocycle that passes
through the origin of the Poincar\'e disk and through the point at infinity
$e^{i\theta }\in \partial _{\infty }\Hip^2$
(resp. the segment with extrema $\pm e^{i\t }$).
It follows that both subgroups $\G^P,\G ^H$ are contained in
the two-dimensional subgroup $\Hip ^2_{\t }$ of $\sl $.

The coordinates of $v^P,v^H$ with
respect to the frame $\{ \l _i^{-1/2}(E_i)_e\ | \ i=1,2,3\} $  are
respectively $(-\sqrt{\l _1}\sin \t ,\sqrt{\l _2}\cos \t ,\sqrt{\l _3})$ and
$(\sqrt{\l _1}\cos \t ,\sqrt{\l _2}\sin \t ,0)$. After normalizing the cross product
of these 3-tuples,  we deduce that
the (constant) value of the left invariant Gauss map of $\Hip ^2_{\t }$ in the
left invariant metric on $\sl $ determined by $\l _1,\l _2,\l_3>0$ is the one given by
equation~(\ref{Gsl}). The remaining conclusions of the lemma are direct consequences of
(\ref{Gsl}).
\end{proof}

\section{The geometry of invariant limit surfaces of index-one spheres}
\label{sec:limits}
In what follows, we will denote by  $\chM _X$ the moduli space of all index-one
constant mean curvature spheres in a metric Lie group $X$
diffeomorphic to $\R^3$. Inside $\chM _X$ we have the
component $\mathcal{C}$ described in item~4 of Proposition~\ref{propsu2}.
We will also use the number $h_0(X)\geq 0$ defined in Proposition~\ref{propsu2}.

\begin{definition}
\label{def6.1}
{\rm
 We say that a complete, noncompact, connected
$H$-surface $f\colon\Sigma\looparrowright X$ is a \emph{limit
surface} of $\mathcal{C}$ with base point $p\in \Sigma $ (also called a
{\em pointed limit immersion} and denoted by $f\colon (\Sigma
,p)\looparrowright (X,e)$) if $e=f(p)$ and there exists a sequence
$\{ \wh{f}_n\colon S_n\la X\} _n\subset \mathcal{C}$, compact domains
$\Omega_n\subset S_n$ and points $p_n\in \Omega _n$ such that the
following two conditions hold:
 \begin{enumerate}[(G1)]
   \item  $f$ is a limit as $n\to \infty $ of the immersions $f_n=l_{\wh{f}_n(p_n)^{-1}}\circ (\wh{f}_n|_{\Omega _n})
\colon \Omega_n  \la X$ obtained by left translating $\wh{f}_n|_{\Omega _n}$ by the inverse of $\wh{f}_n(p_n)$
in $X$ (hence $f_n(p_n)=e$).  Here, the convergence is the uniform convergence in the
 $C^k$-topology for every $k\geq 1$, when we view the surfaces as
 local graphs in the normal bundle of the limit immersion.
  \item
 The area of $f_n$ is greater than $n$, for all $n\in \N$.
 \end{enumerate}}
\end{definition}

We should observe that, by item 6 of Proposition~\ref{propsu2}, any such pointed limit immersion has constant
mean curvature equal to $h_0(X)$. Moreover, by~\cite[Lemma 5.2]{mmpr4}, the space of pointed
limit immersions of $\mathcal{C}$ is nonempty.

\begin{proposition}
\label{DeltaF}
Let $f\colon (\Sigma ,p)\looparrowright (X,e)$ be a pointed limit immersion of $\mathcal{C}$, and let $G\colon
\Sigma \to \esf^2\subset T_eX$ be its left invariant Gauss map. Then, the rank of the differential $dG$ is constant on
$\Sigma $. Furthermore:
\begin{enumerate}[(1)]
\item If rank$(dG)=0$, then $f$ is injective and $f(\Sigma )$ is a two-dimensional subgroup of $X$.
\item If rank$(dG)=1$, then there exists a unique (up to scaling) right invariant vector field on $X$ which is
everywhere tangent to $f( \Sigma )$.
\end{enumerate}
\end{proposition}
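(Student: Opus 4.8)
The plan is to organize the proof around whether the hypothesis of Proposition~\ref{propos3.1} holds for $f$. First I record that, by Definition~\ref{def6.1} and item~6 of Proposition~\ref{propsu2}, $f$ is a complete, connected, noncompact immersion of constant mean curvature $h_0(X)$; in particular $f$ and hence $G$ are real analytic. The dichotomy is: either (I) there is a two-dimensional subgroup $\Delta\subset X$ of mean curvature $h_0(X)$ (for the orientation for which its constant left invariant Gauss map value lies in $G(\Sigma)$), or (II) there is no such subgroup. I will show that (I) forces rank$(dG)\equiv 0$ and conclusion (1), while (II) forces rank$(dG)\equiv 1$ and conclusion (2); in either case the rank is automatically constant, which is the first assertion of the proposition.

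In case (I), let $v:=G_\Delta\in G(\Sigma)$ be the constant Gauss value of $\Delta$ (property (C)) and pick $q\in\Sigma$ with $G(q)=v$. By left invariance of the Gauss map, the left coset $f(q)\Delta$ passes through $f(q)$ with the same constant Gauss value $v$ and the same constant mean curvature $h_0(X)$ as $\Delta$; hence $f(\Sigma)$ and $f(q)\Delta$ are tangent at $f(q)$ with coinciding mean curvature vectors. The plan is then to apply the strong maximum principle for constant mean curvature surfaces: using that $f(\Sigma)$ lies locally on one side of the tangent coset $f(q)\Delta$ (one-sided contact, which I expect to obtain from the Alexandrov-embeddedness of the approximating spheres, as in Corollary~\ref{cor:unique}), the two surfaces coincide near $f(q)$, and by unique continuation $f(\Sigma)\subset f(q)\Delta$. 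Thus $G\equiv v$, so rank$(dG)\equiv 0$; moreover $f$ is a local isometry of the complete $\Sigma$ into the simply connected coset, hence a Riemannian covering and so a diffeomorphism onto $f(q)\Delta$. Since $e=f(p)\in f(\Sigma)$, this coset passes through $e$ and therefore equals the subgroup $\Delta$, giving conclusion (1).

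In case (II) the hypothesis of Proposition~\ref{propos3.1} holds, and to apply it I must first prove rank$(dG)\le 1$ everywhere; \emph{this is the main obstacle}. The input I plan to exploit is that each approximating Gauss map equals the restriction of the diffeomorphism $G_n\colon S_n\to\esf^2$ (the left invariant Gauss map is unchanged by the left translations in Definition~\ref{def6.1}, and is an orientation-preserving diffeomorphism by item~3 of Proposition~\ref{propsu2}); hence its signed Jacobian is positive and, by the degree, $\int_{S_n}\mathrm{Jac}(G_n)=4\pi$, while $\mathrm{Area}(S_n)\to\infty$ by condition (G2). The difficulty is that this total-curvature bound alone is consistent with rank $2$ on a set of small area, so I expect the exclusion of rank~$2$ to genuinely require the stability of the limit surface $f$ (inherited from the index-one spheres, whose single unstable direction is lost in the noncompact limit) together with the curvature and radius estimates of~\cite{mmpr4,mmp1}: a complete stable limit of these spheres whose Gauss map were nondegenerate on an open set should violate those estimates. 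Granting rank$(dG)\le 1$ everywhere, Proposition~\ref{propos3.1}(1) shows that $f$ is invariant under the flow of a nonzero right invariant field $K$, and then Proposition~\ref{propos3.1}(2) upgrades this to rank$(dG)\equiv 1$ with $G(\Sigma)$ a regular curve.

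It remains to prove the uniqueness of $K$ in case (2) and to assemble the statement. Since $f$ is $K$-invariant, $G$ is constant along the $K$-orbits (Remark~\ref{rem7.1}), so $df(\ker dG)$ is exactly the line spanned by $K$. Any right invariant field $K'$ everywhere tangent to $f(\Sigma)$ generates a flow preserving $f(\Sigma)$, hence is also tangent to $\ker dG$ and therefore pointwise proportional to $K$; if $K'$ were not a constant multiple of $K$ in the Lie algebra, the right invariant bivector $K\wedge K'$ would be nonzero at $e$ and thus nowhere zero, yielding a second independent invariance direction at every point, so that $f(\Sigma)$ would be an integral surface of a right invariant $2$-plane distribution and $G$ would be constant, contradicting rank$(dG)\equiv 1$. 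Hence $K$ is unique up to scaling. Combining this with the two cases above completes the proof: the rank is constant, equal to $0$ exactly in case (I) (where $f(\Sigma)$ is a two-dimensional subgroup) and to $1$ exactly in case (II) (where the tangent right invariant field is unique up to scaling).
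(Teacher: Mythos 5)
Your high-level organization --- splitting on whether the hypothesis of Proposition~\ref{propos3.1} holds, i.e., on the existence of a two-dimensional subgroup of mean curvature $h_0(X)$ whose Gauss map value lies in $G(\Sigma)$ --- is faithful to the logic of the paper, and your uniqueness argument for item (2) (two independent tangent right invariant fields would, via the bracket, make $f(\Sigma)$ a coset of a two-dimensional subgroup, forcing $G$ constant) is exactly the paper's. But both halves of your dichotomy contain genuine gaps. In case (I), the one-sided contact you need before invoking the maximum principle is not available to you, and your route to it is circular: Corollary~\ref{cor:unique} is proved \emph{after} Proposition~\ref{DeltaF} and depends on it through Lemma~\ref{unique-limit} and Proposition~\ref{rank:limits}, whose proofs invoke Proposition~\ref{DeltaF} or its arguments. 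Worse, one-sidedness here is essentially equivalent to the conclusion: by Bers' theorem~\cite{Bers1}, if $f(\Sigma)$ and $f(q)\Delta$ do not coincide near $f(q)$, the difference function has a homogeneous harmonic leading term of degree $k\ge 2$, so the contact is saddle-like and $f(\Sigma)$ automatically crosses the coset; assuming one-sidedness begs the question. The paper instead rules out the saddle directly: the level sets of the difference function are $C^1$-equivalent to those of ${\rm Re}(z^k)$, and Morse theory transplants this picture to the nearby compact pieces of the spheres $\wh{f}_n(S_n)$, producing for large $n$ a tangency of $\wh{f}_n(S_n)$ with a leaf of the coset foliation together with sphere points on both sides of that leaf, contradicting the Transversality Lemma~\cite[Lemma~3.1]{mmp2}. (Nor can you fix this by translating each sphere so it is tangent to a coset at its point with Gauss value exactly $v$: whether those tangency points converge to $q$ is precisely the content of Lemma~\ref{unique-limit}, which again uses Proposition~\ref{DeltaF}.)

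In case (II) you leave the key step as a heuristic, and in fact you are aiming at more than the proposition asserts. The statement does not exclude ${\rm rank}(dG)\equiv 2$; what must be proved is only that rank $1$ at one point propagates to rank $1$ everywhere. The paper's mechanism for this is not the one you guess (stability of the limit plus curvature/radius estimates), but the index-one property of the approximating spheres: at a point $q$ with ${\rm rank}(dG_q)=1$, Step~1 of the proof of Theorem~4.1 in~\cite{mmpr4} yields a nonzero right invariant field $V$ whose Jacobi function $J=\langle V,N\rangle$ vanishes to order at least two at $q$. If $V$ is everywhere tangent to $f(\Sigma)$, Proposition~\ref{propos3.1} gives rank at most one everywhere and rank zero anywhere is excluded by the first part; if $V$ is not everywhere tangent, Cheng's theorem~\cite{cheng1} gives $k\ge 2$ nodal arcs of $J$ crossing at $q$, and transplanting this nodal picture to the spheres $S_n$ (index one, so $J_n^{-1}(0)$ is a single Jordan curve bounding two nodal domains) one constructs, with explicit cutoff test functions, two disjoint strictly unstable domains in $S_n$ --- contradicting index one. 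Excluding ``rank $2$ on an open set'' is neither needed nor done at this stage of the paper; it is established later only for the more special limits in $\Delta(f')$, via~\cite[Corollary~5.4]{mmpr4}, where stability does enter. As it stands, then, your proposal proves neither the constancy of the rank nor item (1), and its two cases would need to be replaced by the Bers/Transversality argument and the index-one nodal argument, respectively.
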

\begin{proof}
Suppose first that $dG$ has rank zero at some point $q\in \Sigma$.
By \cite[equation (3.9) and Corollary 3.21]{mpe11},
there exists a two-dimensional subgroup $\Delta $ of $X$ which has the same mean
curvature as $f$ and (constant) left invariant Gauss map of value $G(q)\in \esf^2$.
We are going to prove next that $f(\Sigma )=l_{f(q)}(\Delta )=f(q)\, \Delta $.

Consider the foliation $\cF=\{ l_a(\Delta )=a\Delta \ | \ a\in X\} $
of left cosets of $\Delta $. Then, we may write the leaves of $\cF$ as the level sets of a Morse function without critical points
$h\colon X\to \R$, so that $\{ h=0\} = f(q)\Delta $. Also, we can consider local coordinates $(x_1, x_2, x_3)$
in $X$ around $f(q)$ with $x_3:=h$, so that $f(q)=(0,0,0)$ in these coordinates. By taking a small neighborhood $V$
of $f(q)$ in
$f(\Sigma )$, we may view $V$ as a graph $x_3=\varphi(x_1,x_2)$ over a small neighborhood $U$ of the
origin in the $(x_1, x_2)$-plane.

Arguing by contradiction, assume that $f(\Sigma )$ and $f(q)\, \Delta $ do not coincide in a neighborhood of $f(q)$.
Then, $\varphi$ is not identically zero around the origin. Since the graphs $x_3=\varphi(x_1,x_2)$ and $x_3=0$ have the
same constant mean curvature in $X$ (and thus define solutions to the same quasilinear elliptic PDE),
then $\varphi-0=\varphi$ satisfies a second order linear
homogeneous elliptic equation with smooth coefficients.
So, by Bers' theorem~\cite{Bers1}, we have $\varphi(x_1,x_2)=p_k(x_1,x_2) + o(\sqrt{x_1^2+x_2^2})^k$, where
$p_k(x_1,x_2)$ is a homogeneous polynomial of the form $p_k=h_k\circ \Phi$,
with $h_k$ a harmonic homogeneous polynomial of degree $k\geq 2$ in $\R^2$,
and $\Phi$ a linear transformation of the $(x_1,x_2)$-coordinates. In particular,
the level set structure of $\varphi(x_1,x_2)$ around the origin is $C^1$-diffeomorphic
to the level set structure of the harmonic function ${\rm Re}(z^k)$ around the origin in the complex plane;
see e.g., Kuo~\cite{kuo1} or Cheng~\cite{cheng1}.
Note that the level sets of $\varphi$ describe the intersection of the foliation $\cF$ with $V$ around $f(q)$.

As $f$ is a pointed limit immersion of $\mathcal{C}$, there exist
a sequence $\{ \wh{f}_n\colon S_n\la X\} _n\subset \mathcal{C}$, compact domains
$\Omega_n\subset S_n$ and points $p_n\in \Omega _n$ such that the conditions
(G1) and (G2) hold. To find the desired contradiction, we will show that for $n$ large enough,
the foliation $\cF$ intersects tangentially $\wh{f}_n(S_n)$ at some point
$\wh{f}_n(q_n)$ with $q_n\in \Omega_n$, so that $\wh{f}_n(S_n)$ has points at both sides
of $\wh{f}_n(q_n)\Delta $, which contradicts the Transversality Lemma~\cite[Lemma 3.1]{mmp2}.

To start, note that it follows from the above discussion on the asymptotic behavior of $\varphi$
around the origin that $f(\Sigma)$ intersects  $\cF$ nearby $f(q)$
in a hyperbolic manner, which is to say that the intersection of $\cF$ with $V$ produces a $1$-dimensional foliation
of $V$ with an isolated singularity  of index $-k$ at $f(q)$, and the leaves of $\cF$ are transverse to
$V$ except at $f(q)$.
For $a>0$ sufficiently close to zero,
 $V\cap \{x_3=a\}$ separates $V$ into of $k+1$ components,
of which $k$ lie in $\{ x_3>a\} $ while just one lies in $\{ x_3<a\} $.
Analogously, for $a<0$ sufficiently close to zero, $V\cap \{x_3=a\}$ separates $V$
into of $k+1$ components, of which $k$ lie in $\{ x_3<a\} $ while just one lies in $\{ x_3>a\} $.
For $n$ large, let $V(n) \subset \wh{f}_n(\Omega_n)$ be subdomains that are expressed as small
normal graphs over $V$ and converge to $V$ as $n\to \infty$.
Since the leaves of $\cF$ are transverse to
$V$ except at $f(q)$, for $a\neq 0$ fixed and sufficiently close to zero,
this description of components of $V- \{ x_3=a\} $
persists on $V(n)- \{ x_3=a\} $ for $n$ sufficiently large. Also observe
that $x_3|_{\partial V(n)}$ has no critical points in $\partial V(n)-\{ |x_3|>a\} $
(since this property holds for  $x_3|_{\partial V}$).
By Morse theory,
we then conclude that for $n$ large enough, there exists a tangency point
$\wh{f}_n(q_n)\in V(n)$ of $\wh{f}_n(S_n)$ with some leaf $\{ x_3=a_n\} $ such that
$\wh{f}_n(q_n)\to f(q)$ and $a_n\to 0$ as $n\to \infty$. This last property implies
that for $n$ sufficiently large,
there exist points $r_n,s_n\in \partial V(n)\subset \wh{f}_n(S_n)$ such that $x_3(r_n)<x_3(\wh{f}_n(q_n))$
and $x_3(s_n)>x_3(\wh{f}_n(q_n))$, which is the desired contradiction with the Transversality Lemma~\cite[Lemma 3.1]{mmp2}.

\par
\vspace{.1cm}
Suppose now that the rank of $dG$ is constant one on $\Sigma $.
By item~1 of Proposition~\ref{propos3.1}, $f(\Sigma )$ is invariant under the flow of a
nonzero right invariant vector field $K$ on $X$. If $\wt{K}$ is a right invariant vector field
on $X$ linearly independent from $K$, and $\wt{K}$ is everywhere tangent to $f(\Sigma )$,
then the Lie bracket $[K,\wt{K}]$ is also everywhere tangent to $f(\Sigma )$. This
implies that $K,\wt{K}$ generate an integrable two-dimensional subalgebra
of the algebra of right invariant vector fields on $X$, whose integral submanifold passing through $e$
is a two-dimensional subgroup $\Delta $ of $X$. By uniqueness of integral submanifolds of an
integrable distribution, we have $f(\Sigma )=\Delta $, which contradicts that the rank of $dG$
is constant one on $\Sigma $. Hence item~2 of the proposition is proved.

We next prove that if $dG$ has rank one at a some point $q\in \Sigma $, then the rank of $dG$ is one
everywhere on $\Sigma $, which together with the previous paragraphs
implies the constancy of the rank of $dG$ in any case and finishes the proof of Proposition~\ref{DeltaF}.
Suppose then that rank$(dG_q)=1$ at some $q\in \Sigma $. By the argument in Step~1 of the proof
of~\cite[Theorem~4.1]{mmpr4}, there is a nonzero right invariant vector field
$V$ on $X$ that has a contact at $f(q)$ with $f(\Sigma )$ of order at least two.
Suppose for the moment that $V$ is everywhere tangent to $f(\Sigma)$. In this case, the
differential of $G$ has rank at most one at every point of $\Sigma $ by item~1 of
Proposition~\ref{propos3.1}. But by the previous arguments of this proof, the rank of $dG$ cannot be zero
at any point of $\Sigma $, and so, $dG$ has constant rank one.

To finish the proof, we assume that $V$ is not everywhere tangent to $f(\Sigma)$ and we
will obtain a contradiction. This contradiction will follow from the
existence of two disjoint compact domains in the index-one $H_n$-spheres
$\wh{f}_n\colon S_n\looparrowright X$ that give rise to the limit immersion
$f$ (in the sense of conditions (G1) and (G2) above), such that each of
these compact domains is unstable. To create these compact
subdomains, we proceed as follows. Consider the Jacobi function
$J=\langle V, N \rangle$ on $\Sigma $, where $N$ stands for the unit
normal vector field along $f$. By~\cite[Theorem~2.5]{cheng1},
in a small compact neighborhood $E_q$ of $q$
in $\Sigma $, the set $J^{-1}(0)$ has the appearance of a
set of $k$ embedded arcs $\a_1,\ldots, \a_k$ crossing at
equal angles at $q$ with very small geodesic curvatures (their
geodesic curvatures all vanish at the common point $q$), where
$k\geq2$ is the degree of vanishing of $J$ at $q$; see
Figure~\ref{fig:nodal}. Let $E(n)\subset \Omega _n$ be compact disks
that converge to $E_q$ as $n\to \infty $, where $\Omega _n\subset S_n$ is
defined in condition (G1) above.
As $S_n$ has index one, for $n$ large,
the zero set of the Jacobi function $J_n=\langle V, N_n
\rangle$ (here $N_n$ is the unit normal vector of $\wh{f}_n$) is a
regular analytic Jordan
curve that decomposes
$S_n$ into two nodal domains $D_1(n)$, $D_2(n)$ and the zero sets
$J_n^{-1}(0)\cap \Omega _n$ converge as $n\to \infty$ to the zero set $J^{-1}(0)$ of
$J$. Then one of the two nodal domains,
say $D_1(n)$, intersects $E(n)$ in a connected set, see Figure~\ref{fig:nodal}.
\begin{figure}
\begin{center}
\includegraphics[height=5cm]{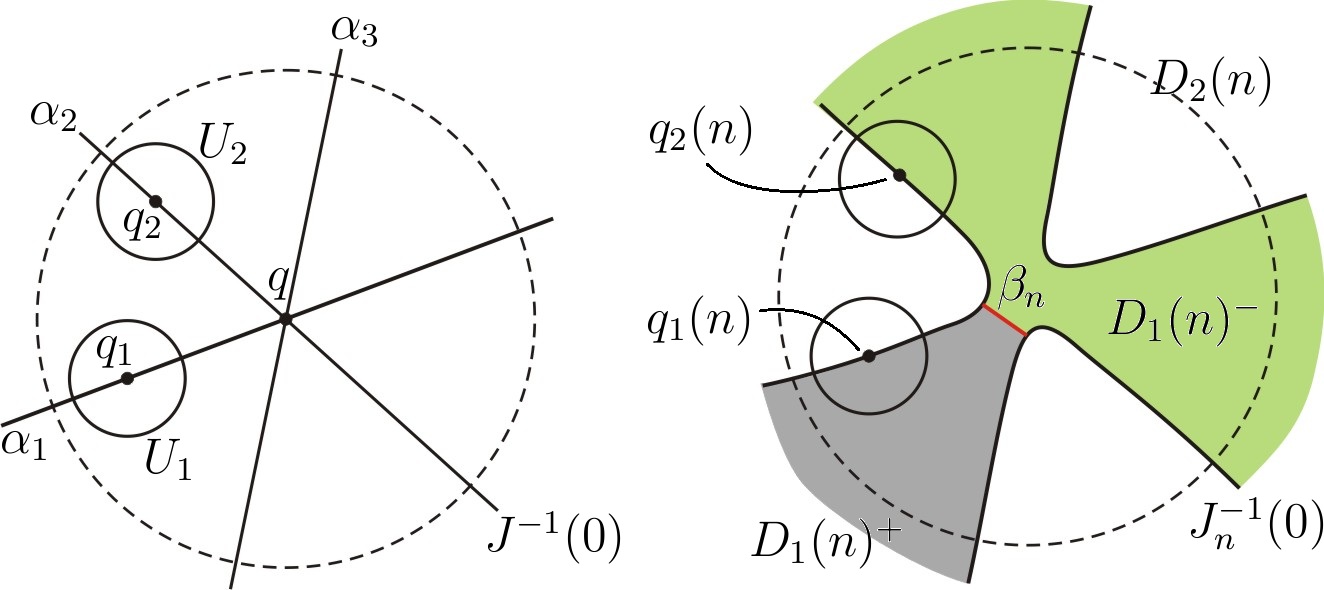}
\caption{Left: In a small neighborhood $E_q$ (represented as a dotted disk)
of the point $q\in \Sigma $,
the nodal lines of the Jacobi function $J$ form an
equiangular system of (almost geodesic) arcs crossing at $q$.
Right: The nodal lines of the related Jacobi
functions $J_n$ in a (dotted) disk $E(n)\subset S_n$ converging to $E_q$. $J_n^{-1}(0)$
divides $S_n$ into two disk regions $D_1(n),D_2(n)$, and the short
arc $\be _n$ divides
$D_1(n)$ in $D_1(n)^+,D_1(n)^-$. The two small disks centered at $q_1(n)$, $q_2(n)$
are $U_1(n)$, $U_2(n)$.}
\label{fig:nodal}
\end{center}
\end{figure}

Let $U_1,U_2\subset \Sigma $ be two small compact disjoint geodesic
disks of radius $\ve_0>0$ centered at points $q_1 \in \a_1$, $q_2\in
\a_2$ such that $U_1\cup U_2\subset E_q$ and $(U_1\cup U_2)\cap\a_j=\mbox{\O }$
for all $j\neq 1,2$. Let $q_1(n),q_2(n) \in J_n^{-1}(0)\cap E(n)$ be
points in the zero set of $J_n$ that converge to $q_1,q_2$,
respectively, and let $U_1(n),U_2(n)$ be $\ve_0$-disks in $E(n)$
centered at the points $q_1(n),q_2(n)$ that converge to $U_1,U_2$ as $n\to \infty $.
Choose a compact embedded short arc $\be _n\subset
D_1(n)-[U_1(n)\cup U_2(n)]$ such that
\begin{enumerate}[(H1)]
\item $\be _n$ joins points in different components of $E(n)\cap \partial
D_1(n)$, and $\be _n$ separates $U_1(n)\cap D_1(n)$ from $U_2(n)\cap
D_1(n)$ in $D_1(n)$.
 \item $\{ x\in S_n\ | \ d_{S_n}(x,\be _n)\leq\frac{1}{n}\} _n$ converges to $\{ q\} $ as $n\to \infty $
 and $\mbox{dist}_{S_n}\left( \be _n,U_1(n)\cup U_2(n)\right) >\de $ for some small $\de >0$ independent of $n$,
  where dist$_{S_n}$ denotes intrinsic distance in $S_n$.
\end{enumerate}

Note that $\be _n$ decomposes $D_1(n)$ into two disk components
$D_1(n)^+,D_1(n)^-$, each one being disjoint from exactly one of the
disks $U_1(n),U_2(n)$. Without loss of generality, we can assume that
$U_1(n)\cap D_1(n)^-=\mbox{\O }$ (and thus, $U_2(n)\cap D_1(n)^+=\mbox{\O }$), see
Figure~\ref{fig:nodal}.
 Let
$\phi _n\colon S_n\to [0,1]$ be a piecewise smooth cut-off function with the
following properties:
\begin{enumerate}[({I}1)]
\item $\phi _n(x)=0$ if dist$_{S_n}(x,\be _n)\leq \frac{1}{3n}$. 
\item $\phi _n(x)=1$ if dist$_{S_n}(x,\be _n)\geq \frac{1}{2n}$. 
\item $|\nabla \phi _n|\leq nC$ in $S_n$, 
where $C>0$ is a universal constant.
\end{enumerate}
Note that $\phi _nJ_n$ is a piecewise
smooth function that vanishes along $\partial D_1(n)^+$.

We claim that
\begin{equation}
\label{eq:31}
 \int _{D_1(n)^+}\left( |\nabla (\phi _nJ_n)|^2-P_n
\phi _n^2J_n^2 \right) \rightarrow 0 \quad \mbox{as }n\to \infty ,
\end{equation}
where $\mathcal{L}_n=\Delta +P_n$ is the stability operator of
$S_n$, i.e., $P_n=|\sigma _n|^2+\mbox{Ric}(N_n)$, $|\sigma _n|$
is the norm of the second fundamental form of $S_n$ and Ric$(N_n)$
denotes the Ricci curvature of $X$ in the direction of the unit
normal to $S_n$. To prove (\ref{eq:31}), observe that since
$\mathcal{L}_nJ_n=0$ on $S_n$, then
\begin{equation}
\label{eq:32}
 |\nabla J_n|^2-P_nJ_n^2 =|\nabla J_n|^2+J_n\Delta J_n=
\Div (J_n\nabla J_n).
\end{equation}
Multiplying by $\phi_n^2$ in (\ref{eq:32}) and integrating on $D_1(n)^+$ we find
\begin{equation}
\label{eq:33}
 \int _{D_1(n)^+}\left( \phi _n^2|\nabla J_n|^2-P_n\phi _n^2J_n^2\right)
= \int _{D_1(n)^+}\phi _n^2\Div (J_n\nabla J_n).
\end{equation}
 On the other hand, $\Div (\phi _n^2J_n\nabla J_n)= \frac{1}{2}\langle
\nabla (\phi _n^2),\nabla (J_n^2)\rangle +\phi _n^2\Div (J_n\nabla
J_n)$, hence the Divergence Theorem and the fact that
$\phi _nJ_n=0$ along $\partial D_1(n)^+$ give that
\begin{equation}
\label{eq:34}
\int _{D_1(n)^+}\phi _n^2\Div (J_n\nabla J_n)=
-\frac{1}{2}\int _{D_1(n)^+}\langle \nabla
(\phi _n^2),\nabla (J_n^2)\rangle .
\end{equation}

Equations
(\ref{eq:33}), (\ref{eq:34}) together with (I1), (I2) and (I3) imply that
\[
\int _{D_1(n)^+}\left( |\nabla (\phi
_nJ_n)|^2-P_n\phi _n^2J_n^2 \right)
 = \int_{D_1(n)^+}J_n^2|\nabla \phi _n|^2 =
\int_{\{
\frac{1}{3n}<d_{S_n}(x,\be _n)<\frac{1}{2n}\} \cap D_1(n)^+}J_n^2(x)|\nabla \phi _n|^2 (x)
\]
\[
\leq C^2n^2\cdot \mbox{Area}({\textstyle \{ \frac{1}
{3n}<d_{S_n}(x,\be _n)<\frac{1}{2n}\} })\cdot \max _{\{
\frac{1}{3n}<d_{S_n}(x,\be _n)<\frac{1}{2n}\}}(J_n^2(x)).
\]
As $\{ \wh{f}_n|_{\Omega_n}\} _n$ is convergent,  we conclude that
$\mbox{Area}(\{ \frac{1}{3n}<d_{S_n}(x,\be _n)<\frac{1}{2n}\})$ can be made smaller than
$C_1/n^2$ for $n$ large, for some $C_1>0$ independent of $n$.
Therefore, the last right-hand-side is bounded
from above by
\[
C^2C_1{\displaystyle \max _{\{
\frac{1}{3n}<d_{S_n}(x,\be _n)<\frac{1}{2n}\}}(J_n^2(x)).}
\]
which tends to zero as $n\to \infty $ by the first property in (H2) above, because
$J(q)=0$.
This proves our claim~(\ref{eq:31}).

Next we will define a test function for the stability operator on the domain
$D_1(n)^+\cup U_1(n)$.

To start, let us consider for $\ve_0>0$ sufficiently small, local coordinates $(x,y)$ around $q_1$ in $\Sigma$, so that:

\begin{itemize}
\item
$(x,y)$ are defined on $\D(\ve)=\{(x,y): x^2+y^2\leq \ve\}$ for some $\ve\in (0,\ve_0]$, and $\D(\ve)$
corresponds in these coordinates to a compact neighborhood $V_1\subset U_1$ of $q_1$.
\item
$\alfa_1\cap V_1 $ corresponds to the arc $\D(\ve)\cap \{x=0\}$.
 \item
The curves $\{y=\text{constant}\}$ are geodesic arcs orthogonal to $\{x=0\}\subset \alfa_1$.
\end{itemize}
By noting the convergence of the disks $U_1(n)$ to $U_1$ as $n\to \8$,
it is clear that we can choose coordinates $(x_{(n)},y_{(n)})$ in $S_n$
around $q_1(n)$ for each $n$ large enough on domains $V_1(n)$ converging
to $V_1$, given by a slight deformation of the coordinates $(x,y)$, and
with similar properties. More specifically, for $n\geq n_0(\ve)$ large enough
there exist local coordinates around $q_1(n)$ in $S_n$, that will be
denoted for simplicity also by $(x,y)$, so that:

\begin{itemize}
\item
$(x,y)$ are defined on $\D(\ve)=\{(x,y): x^2+y^2\leq \ve\}$ for some $\ve\in (0,\ve_0]$,
and $\D(\ve)$ corresponds in these coordinates to a compact neighborhood $V_1(n)\subset U_1(n)$ of $q_1(n)$.
\item
$D_1(n)^+\cap V_1(n) $ corresponds to $\D(\ve)\cap \{x\geq 0\}$.
 \item
The curves $\{y=\text{constant}\}$ are geodesic arcs orthogonal to $\{x=0\}\subset \partial D_1(n)^+$.
\end{itemize}

Given $\l\in (0,1)$, consider the {\it rhombus}
$R(n)=R(\ve,\l,n)\subset \D(\ve)$
that corresponds to the convex hull of the points $(\pm\l \ve,0),(0,\pm \ve)$ in these coordinates.
We define a function $v_1(n)\colon \D(\ve) \to \R$
in these local coordinates as a function of $x,y$ as follows:
\begin{enumerate}[(J1)]
\item $v_1(n)=0$ in $[D_2(n)\cap V_1(n)]-R(n)$.
\item $v_1(n)=J_n$ in $[D_1(n)^+\cap V_1(n)]-R(n)$.
\item $v_1(n)$ linearly interpolates the values of
$J_n|_{D_1(n)^+\cap \partial R(n)}$ and the value zero on $\partial R(n)\cap D_2(n)$ along the
geodesics $\{ y=\mbox{constant}\} $.
\end{enumerate}

For $n$ large, $v_1(n)$ can be extended to a piecewise smooth map on $S_n$ (also denoted by $v_1(n)$),
with the following properties for $\l ,\ve $ sufficiently small:
\begin{enumerate}[(J1)']
\item $v_1(n)=0$ in $S_n-[D_1(n)^+\cup R(n)]$,
\item $v_1(n)= \phi _nJ_n$  in $D_1(n)^+-R(n)$,
\item ${\displaystyle
\max _{R(n)}v_1(n)^2\leq   \max _{D_1(n)^+\cap \partial R(n)}J_n^2
=  \max _{D_1(n)^+\cap R(n)}J_n^2}$.
\item ${\displaystyle
\max _{R(n)}|\nabla v_1(n)|\leq \frac{3}{5}\min _{D_1(n)^+\cap R(n)}|\nabla J_n| }$.
\end{enumerate}
Observe that (J1)', (J2)' respectively follow from (J1), (J2)
and the fact that $\phi_n=1$ in $U_1(n)$ for $n$ large.
Let us explain next why properties (J3)' and (J4)' hold for $v_1(n)$.

To start, note that, in terms of the $(x,y)$ coordinates in $V_1(n)$,
the Jacobi function $J_n$ satisfies $J_n(0,y)=0$ and $|\nabla J_n|(0,y)\neq 0$
for every $y\in [-\ep,\ep]$. Thus, for $\ep$ small enough, $J_n$ can be
arbitrarily well approximated in $\D(\ep)$ by a linear function of the type $ax$
for some $a\neq 0$. This shows that the equality in (J3)' holds for $\ve,\landa$
small enough. The inequality in (J3)' is immediate from the definition of $v_1(n)$.

Similarly, by the previous comments and the definition by interpolation of $v_1(n)$
on $R(n)$, we see that $v_1(n)$ is arbitrarily well-approximated in $R(n)\subset
\D(\ep)$ for $\ve$ small enough by the piecewise linear function
$\frac{a}{2}(x-\landa |y| + \landa \ve)$, where $a\neq 0$ is the previous
constant associated to $J_n$. Thus, the (almost constant) norm of the gradient of
$v_1(n)$ in $R(n)\subset \D(\ep)$ is approximately $\sqrt{1+\landa^2}/2$ times
the (almost constant) norm of the gradient of $J_n$ in $\D(\ep)$.
By taking $\landa$ small enough, we obtain condition (J4)'.

Observe that $v_1(n)$ vanishes along the boundary of $D_1(n)^+\cup
R(n)$. We will prove that, for $\ve,\landa$ sufficiently small,
there is $n_0=n_0(\ve,\landa)$ such that, whenever $n\geq n_0$,
the following inequality holds:
\begin{equation}
\label{eq:35}
 \int _{D_1(n)^+\cup R(n)}\left( |\nabla v_1(n)|^2-P_nv_1(n)^2\right) <0.
\end{equation}
Assuming (\ref{eq:35}) holds, we conclude that $D_1(n)^+\cup R(n)$
is a strictly unstable domain in $S_n$, and thus, $D_1(n)^+\cup U_1(n)$ is strictly unstable as well.
Arguing in a similar way we
also conclude that $D_1(n)^-\cup U_2(n)$ is strictly unstable.
As $D_1(n)^+\cup U_1(n)$, $D_1(n)^-\cup U_2(n)$ have disjoint
interiors, we contradict that the index of $S_n$ is one. Hence it
only remains to prove (\ref{eq:35}) in order to find the
desired contradiction in the case that the differential $dG$
has rank one at some point but the right invariant Killing vector field
$V$ is not everywhere tangent
to $\Sigma $. We next show (\ref{eq:35}).

Since $v_1(n)=\phi _nJ_n$ in $D_1(n)^+-R(n)$, then
\begin{equation}
\label{eq:36}
 \begin{array}{rcl}
{\displaystyle
 \int _{D_1(n)^+\cup R(n)}\left( |\nabla v_1(n)|^2-P_nv_1(n)^2\right) }
&=&
 {\displaystyle
\int _{D_1(n)^+-R(n)}\left( |\nabla (\phi _nJ_n)|^2-P_n\phi
_n^2J_n^2\right) }
 \\
&+&
 {\displaystyle
\int _{R(n)}\left( |\nabla v_1(n)|^2-P_nv_1(n)^2\right) .}
 \end{array}
 \end{equation}
Using property (I2) above, we split the first integral in the right-hand-side of (\ref{eq:36}) for $n$ large
as
\begin{equation}
\label{eq:38} \int _{D_1(n)^+}\left( |\nabla (\phi _nJ_n)|^2-P_n\phi
_n^2J_n^2\right) -
 \int _{D_1(n)^+\cap R(n)}\left( |\nabla J_n|^2-P_nJ_n^2\right) .
\end{equation}
By the previous claim in equation~(\ref{eq:31}), the first integral
in (\ref{eq:38}) tends to zero as $n\to \infty $. 
Hence, to prove (\ref{eq:35})
we just need to show that if $\ve,\landa$ are sufficiently small and $n$
is large enough, then
\begin{equation}
  \label{eq:38bis}
  \int _{R(n)}\left( |\nabla v_1(n)|^2-P_nv_1(n)^2\right )-
\int _{D_1(n)^+\cap R(n)}\left( |\nabla J_n|^2-P_nJ_n^2\right )\leq C(\ve)
\end{equation}
for some constant $C(\ve,\landa )<0$.

First note that for $\ve,\landa$ sufficiently small and $n$ large enough
(once $\ep,\landa$ are fixed), we have by (J3)'
$$
\left| -\int _{R(n)}P_nv_1(n)^2+ \int _{D_1(n)^+\cap R(n)}P_nJ_n^2
\right| \leq
2 \max _{R(n)}|P_n|\cdot \max _{D_1(n)^+\cap R(n)}(J_n^2)\cdot \mbox{Area}[R(n)]
$$
\begin{equation} \label{eq:New5.9}
\leq 2\max _{R(n)}|P_n|\cdot \ve ^2\max _{D_1(n)^+\cap R(n)}|\nabla J_n|^2\cdot \mbox{Area}[R(n)].
\end{equation}
Observe that the potential $P_n$ converges smoothly to the corresponding potential
$P=|\sigma |^2+\mbox{Ric}(N)$ for the limit surface $\Sigma $. Therefore, $\max _{R(n)}|P_n|$
can be supposed to be less than some $\mu >0$ independent of $\ve,\landa$. This implies that
\begin{equation}
\label{eq:39}
2\max _{R(n)}|P_n|\cdot \ve ^2\max _{D_1(n)^+\cap R(n)}|\nabla J_n|^2\cdot \mbox{Area}[R(n)]
\stackrel{(A)}{\leq }4\mu \cdot \ve ^2\min _{D_1(n)^+\cap R(n)}|\nabla J_n|^2\cdot \mbox{Area}[R(n)],
\end{equation}
where in (A) we have used that the sequences of numbers
\[
\{ \max _{D_1(n)^+\cap V_1(n)}|\nabla J_n|\} _n,\quad \{ \min _{D_1(n)^+\cap V_1(n)}|\nabla J_n|\} _n
\]
 converge to the same
positive limit $|(\nabla J)(q_1)|$ when we make $\ve>0$ decrease to zero,
and take $n$ large enough with respect to each such choice of $\ve$.

On the other hand,
\[
\int _{R(n)}|\nabla v_1(n)|^2-\int _{D_1(n)^+\cap R(n)}|\nabla J_n|^2
 \leq
\]
\[
\max _{R(n)}|\nabla v_1(n)|^2 \cdot \mbox{Area}[R(n)]-
 \min _{D_1(n)^+\cap R(n)}|\nabla J_n|^2 \cdot \mbox{Area}[D_1(n)^+\cap R(n)]
\]
\[
\stackrel{(B)}{\leq }
\max _{R(n)}|\nabla v_1(n)|^2 \cdot \mbox{Area}[R(n)]-
 \min _{D_1(n)^+\cap R(n)}|\nabla J_n|^2 \cdot \frac{2}{5}\mbox{Area}[R(n)],
\]
where (B) holds for $\ve $ small and $n$ large enough;
here, we are using that by taking $\ve $ small enough, the metric space
structure on $S_n$ induced by $\wh{f}_n$ can be assumed to be arbitrarily close
to the flat one of the $(x,y)$-coordinates.
By property (J4)',
we obtain from the above inequality for $\ve,\landa$ small and $n$ large that
\begin{equation}
  \label{eq:41}
\int _{R(n)}|\nabla v_1(n)|^2-\int _{D_1(n)^+\cap R(n)}|\nabla J_n|^2 \leq
\left( \frac{9}{25}-\frac{2}{5}\right)
\min _{D_1(n)^+\cap R(n)}|\nabla J_n|^2 \cdot \mbox{Area}[R(n)].
\end{equation}
Hence, (\ref{eq:New5.9}), (\ref{eq:39}) and (\ref{eq:41}) give that
\[
\int _{R(n)}\left( |\nabla v_1(n)|^2-P_nv_1(n)^2\right)
 - \int _{D_1(n)^+\cap R(n)}\left( |\nabla J_n|^2-P_nJ_n^2\right)
\leq
\]
\[
\left( 4 \mu\, \ve ^2- \frac{1}{25} \right)
\min _{D_1(n)^+\cap R(n)}|\nabla J_n|^2\cdot \mbox{Area}[R(n)],
\]
which implies directly that inequality (\ref{eq:38bis}) holds for
$\ve,\landa $ small enough and $n$ large.
 It then follows that for $n$
large, there exist two disjoint
unstable regions on $S_n$ which  contradicts that $S_n$ has index one.
This completes the proof of the proposition.
\end{proof}

\begin{definition}
\label{def6.3}
{\rm
Suppose $f'\colon (\Sigma',p') \looparrowright
(X,e)$ is a pointed limit immersion of $\mathcal{C}$.
We define $\Delta(f')$ as the set of pointed
immersions $f\colon (\Sigma,p)\looparrowright (X,e)$ where $\Sigma$ is a complete,
noncompact connected surface, $p\in \Sigma $, $f(p)=e$ and
$f$ is obtained as a limit of $f'$ under an (intrinsically)
divergent sequence of left translations. In other words, there exist
compact domains $\Omega'_n\subset \Sigma'$ and points $q_n\in \Omega'_n$
diverging to infinity in $\Sigma'$ such that the sequence of
left translated immersions $\{ (l_{f'(q_n)^{-1}}\circ f')|_{\Omega'_n}\} _n$
converges on compact sets of $\Sigma$ to $f$ as $n\to \infty $.
}
\end{definition}

\begin{proposition}
\label{rank:limits}
Let $f'\colon (\Sigma',p') \looparrowright (X,e)$ be a pointed limit
immersion of $\mathcal{C}$. Then, the
space $\Delta(f')$ is nonempty and every $[f\colon (\Sigma ,p)
\looparrowright (X,e)]\in \Delta(f')$ satisfies:
\begin{enumerate}[(1)]
\item
$f$ is a limit surface of $\mathcal{C}$, thus of constant mean curvature $h_0(X)$.
 \item
$f$ is stable.
\item There exists a nonzero
right invariant vector field 
on $X$ which is everywhere tangent to $f(\Sigma)$.
\item $f(\Sigma)$ is topologically an immersed plane or annulus in $X$.
\item $\Sigma$ is diffeomorphic to a plane or an annulus.
\end{enumerate}
Moreover, if $f(\Sigma)$ is not a two-dimensional subgroup of $X$, then the
left invariant Gauss map image $G(\Sigma)$ of $f$ is a regular curve in $\S^2$.
\end{proposition}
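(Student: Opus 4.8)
The plan is to establish the five numbered items and the final assertion in the order they depend on one another, deriving the structural statements (items 3--5 and the \emph{Moreover} part) from the two analytic facts that $f$ is again a limit surface (item 1) and that $f$ is stable (item 2).

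First I would prove nonemptiness together with item 1 by a compactness-plus-diagonal argument. Fix any divergent sequence $q_n\in\Sigma'$; the uniform second fundamental form bound of item~5 of Proposition~\ref{propsu2} transfers to $f'$, and since left translations are isometries, the translated immersions $l_{f'(q_n)^{-1}}\circ f'$ all have uniformly bounded geometry and pass through $e$. Standard convergence for immersions of bounded geometry then yields a subsequential $C^k_{\mathrm{loc}}$-limit $f\colon(\Sigma,p)\looparrowright(X,e)$ that is complete, noncompact and connected, so $\Delta(f')\neq\emptyset$. To see that $f$ is a limit surface of $\mathcal{C}$, I would write $f'$ itself as a $C^k_{\mathrm{loc}}$-limit of left translates $f'_m$ of compact domains of spheres $\widehat f_m\in\mathcal{C}$ and choose $m=m(n)$ growing fast enough that $l_{f'(q_n)^{-1}}\circ f'_{m(n)}$ stays $C^k$-close to $l_{f'(q_n)^{-1}}\circ f'$ on the ball of radius $n$ about $q_n$. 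Since $l_a\circ l_b=l_{ab}$, each $l_{f'(q_n)^{-1}}\circ f'_{m(n)}$ is a left translate of a compact domain of $S_{m(n)}$ of area at least $n$, so conditions (G1)--(G2) of Definition~\ref{def6.1} hold for $f$, and by the remark following that definition $f$ has constant mean curvature $h_0(X)$.

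The heart of the argument is item 2, where the divergence of the $q_n$ is essential. Suppose $f$ were unstable; then some compact domain $D\subset\Sigma$ has negative first Dirichlet eigenvalue for $-L$. By the $C^2_{\mathrm{loc}}$-convergence, for large $n$ there is an unstable domain $D_n\subset\Sigma'$ lying in the part of $\Sigma'$ near $q_n$, and because $q_n$ diverges I can choose indices $n_1<n_2$ with $D_{n_1}$ and $D_{n_2}$ disjoint in $\Sigma'$. Writing $f'$ once more as a limit of domains of the spheres $S_m$, both $D_{n_1}$ and $D_{n_2}$ lift, for large $m$, to two disjoint unstable subdomains of $S_m$; test functions supported on these span a two-dimensional space on which the quadratic form of $-L$ is negative, forcing the index of $S_m$ to be at least two and contradicting item~1 of Proposition~\ref{propsu2}. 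Hence $f$ is stable.

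With items 1 and 2 in hand, the rest follows from the earlier structural results. For item 3, since $f$ is a limit surface, Proposition~\ref{DeltaF} gives that $\mathrm{rank}(dG)$ is constant and at most one on $\Sigma$: if it is zero then $f(\Sigma)$ is a two-dimensional subgroup, which is preserved by the flow $l_{\exp(tv)}$ of any right invariant field with $v\in T_e f(\Sigma)$, and if it is one then item~2 of Proposition~\ref{DeltaF} furnishes the (unique up to scale) tangent right invariant field $K$. Items 4 and 5 then follow from Remark~\ref{rem7.1}: the flow of $K$ is a free proper $\R$-action on $\Sigma$ whose orbits are the integral curves of $K$, each diffeomorphic to $\R$, so $\Sigma$ fibers over its one-dimensional orbit space (a line or a circle), giving $\Sigma\cong\R^2$ or $\Sigma\cong\S^1\times\R$; and since $f(\Sigma)$ is swept out by the $K$-orbits over the immersed base curve $\beta\subset X/K$, the image is topologically an immersed plane or annulus according to whether $\beta$ remains an embedded line or closes up. Finally, for the \emph{Moreover} part, if $f(\Sigma)$ is not a two-dimensional subgroup then the rank cannot be zero, so $\mathrm{rank}(dG)=1$ everywhere and Proposition~\ref{propos3.1}(2) shows that $G(\Sigma)$ is a regular curve in $\S^2$. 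The main obstacle I expect is item 2: transferring the hypothetical instability of the \emph{doubly} limiting surface $f$ back to two genuinely disjoint unstable subdomains of a single sphere $S_m$ requires handling two nested $C^k_{\mathrm{loc}}$-limits at once, and it is exactly here that both the divergence of the $q_n$ and the index-one hypothesis are indispensable.
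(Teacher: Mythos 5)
Your treatment of nonemptiness and of items 1 and 2 is essentially correct: the diagonal argument establishing (G1)--(G2) for $f$, and the contradiction with the index-one hypothesis obtained by producing \emph{two disjoint} unstable domains inside a single sphere $S_m$ (which is exactly where the intrinsic divergence of the $q_n$ is used), are the right mechanisms. The paper itself outsources these two facts to~\cite[Corollary~5.4]{mmpr4}, but your reconstruction is faithful to the underlying arguments, and the same disjoint-unstable-domains device appears in the paper's own proof of Proposition~\ref{DeltaF}.

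The genuine gap is in item 3, and it propagates into your ``Moreover'' paragraph. You assert that Proposition~\ref{DeltaF} gives that ${\rm rank}(dG)$ is ``constant and at most one''. It does not: Proposition~\ref{DeltaF} gives only that the rank is \emph{constant}, and then analyzes the cases ${\rm rank}(dG)=0$ and ${\rm rank}(dG)=1$; the case ${\rm rank}(dG)\equiv 2$ is left completely open there (note that the spheres of $\mathcal{C}$ themselves have Gauss maps of rank two everywhere, by item~3 of Proposition~\ref{propsu2}, so rank two is in no way incompatible with being a limit of domains of such spheres). Excluding rank two for elements of $\Delta(f')$ is precisely the nontrivial content of item~3. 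The paper handles it by invoking item~1 of~\cite[Corollary~5.4]{mmpr4}, which is valid under the hypothesis (K1) that no left translate of a two-dimensional subgroup of mean curvature $h_0(X)$ is tangent to $f(\Sigma)$, and then proving implication (K2) --- that (K1) holds whenever $f(\Sigma)$ is not itself a two-dimensional subgroup --- via the tangency argument (Bers expansion plus the Transversality Lemma) adapted from the first part of the proof of Proposition~\ref{DeltaF}. A self-contained substitute must again use both the divergence of the $q_n$ and a special property of the spheres; for instance: since each $G_m\colon S_m\to\S^2$ is a diffeomorphism, $\int_{S_m}|{\rm Jac}\,G_m|=4\pi$, hence $\int_{\Sigma'}|{\rm Jac}\,G_{f'}|\leq 4\pi$; choosing pairwise disjoint compact pieces $K_n\subset\Sigma'$ near the divergent points $q_n$ that converge to a given compact $K\subset\Sigma$ then forces $\int_{K}|{\rm Jac}\,G_f|=\lim_n\int_{K_n}|{\rm Jac}\,G_{f'}|=0$, so ${\rm rank}(dG_f)\leq 1$. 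Nothing playing this role appears in your proposal. Relatedly, in the ``Moreover'' part you apply item~2 of Proposition~\ref{propos3.1} without verifying its hypothesis, namely that no two-dimensional subgroup of mean curvature $h_0(X)$ has its (constant) left invariant Gauss map value inside $G(\Sigma)$; that verification is again exactly property (K2), which your argument never establishes.
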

\begin{proof}
The fact that $\Delta(f')$ is not empty and items 1,2 of the proposition follow directly from
the main statement of~\cite[Corollary~5.4]{mmpr4}. Item 3 of the proposition was
proved in item~1 of~\cite[Corollary~5.4]{mmpr4}, under the extra assumption (K1)
below (as in our explanation previous to Proposition~\ref{propos3.1}, assumption (K1)
was stated in~\cite[Corollary~5.4]{mmpr4} in terms of the $H$-potential, so one must
use~\cite[Corollary~3.21]{mpe11}
to reformulate it in the following manner):
\begin{enumerate}[(K1)]
\item No left translation of a two-dimensional subgroup in $X$ with constant mean curvature $h_0(X)$
is tangent to $f(\Sigma )$ at some point.
\end{enumerate}
Therefore, item~3 of the proposition will be proved if we demonstrate that
\begin{enumerate}[(K2)]
\item If $f(\Sigma )$ is not a two-dimensional subgroup, then property (K1) holds.
\end{enumerate}
We next prove (K2). Arguing by contradiction, suppose that $f(\Sigma )$ is not
a two-dimensional subgroup and there exist a two-dimensional subgroup $\Delta $ of $X$
with mean curvature $h_0(X)$, and points $x\in X$, $y\in f(\Sigma )\cap (x\Delta )$ such
that $f(\Sigma )$ and $x\Delta =l_x(\Delta)$ are tangent at $y$.
Therefore, $f(\Sigma )$, $x\Delta $ are different
surfaces tangent at $y$ with the same unit normal at this point and the same constant mean curvature.
Since $f$ is a limit surface of $\mathcal{C}$ by the already proven item~1 of this proposition,
we can easily adapt the arguments in the first part of the proof of
Proposition~\ref{DeltaF} to deduce that $f(\Sigma )=x\Delta $, which is
impossible since $e\in f(\Sigma)$, and so $f(\Sigma)=\Delta$, a subgroup.
Therefore, property (K2) holds and the proof
of item~3 of Proposition~\ref{rank:limits} is complete.

Once item~3 is proved, the proofs of items 4,5 of the proposition are the same as the proofs of the
related items~2,3 of~\cite[Corollary~5.4]{mmpr4}, respectively.

It remains to prove the `Moreover part' of the proposition, so assume that $f(\Sigma )$
is not a two-dimensional subgroup. By properties (K1), (K2) above, we conclude that
there are no two-dimensional subgroups in $X$ with mean curvature $h_0(X)$, whose
(constant) left invariant Gauss map lies in $G(\Sigma)$. In this setting, the already proven item~3
of this proposition implies that we can apply Proposition~\ref{propos3.1} to deduce that $f(\Sigma )$
is a regular curve. This completes the proof.
\end{proof}

The next technical lemma will be used in the proof of Corollary~\ref{cor:unique}.

\begin{lemma}[Unique Limit Surface Lemma]
\label{unique-limit}
\mbox{}\newline
Let $f_1\colon (\Sigma _1,p) \looparrowright (X,e)$ be a
pointed limit immersion of $\mathcal{C}$, with associated pointed immersions
$\wh{f}_n\colon (S_n,p_n) \looparrowright (X,e)$, $S_n\in \mathcal{C}$, so that
each $H_n$-sphere $S_n$ contains a compact subdomain $\Omega _n^1$ with $p_n\in \Omega _n^1$
and $\wh{f}_n|_{\Omega_n^1}$ converges to $f_1$  as $n\to \infty $,
and {\rm Area}$(\wh{f}_n|_{\Omega _n^1})>n$.
 Suppose that the rank of the differential $dG^1$ of the left invariant Gauss map $G^1$
of $f_1$ is one. Let $G_n$ be the left invariant Gauss map  of $\wh{f}_n$ and let
$q_n\in S_n$ be
the unique point with $G_n(q_n)=v=G^1(p)$.
Then, after choosing a subsequence, there exist  compact domains $\Omega_n^2
\subset S_n$ with $q_n \in \Omega_n^2$ such that:
\begin{enumerate}[(1)]
\item  The sequence of pointed $H_n$-immersions
$(l_{\wh{f}_n(q_n)^{-1}}\circ \wh{f}_n)|_{\Omega_n^2} \colon (\Omega_n^2,q_n)
\looparrowright (X,e)$ obtained by left translating $\wh{f}_n|_{\Omega_n^2}$ by
the inverse of $\wh{f}_n(q_n)$ in $X$, converges to a pointed
immersion $f_2\colon (\Sigma_2,q) \looparrowright (X,e)$ that is a limit of
$\mathcal{C}$,
and which has unit normal vector $v$ at $q\in \Sigma _2$.
\item The immersion $f_2$ has the same image as $f_1$.
\end{enumerate}
\end{lemma}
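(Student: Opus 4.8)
The plan is to build the surface $f_2$ by recentering the spheres at the points $q_n$ and then to identify its image with that of $f_1$ using the flow‑invariance of $f_1$, with Lemma~\ref{tangin} as the device that upgrades a local coincidence to a global one. First I would set $g_n:=l_{\wh f_n(q_n)^{-1}}\circ \wh f_n$, so that $g_n(q_n)=e$. Since left translations preserve the left invariant Gauss map, each $g_n$ still has Gauss map $G_n$ with $G_n(q_n)=v$, and hence unit normal $v$ at $q_n$. By the uniform bound on the second fundamental form in Proposition~\ref{propsu2}(5) (the values $H_n$ lie in a compact subinterval of $(h_0(X),\infty)$), the $g_n$ are uniform local graphs, so choosing compact domains $\Omega_n^2\ni q_n$ with areas tending to infinity and applying the standard compactness for immersions of bounded geometry, a subsequence of $(g_n|_{\Omega_n^2},q_n)$ converges to a complete pointed immersion $f_2\colon(\Sigma_2,q)\looparrowright(X,e)$ of constant mean curvature $h_0(X)$ with unit normal $v$ at $q$. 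Because the areas of the $S_n$ diverge (Proposition~\ref{propsu2}(7)), $f_2$ is noncompact, hence a genuine limit surface of $\mathcal{C}$ in the sense of Definition~\ref{def6.1}. This establishes item~1.

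Next I would check that both Gauss map images are regular curves through $v$. Since $\mathrm{rank}(dG^1)=1$, the surface $f_1(\Sigma_1)$ is not a two‑dimensional subgroup and $\gamma_1:=G^1(\Sigma_1)$ is a regular curve through $v$ (Proposition~\ref{rank:limits}). For $f_2$, Proposition~\ref{DeltaF} gives that $\mathrm{rank}(dG^2)$ is constant, equal to $0$ or $1$. If it were $0$, then $f_2(\Sigma_2)$ would be a two‑dimensional subgroup with constant Gauss value $v$ and mean curvature $h_0(X)$; a left translate of it passing through $e$ would be tangent to $f_1(\Sigma_1)$ at $e$ (both having normal $v$ there), contradicting property (K1), which holds for $f_1$ by (K2) in the proof of Proposition~\ref{rank:limits}. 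Hence $\mathrm{rank}(dG^2)=1$ and $\gamma_2:=G^2(\Sigma_2)$ is a regular curve through $v$. In view of Lemma~\ref{tangin}, item~2 will follow once I show that $\gamma_1$ and $\gamma_2$ are tangent at $v$, equivalently that $f_2(\Sigma_2)$ and $f_1(\Sigma_1)$ agree near $e$.

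For the identification I would parameterize $f_1$ as in Remark~\ref{rem7.1} by $f_1(s,t)=l_{\Gamma(t)}(\beta(s))$, where $\Gamma$ is the one‑parameter subgroup with $\Gamma'(0)=K_{\Sigma_1}(e)$, $\beta(0)=e$, $p=(0,0)$, and $G^1=G^1(s)$ depends only on $s$ with $G^1(0)=v$. Since $G^1$ is constant along the orbits (left translates by $\Gamma(t)$) and equals $v$ exactly on the orbit through $p$, while $G_n(q_n)=v$ and $G_n\to G^1$, the point $q_n$ is forced to lie near this orbit; writing $q_n=(s_n,t_n)$ in these coordinates (after enlarging the $\Omega_n^1$ and passing to a subsequence) one gets $s_n\to 0$, with $t_n$ possibly diverging. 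A diagonal argument relating the two convergences $\wh f_n\to f_1$ and $g_n\to f_2$ then identifies the image of $f_2$ near $e$ with the limit of the images $l_{f_1(s_n,t_n)^{-1}}\bigl(f_1(\Sigma_1)\bigr)$. Using $f_1(s_n,t_n)^{-1}=\beta(s_n)^{-1}\Gamma(-t_n)$ together with the flow‑invariance $l_{\Gamma(t)}(f_1(\Sigma_1))=f_1(\Sigma_1)$,
\[
l_{f_1(s_n,t_n)^{-1}}\bigl(f_1(\Sigma_1)\bigr)
= l_{\beta(s_n)^{-1}}\, l_{\Gamma(-t_n)}\bigl(f_1(\Sigma_1)\bigr)
= l_{\beta(s_n)^{-1}}\bigl(f_1(\Sigma_1)\bigr),
\]
and since $\beta(s_n)\to e$ this converges to $f_1(\Sigma_1)$. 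Thus $f_2(\Sigma_2)=f_1(\Sigma_1)$ near $e$, so $\gamma_2$ is tangent to (indeed coincides with) $\gamma_1$ at $v$, and Lemma~\ref{tangin} yields $f_2(\Sigma_2)=f_1(\Sigma_1)$ globally, proving item~2.

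The hard part will be the possible divergence of $t_n$: the unique point $q_n$ with $G_n=v$ typically escapes to infinity along the orbit of $K_{\Sigma_1}$, so $g_n$ need not converge to a bounded left translate of $f_1$ and one cannot simply quote the convergence $\wh f_n\to f_1$ on a fixed neighborhood of $q_n$. Making the diagonal argument rigorous — controlling the transverse coordinate $s_n\to 0$ uniformly while the tangential coordinate $t_n$ runs off, and recognizing the recentered limit $f_2$ as a divergent left translate of $f_1$ in the sense of $\Delta(f_1)$ (Definition~\ref{def6.3}) — is the technical heart of the proof. The flow‑invariance of $f_1$ is exactly what absorbs the divergent tangential translation $l_{\Gamma(-t_n)}$ and forces the two images to agree.
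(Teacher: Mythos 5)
Your proposal has a genuine gap at its core, and it is exactly the point you yourself flag as ``the technical heart.'' You assume that the point $q_n$ --- defined \emph{globally} as the unique preimage of $v$ under the diffeomorphism $G_n\colon S_n\to \esf^2$ --- lies in the region of $S_n$ on which $\wh f_n$ (recentered at $p_n$) converges to $f_1$, so that it can be assigned coordinates $(s_n,t_n)$ adapted to the parameterization $f_1(s,t)=l_{\Gamma(t)}(\beta(s))$ with $s_n\to 0$. Nothing justifies this. The convergence $G_n\to G^1$ holds only on compact pieces corresponding to compact subsets of $\Sigma_1$, and since $G^1$ has rank one, its image is a curve through $v$; a $C^1$-small perturbation of a rank-one map need not attain the value $v$ anywhere on a compact set (the image curve can simply move off $v$). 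Hence $q_n$ may lie in the part of the sphere $S_n$ that is invisible to the convergence toward $f_1$, and your coordinates for $q_n$, the claim $s_n\to 0$, and the subsequent computation $l_{f_1(s_n,t_n)^{-1}}(f_1(\Sigma_1))=l_{\beta(s_n)^{-1}}(f_1(\Sigma_1))$ are all undefined or unjustified. Locating $q_n$ relative to $p_n$ is precisely the content of the lemma, and it cannot be extracted from local convergence alone. The paper's proof gets around this by using the \emph{global} injectivity of $G_n$ (item~3 of Proposition~\ref{propsu2}): in the rank-one case, if the curves $\gamma_1,\gamma_2$ were transverse at $v$, the open sets $G_n(B_{S_n}(p_n,\ve))$ and $G_n(B_{S_n}(q_n,\ve))$ would have to overlap near $v$, and injectivity of $G_n$ on all of $S_n$ then forces $d_{S_n}(p_n,q_n)<2\ve$, so $d_{S_n}(p_n,q_n)\to 0$ and the two limits coincide; if instead the curves are tangent, Lemma~\ref{tangin} applies directly (this part of your argument agrees with the paper). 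Your proposal never invokes this global injectivity, which is the one tool that can control where $q_n$ sits.

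There is a second, smaller error: you assert that Proposition~\ref{DeltaF} forces $\mathrm{rank}(dG^2)\in\{0,1\}$. It does not; it only gives \emph{constancy} of the rank, together with structural conclusions in the rank-$0$ and rank-$1$ cases. A pointed limit immersion could a priori have rank-two Gauss map ($f_2$ is not known at this stage to lie in any $\Delta(f')$, which is where rank $\leq 1$ is established), and the paper devotes case~(L3) of its proof to excluding this: one applies the inverse function theorem to $G^2$ near $q$ and again uses that $G_n$ is a diffeomorphism to conclude $d_{S_n}(p_n,q_n)\to 0$, whence the limits coincide and the rank would be one after all, a contradiction. Your proof skips this case entirely. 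Your treatment of the rank-zero case (exclusion via the tangency argument of Proposition~\ref{DeltaF}/(K2)) is correct and matches the paper.
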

\begin{remark}
{\em
Lemma~\ref{unique-limit} remains true if we drop the hypothesis that
the rank of $dG^1$ is one, but we will not need this more general version in this paper.
}
 \end{remark}
\begin{proof}
The existence of the compact subdomains $\Omega _n^2\subset S_n$ (after passing to a
subsequence) and item~1 follow from standard arguments in elliptic theory, as we have uniform curvature
estimates for the $H_n$-immersions $l_{\wh{f}_n(q_n)^{-1}}\circ \wh{f}_n\colon
(S_n,q_n)\looparrowright (X,e)$ by item~5 of Proposition~\ref{propsu2}.
By construction,  the $h_0(X)$-immersions $f_1,f_2$ have an oriented
contact of order at least one at $e$.
By Proposition~\ref{DeltaF}, the differentials of the respective Gauss
maps $G^1$ and $G^2$ of $f_1$ and $f_2$ have constant ranks, which are
possibly different. We now distinguish cases depending on the value of rank$(dG^2)$.
\begin{enumerate}[(L1)]
\item \underline{Suppose rank$(dG^2)=0$.}
In this case, item~1 of Proposition~\ref{DeltaF} gives that
$f_2$ is injective and $f_2(\Sigma _2)$ is a two-dimensional
subgroup of $X$. Since $f_1(\Sigma )$ is not a two-dimensional
subgroup (because rank$(dG^1)=1$), and both $f_1(\Sigma _1),f_2(\Sigma _2)$ are
tangent at $e$ with the same constant mean curvature, we can apply the arguments in
the first part of the proof of Proposition~\ref{DeltaF} (see also the proof of
property (K2) above) to find a contradiction. Therefore, this case cannot occur.
\item
\underline{Suppose rank$(dG^2)=1$.}
In this case, $G^1(\Sigma _1)$ and $G^2(\Sigma _2)$ are analytic
immersed curves $\a_1,\a_2$ in $\S^2$ passing through $v\in \esf^2$.
If $\a_1,\a_2$ intersect tangentially at $v$, then by Lemma~\ref{tangin}
we have $f_1(\Sigma_1)=f_2(\Sigma_2)$ and so the lemma holds. So, assume next that $\a_1,\a_2$
are transverse at $v$. Thus, given $\ep>0$ small enough there is
some $n_0=n_0(\ep)$ such that, if $n\geq n_0$, the images through $G_n$
of the intrinsic balls $B_{S_n}(p_n,\ep)$, $B_{S_n}(q_n,\ep)$ centered at $p_n$ and $q_n$
of radius $\ep$, must intersect in an open nonempty set of $\S^2$ near $v$. As
$G_n$ is a diffeomorphism, $B_{S_n}(p_n,\ep)\cap B_{S_n}(q_n,\ep)\neq \mbox{\O}$,
which implies that $d_{S_n}(p_n,q_n)<2\ep$. Therefore, $d_{S_n}(p_n,q_n)\to 0$ as $n\to \8$,
and this implies $f_1(\Sigma_1)=f_2(\Sigma_2)$.
This proves Lemma~\ref{unique-limit} in this case.
\item
\underline{Suppose rank$(dG^2)=2$.} By the Inverse Function Theorem applied to
$G^2$ around $q$, we can find $\ve >0$ small such that if
$B_{\ep}$ denotes the intrinsic ball in $\Sigma_2$ of radius $\ep$ centered at $q$,
then $G^2|_{B_{\ep}}$ is a diffeomorphism onto its image. Choosing $\ep>0$ small enough,
we can assume that for $n$ large, $G^2(B_{\ep})\subset G_n(B_{S_n}(q_n,2\ep))$,
where $B_{S_n}(q_n,2\ep)$ is the intrinsic ball of radius $2\ep$ around $q_n$ in $S_n$.
As $G_n(p_n)\to v$, we have $G_n(p_n)\in G_n(B_{S_n}(q_n,2\ep))$, and as $G_n$
is a diffeomorphism, this implies that $p_n\in B_{S_n}(q_n,2\ep)$ for $n$ large enough.
As $\ep$ is arbitrarily small, we conclude that $d_{S_n}(p_n,q_n)\to 0$ as $n\to \8$.
This implies that the limit surfaces $f_1(\Sigma_1),f_2(\Sigma _2)$ are the same,
so in particular ${\rm rank}(dG^2)=1$, a contradiction.
Therefore, this case does not occur, and this completes the proof of Lemma~\ref{unique-limit}.
\end{enumerate}
\par
\vspace{-.5cm}
\end{proof}
As a consequence of Lemma~\ref{unique-limit}
and the Transversality Lemma~\cite[Lemma~3.1]{mmp2}, we have the
following corollary.

\begin{corollary}
\label{cor:unique}
Suppose $f\colon(\Sigma,p) \looparrowright (X,e) \in \Delta(f')$, where $f'\colon (\Sigma',p')
\looparrowright (X,e)$ is a limit immersion of $\mathcal{C}$.
If $f(\Sigma)$ is tangent at some point to a left or right coset of a
two-dimensional subgroup of $X$, then $f(\Sigma)$ is contained in one of the two closed
complements of this coset in $X$.
\end{corollary}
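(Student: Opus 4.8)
The plan is to combine Lemma~\ref{unique-limit}, which produces \emph{exact} tangencies with cosets, with the Transversality Lemma~\cite[Lemma~3.1]{mmp2}, which forces each index-one sphere of $\mathcal{C}$ to lie in one closed complement of any coset of a two-dimensional subgroup to which it is tangent (this is exactly the input used to derive the contradiction in the first part of the proof of Proposition~\ref{DeltaF}).

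First I would normalize. Since every right coset $\Delta x$ equals the left coset $x\Delta_1$ of the conjugate subgroup $\Delta_1=x^{-1}\Delta x$, it suffices to treat a left coset $x\Delta$. Writing the tangency point as $y=f(p_0)\in x\Delta$ and replacing $f$ by $g:=l_{y^{-1}}\circ f$, I may assume that $g(\Sigma)$ is tangent to the subgroup $\Delta$ itself at $e=g(p_0)$, because $l_{y^{-1}}(x\Delta)=\Delta$; here $g$ is again a limit surface of $\mathcal{C}$ with base point $p_0$ mapping to $e$, since $\mathcal{C}$ and the notion of pointed limit immersion are invariant under left translations (one re-centers the approximating sequence). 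By property~(C), $\Delta$ has constant left invariant Gauss map; orienting $\Delta$ so that this constant value is $w:=G(p_0)\in\S^2$, the tangency at $e$ means $g(\Sigma)$ and $\Delta$ share the normal $w$ there.

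Next I would use the constancy of $\mathrm{rank}(dG)$ from Proposition~\ref{DeltaF}. If $\mathrm{rank}(dG)=0$, then $g(\Sigma)$ is a two-dimensional subgroup through $e$ with Gauss value $w$; as such a subgroup is determined by its tangent plane $w^{\perp}$ at $e$, we get $g(\Sigma)=\Delta$, which lies (trivially) in either closed complement of $\Delta$. If $\mathrm{rank}(dG)=1$, then $g(\Sigma)$ is not a subgroup (Proposition~\ref{rank:limits}), and I apply Lemma~\ref{unique-limit} to $g$ with $v=w=G(p_0)$. This provides index-one spheres $\wh f_n\colon S_n\la X$ in $\mathcal{C}$, points $q_n\in S_n$ with $G_n(q_n)=w$ \emph{exactly}, and compact domains $\Omega_n^2\ni q_n$ such that the pointed immersions $(l_{\wh f_n(q_n)^{-1}}\circ \wh f_n)|_{\Omega_n^2}$ converge to an immersion $f_2$ whose image equals $g(\Sigma)$.

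The heart of the argument is then direct: $G_n(q_n)=w$ makes $\wh f_n(S_n)$ tangent at $\wh f_n(q_n)$ to the coset $\wh f_n(q_n)\Delta$, whose constant Gauss value is also $w$. Since the mean curvatures satisfy $H_n>h_0(X)\geq H(X)$ while $\Delta$ has absolute mean curvature at most $H(X)$, the Transversality Lemma applies and confines $\wh f_n(S_n)$ to one closed complement of $\wh f_n(q_n)\Delta$; left translating by $\wh f_n(q_n)^{-1}$ places the whole sphere in one closed complement of $\Delta$. After passing to a subsequence so that this is always the same side $\Delta^+$, the domains $(l_{\wh f_n(q_n)^{-1}}\circ \wh f_n)|_{\Omega_n^2}\subset\Delta^+$ converge to $f_2$, whence $g(\Sigma)=f_2(\Sigma_2)\subset\Delta^+$ because $\Delta^+$ is closed. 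Applying $l_y$ returns $f(\Sigma)\subset l_y(\Delta^+)$, one of the two closed complements of $x\Delta$. The step I expect to be the main obstacle is securing the \emph{exact} tangencies of the $\wh f_n(S_n)$ with cosets of $\Delta$, since the natural approximants to $g$ realize the Gauss value $w$ only in the limit; Lemma~\ref{unique-limit} is precisely the tool that supplies these exact tangency points while guaranteeing that the rearranged limit $f_2$ still sweeps out $g(\Sigma)$, which is what lets the one-sidedness survive the passage to the limit.
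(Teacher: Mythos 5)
Your proposal is correct and follows essentially the same route as the paper's own proof: normalize the tangency to the subgroup $\Delta$ at $e$, dispose of the case where the image is itself a two-dimensional subgroup, then apply Lemma~\ref{unique-limit} to obtain exact tangency points $q_n$ on the approximating index-one spheres and the Transversality Lemma~\cite[Lemma~3.1]{mmp2} to confine each (left-translated) sphere to one closed side of $\Delta$, finally passing this one-sidedness to the limit $f_2(\Sigma_2)=f(\Sigma)$. The only difference is that you make explicit some details the paper leaves implicit (the mean curvature comparison $H_n>h_0(X)\geq H(X)$ behind the Transversality Lemma, and the re-centering of the approximating sequence under the initial left translation).
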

\begin{proof}
Suppose the image immersed surface $f(\Sigma)$ is
tangent to a left or right coset $E$ of some two-dimensional
subgroup of $X$ at a point $f(y)\in f(\Sigma )\cap E$, where
$y\in \Sigma $. By the discussion just before property (C)
in Section~\ref{backgroundCMC}, we can assume that $E=f(y)\Delta $ for
some two-dimensional subgroup $\Delta $ of $X$.
After a left translation of $f$ by $f(y)^{-1}$, we may assume that $p=y$,
and hence $f(\Sigma )$
is tangent to $\Delta $ at $e$.

If $f(\Sigma )$ is a two-dimensional subgroup of $X$, then
$f(\Sigma )=\Delta $ and there is nothing to prove. Hence in the sequel we will
assume that $f(\Sigma )$ is not a two-dimensional subgroup of $X$. In this case,
as $f\in \Delta(f')$ and $f'$
is a limit
immersion of $\mathcal{C}$, then Proposition~\ref{rank:limits} gives that
$G(\Sigma )$ is a regular curve in $\esf^2$, where $G\colon \Sigma \to \esf^2
\subset T_eX$ is the left invariant Gauss map of $f$. In particular, rank$(dG)=1$.

Applying Lemma~\ref{unique-limit} to $f_1=f$, we conclude that
 $f(\Sigma)$ coincides with the image
set $f_2(\Sigma_2)$ of the limit $f_2\colon (\Sigma_2,p_2)\la (X,e)$
of a sequence of pointed immersions $f_n^2\colon (\Omega_n ^2,q_n)
\looparrowright (X,e)$, where each $\Omega _n^2$ is a compact
subdomain of an $H_n$-sphere $S_n$ in $\mathcal{C}$ and such that the
left invariant Gauss map of $f_n^2$ at $q_n\in \Omega _n^2$ is $v=G(p)$.
By the Transversality Lemma,  the spheres
 $f^2_n(S_n)$ all lie on one side of
$\Delta$. Thus, $f(\Sigma )=f_2(\Sigma_2)=\lim _{n\to \infty}f^2_n(\Omega
_n^2)$ must lie on one side of $\Delta$. This completes the proof of the corollary.
\end{proof}

\begin{lemma}
\label{lam:gauss}
Suppose $f\colon(\Sigma,p) \looparrowright (X,e) \in \Delta(f')$,
where $f'\colon (\Sigma',p') \looparrowright (X,e)$ is a limit immersion
for $\mathcal{C}$. For each $q\in \Sigma$,
let $f_{q}=l_{f(q)}^{-1}\circ f\colon (\Sigma ,q)\looparrowright (X,e)$
denote the related pointed immersion from
$(\Sigma,q)$ to $(X,e)$ obtained from $f$ after a change of base
point and left translating by $f(q)^{-1}$.
Consider the set $\Delta(f)$ of limits of $f$ under a divergent sequence left translations,
in the sense of Definition~\ref{def6.3}.
Then:
\begin{enumerate}[(1)]
\item 
$\Delta(f)$ is a subset of $\Delta(f')$.
\end{enumerate}
Assume that 
$\Delta(f)$ contains no elements with
constant left invariant Gauss map. Then:
\begin{enumerate}[(1)]
\setcounter{enumi}{1}
\item The image $\gamma_f=G(\Sigma )$ of the left invariant Gauss map $G \colon \Sigma \to \esf^2 \subset
T_eX$ of $f$
is a complete embedded regular curve in $\S^2$. Moreover, its closure
$\overline{\gamma_f}\subset \S^2$ admits the structure of a
lamination of $\S^2$, whose leaves correspond to the Gauss
map images $\gamma_{\hat{f}}$ of elements $\hat{f}\in
\Delta(f)$. In particular, if $\hat{f}\in
\Delta(f)$, then
$\Delta(\hat{f})\subset \Delta(f)$ and so, $\overline{\gamma_{\hat{f}}}$ is a sublamination of
$\overline{\gamma_f}$.

\item There is a uniform upper bound on the absolute geodesic
curvature of all the leaves of $\overline{\gamma_f}$.

\item There exists $\hat{f}\in 
\Delta(f)$ such that $\overline{\gamma_{\hat{f}}}$ contains no proper sublaminations.
Furthermore, one of the following two possibilities holds:
\begin{enumerate}[(a)]
\item If $\g _{\hat{f}}$ is a closed curve, then the lamination
$\overline{\g _{\hat{f}}}$
contains a single leaf.
\item If $\g _{\hat{f}}$ is not a simple closed curve,
then the lamination $\overline{\g _{\hat{f}}}$ has uncountably many leaves and
$\gamma_{\hat{f}}$ has the following recurrency property: given
any compact arc $I$ of $\gamma_{\hat{f}}$, there exists a sequence
of intrinsically divergent, pairwise disjoint arcs in
$\gamma_{\hat{f}}$ that converge to $I$ in the $C^{1}$-topology.
 \end{enumerate}
 \end{enumerate}
\end{lemma}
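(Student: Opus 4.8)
The plan is to prove the five items, together with the Moreover-part, in order, exploiting two structural facts throughout. First, the left invariant Gauss map is itself invariant under left translations (this is immediate from Definition~\ref{defG}, since the factor $dl_{f(p)}$ absorbs the translation). Consequently, for any $\hat f\in\Delta(f)$ the Gauss map image $\g_{\hat f}$ is an honest $C^1$-limit of $G_n$ restricted to compact domains of the approximating spheres $S_n\in\mathcal{C}$, with no translation correction. Second, each $G_n\colon S_n\to\S^2$ is an (orientation-preserving) diffeomorphism by item~3 of Proposition~\ref{propsu2}, so in particular injective. Item~(1) is then a diagonal argument: $\hat f$ is a limit of intrinsically divergent left translates of $f$, and $f$ is a limit of divergent translates of $f'$; combining these (using that left translations are isometries and the convergence is $C^k$ on compacta) produces divergent translates of $f'$ converging to $\hat f$, the divergence of base points in $\Sigma'$ following because the base points of $\hat f$ diverge in $\Sigma$ and $\Sigma$ is a limit of the domains $\Omega_n'$. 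The same transitivity yields $\Delta(\hat f)\subset\Delta(f)$ for every $\hat f\in\Delta(f)$, which is recorded for use in item~(2).

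Assume now that $\Delta(f)$ contains no element with constant Gauss map. For item~(2), I first note that $\g_f$ is regular: if $f(\Sigma)$ were a two-dimensional subgroup, then every divergent left translate would again be such a subgroup (a subgroup is preserved by left translation by its own elements), producing elements of $\Delta(f)$ with constant Gauss map, contrary to hypothesis; hence $f(\Sigma)$ is not a subgroup and the Moreover-part of Proposition~\ref{rank:limits} gives that $\g_f$ is a regular curve. Completeness follows from completeness of $\Sigma$ and the invariance description (the Gauss map depends only on the single parameter $s$, as in Proposition~\ref{propos3.1}). The crux is embeddedness together with non-crossing of leaves; I claim that any two leaves $\g_{\hat f_1},\g_{\hat f_2}$ (allowing $\g_f$ paired with a second branch of itself) either coincide or are disjoint. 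Realize both as $C^1$-limits of $G_n(\Omega_n^i)$ for compact $\Omega_n^i\subset S_n$ in one common sequence (a triple diagonal subsequence, using item~(1)). If the two limit curves met tangentially, Lemma~\ref{tangin} would force the surfaces, hence their Gauss images, to coincide. A transversal intersection is impossible: for distinct leaves one has $d_{S_n}(\cdot,\cdot)\to\infty$, so $\Omega_n^1,\Omega_n^2$ are eventually disjoint and $G_n(\Omega_n^1)\cap G_n(\Omega_n^2)=\emptyset$ by injectivity of $G_n$; yet a transversal intersection of the limits is stable and would force $G_n(\Omega_n^1)\cap G_n(\Omega_n^2)\neq\emptyset$ for large $n$, a contradiction. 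This gives embeddedness of $\g_f$ and pairwise disjointness of distinct leaves. The lamination structure of $\overline{\g_f}$ then follows: every accumulation point of $\g_f$ arises, after passing to a subsequence and using the $C^1$-compactness furnished by item~(3), from some $\hat f\in\Delta(f)$ with the point lying on $\g_{\hat f}$, and disjointness supplies the local product structure; the sublamination statement is item~(1) applied to $\hat f$.

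For item~(3), all leaves satisfy the same autonomous ODE \eqref{gaussecu}, $G''=\Psi(G,G')$, with a single $\Psi$ depending only on $X$ and on the common value $H=h_0(X)$. After reparameterizing each leaf by arc length, which is legitimate by property~(D), its geodesic curvature in $\S^2$ becomes a fixed continuous function of $(G,G')$ evaluated on the compact unit tangent bundle of $\S^2$, hence is bounded by a constant independent of the leaf. For item~(4), $\overline{\g_f}$ is a nonempty lamination of $\S^2$, so by Zorn's lemma it contains a minimal sublamination, which by item~(2) equals $\overline{\g_{\hat f}}$ for some $\hat f\in\Delta(f)$, minimality meaning it admits no proper sublamination. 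If $\g_{\hat f}$ is a simple closed curve it is compact, so $\overline{\g_{\hat f}}=\g_{\hat f}$ is a single leaf, which is case~(a). Otherwise $\g_{\hat f}$ is a non-compact embedded leaf; by minimality it is dense in $\overline{\g_{\hat f}}$, which is therefore an exceptional minimal set with uncountably many leaves, and the self-density of the leaf combined with the $C^1$-compactness from item~(3) yields the stated recurrency property, which is case~(b).

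The main obstacle I expect is the embeddedness and non-crossing step in item~(2): the whole lamination picture rests on converting tangential contacts into coincidence via Lemma~\ref{tangin} and ruling out transversal contacts through the stability of transversal intersections against the injectivity of the diffeomorphisms $G_n$. Setting up the common diagonal sequence so that competing leaves are realized as limits within a single family $S_n$, and verifying the intrinsic-divergence bookkeeping that makes the domains eventually disjoint, is where the care is required.
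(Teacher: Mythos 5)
Your proposal reproduces the paper's architecture for items (1), (2) and (4): the diagonal argument, regularity of $\g_f$ via Proposition~\ref{rank:limits}, embeddedness by combining Lemma~\ref{tangin} with the stability of transversal crossings against the injectivity of the spherical Gauss maps $G_n$ (exactly the mechanism of case (L2) in the proof of Lemma~\ref{unique-limit}), the identification of the leaves of $\overline{\g_f}$ with the Gauss images of elements of $\Delta(f)$ through the translation invariance of $G$, and Zorn's lemma followed by a minimal-set/Baire argument. The one genuine gap is your justification of the completeness of $\g_f$: you assert that it ``follows from completeness of $\Sigma$ and the invariance description,'' but this is false. Since property~(D) forces $\Psi(\cdot,0)=0$, constant maps are equilibria of \eqref{gaussecu}, and a complete invariant $H$-surface can perfectly well have as Gauss image a finite-length arc limiting to such an equilibrium; geometrically, $f(\Sigma)$ is then asymptotic to a left coset of a two-dimensional subgroup (translation-invariant surfaces asymptotic to a horocylinder in $\H^2\times\R$ behave exactly this way). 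What excludes this is precisely the standing hypothesis that $\Delta(f)$ contains no element with constant left invariant Gauss map, and it must be invoked at this step, as the paper does: if $x\in \esf^2$ were an end point of $\g_f$, one chooses $q_n\in \Sigma$ diverging along the corresponding end of $\Sigma$ with $G(q_n)\to x$, extracts a limit $\wt{f}\in \Delta(f)$ of the translated immersions $f_{q_n}$, and observes that $\wt{f}$ has constant Gauss map $x$, a contradiction. Your justification never uses the hypothesis here, so it cannot be correct as stated; this is the only step of your plan that needs to be replaced rather than merely expanded.

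By contrast, your treatment of item (3) is genuinely different from the paper's and, once rephrased, is correct and arguably cleaner. The paper derives the curvature bound from a priori $C^3$ elliptic estimates for $f(\Sigma)$ (uniformly bounded second fundamental form plus the local quasilinear graph equation), which give bounded geometry of $\g_f$ and of all leaves. You instead use the ODE, but the phrasing needs fixing: property~(D) gives invariance of solutions under \emph{affine} reparameterizations $s\mapsto \delta_1 s+\delta_2$, not under arclength reparameterization, which is not affine. What (D) actually yields is the homogeneity $\Psi(p,\delta v)=\delta^{2}\Psi(p,v)$, and with it the geodesic curvature $\kappa_g=\langle \Psi(G,G'),G\times G'\rangle /|G'|^{3}=\langle \Psi(G,u),G\times u\rangle$, where $u=G'/|G'|$, becomes a continuous function on the compact unit tangent bundle of $\esf^2$, hence uniformly bounded. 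Stated this way, the bound holds simultaneously for every $\g_{\hat{f}}$ with $\hat{f}\in \Delta(f)$ \emph{before} the lamination structure is established, which is what your item (2) requires and dissolves the apparent circularity between your items (2) and (3); the paper's elliptic-estimate route achieves the same thing but is less elementary.
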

\begin{proof}
Item~1 follows from a standard diagonal argument.

In the sequel, we will assume
that $\Delta(f)$ 
contains no elements with constant left invariant Gauss map. In particular, $f(\Sigma )$
is not a two-dimensional subgroup of $X$.

We next prove item~2. By Proposition~\ref{rank:limits}, there exists a nonzero, right invariant
vector field on $X$ which is everywhere tangent to $f(\Sigma )$ and
the Gauss map image $\g _f$ is a regular curve in $\esf^2$. Next we show
that the curve $\g _f$ is complete, or equivalently, it has no end points in
$\esf^2$. If such an end point $x\in \esf^2$ of $\g _f$ exists, then we can
consider a divergent sequence $q_n\in \Sigma $ such that $G(q_n)\to x$.
After passing to a subsequence,
there exist compact subdomains $\Omega _n\subset \Sigma $ with $q_n\in \Omega_n$ such that
the restrictions $f_{q_n}|_{\Omega _n}$ converge to an element $\widetilde{f}\in \Delta(f)$
(see the first paragraph in the proof of Lemma~\ref{unique-limit} for a similar argument).
Clearly $\wt{f}$ has constant left invariant Gauss map, which contradicts our hypothesis.
Therefore, $\g _f$ is complete.

By the arguments in the proof of Lemma~\ref{tangin}, $\g _f$ has no tangential self-intersections.
Transversal self-intersections of $\g _f$ can also be ruled out
by using a straightforward modification of the arguments in case~(L2) of the proof of
Lemma~\ref{unique-limit}.
Therefore, the curve $\g _f$ is embedded.
Also, note that the surface $f(\Sigma)$ can be viewed locally as the graph of a solution
to a quasilinear elliptic PDE, and recall that $f$ has uniformly bounded second fundamental form.
By standard elliptic estimates, this provides a priori $C^3$ estimates for $f(\Sigma)$,
and this shows in particular that $\g _f$ has bounded geometry.
These properties for $\g _f$ imply that its closure $\overline{\g _f}$
has the structure of a lamination of $\esf^2$, all whose leaves have bounded
geometry as well. Hence item~3 of the lemma is proved.

If $\widehat{f}\in 
\Delta(f)$,
then the arguments above apply to $\widehat{f}$ to give that the Gauss map
image $\g _{\widehat{f}}$ of $\widehat{f}$ is a complete embedded regular
curve in $\esf^2$. By definition, there exists a sequence $\{ q'_n\} _n
\subset \Sigma $ such that
the immersions $f_{q'_n}$ converge smoothly to $\widehat{f}$,
and thus the corresponding
Gauss map images $\g _{f_{q'_n}}$ converge to $\g _{\widehat{f}}$ in $\esf^2$.
But clearly $\g _{f_{q'_n}}=\g _{f}$ as sets, from where we deduce that the leaves
of the lamination $\overline{\g _f}$ correspond to the Gauss map images of
 elements of
$\Delta(f)$. This proves item 2 of the lemma.

We finish by proving item~4.
Consider the set $\mathcal{S}$ of sublaminations of the lamination $\overline{\g _f}$,
which is partially ordered by the inclusion. We want to apply Zorn's lemma to
$\mathcal{S}$ in order to find a minimal element in $\mathcal{S}$, i.e., a sublamination of
 $\overline{\g _f}$ with no proper sublaminations. To do this, we must check that
every totally ordered subset $\mathcal{S}_1$ of $\mathcal{S}$ has a lower bound.
This is clear provided that the intersection of all sublaminations in $\mathcal{S}_1$
is nonempty. Since $\mathcal{S}_1$ is a collection of closed sets of the compact
topological space $\esf^2$ and $\mathcal{S}_1$ clearly satisfies the finite
intersection property, then the intersection of all sublaminations in $\mathcal{S}_1$
is nonempty. By Zorn's lemma, there exists a sublamination $\mathcal{L}$ of
 $\overline{\g _f}$ with no proper sublaminations. Take a leaf $\G $ of $\mathcal{L}$.
 As $\G $ is a leaf of $\overline{\g _f}$, then item~2 implies that there exists
an immersion $\widehat{f}\in
\Delta(f)$ whose Gauss map image
$\g _{\widehat{f}}$
is equal to $\G $. Therefore, $\overline{\g _{\widehat{f}}}$ is a sublamination of
$\mathcal{L}$, and by minimality of $\mathcal{L}$ in $\mathcal{S}$ implies that
 $\overline{\g _{\widehat{f}}}=\mathcal{L}$. This proves the
first sentence of item~4 of the lemma.

In order to prove items 4a and 4b, it is worth adapting some
known facts about laminations to our setting. Take a sublamination
$\overline{\g _{\widehat{f}}}$ of $\overline{\g _f}$ with no proper sublaminations,
corresponding to an element $\widehat{f}\in 
\Delta(f)$. A point $x\in \overline{\g _{\widehat{f}}}$ is called a
{\it limit point of} $\overline{\g _{\widehat{f}}}$ if $x$ is the limit in $\esf^2$ of
an intrinsically divergent sequence $\{ x_n\} _n\subset \g _{\widehat{f}}$.
If $x\in \overline{\g _{\widehat{f}}}$ is a limit point of $\overline{\g _{\widehat{f}}}$,
then the leaf component $L$ of the lamination $\overline{\g _{\widehat{f}}}$ that contains $x$
consists entirely of limit points of $\overline{\g _{\widehat{f}}}$ (and $L$ is called
a {\it limit leaf}).
The set ${\rm Lim} (\g _{\widehat{f}})$ of limit points of $\overline{\g _{\widehat{f}}}$
is a (closed) sublamination of $\overline{\g _{\widehat{f}}}$, possibly empty.

If $\g _{\widehat{f}}$ is a closed curve, then clearly ${\rm Lim} (\g _{\widehat{f}})$ is empty and so,
$\overline{\g _{\widehat{f}}}$ consists of the single leaf $\g _{\widehat{f}}$, which is item~4a of the lemma.
Next suppose that $\g _{\widehat{f}}$ is not a simple closed curve.
Thus, ${\rm Lim} (\g _{\widehat{f}})\neq \mbox{\O }$.
Since $\overline{\g _{\widehat{f}}}$ contains no proper sublaminations, then
${\rm Lim} (\g _{\widehat{f}})=\overline{\g _{\widehat{f}}}$.
We next show that $\overline{\g  _{\widehat{f}}}$ contains an uncountable number of leaves.
Consider a small compact arc $\a\subset \esf^2$ cutting $\ov{\g_{\widehat{f}}}$ transversally.
Since complete one-manifolds are second countable in their intrinsic topology
(each leaf of the lamination $\ov{\g _{\widehat{f}}}$ has this property),
to prove that $\ov{\g _{\widehat{f}}}$ has an uncountable number of leaves,
as a lamination has a local product structure it suffices to
prove that  $W=\a\cap \ov{\g _{\widehat{f}}}$ is uncountable. We can consider $W$ to be a
complete metric space (note that $\a $ is a compact arc in $\esf^2$ with its
usual topology, hence $\a $ has a natural structure of a complete metric
space, and $W$ is a closed subset of $\a $). $W$ has no isolated points, since
${\rm Lim} (\g _{\widehat{f}})=\overline{\g _{\widehat{f}}}$. In this setting, the uncountability
of $W$ is a consequence of the following well known elementary application of the Baire category theorem:
\emph{any complete metric space without isolated points is uncountable.}

Finally, the recurrency property in item~4b of the lemma follows easily from the fact that
$\g _{\widehat{f}}\subset \overline{\g _{\widehat{f}}}={\rm Lim}(\g _{\widehat{f}})$.
Now Lemma~\ref{lam:gauss} is proved.
\end{proof}

For the remainder of this section
we will use the notation introduced in Lemma~\ref{lam:gauss},
and denote by $f\colon (\Sigma ,p)\la (X,e)$ an element of
$\Delta(f')$ such that
{\bf $\Delta(f)$  contains no elements with constant left invariant Gauss map}.
In particular, $f(\Sigma )$ is not a two-dimensional subgroup of $X$.

By Proposition~\ref{rank:limits}, $\g_f$ is a regular curve in $\esf^2$ and thus,
the right invariant vector field
$K_{\Sigma }$ given by item~2 of Proposition~\ref{DeltaF} is unique up to scaling.
Once we pick $K_{\Sigma}$ and an orientation on $X$,
 the curve $\gamma_f\subset \esf^2$
has a natural orientation as follows.

\begin{definition}
\label{def4.7}
{\rm
Suppose that $q\in \Sigma$ and
$N(q)\in T_{f(q)} X$ is the unit normal vector to $\Sigma$ at $q$. Consider
a short arc $\be$ in $\Sigma$ transverse to the integral curves of
the vector field $\wt{K}_{\Sigma }$ induced by $K_{\Sigma}$ on $\Sigma $,
parameterized so that
$\be(0)=q$ and $\{ df_q(\be '(0)),
K_{\Sigma}(f(q)),N(q)\} $ is a positively oriented basis for $T_{f(q)}
X$. Now, the orientation on $\g _f$ is defined to be the one given
by $G\circ \be $ where $G$ is the Gauss map of $f$.
}
\end{definition}

\begin{lemma}
\label{ass:limit}
Assume that $\g _f$ is not a simple closed curve
in $\esf^2$. Then, there exists a compact interval $\sigma \subset
\g _f$ with $G(p)\in \sigma $ (here $G$ is the left invariant Gauss map of $f$
and $p$ is the base point of $f$)
and there exists a sequence of pairwise disjoint arcs $\sigma_n\subset \g
_f$ that are small normal graphs over $\sigma$, which converge to $\sigma$
in the $C^1$-topology and, as graphs oriented by $\g _f$, satisfy
that their induced orientations are opposite of the orientation of $\sigma$, see Figure~\ref{fig15}.
\begin{figure}
\begin{center}
\includegraphics[height=3.5cm]{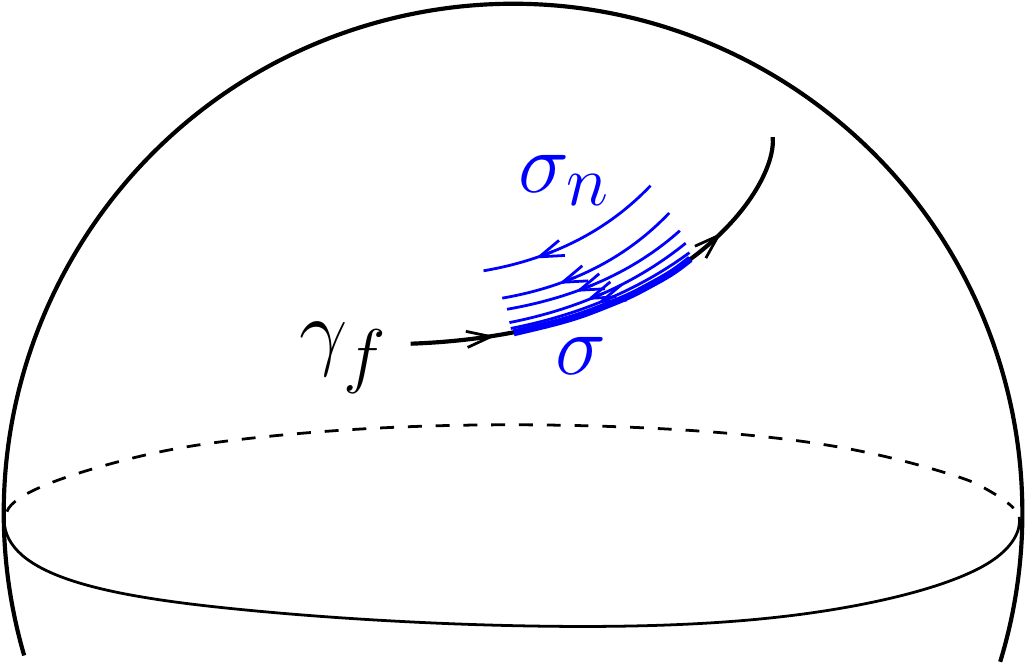}
 \caption{If $\g_f$ is not a simple closed curve, then it contains
compact arcs $\sigma_n$ converging to a compact arc $\sigma$, with the orientations of the $\sigma_n$
being opposite to the one of $\sigma $.}
\label{fig15}
\end{center}
\end{figure}
\end{lemma}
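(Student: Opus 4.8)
The plan is to read off the arcs $\sigma_n$ from the recurrency already contained in Lemma~\ref{lam:gauss}, and to reserve the real work for the orientation statement, which is the only nontrivial point. By Proposition~\ref{rank:limits} and item~2 of Lemma~\ref{lam:gauss}, $\gamma_f$ is a complete embedded regular curve in $\S^2$; since by hypothesis it is not a simple closed curve, we are in case~4b of Lemma~\ref{lam:gauss}. A short application of Lemma~\ref{tangin} shows moreover that $\gamma_f$ is self-limiting, i.e. $\gamma_f$ coincides with the minimal leaf $\gamma_{\hat f}$ of item~4: taking intrinsically divergent $a_n\in\Sigma$ with $G(a_n)\to G(p)$ and passing to the limit of $l_{f(a_n)^{-1}}\circ f$, the limit leaf is tangent to $\gamma_f$ at $G(p)$ and the two invariant surfaces share the point $e$, so Lemma~\ref{tangin} gives that they have the same image; since the left invariant Gauss map is preserved by left translations, this yields $\gamma_{\hat f}=\gamma_f$. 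In particular $G(p)$ is a limit point of $\gamma_f$, and the recurrency property of item~4b applies at a compact subarc $\sigma\subset\gamma_f$ with $G(p)$ in its interior, producing intrinsically divergent, pairwise disjoint arcs $\sigma_n\subset\gamma_f$ converging to $\sigma$ in $C^1$. Being pairwise disjoint, disjoint from $\sigma$, and $C^1$-close to $\sigma$, each $\sigma_n$ is for $n$ large a normal graph over $\sigma$ in a tubular neighborhood of $\sigma$ in $\S^2$. Everything except the sign of the induced orientation is then in place.

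The heart of the matter is to show that, for $n$ large, the orientation $\gamma_f$ induces on $\sigma_n$ is opposite to the one it induces on $\sigma$, and I would compute this sign by tracking Definition~\ref{def4.7} through the limit above. Choose $a_*\in\Sigma$ with $G(a_*)=v\in\sigma$ and divergent $a_n\in\Sigma$ with $G(a_n)\in\sigma_n$, $G(a_n)\to v$; after left translating so that $a_*$ and $a_n$ map to $e$, the sheet of $f$ around $a_*$ and the limit of the sheets around $a_n$ are two invariant $h_0(X)$-surfaces through $e$, tangent there, with the same oriented Gauss value $v$. The orientation of $\sigma$ is read off from the frame $\{df(\beta'),K_\Sigma,N\}$, while that of $\sigma_n$ is read off from the analogous frame of the translated sheet. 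The decisive point is that a left translation preserves the unit normal $N$ — hence the mean-convex co-orientation inherited from the Alexandrov-embedded, mean-convex approximating spheres of item~1 of Proposition~\ref{propsu2} — but acts nontrivially on the right invariant field $K_\Sigma$ (compare Lemma~\ref{lemma3.6}). I would argue that, after the normal-graph identification of $\sigma_n$ with $\sigma$, holding $N$ (equivalently the mean-convex side) fixed forces the transverse field $\beta'$, i.e. the sense in which the sheet crosses the $K_\Sigma$-flow, to reverse; this is exactly the asserted reversal. To rule out the same-orientation alternative cleanly, I would use the half-space property of Corollary~\ref{cor:unique}: equal orientation would place the two tangent sheets on the same mean-convex side, and the strong maximum principle for $h_0(X)$-surfaces would then force them to coincide in $X$, which is impossible since $\sigma_n$ and $\sigma$ are disjoint arcs of the embedded curve $\gamma_f$.

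The main obstacle I anticipate is precisely this sign bookkeeping, and the difficulty is structural: the orientation of Definition~\ref{def4.7} mixes the surface normal $N$, which is preserved by the left translations used to form the limit, with the right invariant Killing field $K_\Sigma$, which is not; consequently one cannot simply transport orientations along the translations, and must instead compare the limiting co-orientations carefully, fixing the global sign through the mean-convex side furnished by Proposition~\ref{propsu2} together with the one-sidedness of Corollary~\ref{cor:unique}. A secondary subtlety is that $C^1$-closeness of $\sigma_n$ to $\sigma$ in $\S^2$ controls only the normals of the two families of sheets and not their positions in $X$; this is what makes the passage to the translated limit — bringing the two sheets into a common tangency at $e$ — indispensable before any maximum-principle argument can be applied.
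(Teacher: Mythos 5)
Your first paragraph is essentially sound: in the setting where this lemma is applied, $f$ has been chosen so that $\overline{\gamma_f}$ carries no proper sublaminations, hence ${\rm Lim}(\gamma_f)=\overline{\gamma_f}\supset \gamma_f$, and item~4b of Lemma~\ref{lam:gauss} directly supplies the intrinsically divergent, pairwise disjoint arcs $\sigma_n\to\sigma$ in $C^1$; embeddedness and the local product structure of the lamination make them normal graphs, exactly as in the paper. (Your detour through Lemma~\ref{tangin} to show $\gamma_{\hat f}=\gamma_f$ is circular as written, since the existence of intrinsically divergent $a_n$ with $G(a_n)\to G(p)$ \emph{is} the self-limiting property you are trying to establish; but this is repaired by the standing choice of $f$, so it is not the real problem.)

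The genuine gap is in the orientation step, and it is structural, not a matter of bookkeeping. The contradiction you propose is not a contradiction: if the orientations of $\sigma_n$ and $\sigma$ agreed, the limit of the translated sheets around $a_n$ would indeed coincide with the sheet around $a_*$, but this is perfectly compatible with $\sigma_n\cap\sigma=\emptyset$, because the coincidence occurs only \emph{in the limit}, where the disjoint arcs $\sigma_n$ have already converged onto $\sigma$. In fact Lemma~\ref{oldstep1} proves precisely this coincidence (the translated immersions $l_{f(p_n)^{-1}}\circ f$ converge to $f$ itself), and its proof is insensitive to the sign: property (D) allows the reparameterizations $\Phi_n(z)=\delta_n z+\mu_n$ with $\delta_n$ of either sign. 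The only thing that distinguishes ``same orientation'' from ``opposite orientation'' is the sign of the constant $c$ in $K_1=c\,K_\Sigma$, where $K_1$ is the limit of the normalized translated Killing fields (this is how the lemma is exploited in Assertion~\ref{ass6.2}); a positive $c$ produces no clash with the maximum principle, nor with Corollary~\ref{cor:unique} (which in any case concerns tangencies of $f(\Sigma)$ with cosets of two-dimensional subgroups, not tangencies between two sheets of $f(\Sigma)$), nor with anything local. Indeed, the analogous statement is \emph{false} for embedded recurrent curves in other surfaces: an irrational line on a flat torus returns to any transversal infinitely often, always with the \emph{same} orientation. So no argument that never invokes the topology of $\S^2$ --- and yours does not --- can prove this lemma. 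The paper's proof is exactly a planar-topology argument: parameterize $\gamma_f$ by $\R$, split it into past and future, build a Jordan curve out of $\sigma$, a connecting subarc $J$ of $\gamma_f$ and a cross-cut of the square model $U$, and use the fact that this Jordan curve bounds a disk $D$ in $\S^2$ to force some later return of $\gamma_f$ to enter $D$ pointing ``down'', contradicting embeddedness under the same-orientation assumption. Some such Jordan-curve mechanism is the missing idea in your proposal.
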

\begin{proof}
By item~4b of Lemma~\ref{lam:gauss},
given a small  arc $\sigma \subset \g _f$ containing $G(p)$,
there exists a
sequence of pairwise disjoint intervals $\sigma _n\subset \g _f$
which converge $C^1$ in $\esf^2$ to $\sigma $. After extracting a
subsequence, we can assume that all the arcs $\sigma _n$ lie on the
same side of $\sigma $; more specifically, we may assume that there exists a one-sided
neighborhood $U$ of $\sigma $ diffeomorphic to a square $[0,1]\times
[0,1]$, so that $\sigma $ corresponds to $\{ 0\} \times [0,1]$ and
for each $n\in \N$,
$\sigma _n$ corresponds to $\{ 1/n\} \times [0,1]$ under this
diffeomorphism. Also note that the lamination structure on the closure of
$\g _f$ ensures that we can take $U$ so that every component of $\g
_f\cap U$ is a small normal graph over $\sigma $. Therefore, in the
model of $U$ as $[0,1]\times [0,1]$, $\g _f\cap U$ can be
represented as $A\times [0,1]$ where $A\subset [0,1]$ is a compact
infinite subset, $1\in A$ and $0$ is an accumulation point of $A$.

Without loss of generality, we can also assume that the
orientation on $\sigma $  induced by the one of $\g _f$ is pointing upwards
in this model of $\sigma $ as $\{ 0\} \times [0,1]$. If Lemma~\ref{ass:limit}
fails, then we have that after choosing $U$ small
enough, every component of $\g _f\cap U\equiv A\times [0,1]$ is
oriented by pointing upwards. We can assume that $\g _f$ is
parameterized by the real line $\R $, and its orientation is the
induced one by this parameterization. Then, each of the segments
$\sigma , \sigma _n$ corresponds respectively to a closed interval $I,I_n\subset
\R$ by this parameterization, and these intervals form a pairwise
disjoint collection. Each point or closed interval $J$ in $\R $
defines a ``future'' (the component of $\R -J$ that limits to
$+\infty $) and a ``past'' (the component of $\R -J$ that limits to
$-\infty $), and these definitions can be translated to $\g _f$ via
the parameterization. After passing to a subsequence, we can assume
that one of the two following possibilities holds:
\begin{enumerate}[(M1)]
\item $I_n$ is contained in the future of $I$, for all $n\in \N$.
\item $I_n$ is contained in the past of $I$, for all $n\in \N$.
\end{enumerate}

If case (M1) holds, then consider the  end point
$t_1=\max (I)$ and the first $t_2>t_1$ whose image by the
parameterization lies in the closed topological square $U\subset
\S^2$; the image of the interval $(t_1,t_2)\subset \R $ is an open
Jordan arc $J$ in $\esf^2-U$ with endpoints in $\partial U$
corresponding to the points $(0,1)$ and $(r,0)$ for a certain $r\in
(0,1]$, in the model of $U$ as $[0,1]\times[0,1]$, see Figure~\ref{figsquare}.
\begin{figure}
\begin{center}
\includegraphics[height=5cm]{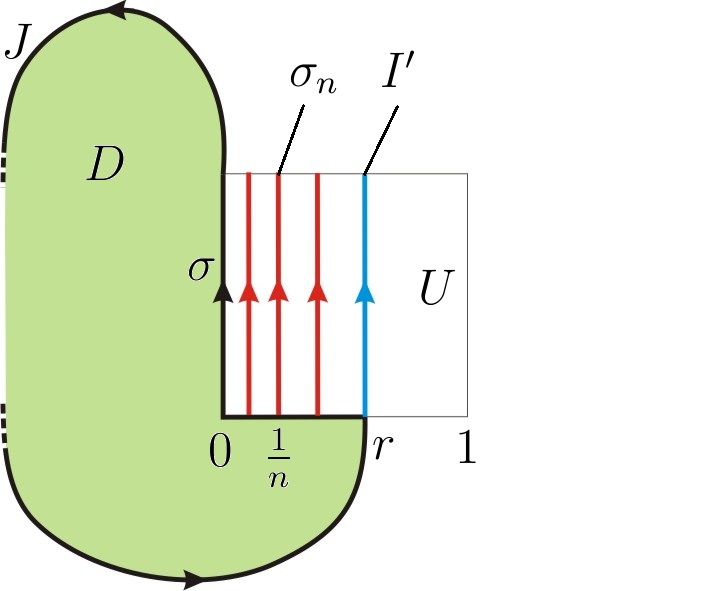}
 \caption{The future of $I'$ in $\g _f$ must enter $D$ by crossing $U$
 pointing down, a contradiction.}
\label{figsquare}
\end{center}
\end{figure}
By definition, $J\cap U=\mbox{\O }$ and $(r,0)$ lies in the boundary of an
interval component $I'$ of $\g _f\cap U$, which in our square model
is represented by $\{ r\} \times [0,1]$. Then, $\a :=\sigma \cup J\cup
([0,r]\times \{ 0\} )$ defines a Jordan curve in $\esf^2$. Let
$D\subset \esf^2$ be the topological disk bounded by $\a $ whose
interior is disjoint from the square $U$.
Consider the embedded  open arc $(I')^+$ of $\g _f$ given by the future of
$I'$. Note that $(I')^+$ must eventually intersect the interior of
$D$ (to see this, observe that for $n$ sufficiently large,
the segment $\sigma _n \equiv \{ 1/n\} \times [0,1]$ lies in $(I')^+$ and
intersects $\partial D$ transversely at its initial point). Since  $\g _f$ has no
self-intersections, we deduce that $(I')^+$ must cross $\partial D$ along
the arc $(0,r)\times \{ 0\} $. Moreover, at the points of $(I')^+\cap \partial D $
along the arc $(0,r)\times \{ 0\} $ where $(I')^+$ enters $D$,
the induced orientation of $(I')^+$ points down, which contradicts our assumption that every
component of $\g _f\cap U\equiv A\times [0,1]$ is oriented by
pointing up. If possibility~(M2) above occurs, then a similar
argument leads to a contradiction. This finishes the proof of Lemma~\ref{ass:limit}.
\end{proof}

\begin{lemma}
\label{oldstep1}
Suppose that $\g_f$ is not a closed curve. Let $\sigma, \sigma _n\subset \gamma_f$,
$n\in \N$, be the arcs given by Lemma~\ref{ass:limit}, and assume that $G(p)\in \sigma$
(here $p$ is the base point of $f$ and $G$ is its left invariant Gauss map). Let $D,D_n\subset \Sigma$ be given by
$D:=G^{-1}(\sigma)$, $D_n:=G^{-1}(\sigma_n)$.
Take points $p_n\in D_n$ so that $\{G(p_n)\}_n$ converges to $G(p)$. Then, there exist
diffeomorphisms $\Phi_n\colon D\flecha D_n$ such that the immersions
$l_{f(p_n)^{-1}} \circ f \circ \Phi_n$ converge to $f$ uniformly on compact sets of $D$.

As a consequence, the pointed immersions $l_{f(p_n)^{-1}}\circ f
\colon (\Sigma,p_n)\la (X,e)$ converge uniformly on compact sets to
$f\colon (\Sigma,p)\la (X,e)$.
\end{lemma}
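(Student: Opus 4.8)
The strategy is to reconstruct both $f|_D$ and the left translates $l_{f(p_n)^{-1}}\circ f|_{D_n}$ from their left invariant Gauss maps, exploiting that $\sigma_n\to\sigma$ in $C^1$ by Lemma~\ref{ass:limit}. I will use three facts. (i) Left translations preserve the left invariant Gauss map, so $l_{f(p_n)^{-1}}\circ f$ has the same Gauss map $G$ as $f$. (ii) Since $\mbox{rank}(dG)=1$ (Proposition~\ref{rank:limits}), Remark~\ref{rem7.1} gives the invariant parameterization $f(s,t)=l_{\Gamma(t)}(\beta(s))$, where $\Gamma$ is the $1$-parameter subgroup with $\Gamma'(0)=K_{\Sigma}(e)$, $\beta$ is the profile curve, and $G=G(s)$ depends only on $s$. (iii) By~\cite[Theorem~3.7]{mmpr4} the Gauss map of a conformally parameterized $H$-surface determines the surface up to left translation, and this reconstruction is continuous in $G$ because $G(s)$ obeys the autonomous ODE~\eqref{gaussecu}. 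Using that $\gamma_f$ is embedded (Lemma~\ref{lam:gauss}), I parameterize it globally by $G(s)$, so that $\sigma=G([a,b])$, $\sigma_n=G([a_n,b_n])$, and the strips $D,D_n$ become $[a,b]\times\R$ and $[a_n,b_n]\times\R$ in the $(s,t)$-coordinates. I normalize so that $p=(s_0,0)$ with $v:=G(s_0)=G(p)$ and $\beta(s_0)=f(p)=e$, and write $p_n=(s_n',t_n)$ with $G(s_n')=G(p_n)\to v$.

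Next I would build $\Phi_n$ and compute. The $C^1$-convergence $\sigma_n\to\sigma$ from Lemma~\ref{ass:limit} yields a normal-graph correspondence $\psi_n\colon[a,b]\flecha[a_n,b_n]$ with $\psi_n(s_0)=s_n'$ and $G\circ\psi_n\to G$ in $C^1$; I then set $\Phi_n(s,t)=(\psi_n(s),t+t_n)$, a diffeomorphism $D\flecha D_n$ with $\Phi_n(p)=p_n$. Using the invariant form and $f(p_n)=\Gamma(t_n)\beta(s_n')$, a direct calculation gives
\[
l_{f(p_n)^{-1}}\circ f\circ\Phi_n(s,t)=\beta(s_n')^{-1}\,\Gamma(t)\,\beta(\psi_n(s)),
\]
so it suffices to prove that the renormalized profile curves $\beta(s_n')^{-1}\beta(\psi_n(\cdot))$ converge to $\beta(\cdot)$ on compact $s$-sets; the limit is then $\beta(s_0)^{-1}\Gamma(t)\beta(s)=\Gamma(t)\beta(s)=f(s,t)$ because $\beta(s_0)=e$. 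This renormalized convergence is precisely fact (iii): the translate of the surface piece over $\sigma_n$ sending its marked point to $e$ is reconstructed from $G\circ\psi_n$, which converges in $C^1$ (hence in every $C^k$ by~\eqref{gaussecu}) to $G$; continuous dependence of the reconstruction of~\cite[Theorem~3.7]{mmpr4} then forces the renormalized profile curves to converge to that of $f$. This establishes $l_{f(p_n)^{-1}}\circ f\circ\Phi_n\to f$ uniformly on compact subsets of $D$.

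For the consequence I would propagate this local convergence near $p$ to all compact subsets of $\Sigma$. The immersions $l_{f(p_n)^{-1}}\circ f$ have uniformly bounded geometry, since $f$ does (item~3 of Lemma~\ref{lam:gauss} and the a priori $C^3$ estimates there) and left translations are isometries; as the arcs $\sigma_n$ are intrinsically divergent, the base points $p_n$ diverge in $\Sigma$, so the pointed immersions $(l_{f(p_n)^{-1}}\circ f,p_n)$ subconverge to some $\hat f\in\Delta(f)\subseteq\Delta(f')$ (item~1 of Lemma~\ref{lam:gauss}). By the local statement just proved, $\hat f$ agrees with $f$ on the strip $D$; since the Gauss map obeys the real-analytic autonomous ODE~\eqref{gaussecu}, agreement on the arc $\sigma$ forces $\gamma_{\hat f}=\gamma_f$ as parameterized solutions, and~\cite[Theorem~3.7]{mmpr4} together with the base-point normalization gives $\hat f=f$. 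As every subsequential limit equals $f$, the whole sequence $l_{f(p_n)^{-1}}\circ f$ converges to $f$ on compact sets.

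The main obstacle I expect is fact (iii): promoting the qualitative conclusion of~\cite[Theorem~3.7]{mmpr4} to the quantitative continuity of the reconstruction in the $C^k$-topology, so that $C^1$-convergence of the Gauss map arcs yields convergence of the renormalized profile curves. This rests on the smooth dependence of the solutions of the reconstruction ODE on their coefficients, and on handling the orientation reversal in Lemma~\ref{ass:limit} (which makes $\psi_n$ orientation-reversing in the $s$-parameter, so that $G'(\psi_n(s))$ and $\psi_n'(s)$ each pick up a sign) without disturbing $G\circ\psi_n\to G$ in $C^1$. A secondary point is checking that $\Phi_n$ is a genuine diffeomorphism of the full invariant strips and correctly intertwines the $\Gamma$-flow, which the explicit product form $\Phi_n(s,t)=(\psi_n(s),t+t_n)$ makes transparent.
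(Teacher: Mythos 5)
There is a genuine gap, and in fact the convergence you claim for your specific $\Phi_n$ is false. Write $c_n=\beta(s_n')$, so that your (correct) computation reads
\[
l_{f(p_n)^{-1}}\circ f\circ\Phi_n(s,t)\;=\;c_n^{-1}\,\Gamma(t)\,\beta(\psi_n(s))\;=\;\bigl[c_n^{-1}\Gamma(t)c_n\bigr]\cdot\bigl[c_n^{-1}\beta(\psi_n(s))\bigr].
\]
Since the arcs $\sigma_n$ are intrinsically divergent in $\gamma_f$ (item 4b of Lemma~\ref{lam:gauss}), the parameters $s_n'$ diverge and $c_n$ does \emph{not} converge to $\beta(s_0)=e$. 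So even granting the convergence of the renormalized profile curves $c_n^{-1}\beta(\psi_n(\cdot))\to\beta(\cdot)$, your sentence ``the limit is then $\beta(s_0)^{-1}\Gamma(t)\beta(s)$'' passes to the limit factor by factor, which is illegitimate in the non-abelian group $X$: one would additionally need the conjugates $c_n^{-1}\Gamma(t)c_n$ to converge to $\Gamma(t)$, and they do not. Indeed, $t\mapsto c_n^{-1}\Gamma(t)c_n=f(p_n)^{-1}\Gamma(t)f(p_n)$ is exactly the $1$-parameter subgroup generating the right invariant field $(l_{f(p_n)^{-1}})_*(K_{\Sigma})$ tangent to the translated surface, and the orientation reversal of Lemma~\ref{ass:limit}, combined with Definition~\ref{def4.7} (this is precisely the mechanism exploited in the proof of Assertion~\ref{ass6.2}), forces these fields, after positive normalization, to converge to a \emph{negative} multiple of $K_{\Sigma}$. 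If your convergence held, then differentiating $l_{f(p_n)^{-1}}\circ f\circ\Phi_n$ in $t$ at the base point would give $(l_{f(p_n)^{-1}})_*(K_{\Sigma})\to +K_{\Sigma}$, a contradiction. The same defect is visible at the level of orientations: your $\Phi_n(s,t)=(\psi_n(s),t+t_n)$ is orientation-reversing on $\Sigma$ (you flip $s$ but not $t$), so the Gauss map of $f\circ\Phi_n$ is $-G\circ\Phi_n\to -G$, not $G$, and $l_{f(p_n)^{-1}}\circ f\circ\Phi_n$ cannot converge to $f$ as oriented immersions. The ``transparent product form'' you list as a secondary point is actually the bug, not a feature.

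The missing idea is that the flip forced by Lemma~\ref{ass:limit} must propagate to the flow direction $t$ as well (with an appropriate rescaling), and this is what the paper's construction achieves automatically: it works in \emph{conformal} coordinates $z=s+it$, in which $G=G(s)$ solves the autonomous ODE \eqref{gaussecu}, and takes $\Phi_n(z)=\delta_n z+\mu_n$ with $\delta_n<0$ — a holomorphic, hence orientation-preserving, map of the strip that reverses $s$ and $t$ simultaneously. Property (D) guarantees $g_n=g\circ\Phi_n$ solves the same ODE, the $C^1$-convergence $\sigma_n\to\sigma$ yields $g_n\to g$ in $C^1$, and the immersion is then recovered from its Gauss map through the linear system $f_z=\sum_{i=1}^3 A_i(z)(E_i)_{f(z)}$ of~\cite{mmpr4} with smooth dependence on coefficients and on the initial condition $f(p_n)\mapsto e$; there is no splitting into profile curve times flow, hence no conjugation problem. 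As a structural sanity check that your stronger statement cannot be repaired along your route: if it were true, the argument of Assertion~\ref{ass6.2} would produce a contradiction with no hypothesis on $K_{\Sigma}$ whatsoever (positive versus negative multiple), so $\gamma_f$ would always be closed and the slab-property arguments of Assertions~\ref{ass6.3} and~\ref{ass6.5} would be superfluous — which the paper's architecture shows is not the case. Your second paragraph (the pointed-convergence consequence, via bounded geometry, subconvergence and unique continuation) is in the spirit of the paper's own argument, but it rests on the first part and cannot be salvaged until that part is redone as above.
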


\begin{proof}
As $\gamma_f$ is not a closed curve, $\Sigma$ cannot be diffeomorphic to an annulus, and so by
Proposition~\ref{rank:limits} it is simply connected. Thus, by the arguments described
after Proposition~\ref{propos3.1} and the uniformization theorem,
we can parameterize $f$ as a conformal immersion
$f\colon \cU\subset \C\la X$ where $\cU$ is either $\C$ or a vertical strip
$\{s+it\ | \ a_1<s<a_2\}$, and the
left invariant Gauss map $G\colon \cU\to \S^2$
of $f$ depends solely on $s$. From now on, we will identify $\Sigma $ with $\cU $
and assume that the imaginary part of the base point $p\in \cU$ of $f$ is zero.
We will also consider the stereographically projected Gauss map $g:=\pi \circ G\colon  \cU\flecha \overline{\C}$, where
$\pi\colon \S^2\flecha \overline{\C}$ is the stereographic projection
from the South pole of $\esf^2$ defined in terms of
a previously chosen left invariant orthonormal frame $\{E_1,E_2,E_3\}$ of $X$
(see the paragraph before~\cite[Definition~3.2]{mmpr4}).
By item~2 of Lemma~\ref{lam:gauss}, $s\mapsto g(s)$ is a
complete embedded regular curve in $\overline{\C}$. Moreover, after choosing an
adequate left invariant frame $\{E_1,E_2,E_3\}$, \cite[Theorem~3.7]{mmpr4} implies that $g(s)$ is a solution of a
second order analytic autonomous ODE, see its explicit form in~\cite[equation~(4.3)]{mmpr4}.
In particular, $g(s)$ satisfies property~(D), introduced just before Lemma~\ref{tangin}.

Write $\sigma=g([b_1,b_2])$, and $\sigma_n=g(I_n)$, where $[b_1,b_2]$ and
each $I_n$ are compact arcs in $(a_1,a_2)$; note that $p$ can be then identified
with some point in $[b_1,b_2]$.
Let $\Phi_n(z)= \delta_n z +\mu_n$, where $\delta_n,\mu_n\in \R$ satisfy that
$\Phi_n([b_1,b_2])=I_n$. Then, we can consider the sequence of maps
$g_n:=g\circ \Phi_n\colon [b_1,b_2]\flecha \overline{\C}$. It follows from
property (D) above
and from the $C^1$ convergence of the arcs $\sigma_n$ to
$\sigma$ given by Lemma~\ref{ass:limit} that the following properties hold:
\begin{enumerate}[(N1)]
\item
Each $g_n$ is a solution of the ODE~\cite[equation~(4.3)]{mmpr4}. In fact,
$g_n$ is the Gauss map of $f\circ \Phi_n$ restricted to
$[b_1,b_2]\times \R \subset \C$.
\item $\{ g_n\} _n\to g$ uniformly on $[b_1,b_2]$ in the $C^1$-topology.
\end{enumerate}
Let $t_n\in [b_1,b_2]$ such that $g_n(t_n)$
is the value of the left invariant Gauss map of $f$ at $p_n$.
By hypothesis on the points $p_n$, we have that $\{ t_n\} _n\to
p\in [b_1,b_2]$.

We next translate property (N2) above into the desired
convergence of $l_{f(p_n)^{-1}}\circ f\circ \Phi_n$ to $f$.
To do this, we observe that given $z\in \mathcal{U}$, we have
\begin{equation}
\label{eq:4.12}
f_z(z)=\sum_{i=1}^3 A_i(z) (E_i)_{f(z)},
\end{equation}
where $A_1,A_2,A_3\colon \mathcal{U}\to \C $ are given in terms of $g$
by~\cite[equation~(3.4)]{mmpr4}, and $\{ E_1,E_2,E_3\} $ is the orthonormal basis
of left invariant vector fields on $X$ that we prescribed previously.
Note that the immersion $f\circ \Phi_n$ satisfies
the same complex linear differential equation (\ref{eq:4.12}) with respect to $g_n$.
Using properties~(N1), (N2) above and the smooth dependence of the
solution of an ODE with respect to initial conditions, we deduce
that $l_{f(p_n)^{-1}}\circ f\circ \Phi_n$ converges uniformly on
compact sets of $[b_1,b_2]\times \R $ to $f$.

Finally we prove the last sentence in the statement of Lemma~\ref{oldstep1}.
Let us consider the pointed immersions $l_{f(p_n)^{-1}}\circ f \colon (\Sigma,p_n)\la (X,e)$,
which have uniform curvature estimates by item~5 of Proposition~\ref{propsu2}.
By elliptic theory, these immersions converge
(up to subsequence) to some other complete pointed immersion
of constant mean curvature. Now, the uniform convergence property
that we obtained in the previous paragraph, and
the unique continuation principle for
surfaces of constant mean curvature (see e.g., Aronszajn~\cite{arons1})
concludes the proof of Lemma~\ref{oldstep1}.
\end{proof}

\section{The existence of periodic invariant limit surfaces}
\label{sec:periodic}

In this section we will apply the results of Section~\ref{sec:limits}
to prove Theorem~\ref{g_f closed} below, as well as
its consequence, Corollary~\ref{thm6.11}.
In later sections we will also quote some of the steps in the proof of
Theorem~\ref{g_f closed}.

We will use the following notation along this section.
$X$ is a metric Lie group diffeomorphic to $\R^3$; $h_0(X)\geq 0$,
$\mathcal{C}$ are respectively the number and the component
of the space $\chM_X$ of index-one spheres with constant mean curvature in $X$
that appear in Proposition~\ref{propsu2}.
$f\colon (\Sigma ,p)\la (X,e)$ is a pointed limit immersion of $\mathcal{C}$ produced by
Proposition~\ref{rank:limits}; in particular, $f$ is stable with constant mean curvature $h_0(X)$,
and there exists
a nonzero, right invariant vector field $K_{\Sigma }$ on $X$ which is everywhere tangent to $f(\Sigma)$.
$\G_{\Sigma }\subset X$ will denote the 1-parameter subgroup of $X$ generated by $K_{\Sigma }$.

Observe that if
the space $\Delta(f)$ defined in Lemma~\ref{lam:gauss}
contains an element $f_1\colon (\Sigma _1,p_1)\looparrowright (X,e)$ with constant
left invariant Gauss map (equivalently, $f_1(\Sigma _1)$ is a two-dimensional subgroup of $X$),
then $f_1$ is injective and $f_1(\Sigma _1)$ is an entire Killing graph in $X$ with respect to any
nonzero right invariant vector field in $X$ transverse to $f_1(\Sigma_1)$.
In particular, Theorem~\ref{topR3case} holds trivially in this case
(note that by item 1 of Proposition~\ref{rank:limits}, $f_1$ is a limit surface
of $\mathcal{C}$, and so it has the convergence properties in the statement of
Theorem~\ref{topR3case}).
Therefore,
{\bf in the sequel we will assume that
no element in $\Delta(f)$ has constant left invariant Gauss map;}
in particular, $f(\Sigma )$ is not a two-dimensional subgroup of $X$.

By Proposition~\ref{rank:limits},
 $\Sigma$ is either simply connected (with $f(\Sigma )$ being a plane or an annulus)
or $\Sigma $ is diffeomorphic to an annulus (so $f(\Sigma )$ is an annulus as well).
By Lemma~\ref{lam:gauss}, the left invariant Gauss map image $\g _f=G(\Sigma )$ of $f$
is an embedded regular curve in $\esf^2\subset T_eX$, and we can choose $f$ so that the closure
$\overline{\g _f}$ is a lamination of $\S^2$ with no proper sublaminations.

\begin{theorem}
\label{g_f closed}
In the above situation, $\g_f$ is a closed curve in $\S^2$.
\end{theorem}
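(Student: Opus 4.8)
The plan is to argue by contradiction: suppose $\g_f$ is not a closed curve. By Lemma~\ref{lam:gauss}, after replacing $f$ by a suitable $\widehat{f}\in\Delta(f)$ we may assume $\overline{\g_f}$ is a minimal lamination of $\S^2$ with no proper sublaminations, and since $\g_f$ is not a simple closed curve, item~4b of Lemma~\ref{lam:gauss} tells us $\overline{\g_f}$ has uncountably many leaves and $\g_f$ satisfies the recurrency property. The key structural tool will be Lemma~\ref{ass:limit}: it produces a compact arc $\sigma\subset\g_f$ containing $G(p)$ together with a sequence of pairwise disjoint arcs $\sigma_n\subset\g_f$ that are normal graphs over $\sigma$, converge to $\sigma$ in $C^1$, and whose $\g_f$-induced orientations are \emph{opposite} to that of $\sigma$. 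This orientation reversal is the crucial extra piece of information, and I expect the whole contradiction to be extracted from it.

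\medskip

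\noindent\textbf{Exploiting the orientation reversal.} First I would feed the arcs $\sigma,\sigma_n$ into Lemma~\ref{oldstep1}: setting $D=G^{-1}(\sigma)$ and $D_n=G^{-1}(\sigma_n)$, and choosing $p_n\in D_n$ with $G(p_n)\to G(p)$, we obtain diffeomorphisms $\Phi_n\colon D\to D_n$ such that $l_{f(p_n)^{-1}}\circ f\circ\Phi_n\to f$ uniformly on compact sets, and hence $l_{f(p_n)^{-1}}\circ f\colon(\Sigma,p_n)\to(X,e)$ converges to $f\colon(\Sigma,p)\to(X,e)$. Thus the left-translated immersions based at the $p_n$ reproduce the \emph{same} pointed limit surface as the base point $p$. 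The plan is then to compare the two pieces of $f(\Sigma)$ near $e$: the original sheet through $f(p)=e$ with Gauss image $\sigma$, and the limiting sheet coming from the $\sigma_n$'s. Because $G$ determines the immersion up to left translation (via~\cite[Theorem~3.7]{mmpr4}) and both sheets have the same unit normal $v=G(p)$ at $e$ but the \emph{opposite} induced orientation of their Gauss images along $\sigma$, these two coherently oriented $h_0(X)$-surfaces are tangent at $e$ and must lie, locally, on opposite sides of one another along the integral curve of $K_\Sigma$.

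\medskip

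\noindent\textbf{The contradiction.} The hard part will be converting this local ``opposite sheets through a common tangent'' picture into a genuine contradiction, and here I would invoke the mean curvature comparison principle in the form already used in this circle of ideas (Corollary~\ref{cor:unique} and the Transversality Lemma~\cite[Lemma~3.1]{mmp2}). Since each $f\in\Delta(f')$ is a limit of compact domains of the index-one spheres $S_n\in\mathcal{C}$, the sheets inherit the one-sidedness encoded in Corollary~\ref{cor:unique}: when $f(\Sigma)$ is tangent to a coset it must lie on one side of it. Applying this to the tangent sheet obtained from the $\sigma_n$ against the sheet through $e$ — two $h_0(X)$-surfaces with the same oriented tangent plane at $e$ but opposite orientations of their $\g_f$-parameterizations — forces the two coherently normalized mean-curvature-$h_0(X)$ graphs to violate the comparison principle, exactly as in the contact arguments of Proposition~\ref{DeltaF} and Lemma~\ref{tangin}. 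More precisely, the opposite orientation means the two sheets have mean curvature vectors pointing to opposite sides of the common tangent plane while being tangent with the same normal line, which is incompatible with both having mean curvature $h_0(X)$ with respect to consistently chosen normals unless they coincide; but coincidence would make $\g_f$ a single closed leaf, contradicting the standing assumption that $\g_f$ is not simple closed. I expect the main obstacle to be making the ``opposite sides'' conclusion rigorous at the level of the immersions rather than merely the Gauss curves: one must carefully track how the orientation reversal of $\sigma_n$ relative to $\sigma$ in $\S^2$ translates, through Definition~\ref{def4.7} and the reconstruction of $f$ from $G$, into a genuine geometric separation of the two sheets in $X$, and then close the argument by the comparison principle. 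Once that translation is in hand, the contradiction is immediate and we conclude that $\g_f$ must be a closed curve.
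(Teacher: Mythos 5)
Your setup is the paper's setup: the proof does begin exactly with Lemma~\ref{ass:limit} (the orientation-reversing arcs $\sigma_n\to\sigma$) fed into Lemma~\ref{oldstep1}. But the contradiction mechanism you propose is wrong, because you have misread what the orientation reversal means geometrically. Lemma~\ref{oldstep1} says that $l_{f(p_n)^{-1}}\circ f$ converges to $f$ \emph{itself}: same immersion, same orientation, same unit normal, same mean curvature vector. So your ``limiting sheet'' and ``original sheet'' are one and the same oriented surface; there are no two graphs with mean curvature vectors pointing to opposite sides, and the mean curvature comparison principle (or Lemma~\ref{tangin}, or Corollary~\ref{cor:unique}) has nothing to bite on --- coincidence of the two sheets is consistent, not contradictory. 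Your fallback, ``coincidence would make $\gamma_f$ a single closed leaf,'' is also false: $f$ being a limit of divergent left translations of itself is precisely the recurrency in item~4b of Lemma~\ref{lam:gauss}, and it is perfectly compatible with $\gamma_f$ being non-closed. What the reversed orientation of the $\sigma_n$ actually encodes, through Definition~\ref{def4.7}, is that the normal survives the limit while the tangent Killing direction flips: the normalized push-forwards $K(n)$ of $K_\Sigma$ in \eqref{kan} converge to a \emph{negative} multiple of $K_\Sigma$. That is the only information the reversal gives.

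The paper's contradiction is therefore algebraic, not a tangency/comparison argument: one must show that left translations of $X$ cannot asymptotically reverse the direction of the right invariant field $K_\Sigma$, and this depends on the Lie group structure and on the type of $K_\Sigma$, which is why the proof splits into Assertions~\ref{ass6.2}, \ref{ass6.3} and~\ref{ass6.5}. When $K_\Sigma\notin\mathrm{Span}\{F_1,F_2\}$ in a semidirect product, the $\partial_z$-component of $K_\Sigma$ is preserved by every $(l_a)_*$, so no sign flip can occur; in $\sl$ with $K_\Sigma$ elliptic or parabolic, the sign of $\langle K_\Sigma,E_3\rangle$ is invariant (Lemma~\ref{lemma3.6}). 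Crucially, in the remaining cases ($K_\Sigma$ horizontal in a semidirect product, or hyperbolic in $\sl$) left translations \emph{can} reverse such fields, the negative-multiple conclusion is not by itself absurd, and the paper must argue differently: via the cross-product field $W$ and the sign of $\langle W,\nabla(z\circ f)\rangle$ in case (O1) of Assertion~\ref{ass6.3}, and via the Slab Properties (Claims~\ref{assslabprop} and~\ref{assslabprop1}) in the tangential cases, where closedness of $\gamma_f$ is established directly rather than by contradiction. Your proposal contains no counterpart for this case analysis, and no uniform comparison-principle argument can replace it.
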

\begin{proof}
As $\g_f$ is not a point, then we can consider on $\g _f$ the orientation given by Definition~\ref{def4.7}.
We will divide the proof of the theorem into three assertions for the sake of clarity.
\par
\vspace{0.2cm}

\begin{assertion}
\label{ass6.2}
$\g _f$ is a closed curve if one of the following two conditions hold:
\begin{enumerate}[(1)]
\item $X$ is isomorphic to $\R^2\rtimes _A\R $ for some matrix $A\in
\mathcal{M}_2(\R )$ and $K_{\Sigma }\notin \mbox{\rm Span}\{ F_1,F_2\} $ where
$F_1,F_2$ are given by equation~(\ref{eq:6}).
\item $X$ is isomorphic to $\sl $ and $K_{\Sigma }$ is either elliptic or
parabolic.
\end{enumerate}
\end{assertion}
\begin{proof}[Proof of Assertion~\ref{ass6.2}]
Arguing by contradiction, suppose that $\g _f$ is not a simple closed
curve. Let $\sigma$ and $\{ \sigma_n\} _n$ be the arcs given in
Lemma~\ref{ass:limit}, where $q:=G(p)$ lies in $\sigma $. Let
$p_n$ be points of $\Sigma$ such that the sequence $G(p_n)\in \sigma_n$
 converges to $q$ as $n\to \infty $. By Lemma~\ref{oldstep1},
the pointed immersions $l_{f(p_n)^{-1}} \circ f \colon (\Sigma ,p_n)\looparrowright (X,e)$ converge
uniformly on compact sets to $f\colon (\Sigma,p)\looparrowright (X,e)$. As $K_{\Sigma }$ is
everywhere tangent to $f(\Sigma )$, then the nonzero
vector field $(l_{f(p_n)^{-1}} )_*(K_{\Sigma })$ is everywhere
tangent to $(l_{f(p_n)^{-1}} \circ f)(\Sigma )$.
Observe that $(l_{f(p_n)^{-1}} )_*(K_{\Sigma })$ is right
invariant by the first paragraph of the proof of Lemma~\ref{lemma3.6}.
After normalizing, we deduce
that the sequence of right invariant vector fields
\begin{equation}\label{kan}
K(n)= \frac{1}{|
[(l_{f(p_n)^{-1}} )_*(K_{\Sigma})](e)|}(l_{f(p_n)^{-1}} )_*(K_{\Sigma})
\end{equation}
converges to a nonzero right invariant vector field $K_1$ on $X$
which is everywhere tangent to $f(\Sigma )$. Since $f(\Sigma )$ is not
a two-dimensional subgroup of $X$, then $K_1$ must be a (constant)
nonzero multiple of $K_{\Sigma }$. As the orientations of the arcs
$\sigma _n$ are all opposite to the orientation of the arc $\sigma
$, by definition of the orientation of $\g _f$ we deduce that $K_1$
is a {\it negative} multiple of $K_{\Sigma}$.

We now check that the last property cannot happen under the hypotheses of
Assertion~\ref{ass6.2}. First suppose that we are in case~1 of this assertion. Since
$K_{\Sigma}\notin \mbox{\rm Span}\{ F_1,F_2\} $, then we may assume that
\[
K_{\Sigma}=F_3+c_1\partial_x +c_2\partial_y
\]
 for some constants $c_1,c_2\in
\R$; here $F_3$ is given by (\ref{eq:6}).
Plugging (\ref{eq:6}) into this last equation we get
$K_{\Sigma}=\partial _z+h_1(x,y)\partial_x +h_2(x,y)\partial_y$ for some
functions $h_1,h_2$.
Since $\partial_z$ is left
invariant and $(l_{a})_*$ preserves $\mbox{\rm Span}\{ \partial _x,
\partial _y\} $ for every $a\in X$, then we conclude that $K(n)$ cannot converge to a
negative multiple of $K_{\Sigma}$, which is the desired
contradiction if case~1 holds.

In case 2, we can use similar arguments to find a contradiction, using
the last sentence of Lemma~\ref{lemma3.6}.
This finishes the proof of Assertion~\ref{ass6.2}.
\end{proof}

\begin{assertion}
\label{ass6.3}
If $X=\R^2\rtimes _A\R $ and $K_{\Sigma }\in \mbox{\rm Span}\{ F_1,F_2\} $,
then $\g_f$ is a closed curve. Moreover, if
$\g_f$ does not pass through the North or South pole, then $f(\Sigma
)$ is an entire Killing graph with respect to any horizontal right
invariant vector field linearly independent from $K_{\Sigma }$.
\end{assertion}
\begin{proof}[Proof of Assertion~\ref{ass6.3}]
As $K_{\Sigma }\in \mbox{\rm Span}\{ F_1,F_2\} $, equation~(\ref{eq:6}) implies
that $K_{\Sigma }$ is a linear combination $\l \partial _x+\mu \partial _y$
for some $\l ,\mu \in \R $. 
Since $f(\Sigma )$ is foliated by integral curves of $K_{\Sigma }$, then $f(\Sigma )$
is ruled by straight lines in the direction of
$\l \partial _x+\mu \partial _y$.
There are two cases to consider:
\begin{enumerate}[(O1)]
  \item The angles between the immersed surface $f(\Sigma )$
  and the planes $\R^2\rtimes _A\{ z\} $, $z\in \R $,
are bounded away from zero.
Equivalently, $\overline{\gamma_f}$ does not contain the North or South poles of $\S^2$.

\item There exists a sequence $\{ q_n\} _n\subset \Sigma $
such that the angle between $f(\Sigma )$ and $\R^2\rtimes _A\{
z(f(q_n))\} $ at $f(q_n)$ tends to zero as $n\to \infty $.
Equivalently, the North pole or the South pole in $\S^2$ is in $\overline{\gamma_f}$.
\end{enumerate}

Suppose case (O1) holds. Note that the length of the pullback $\nabla
(z\circ f)$ by $f$ of the tangential component of $\partial _z$ to $\Sigma $
is bounded away from zero. Without loss of generality, we can assume that
$K_{\Sigma }=\partial_x$ (this is just a linear change of the coordinates
$x,y$, which does not affect to our hypotheses). Hence, the image through
$f$ of the integral curve of $\nabla (z\circ f)$ passing through the base point $p$
is a curve in $\R^2\rtimes _A\R $ that passes through the origin and
intersects every horizontal plane $\R^2\rtimes _A\{ t\} $ at a single
point $Q(t)=(x(t),y(t), t)$. Furthermore, the intersection of $f(\Sigma )$
with $\R^2\rtimes _A\{ t\} $ is the straight line passing through
$Q(t)$ in the
direction of $\partial _x$. Hence, $f(\Sigma )$ is an entire Killing
graph with respect to any horizontal right invariant vector field
$V=a \parc_x + b\parc_y$ linearly independent from $\partial _x$.

We now prove that $\g _f$ is a simple closed curve by using
arguments that are similar to the ones in the proof of the first item of Assertion~\ref{ass6.2}.
Consider the cross product  $K_{\Sigma} \times N$, which defines a nowhere zero
tangent vector field to $f(\Sigma )$ (here $N$ is the unit normal of $f$).
Thus, the tangent vector field $W$ to $\Sigma$ given by
pulling back $K_{\Sigma} \times N$ through $f$
is never tangent to the foliation
$\{ \R^2\rtimes _A\{ z\} \ | \ z\in \R \} $. By connectedness
of $\Sigma$, the function $\esiz W, \nabla (z\circ f)\esde$ has constant
(nonzero) sign on $\Sigma$.
Observe that this sign does not change when we compose
$f$ with a left translation in $X$ and do the same process with the translated surface.
Assume $\g _f$ is not a simple closed curve and consider for each $n\in \N$
the right invariant vector field $K(n)$ given by \eqref{kan},
which in this case lies
in $\mbox{\rm Span}\{ \partial _x,\partial _y\} $ (because
$(l_{a})_*$ preserves $\mbox{\rm Span}\{ \partial _x,\partial _y\} $ for every $a\in X$).
As before, the $K(n)$ converge to a negative
multiple of $K_{\Sigma}$, and so the vector fields
$W_n$ obtained by pulling back $K(n)\times N_n$ through $l_{f(p_n)^{-1}}\circ f$
converge to a negative multiple of $W$ (here, $N_n$ denotes the unit normal
of $l_{f(p_n)^{-1}} \circ f$). This contradicts the previously explained
invariance of the sign of the function
$\esiz W, \nabla (z\circ f)\esde$ under left translations. Hence, $\g_f$ is a simple closed
curve, what proves Assertion~\ref{ass6.3} if case~(O1) holds.

In order to analyze case (O2), we will prove the following general
property to be used later on.

\begin{claim}[Slab Property in $\R^2\rtimes _A\R$]
\label{assslabprop}
Suppose $X=\R^2\rtimes _A\R $ and
$K_{\Sigma }\in \mbox{\rm Span}\{ F_1,F_2\} $.
Let $\wt{K}_{\Sigma}=f^*(K_{\Sigma })$ be the
Killing field on $\Sigma$ induced from the right invariant vector
field $K_{\Sigma}$. If there exists a sequence $\{ q_n\} _n\subset \Sigma $
such that the angle between $f(\Sigma )$ and
$\R^2\rtimes _A\{ z(f(q_n))\} $ at $f(q_n)$ tends to zero as $n\to
\infty $ (i.e., case (O2) holds), then:
 \begin{enumerate}[(1)]
\item $f(\Sigma )$ is contained in a smallest horizontal slab $\R^2\rtimes_A
[a,b]$ with $a\leq 0\leq b$, $a\neq b$, and $f(\Sigma)$ is tangent to both $\R^2\rtimes_A \{a\}$ and $ \R^2\rtimes_A\{b\}$.
 \item If $q\in \Sigma$ satisfies that
$f(\Sigma )$ is tangent to $\R^2\rtimes_A \{z(f(q))\}$,
then $z(f(q))\in \{ a,b\} $.
 \item If $\Sigma$ is simply
connected, then the points $q_n$ can be taken to lie on distinct
integral curves of $\wt{K}_{\Sigma}$ such that
\[
z(f(q_n))=\left\{ \begin{array}{cl}
 a & \mbox{ if $n$ is odd,}
 \\
 b & \mbox{ if $n$ is even.}
\end{array}
\right.
\]
\item  $\g_f$ is a simple closed curve that passes through the North
and South poles of $\S^2$.
\end{enumerate}
\end{claim}
\begin{proof}[Proof of Claim~\ref{assslabprop}]
By the uniform curvature estimates in item 5 of Proposition~\ref{propsu2},
after passing to a subsequence, the sequence of left translated
pointed immersions
$l_{f(q_n)^{-1}}\circ f\colon (\Sigma ,p)\la (X,e)$ converges to
a pointed immersion $[\hat{f}\colon (\hat{\Sigma},\hat{p} )\looparrowright (X,e)]
\in \Delta(f)$. Since the angle between $f (\Sigma )$ and $\R^2 \rtimes_A \{z(f(q_n))\} $ at
$f(q_n)$ is not bounded away from zero, then $\hat{f}(\hat{\Sigma
})$ is tangent to $\R^2\rtimes _A\{ 0\} $ at $e=(0,0,0)$. By
Corollary~\ref{cor:unique}, $\wh{f}(\hat{\Sigma })$ is contained in
$\R^2\rtimes _A(-\infty ,0]$ or $\R^2\rtimes _A[0, \infty )$. From
now on, we will assume that $\wh{f}(\wh{\Sigma })\subset
\R^2\rtimes _A[0,\infty )$ (this does not affect the arguments that follow).

Note that $\hat{f}$ is invariant under a nonzero right invariant vector field
$K_{\hat{\Sigma}}$, which is the limit as $n\to \infty $ of the right invariant vector fields
$K(n)$ given by equation~(\ref{kan}) after replacing $p_n$ by $q_n$;
in particular, both $K(n)$ and $K_{\hat{\Sigma }}$ lie in
$\mbox{\rm Span}\{ \partial _x,\partial _y\} $.
Without loss of generality, we can assume that $K_{\hat{\Sigma}}=\partial_x$ .

Therefore, $\hat{f}(\hat{\Sigma} )$ is ruled by straight lines in
the direction of $\partial _x$. If $\hat{f}(\hat{\Sigma
})=\R^2\rtimes _A\{ 0\} $, then $\hat{f}$ has constant left invariant Gauss map,
which contradicts our assumption stated before Theorem~\ref{g_f closed}.
Since $\hat{f}(\hat{\Sigma })$ is analytic
and $\hat{f}(\hat{\Sigma })\neq \R^2\rtimes _A\{ 0\} $, then there
exists $\ve >0$ such that if $B_{\hat{\Sigma }}(\hat{p},\ve )$ denotes the intrinsic
closed disk in $\wh{\Sigma }$ centered at the base point $\hat{p}$ with radius
$\ve$, then $D=\hat{f}\left( B_{\hat{\Sigma }}(\hat{p},\ve )\right) $
is a small graphical disk over its vertical projection to
$\R^2\rtimes _A\{ 0\} $, $D$ is foliated by line segments in the
direction of $\partial _x$, exactly one of these line segments lies
in the $x$-axis, and all the other ones lie in
$\R^2\rtimes _A(0,\infty )$.

As $l_{f(q_n)^{-1}}\circ f$ converges to $\widehat{f}$,
then the sequence of disks $D_n=(l_{f (q_n)^{-1}}\circ f )(B_{\Sigma  }(q_n,\ve ))$
converges smoothly to $D$, where $B_{\Sigma  }(q_n,\ve )$ stands for the intrinsic
closed disk in $\Sigma $ centered at $q_n$ with radius $\ve $.
Consider the integral curves of $K(n)$ (which are
horizontal straight lines not necessarily parallel to the $x$-axis)
which intersect $D_n$. For each $n$, this set of
parallel lines forms a smooth surface $D'_n$
contained in $(l_{f (q_n)^{-1}}\circ f) (\Sigma)$, with boundary
consisting of two horizontal lines, and the $D_n'$ converge smoothly on
compact sets to the ruled surface $D'\subset \wh{f}(\wh{\Sigma })$ consisting of the
integral curves of $\partial _x$ that intersect $D$; note that $D'$
contains the $x$-axis and its boundary consists of two lines
that both lie in $\R^2\rtimes _A(0,\infty )$. In
particular, for $n$ large, the boundary curves of $D'_n$ lie
strictly above the lowest straight line in $D'_n$. Therefore,
along this lowest straight line in $D_n'$, the tangent plane to this surface is horizontal.
Hence, $f(\Sigma )$ is also tangent to some
horizontal plane and by Corollary~\ref{cor:unique} we see that $f(\Sigma)$
lies above this plane.

After possibly replacing $f$ by a left translation of it and
changing the base point $p$, we may assume that $f(\Sigma)$ is
tangent to the plane $P_0=\R^2\rtimes_A \{0\}$ at $f(p)=(0,0,0)$ and it lies
above $P_0$.

By analyticity of $f$, we can choose an open strip
$\mathcal{S}\subset \Sigma $ containing $p$ and such that:
\begin{itemize}
\item $\mathcal{S}$ is invariant under the flow of $\wt{K}_{\Sigma}$.
\item  $\nabla (z\circ f)$ only vanishes in $\mathcal{S}$ along the integral curve $\be_1$ of $\wt{K}_{\Sigma}$
that passes through $p$.
\item $z\circ f$ attains  the same value at the two boundary components of $\mathcal{S}$.
\end{itemize}
Let $C_1\subset \Sigma$ be one of the (at most two) components of $\Sigma -\mathcal{S}$, and let $P'$ be the
horizontal plane that contains $f(\partial C_1)$.
We claim
that  the angle that $f(C_1)$ makes with the
foliation by horizontal planes is not bounded away from zero: otherwise there
exists a sequence of points $r_n\in \Sigma $ such that $|(z\circ
f)(r_n)|\to \infty $ as $n\to \infty $. After left translating $f$
by $f(r_n)^{-1}$ and extracting a subsequence, we find a new limit
$f'\in \Delta(f)$ such that its image makes angles bounded
away from zero with the foliation of horizontal planes $\{ \R^2
\rtimes_A \{t\}\ | \ t\in \R\} $. Since $\g _f$ is a leaf of the lamination
$\overline{\g _f}$ and $\overline{\g _f}=\overline{\g _{f'}}$
(recall that we were assuming that $\overline{\g_f}$ has no proper sublaminations),
 then item~2 of Lemma~\ref{lam:gauss} implies that $f$ is also a limit of $f'$ under
left translations. This clearly contradicts that $f(\Sigma )$ is
tangent to $P_0$. Hence, our claim is proved.

By repeating the above arguments, there is an integral curve $\be_2\subset C_1$
of $\wt{K}_{\Sigma}$ such that $\nabla (z\circ f)$  vanishes along $\be_2$ and
being the closest such integral curve to $\be_1$ in $C_1$; let
$b:=(z\circ f)(\be_2) > 0$. As before, Corollary~\ref{cor:unique}
implies that $f(\Sigma)$ is contained on one side of the plane
$P_b=\R^2\rtimes_A \{b\}$; by connectedness of $f(\Sigma )$, this side
must be $\R^2\rtimes_A (-\infty ,b]$. This implies that the immersed surface $f(\Sigma)$
is contained in the slab between the planes $P_0$ and $P_b$; also,
the strip $Y_1\subset \Sigma$ between $\be_1$ and $\be_2$ has an embedded
image contained in that slab (because $z\circ f$ has no
 critical points in the interior of $Y_1$). Repeating these arguments
in the domain $C_2:=C_1-Y_1$, one finds an integral curve $\be_3$ of
$\wt{K}_{\Sigma}$ in $C_2$ closest to $\be_2$ where $z\circ f$ has a
critical value and $f(\Sigma)$ lies on one side of the corresponding
horizontal plane. Again by connectedness of $f(\Sigma)$, this critical value must be
0. Continuing inductively with this argument, in the case that
$\Sigma$ is simply connected, one produces the desired sequence of
points $\{q_n\}_{n\in \N }$, where $q_n \in \G_n$ and for $n$ odd
(resp. even) one has the related  integral curves $\be_n$ contained in
$\R^2\rtimes _A\{ 0\} $ (resp. $\R^2\rtimes _A\{ b\} $).
The remainder of the statements listed in Claim~\ref{assslabprop} follow
immediately from this discussion.
\end{proof}
Finally, Assertion~\ref{ass6.3} follows from the Slab Property (Claim~\ref{assslabprop})
and our previous discussion of case (O1).
\end{proof}

In the last step of the proof of Theorem~\ref{g_f closed} we will study the case in
which $X$ is isomorphic to $\sl$ and $K_{\Sigma }$ is hyperbolic, or equivalently, the
1-parameter subgroup $\G_{\Sigma }$ of $\sl $ generated by $K_{\Sigma }$ is hyperbolic.
Recall from Section~\ref{subsecsl} that the image of $\G_{\Sigma}$ through the projection
$\Pi \colon \sl \to \Hip ^2$ defined by (\ref{eq:PI}) is an arc of constant geodesic
curvature (possibly zero) in the standard metric of $\Hip ^2$, that
passes through the origin of the Poincar\'e disk
and its two extrema $\t _1,\t _2\in \partial _{\infty }\Hip ^2$ determine the two
two-dimensional subgroups $\Hip ^2_{\t _1}$, $\Hip ^2_{\t _2}$ of $\sl $
that contain $\G_{\Sigma }$.
 We will also use in the next statement the pair
of antipodal, simple closed curves
$\Upsilon \cup (-\Upsilon )\subset \S^2$ defined in Lemma~\ref{lemma5.3}.

\begin{assertion}
\label{ass6.5}
If $X$ is isomorphic to $\sl$ and $K_{\Sigma }$
is hyperbolic, then $\g_f$ is a simple closed curve. Moreover, if
$\g_f$ does not intersect $\Upsilon \cup (-\Upsilon )$, and $\H$ is either
$\H^2_{\theta_1}$ or $\H^2_{\theta_2}$ (where $K_\Sigma$ is tangent to both
$\H^2_{\theta_1}$ and $\H^2_{\theta_2}$), then $f(\Sigma )$ is an entire Killing graph with
respect to the parabolic right invariant vector field which is
tangent at $e$ to $\H$.
\end{assertion}
\begin{proof}[Proof of Assertion~\ref{ass6.5}]
We denote by $\G_3$ the 1-parameter elliptic
subgroup of $X$ defined by $\G_3'(0)=(E_3)_e$ (see equation~\ref{eq:lframesl})).
Let $\H $ be one of the subgroups $\H^2_{\theta_1}$ or $\H^2_{\theta_2}$.
Let $\mathcal{F}=\{ r_t(\Hip)=\Hip\ t\ | \ t\in \sl \} $
be the foliation of $X$ whose leaves are the right cosets of $\Hip$. Since
$\G_3$ intersects $\Hip$ only at the identity element, it is easily seen that
every element of $X$ lies in a unique right coset of $\Hip$ of the form
$\Hip\, t$, for some $t\in \G_3$.

Consider the oriented distance function
$\wt{d}
\colon X \to \R$ to the closed set $\Hip$, which is positive on
$\H \, t$, for $t>0$, where we have naturally parameterized
$\G_3$ by $\R $.
As each right coset of a codimension one connected subgroup of a metric Lie
group  is at a constant distance from the subgroup (see~\cite[Lemma~3.9]{mpe11})
and $\G_3$ is isomorphic to
$\R$, then the level sets of $\wt{d}
$ are the leaves of
$\mathcal{F}$. Hence, we have an induced function $d
\colon \cF \to \R$ so that the absolute value of $d
$ at a given element $\Hip \, t$ of $\cF$ is the constant distance  from $\Hip $
to $\Hip\, t$; thus $\mathcal{F}$ can be parameterized by $\G _3$, i.e.,
$\mathcal{F}=\{ \Hip\, t \ | \  t\in \G_3\}$, and we may assume that
 $\langle \nabla \wt{d}
, E_3\rangle >0$, where here $\nabla $
stands for gradient in $X$.

The approach to prove Assertion~\ref{ass6.5} is to follow, when possible, the
arguments in the proof of Assertion~\ref{ass6.3} by exchanging the
former foliation $\{ \R^2\rtimes _A\{ t\} \mid t\in \R \} $ by the
current one $\mathcal{F}$. Notice that the ambient function $z$ in
Assertion~\ref{ass6.3} is now replaced by the oriented distance function
$\wt{d}
\colon X \to \R$. For the proof, it is also important
to take into account the following list of elementary observations:
\ben[(P1)]
\item Each of the right cosets $\Hip\, t$  with $t\in \R \equiv \G_3$
is a  left coset of some two-dimensional subgroup $\Hip _t$ which is
a conjugate subgroup of $\Hip$;  see Section~\ref{backgroundCMC}.
If $t\in \G _3$ lies in the center of $X$ (which is isomorphic to $\Z $), then $\Hip\, t
=t\, \Hip$. In this sense, $\mathcal{F}$ is a {\it periodic}
foliation invariant under the left action of $\Z =\mbox{center} ( X
)\subset \G_3$.

\item Limits of $f$ after left translations
by the inverses of points on the image surface $f(\Sigma)$ give rise
to limit surfaces $\wh{f}(\wh{\Sigma})$ that are invariant under
some other \emph{hyperbolic} right invariant vector field and such
that $\g_{\wh{f}}$ is a leaf of $\ov{\g_f}$.  Otherwise,
Lemma~\ref{lemma3.6} implies that
$\wh{f}(\wh{\Sigma })$
would be tangent to a right invariant parabolic vector
field (because the limit of a normalized sequence of
hyperbolic right invariant vector fields is either hyperbolic or parabolic).
As the set of parabolic vector fields normalized to have
length 1 at $e$ is compact, then every element in $\Delta (\wh{f})$
is tangent to a parabolic right invariant vector field. As $\g_f$ is a leaf of
the lamination $\ov{\g _f}= \ov{\g_{\wh{f}}}$ (here we are using
$\overline{\g _f}$ has no proper sublaminations),
then $f$ is itself a limit of
$\wh{f}$, i.e., $f\in \Delta(\wh{f})$, and so $f$ is tangent to a right invariant parabolic
vector field. This is a contradiction since $K_{\Sigma }$ is
hyperbolic and $f(\Sigma )$ is not a two-dimensional subgroup of $X$.

\item Given $a\in X$, the family $a\, \mathcal{F}=\{ a\, L=l_a(L)\ | \ L\in \mathcal{F}\} $
is a foliation of $X$ by the right cosets of some conjugate subgroup of $\Hip $.

\item As the foliation $\mathcal{F}$ is periodic in the sense of item~(P1),
then for every sequence $\{ a_n\} _n\subset X$, the sequence
of foliations $\{ a_n\, \mathcal{F}\} _{n\in \N}$ defined as in (P3)
has a convergent subsequence to another foliation of $X$ by the right
cosets of some conjugate subgroup of $\Hip$.

\item Let $\G ^P$ be the unique 1-parameter parabolic subgroup
 contained in $\Hip$. As $\G _{\Sigma }\, \Hip=\Hip$ and $\G ^P\, \Hip=\Hip $, we deduce that
the left actions of $\G _{\Sigma},\G^P$ on $X$ leave invariant each of the right cosets of $\Hip$.
In particular, $K_{\Sigma}$ and the parabolic right
invariant vector field $P$ on $X$ generated by $\Gamma ^P$ are everywhere tangent
to each of the leaves of $\mathcal{F}$.
\item
Given a leaf $\Hip\, t$ of $\cF$, each integral curve of  $K_{\Sigma}$ restricted to  $\Hip\, t$ intersects
 each integral curve of $P$ restricted to $\Hip\, t$ in exactly one point.
\een

To prove Assertion~\ref{ass6.5}, let us start by assuming that
the Gaussian image $\gamma_f$ is at a positive distance from the set
$\Upsilon \cup (-\Upsilon )$. In particular, the angle between
$f(\Sigma)$ and the leaves of $\cF$ is bounded away from zero.
Using observations (P5) and (P6) above, it follows from simple modifications of the
arguments in the proof of case~(O1) of  Assertion~\ref{ass6.3} that
in these conditions,
$f(\Sigma)$ is an entire Killing graph with respect to $P$, and that
$\gamma_f$ is closed. Details are left to the reader.

\begin{claim}[Slab Property in $\sl$]
\label{assslabprop1}
Suppose $X=\sl$ and $K_{\Sigma }$ is hyperbolic.
Let $\wt{K}_{\Sigma}=f^*(K_{\Sigma })$ be the
Killing field on $\Sigma$ induced by the right invariant vector
field $K_{\Sigma}$.
If there exists a sequence $\{ q_n\} _n\subset \Sigma $
such that the angle between $f(\Sigma )$ and
the leaf of $\mathcal{F}$ passing through $f(q_n)$
tends to zero as $n\to \infty $, then:
 \begin{enumerate}[(1)]
\item $f(\Sigma )$ is contained in a smallest topological slab bounded by two different
leaves $L_a,L_b$ of $\mathcal{F}$,  and $f(\Sigma)$ is tangent to both leaves.
 \item If $f(\Sigma )$ is tangent to a leaf $L$ of $\mathcal{F}$, then
 $L=L_a$ or $L=L_b$.
\item $\g_f$ is a simple closed curve that intersects both $\Upsilon$ and $-\Upsilon $.
\end{enumerate}
\end{claim}
\begin{proof}[Proof of Claim~\ref{assslabprop1}]
Consider for each $n\in \N$ the left translated pointed immersion
$f_n:=l_{f(q_n)^{-1}}\circ f\colon (\Sigma,q_n)\la (X,e)$, which is
everywhere tangent to the hyperbolic right invariant vector field
$K_n:=(l_{f(q_n)^{-1}})_* (K_{\Sigma})$;  see observation (P2). Note that
$K_n$ is everywhere tangent to the foliation by right cosets $\cF_n:=\{ \H_n \, t \ | \  t\in
\R\}$, where $\H_n$ is the two-dimensional subgroup of $X$ given by
$\H_n=f(q_n)^{-1}\, \H \, f(q_n)$ . After extracting a
subsequence, the $f_n$ converge on compact sets to a pointed limit
immersion $\wh{f}\colon (\wh{\Sigma},\wh{p})\la (X,e)$ and
by observation (P4), $\mathcal{F}_n$ converges
to some foliation $\wh{\mathcal{F}}$ of $X$ by the right cosets of some two-dimensional
subgroup $\wh{\Hip }$.
Observe that by hypothesis, the angle that $f_n(\Sigma )$ makes with $\mathcal{F}_n$
at $f_n(q_n)=e$ tends to zero as $n\to \infty$.
Thus, $\wh{f}(\wh{\Sigma })$ is tangent to $\wh{\H }$ at $
\wh{f}(\wh{p})=e$.
In particular, the left invariant Gauss map of $\wh{f}$ at $\wh{p}$ lies in
$\Upsilon \cup (-\Upsilon )$.
Furthermore, $\wh{f}$ is everywhere tangent to some right invariant hyperbolic
Killing field $\wh{K}$, by observation~(P2) ($\wh{K}$ is the limit
of appropriate rescalings of the $K_n$ after
extracting a subsequence, as we did in \eqref{kan}).  In particular, $\wh{K}$ is everywhere
tangent to $\wh{\H}$, which implies that $\wh{K}$ is generated by a
1-parameter subgroup of $\wh{\H}$. Let
$\wh{d}\colon X\flecha \R$ stand for the oriented distance
function associated to the foliation $\wh{\cF}$. Note that $\wh{d}$ is constant along
each of the integral curves of $\wh{K}$.
By Corollary~\ref{cor:unique}, $\wh{f}(\wh{\Sigma})$ is
contained in one side of $\wh{\H}$, i.e., the restriction of $\wh{d}$ to $\wh{f}(\wh{\Sigma})$ is
nonpositive or nonnegative. Moreover, arguing as in the proof of Claim~\ref{assslabprop}
there exists a small compact topological disk $\wh{D}=
\hat{f}\left( B_{\hat{\Sigma }}(\hat{p},\ve )\right)$
which is foliated by compact arcs of integral curves of $\wh{K}$, such that:
\begin{enumerate}[(Q)]
\item Exactly one of these compact arcs of integral curves of $\wh{K}$ in $\wh{D}$
lies in the zero set of $\wh{d}$, while all the other ones lie in
$\wh{d}^{-1}(0,\infty )$ (or in $\wh{d}^{-1}(-\infty ,0)$).
\end{enumerate}

Since $f_n$ converges to $\widehat{f}$,
then the sequence of disks $D_n:=f_n(B_{\Sigma  }(q_n,\ve ))$
converges smoothly to $\wh{D}$.
As 
$\{ \Hip _n\} _n\to \wh{\Hip }$, then we can assume that
 the distance function $d_n\colon \sl \to \R $ associated to the foliation
$\mathcal{F}_n$ converges on compact sets of $\sl $ to $\wh{d}$.
As $K_n$ is everywhere tangent to the leaves of $\cF_n$,
we conclude that $d_n$ is constant along the integral curves of $K_n$. Furthermore,
item~(Q) and the previous convergence properties ensure that for $n$ large enough,
$D_n$ contains an integral curve segment of $K_n$ along which $d_n$ has a local
extremum. Hence, there are points $r_n\in B_{\Sigma }(q_n,\ve )$ such that $f_n$ is
tangent at $f_n(r_n)$ to some leaf of $\mathcal{F}_n$. Therefore, $f$ is tangent at $f(r_n)$
to some leaf $L_a$ of $\mathcal{F}$. By Corollary~\ref{cor:unique}, we deduce that
$f(\Sigma )$ lies on one side of $L_a$. Once here, and using similar ideas, we can adapt the last four paragraphs
of the proof of Claim~\ref{assslabprop} to our $\sl$ setting, and conclude
that $f(\Sigma )$ is tangent to another
leaf $L_b$ of $\mathcal{F}$, and lies in the topological slab of $\sl$ bounded by $L_a\cup L_b$.
This proves item~1 of Claim~\ref{assslabprop1}, and item~2 is a direct consequence of item 1,
Corollary~\ref{cor:unique} and the connectedness of $\Sigma$ that we
leave to the reader.

As for item~3 of Claim~\ref{assslabprop1}, since the leaves of $\mathcal{F}$ are minimal,
 the mean curvature comparison principle applied to $f(\Sigma )$ and $L_a,
L_b$ implies that the mean curvature vector of $f(\Sigma )$ points towards the interior of the
previously defined topological slab at the tangency points between $f(\Sigma )$ and $L_a\cup L_b$. From here,
a straightforward continuity argument applied to the
 normal field to $\mathcal{F}$ gives that $\g _f$ intersects both $\Upsilon $ and $-\Upsilon $.
If $\Sigma $ is an annulus, then $\g _f$ is clearly a closed curve. Otherwise, $\Sigma $
is simply connected and $f(\Sigma )$ is tangent to $L_a$ along at least two (in fact,
infinitely many) distinct integral curves of $\wt{K}_{\Sigma }=f^*(K_\Sigma)$.
This implies that $\g _f$ takes the same
value at these integral curves of $\wt{K}_{\Sigma }$. As $\g $ is embedded, we now conclude that $\g_f$ is
a closed curve.
\end{proof}

Assertion~\ref{ass6.5} follows from the Slab Property (Claim~\ref{assslabprop1})
and our discussion in the paragraph just before Claim~\ref{assslabprop1}.
\end{proof}

We now complete the proof of Theorem~\ref{g_f closed}.
Since $X$ is not isomorphic to
$\su$, it is either isomorphic to a semidirect product or it is
isomorphic to $\sl$.  By Assertions~\ref{ass6.2}, \ref{ass6.3} and~\ref{ass6.5},
$\gamma_f$ is a simple
closed curve, which completes the proof of Theorem~\ref{g_f closed}.
\end{proof}

\begin{corollary}
\label{thm6.11}
Let $f\colon (\Sigma ,p)\looparrowright
(X,e)$ be a pointed limit immersion that satisfies the hypotheses stated just before Theorem~\ref{g_f closed}.
\ben[(1)]
\item If $\Sigma $ is diffeomorphic to an annulus, then it has linear area growth.
\item
\label{item:invariant}
If $\Sigma$ is simply connected, then one of the two following possibilities holds:
\begin{enumerate}[(a)]
\item $f$ factors through a pointed immersion $\widehat{f}\colon
(\widehat{\Sigma},\widehat{p}) \looparrowright (X,e)$ of an annulus. In this
case, $\Sigma$ has quadratic area growth and the constant mean curvature
immersion $\wh{f}$ is stable.
\item $f$ is periodic, in the sense that there exists an element
$a_{\Sigma}\in X-\G_{\Sigma}$ such that the left translation
by $a_{\Sigma}$ leaves $f(\Sigma)$ invariant.
\end{enumerate}
\een
\end{corollary}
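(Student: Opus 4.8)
The plan is to exploit that $f$ is invariant under the flow of the right invariant Killing field $K_\Sigma$, so that (Remark~\ref{rem7.1}) $f$ is globally of the form $f(s,t)=l_{\Gamma(t)}(\beta(s))$, where $\Gamma=\G_\Sigma$ and $\beta$ is a regular profile curve. A direct computation with $l_{f(s,t)}=l_{\Gamma(t)}\circ l_{\beta(s)}$ shows that the left invariant Gauss map satisfies $G(s,t)=G(s)$; and since each $l_{\Gamma(c)}$ is an ambient isometry with $f(s,t+c)=l_{\Gamma(c)}f(s,t)$, the induced metric coefficients $g_{ij}(s,t)$ are independent of $t$. Because each integral curve of $K_\Sigma$ is a line (Remark~\ref{rem7.1}) and there is no immersion $\S^1\to\R$, the orbits of $\wt K_\Sigma=f^*K_\Sigma$ on $\Sigma$ are lines; hence the orbit space is $\S^1$ when $\Sigma$ is an annulus and $\R$ when $\Sigma$ is simply connected. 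By Theorem~\ref{g_f closed} the Gauss image $\g_f$ is a simple closed curve, and by Lemma~\ref{lam:gauss} it is embedded, so $\g_f\cong\S^1$ and $G$ descends to a covering of $\g_f$ by the orbit space.

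First I would treat the annulus case. Triviality of $\R$-bundles provides a product structure $\Sigma\cong(\R/L\Z)_s\times\R_t$ with orbits $\{s=\text{const}\}$, so $g_{ij}$ are $L$-periodic in $s$ and independent of $t$. Since $\beta$ is a regular curve on the compact profile and $K_\Sigma$ is nowhere zero, the coefficients are bounded above and $\det g$ is bounded away from zero; thus $g$ is uniformly equivalent to the flat metric on $(\R/L\Z)\times\R$. Consequently $B_\Sigma(p,R)\asymp\{|t|\lesssim R\}$ (the $s$-diameter being bounded), and $\mathrm{Area}(B_\Sigma(p,R))\asymp R$, which is the claimed linear area growth.

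For the simply connected case the orbit space is $\R$, so $G(s)$ is periodic with minimal period $L$, that of the covering $\R\to\g_f$. Passing to a conformal parameterization in which $G$ depends only on $s$ (as in the discussion after Proposition~\ref{propos3.1} and in Lemma~\ref{oldstep1}), the immersion $(s,t)\mapsto f(s+L,t)$ has the same Gauss map as $f$, so by the Gauss-map rigidity of~\cite[Theorem~3.7]{mmpr4} there is $a_\Sigma\in X$ with $f(s+L,t)=l_{a_\Sigma}(f(s,t))$; in particular $l_{a_\Sigma}$ leaves $f(\Sigma)$ invariant. This produces the dichotomy. If $a_\Sigma\notin\G_\Sigma$ we are in case (b). If $a_\Sigma=\Gamma(t_0)\in\G_\Sigma$, then the flow structure gives $f(s+L,t)=f(s,t+t_0)$, so $f$ is invariant under the free, properly discontinuous $\Z$-action generated by $\tau(s,t)=(s+L,t-t_0)$ and factors as $f=\widehat f\circ\pi$ through the annulus $\widehat\Sigma=\R^2/\langle\tau\rangle$, which is case (a).

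It remains to establish the two assertions of case (a). Because $l_{a_\Sigma}$ is an isometry, the metric on $\Sigma=\R^2$ is now invariant under both $t$-translations and $s\mapsto s+L$, so its coefficients depend only on $s\bmod L$ and are, as above, uniformly bounded above with $\det g$ bounded below; hence $g$ is uniformly equivalent to the flat metric on $\R^2$, $d_\Sigma\asymp|s|+|t|$, and $\mathrm{Area}(B_\Sigma(p,R))\asymp R^2$, giving quadratic area growth. For stability of $\widehat f$, note that $\pi\colon\Sigma\to\widehat\Sigma$ is a normal Riemannian covering with deck group $\Z$; since the Jacobi potential $q=|\sigma|^2+\Ric(N)$ is bounded (item~5 of Proposition~\ref{propsu2} together with homogeneity of $X$) and $\Z$ is amenable, the bottom of the spectrum of the Jacobi operator coincides on $\Sigma$ and on $\widehat\Sigma$, so stability of $f$ forces stability of $\widehat f$. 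I expect the main obstacle to be the rigorous extraction of the global symmetry $a_\Sigma$ from the closedness of $\g_f$ — reconciling the Killing-graph parameterization (in which the flow is exactly $\partial_t$ and the metric is manifestly $t$-invariant) with the conformal parameterization needed to invoke~\cite[Theorem~3.7]{mmpr4} — together with securing the uniform two-sided bounds on $g$ on which both area estimates rest.
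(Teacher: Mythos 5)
Your proposal is, in skeleton, the paper's own proof: the orbits of $\wt{K}_{\Sigma}$ are proper lines, $G$ descends to a covering of the closed curve $\gamma_f$, the Gauss-map rigidity of Theorem~3.7 of \cite{mmpr4} converts a deck transformation of that covering into a left translation $l_{a_\Sigma}$ preserving $f(\Sigma)$, and the dichotomy $a_\Sigma\in\G_\Sigma$ versus $a_\Sigma\notin\G_\Sigma$ produces cases (a) and (b). Your flat-comparison area estimates amount to the paper's argument (foliation of $\Sigma$ by curves isometric to a fixed closed curve), and your amenability/spectral argument for the stability of $\wh{f}$ is exactly the content of the paper's citation of Proposition~2.5 of \cite{mpr19}. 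Also, the reconciliation you worry about at the end is not an obstacle: a nowhere-zero Killing field on a complete simply connected surface admits global conformal coordinates $(s,t)$ in which it equals $\partial_t$ and the metric is $\rho(s)^2(ds^2+dt^2)$, so the Killing-graph and conformal pictures agree.

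The genuine gap is the sentence ``so $G(s)$ is periodic with minimal period $L$, that of the covering $\R\to\gamma_f$''. Covering space theory only gives a fixed-point-free diffeomorphism $\tau$ of the $s$-domain with $G\circ\tau=G$; it does not make $\tau$ a translation in your conformal coordinate, and your application of Theorem~3.7 to the pair $f(s,t)$, $f(s+L,t)$ needs exactly that. (Writing $G=\gamma\circ\theta$ with $\gamma$ an arclength parameterization of $\gamma_f$ of length $\ell$, periodicity of $G$ is the equivariance $\theta(s+L)=\theta(s)+\ell$, which is a genuine constraint on $\theta$.) The claim is true, but it must be proved, and the proof uses the autonomous ODE \eqref{gaussecu}: if $\tau(s_0)=s_1$ and $\tau'(s_0)=\mu$ (necessarily $\mu>0$, since an orientation-reversing homeomorphism of $\R$ has a fixed point), then differentiating $G\circ\tau=G$ gives $G'(s_1)=G'(s_0)/\mu$, so by property (D) the map $G\circ\sigma$, where $\sigma(s)=s_1+\mu(s-s_0)$ is affine, solves \eqref{gaussecu} with the same Cauchy data as $G$ at $s_0$; hence $G\circ\sigma=G$, and since a deck transformation of a connected covering is determined by its value at one point, $\tau=\sigma$. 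Because a nontrivial deck transformation has no fixed point, necessarily $\mu=1$, i.e., $\tau$ is a translation. (The same reasoning forces the conformal domain to be $\C$ rather than a bounded strip: a bounded interval admits no nontrivial translation onto itself, so $G$ would be injective, which is impossible for a local diffeomorphism from an interval onto the closed curve $\gamma_f$.) Alternatively, you can bypass the issue as the paper does: take an abstract generator $\tau$ of the deck group of $\wh{G}\colon\Sigma/\wt{K}_{\Sigma}\to\gamma_f$, pick $q\in\Pi^{-1}(\tau(0))$, and apply Theorem~3.7 (in the form of Lemma~\ref{tangin}) to $f$ and $l_{f(q)^{-1}}\circ f$, which have the same Gauss map and agree at their base points; this produces the isometry of $\Sigma$ lifting $\tau$ and the element $a_\Sigma=f(q)$, with no translation structure required. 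With either repair, the rest of your argument is sound.
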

\begin{proof}
Recall that $f(\Sigma )$ is foliated by
integral curves of $K_{\Sigma }$. Since $X$ is a simply connected Lie group
diffeomorphic to $\R^3$, then the integral curves of $K_{\Sigma }$ are proper
Jordan arcs.
Let $\wt{K}_{\Sigma}=f^*(K_{\Sigma })$
denote the Killing field induced on $\Sigma$ by $K_{\Sigma}$
after pulling back $K_{\Sigma }$ through $f$.

Assume that $\Sigma $ is diffeomorphic to an annulus and we will
show that item~1 holds. Consider the natural projection $\Pi _{\G_{\Sigma }}\colon
X\to X/\G _{\Sigma }$, where $X/\G _{\Sigma }$ denotes the quotient
surface whose points are the integral curves of $K_{\Sigma }$ in $X$.
Since $\Sigma $ is an annulus, then $(\Pi _{\G_{\Sigma }} \circ f)(\Sigma )$ is an immersed closed
curve $\be \subset X/\G _{\Sigma }$ and $f\colon \Sigma \to \be $ is a
trivial $\R$-bundle over $\be $, whose fibers are the integral curves
of $\wt{K}_{\Sigma }$. Take a global
section of this trivial bundle. The image set of this global section
is an embedded closed curve $\alfa \subset \Sigma $ which intersects transversely each of the
integral curves of $\wt{K}_{\Sigma }$. Since $\wt{K}_{\Sigma }$ is
a Killing vector field, then the image $\psi _t(\alfa )$ of $\alfa $ through
the flow $\{ \psi _t\ | \ t\in \R \} $ by isometries
of $\Sigma $ associated to $\wt{K}_{\Sigma }$
produces a foliation of $\Sigma $ by curves isometric to $\alfa $, so in particular
all of these curves $\psi _t(\alfa)$ have the same length. From here it
is straightforward to show that $\Sigma $ has linear area growth,
so item~1 is proved.

Next suppose that $\Sigma$ is simply connected. Consider
the quotient space $\Sigma / \wt{K}_{\Sigma} $ of integral
curves of $\wt{K}_{\Sigma}$. Thus, $\Sigma / \wt{K}_{\Sigma} $ is a 1-dimensional
manifold diffeomorphic to $\R$, and there is a natural
projection $\Pi\colon\Sigma \to \Sigma / \wt{K}_{\Sigma} $.
We can identify $\Sigma / \wt{K}_{\Sigma} =\R $,
so that $\Pi(p)=0$ for the base point $p$ of
$\Sigma$ satisfying $f(p)=e$. Let $\wh{G}\colon \R=\Sigma /
\wt{K}_{\Sigma} \to \gamma_f \subset \esf^2$ be the mapping such
that $G=\wh{G}\circ \Pi$, where $G$ stands for the left invariant Gauss map
of $f$. Since $\g _f$ is a simple closed regular curve by Theorem~\ref{g_f closed},
it is easy to check that the map $\wh{G}$ is the
universal cover of $\gamma_f$. Let
$\tau\colon \Sigma / \wt{K}_{\Sigma}\to \Sigma
/ \wt{K}_{\Sigma}$ be one of the two generators of the
group of automorphisms of the covering
$\wh{G}\colon \Sigma /\wt{K}_{\Sigma} \to \gamma_f \subset \esf^2$.
Since the left invariant Gauss map
determines the surface up to left translations by~\cite[Theorem~3.7]{mmpr4},
given a point $q\in \Pi^{-1}(\tau(0))$,
the left translation $l_{f(q)}\colon X \to X$ induces a
nontrivial isometry $\wt{l_{q}}\colon \Sigma \to \Sigma$ that is a
lift of $\tau$ to $\Sigma$, i.e., $\tau \circ \Pi =\Pi \circ \wt{l_{q}}$,
and satisfies $\wt{l_{q}}(p)=q$.
Note that $\wt{l_{q}}$
acts freely on $\Sigma$ and the quotient $\wh{\Sigma}=\Sigma / \wt{l_{q}}$
is an annulus. We now have two possibilities.
\begin{enumerate}[(R1)]
\item $f(\Pi^{-1}(0))=f(\Pi^{-1}(\tau (0)))$, in which case
$f$ factors through the quotient  annulus
$\wh{\Sigma}$ to an immersion $\wh{f}\colon \wh{\Sigma}\la X$.
\item $f(\Pi^{-1}(0))\neq f(\Pi^{-1}(\tau (0)))$, in which case
the integral curves $f(\Pi^{-1}(0))$, $f(\Pi^{-1}(\tau(0)))$
of $K_{\Sigma }$ are disjoint.
\end{enumerate}
In any of the two possibilities above, the isometry group Iso$(\Sigma)$ of the induced
 metric on $\Sigma $ by $f$ contains an $\R $-type subgroup
 (which corresponds to left translations by any element in $\G _{\Sigma }$) and a
$\Z $-type subgroup (which corresponds to the subgroup generated by $\wt{l_{q}}$),
such that these two subgroups
generate a subgroup $\Delta $ of Iso$(\Sigma)$, although
$\Delta $ is not necessarily isomorphic to $\R \times \Z $,
because elements in the $\R$-type subgroup do not necessarily
commute with those in the $\Z$-subgroup; also note that
$\Sigma $ would have at most quadratic area growth provided that
$\Delta $ is isomorphic to $\R \times \Z $.

If case~(R1) holds, then the quotient metric on $\wh{\Sigma }$ (which is the induced
metric by $\wh{f}$) has linear area growth,
and so $\Sigma$ has quadratic area growth. Since $f$ is stable and the cover
 $\Sigma \to \wh{\Sigma }$ is cyclic, then the quotient
immersion $\wh{f}\colon \wh{\Sigma }\la X$ is also stable (see
  for instance~\cite[Proposition~2.5]{mpr19}), which proves that item~2a
  in the statement of Corollary~\ref{thm6.11} holds. Finally, if case~(R2) holds,
  then we define $a_{\Sigma }=f(q)$, which lies in $X-\G_{\Sigma }$ since
$f(p)=e\in \G _{\Sigma }$ and the integral curves
$f(\Pi^{-1}(0))$, $f(\Pi^{-1}(\tau(0)))$
of $K_{\Sigma }$ are disjoint. Then, the left translation by $a_{\Sigma }$
leaves $f(\Sigma )$ invariant and we have item~2b of the corollary. Now the proof is
complete.
\end{proof}

\section{Proof of Theorem~\ref{topR3case} when $X$ is a semidirect product}
\label{sec:semidirect}
In this section we will prove Theorem~\ref{topR3case} in the case that $X$ is a semidirect product
$\R^2\rtimes_A \R$ endowed with its canonical metric.

Let $S_n$ be a sequence of  constant mean curvature spheres in $X$ with ${\rm Area}(S_n)>n$ for all $n$. In order to
prove Theorem~\ref{topR3case} in $X$ it suffices to consider the
case where all the spheres $S_n$ lie in the connected component $\mathcal{C}$ of the space of index-one
spheres in $X$ with $H>h_0(X)$ given in Proposition~\ref{propsu2}.
Indeed, if Theorem~\ref{topR3case} holds for any such sequence of spheres $(S_n)_n\subset \mathcal{C}$,
the discussion after Definition~\ref{defH(X)} ensures that any $H$-sphere $\Sigma$ in $X$ satisfies that
$H>h_0(X)$, and so Theorem~\ref{topR3case} holds in full generality.

So, assuming these conditions (in particular, that $S_n\in \mathcal{C}$ for every $n$),
Proposition~\ref{rank:limits}, Theorem~\ref{g_f closed} and Corollary~\ref{thm6.11}
imply that there exists a pointed limit immersion
$f\colon (\Sigma,p) \la (X,e)$ of the spheres $S_n$ 
with the following properties:
\begin{enumerate}[(S1)]
\item $f$ is complete, has constant mean curvature $h_0(X)$, it is stable and its
left invariant Gauss map image $\g_f$ is either a point or a regular, simple closed curve in $\esf^2\subset T_eX$.
 \item
$f$ is everywhere tangent to a nonzero, right invariant Killing vector field $K_{\Sigma}$ in $X$. Moreover,
exactly one of the following three situations happens:
\begin{enumerate}[(S2.1)]
\item $\Sigma $ is diffeomorphic to an annulus, and it has linear area growth.
\item $\Sigma $ is simply connected and $f\colon \Sigma \la X$ factors through an
immersion $\wh{f}\colon \wh{\Sigma }\la X$ of an annulus. In this case, $\Sigma $ has
quadratic area growth and $\wh{f}$ is a stable immersion with constant mean curvature.
\item There exists $a\in X-\G_{\Sigma} $ such that the left translation by $a$ leaves
$f(\Sigma )$ invariant. Here, $\G_{\Sigma}$ is the $1$-parameter subgroup of $X$ that generates $K_{\Sigma}$.
\end{enumerate}
\end{enumerate}

Recall that if $\gamma_f$ is a point, then $f(\Sigma)$ is a two-dimensional subgroup of $X$.
In particular $f(\Sigma)$ is an entire Killing graph with respect to some right invariant vector field in $X$,
and so Theorem~\ref{topR3case} holds. Hence, {\bf from now on we will assume that $f(\Sigma)$ is not
a two-dimensional subgroup}; in particular, $\gamma_f$ is a regular Jordan curve in $\S^2$,
and $K_{\Sigma}$ is unique (up to scaling) among right invariant vector fields on $X$
everywhere tangent to $f(\Sigma )$.

First assume  that the matrix $A$ is singular. By Lemma~\ref{lem:asin}, $X$ is isometric to $\R^3$
or to some $\E (\kappa,\tau )$ space with $\kappa \leq 0$.
By the classification theorems of Hopf, Chern and
Abresch-Rosenberg cited in the Introduction, constant mean curvature spheres in these
spaces are embedded and rotational, and the moduli space of such spheres can be parameterized analytically
by the constant values of their mean curvatures, which take all possible values in $(H(X),\8)$.
In particular, $h_0(X)=H(X)$, which is the mean curvature of the
subgroup $\R^2\rtimes _A\{ 0\}$ with respect to its upward-pointing unit normal vector.
If $\g _f$ passes through the North or South pole of $\esf^2$, then Claim~\ref{assslabprop}
ensures that $f(\Sigma )$ in contained in a smallest horizontal slab $\R^2\rtimes_A[a,b]$
with $a\leq 0\leq b$, $a\neq b$, and $f(\Sigma)$ is tangent to both $\R^2\rtimes _A\{ a\} $ and $\R^2\rtimes _A\{ b\}$.
By the maximum principle, $f(\Sigma)=\R^2\rtimes_A\{ a\}$, which is a contradiction. Hence,
$\g_f$ does not pass through the  North or South poles of $\esf^2$. As $\g_f$ is a closed curve,
then $\g_f$ is at positive distance from the North and South poles. In this situation, Assertion~\ref{ass6.3}
implies that $f(\Sigma)$ is an entire Killing graph with respect to some nonzero Killing
vector field in $X$. Thus, Theorem~\ref{topR3case} holds in this situation.

So, from now on, we will assume that the matrix $A$ is regular. By the previous discussion, the next
result directly implies that Theorem~\ref{topR3case} holds for any metric semidirect
product $X$, as desired.

\begin{theorem}
\label{semigraph}
In the above conditions, $K_{\Sigma}$ is horizontal, i.e., $K_{\Sigma}\in {\rm Span}\{ F_1,F_2\} $, and
$f(\Sigma)$ is an entire Killing graph in $X$ with respect to
any nonzero right invariant Killing vector field $V\in {\rm Span}\{ F_1,F_2\} $
that is linearly independent from $K_{\Sigma }$
(with the notation of (\ref{eq:6})).
\end{theorem}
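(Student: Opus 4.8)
The plan is to work through the trichotomy (S2.1)--(S2.3) recorded above: I will show that the first two alternatives cannot occur, and that in the remaining periodic case (S2.3) the field $K_{\Sigma}$ is forced to be horizontal, from which the entire graph property follows. Throughout, since the case $\g_f$ a point has already been disposed of, we may assume $f(\Sigma)$ is not a two-dimensional subgroup, so that $\g_f$ is a regular Jordan curve and $K_{\Sigma}$ is the unique (up to scaling) right invariant vector field tangent to $f(\Sigma)$.

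First I would establish horizontality in case (S2.3), where there is $a_{\Sigma}\in X-\G_{\Sigma}$ with $l_{a_{\Sigma}}(f(\Sigma))=f(\Sigma)$. As $l_{a_{\Sigma}}$ is an isometry preserving $f(\Sigma)$ and, by the first sentence of Lemma~\ref{lemma3.6}, carries right invariant fields to right invariant fields, the field $(l_{a_{\Sigma}})_*K_{\Sigma}$ is again right invariant and tangent to $f(\Sigma)$; by uniqueness of $K_{\Sigma}$ we get $(l_{a_{\Sigma}})_*K_{\Sigma}=\l K_{\Sigma}$ for some $\l\neq 0$, i.e. $a_{\Sigma}$ normalizes $\G_{\Sigma}$. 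The key computation is then to suppose $K_{\Sigma}$ is \emph{not} horizontal and reach a contradiction. In that case, after rescaling, $\G_{\Sigma}$ admits the parameterization $c(t)=(A^{-1}(e^{tA}-I)v,t)$ with $v\in\R^2$, and the conjugation formula $(a,w)(p,z)(a,w)^{-1}=(a-e^{zA}a+e^{wA}p,z)$ in $\R^2\rtimes_A\R$ shows that conjugation preserves the $z$-coordinate. Since $c$ is parameterized by its $z$-coordinate, normalizing $\G_{\Sigma}$ amounts to centralizing it; and imposing $(a,w)\,c(t)\,(a,w)^{-1}=c(t)$ for all $t$ forces, using that $A$ is regular (so that $e^{tA}u=u$ for all $t$ implies $u=0$), the relation $a=(e^{wA}-I)A^{-1}v$, which says precisely that $(a,w)=c(w)\in\G_{\Sigma}$. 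Thus the centralizer of a non-horizontal $\G_{\Sigma}$ equals $\G_{\Sigma}$, contradicting $a_{\Sigma}\notin\G_{\Sigma}$. Hence $K_{\Sigma}\in\mbox{Span}\{F_1,F_2\}$.

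Once $K_{\Sigma}$ is horizontal, I would deduce the graph property as in Assertion~\ref{ass6.3}: either the angles between $f(\Sigma)$ and the horizontal planes are bounded away from zero (case (O1)), in which case $f(\Sigma)$ is an entire Killing graph with respect to any horizontal right invariant $V$ independent from $K_{\Sigma}$, or the North or South pole lies in $\overline{\g_f}$ (case (O2)). It then remains to rule out (O2). By the Slab Property (Claim~\ref{assslabprop}), in case (O2) the surface $f(\Sigma)$ lies in a smallest slab $\R^2\rtimes_A[a,b]$, $a<b$, and is tangent to both bounding planes $\R^2\rtimes_A\{a\}$ and $\R^2\rtimes_A\{b\}$. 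These planes are leaves of $\cF$, hence have constant mean curvature $\mbox{trace}(A)/2$; comparing $f(\Sigma)$ with them at the two tangencies via the mean curvature comparison principle (the surface lying on one side at each tangency) yields two opposite inequalities between $h_0(X)$ and $\mbox{trace}(A)/2$, forcing equality, and then the equality case of the maximum principle yields $f(\Sigma)$ equal to a horizontal plane, contradicting that $f(\Sigma)$ is not a subgroup. Therefore (O2) is impossible and $f(\Sigma)$ is the asserted entire Killing graph.

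It finally remains to rule out cases (S2.1) and (S2.2), which completes the proof since (S2.3) is then the only surviving alternative. For (S2.2), where $\Sigma$ is simply connected with quadratic area growth and $f$ factors through a stable CMC annulus, I would use that the approximating spheres $S_n$ are Alexandrov embedded (item~1 of Proposition~\ref{propsu2}) and bound mean convex immersed balls $\tilde f_n\colon B_n\to X$; the plan is to locate, inside these balls, intrinsic geodesic balls of a fixed radius $R^*>0$ whose volumes diverge as $n\to\8$, contradicting Bishop's theorem applied to $B_n$ (a local isometry into the homogeneous $X$, hence with Ricci curvature bounded below independently of $n$). For (S2.1), where $\Sigma$ is an annulus of linear area growth, I would build an abstract Riemannian three-dimensional cylinder with boundary $\Sigma$ submersing isometrically into $X$ onto $f(\Sigma)$, and show its CMC flux across the generating cycle is nonzero by the nonvanishing result of the Appendix; this contradicts the homological invariance of the CMC flux together with the fact that $f(\Sigma)$ is a limit of the homologically trivial spheres $S_n$, whose flux vanishes. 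The main obstacle is the volume estimate needed to rule out (S2.2): producing fixed-radius geodesic balls of unbounded volume inside the abstract mean convex fillings $B_n$ is the delicate quantitative step, whereas horizontality and the graph property are comparatively soft once the conjugation computation is in place.
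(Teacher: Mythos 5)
Your conjugation computation in case (S2.3) is correct and is a clean substitute for the paper's Lemma~\ref{lem:F3}: showing that the normalizer of a non-horizontal $\G_\Sigma$ coincides with its centralizer (conjugation preserves the $z$-coordinate) and then, using that $A$ is regular, that this centralizer is $\G_\Sigma$ itself, does rule out (S2.3) for non-horizontal $K_\Sigma$. However, your argument for excluding case (O2) once $K_\Sigma$ is horizontal is wrong. The mean curvature comparison at the two tangencies does \emph{not} yield ``two opposite inequalities,'' because the unit normal $N$ of $f(\Sigma)$ need not point in the same vertical direction at both tangency points; indeed, item~4 of Claim~\ref{assslabprop} says $\g_f$ passes through \emph{both} poles, so $N$ takes both values $\pm E_3$ on $\Sigma$. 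Writing $\tau=\mbox{trace}(A)/2$, if $N=E_3$ at the bottom tangency and $N=-E_3$ at the top one (the configuration of a sphere inscribed in a slab, with mean curvature vector pointing into the slab at both tangencies), the two comparisons give $h_0(X)\geq\tau$ and $h_0(X)\geq-\tau$, which are simultaneously satisfied by any $h_0(X)\geq\tau$: no contradiction, and no forced equality to feed into the strong maximum principle. This is precisely why the paper's Lemma~\ref{lem:horizontal} uses a different mechanism: inside the slab the surface is quasi-isometric to a flat ruled surface, hence has quadratic area growth and is conformally parabolic; a horizontal right invariant field $V$ independent from $K_\Sigma$ is bounded in the slab by (\ref{eq:13}); so $J=\langle N,V\rangle$ is a bounded Jacobi function changing sign on a complete stable parabolic surface, contradicting Corollary~1 of~\cite{mper1}. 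Without this stability input your exclusion of (O2) does not close, and the entire-graph conclusion in the horizontal case is unproven.

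There is also a structural gap. Your logic is ``(S2.1) and (S2.2) are impossible, hence (S2.3) holds, hence $K_\Sigma$ is horizontal, hence graph,'' so you must exclude (S2.1)/(S2.2) \emph{also when $K_\Sigma$ is horizontal}. But the volume/Bishop argument and the flux argument you sketch (the paper's Sections~\ref{sec:nos} and~\ref{sec:noa}) depend essentially on the normalization $K_\Sigma=F_3$, available only in the non-horizontal case: they use that the integral curves of $K_\Sigma$ cross every horizontal plane (to produce the arcs of length at most $D_0$ between $\R^2\rtimes_A\{\pm(R+2)\}$, the disks $D_n^k$ flowed between $f_n^{-1}(\R^2\rtimes_A\{\pm(R+1)\})$, and the proper submersions $T_n(q,t)=l_{\G(t)}(\wt{f}_n(q))$), together with the solid-cylinder radius estimate of~\cite{mmp1} and the bounded horizontal field of Lemma~\ref{cor:bounded}. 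For horizontal $K_\Sigma$ every integral curve lies in a single horizontal plane and this machinery collapses. The paper orders the cases the other way: it first proves horizontal $\Rightarrow$ entire graph (which in particular excludes (S2.1)/(S2.2) there, since an entire graph is not an immersed annulus), and only invokes the heavy arguments under the standing assumption $K_\Sigma\notin{\rm Span}\{F_1,F_2\}$. Finally, a technical point you gloss over: Bishop's volume comparison cannot be applied directly to the manifold-with-boundary $(B_n,g_n)$ with balls centered at boundary points; the paper must first extend $(B_n,g_n)$ past its boundary and conformally complete the metric while keeping a uniform lower Ricci bound, and only then apply Bishop.
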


Observe that if
Theorem~\ref{semigraph} holds, then case (S2.3) occurs for $f$ (because in cases
(S2.1) and (S2.2) the image surface $f(\Sigma)$ is an immersed annulus in $X$, which
contradicts that $f(\Sigma)$ is an entire graph).

We will divide the proof of Theorem~\ref{semigraph} into four steps.
In Section~\ref{sec:hori} we will prove that Theorem~\ref{semigraph} holds when $K_{\Sigma}\in {\rm Span}\{ F_1,F_2\} $;
thus, Theorem~\ref{semigraph} will be proved provided that we find a contradiction whenever $K_{\Sigma }$
is not horizontal. Assuming that $K_{\Sigma}\notin {\rm Span}\{ F_1,F_2\} $,
in Sections~\ref{sec:F3}, \ref{sec:nos}, \ref{sec:noa}
we will respectively show that cases (S2.3), (S2.2), (S2.1) above cannot occur. This will complete the proof
of Theorem~\ref{semigraph} (and thus of Theorem~\ref{topR3case} for $X$ a metric semidirect product).

\subsection{Proof of Theorem~\ref{semigraph} when $K_{\Sigma}$ is horizontal}
\label{sec:hori}
\begin{lemma}
\label{lem:horizontal}
If $K_{\Sigma }\in {\rm Span}\{ F_1,F_2\} $, then $f$ is an entire Killing graph
with respect to any right invariant vector field $V\in {\rm Span}\{ F_1,F_2\} $
that is linearly independent from $K_{\Sigma }$.
\end{lemma}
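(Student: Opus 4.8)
The plan is to reduce the whole statement to Assertion~\ref{ass6.3} and then eliminate the single remaining bad configuration. Since $K_{\Sigma}\in\mathrm{Span}\{F_1,F_2\}$, Assertion~\ref{ass6.3} already gives that $\gamma_f$ is a simple closed curve and that, \emph{provided} $\gamma_f$ avoids the North and South poles of $\S^2$, the image $f(\Sigma)$ is an entire Killing graph with respect to every horizontal right invariant field $V$ linearly independent from $K_{\Sigma}$. Thus the entire content of the lemma is to rule out the possibility that $\gamma_f$ passes through a pole, i.e.\ that case (O2) in the proof of Assertion~\ref{ass6.3} occurs. (As $\gamma_f$ is a compact embedded curve, $\overline{\gamma_f}=\gamma_f$, so a pole lying in $\overline{\gamma_f}$ is the same as $\gamma_f$ passing through that pole.) I would phrase the whole argument as a proof by contradiction against this case.

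So assume $\gamma_f$ passes through a pole. Then the hypotheses of the Slab Property (Claim~\ref{assslabprop}) hold, and it produces a smallest horizontal slab $\R^2\rtimes_A[a,b]$ with $a\leq 0\leq b$ and $a\neq b$, containing $f(\Sigma)$, with $f(\Sigma)$ tangent to the two bounding leaves $P_a=\R^2\rtimes_A\{a\}$ and $P_b=\R^2\rtimes_A\{b\}$. At the contact with $P_a$ the surface lies above $P_a$, and at the contact with $P_b$ it lies below $P_b$. The extra ingredient is property~(E2): every leaf of the foliation $\mathcal{F}$ has constant mean curvature $H_0=\mathrm{trace}(A)/2$ with respect to the unit normal $E_3$. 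I would feed this into the mean curvature comparison principle at the two tangency points, taking $E_3$ as the common normal (legitimate, since at each contact point both surfaces are tangent to a horizontal plane and hence have normal $\pm E_3$).

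At the lower tangency $f(\Sigma)$ lies on the $E_3$-side of $P_a$, so the comparison principle gives $H_f(E_3)\geq H_{P_a}(E_3)=H_0$; at the upper tangency $P_b$ lies on the $E_3$-side of $f(\Sigma)$, giving $H_0=H_{P_b}(E_3)\geq H_f(E_3)$. Combining these opposite inequalities forces $H_f(E_3)=H_0$ at the contact points. Hence $f(\Sigma)$ and $P_a$ are tangent, lie to one side of each other, and have the \emph{same} constant mean curvature computed with the \emph{same} normal $E_3$. The strong maximum principle for the quasilinear elliptic constant mean curvature equation then forces $f(\Sigma)$ to coincide with $P_a$ near the contact point, and unique continuation (real analyticity of constant mean curvature surfaces) together with connectedness of $\Sigma$ upgrades this to $f(\Sigma)=P_a$. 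But $P_a$ is a leaf of $\mathcal{F}$, hence a coset of the two-dimensional subgroup $\R^2\rtimes_A\{0\}$, contradicting the standing assumption that $f(\Sigma)$ is not a two-dimensional subgroup of $X$. This excludes case (O2), so $\gamma_f$ avoids the poles and Assertion~\ref{ass6.3} completes the proof.

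I expect the only delicate point to be the orientation and sign bookkeeping: one must verify that $E_3$ may legitimately serve as the common normal at both tangencies and that the two comparison inequalities genuinely have opposite senses, so that they pinch $H_f(E_3)$ to exactly $H_0$. It is worth noting that this pinching incidentally yields $h_0(X)=|H_0|$, consistent with the singular-matrix case treated earlier in the section; however, the argument above derives the equality of mean curvatures directly from the comparison principle and does \emph{not} presuppose the identity $h_0(X)=H(X)$, so it remains valid for regular $A$, where that identity is not yet available.
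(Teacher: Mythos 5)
Your reduction to Assertion~\ref{ass6.3} and to excluding case (O2) is exactly the paper's first step, but the comparison-principle argument you use to exclude (O2) has a genuine gap, and it sits precisely at the point you flagged as ``delicate'': the quantity $H_f(E_3)$ is not a single well-defined constant. The surface $f(\Sigma)$ has constant mean curvature $h_0(X)\geq 0$ with respect to its \emph{own} global unit normal $N$; at a tangency with a horizontal plane one has $N=\pm E_3$, so the mean curvature of $f(\Sigma)$ computed with respect to $E_3$ equals $+h_0(X)$ where $N=E_3$ and $-h_0(X)$ where $N=-E_3$. Your two inequalities therefore read $\epsilon_1 h_0(X)\geq H_0$ at the tangency with $P_a$ and $H_0\geq \epsilon_2 h_0(X)$ at the tangency with $P_b$, where $\epsilon_1,\epsilon_2\in\{\pm 1\}$ record the orientation of $N$ at each contact point. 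These pinch (and then the maximum principle applies) only when $\epsilon_1=\epsilon_2$, or in the case $\epsilon_1=-1,\epsilon_2=+1$, which forces $h_0(X)=H_0=0$ and again gives a contradiction. But in the remaining configuration $\epsilon_1=+1$, $\epsilon_2=-1$ --- i.e.\ when $N$ points \emph{into} the slab at both families of tangencies --- the two inequalities are $h_0(X)\geq H_0$ and $H_0\geq -h_0(X)$, which together say only $|H_0|\leq h_0(X)$: no contradiction. Since item~4 of Claim~\ref{assslabprop} guarantees that $\gamma_f$ passes through \emph{both} poles, this inward-normal configuration is exactly the one to be expected, and your argument never touches it.

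This is not a repairable bookkeeping issue within your approach, because the surviving configuration is realized by genuine CMC surfaces: a round cylinder in $\R^3$ (the case $A=0$) is complete, has constant mean curvature, is invariant under a horizontal Killing field, lies in a smallest slab and is tangent to both bounding planes with inward-pointing normal, yet is not an entire Killing graph. Every step of your argument applies verbatim to it, so an argument using only tangency, one-sidedness and the comparison principle cannot prove the lemma; what fails for the cylinder is \emph{stability}, which your proof never invokes. This is precisely how the paper closes the case: from the slab property and the ruling by lines parallel to $K_{\Sigma}$ it deduces that $\Sigma$ has at most quadratic area growth (comparing with the flat product $\R^2\times[a,b]$, which is quasi-isometric to the slab by (\ref{eq:13})), hence $\Sigma$ is conformally parabolic; the horizontal field $V$ is bounded in the slab, so $J=\langle N,V\rangle$ is a bounded Jacobi function, and it changes sign at the tangency levels; finally, \cite[Corollary~1]{mper1} forbids a bounded sign-changing Jacobi function on a complete, stable, parabolic $H$-surface. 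Also note that your closing remark is misleading: in the surviving configuration $h_0(X)>|H_0|$ is perfectly consistent (as the cylinder shows), so no identity $h_0(X)=|H_0|$ comes out of, or can rescue, the comparison argument.
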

\begin{proof}
In this case, $K_{\Sigma }=\l \partial_x +\mu \partial_y$ for some $\l,\mu \in \R$.
Since $f(\Sigma )$ is foliated by integral curves of $K_{\Sigma }$, then
$f(\Sigma )$ is ruled by straight lines in the direction of $K_{\Sigma }$.
By Assertion~\ref{ass6.3},
we conclude that the lemma will hold provided that $\g _f$
does not pass through the North or South pole of $\esf^2$.
Arguing by contradiction, assume that $\g _f$ passes through the North or
South pole of $\esf^2$. Then, $f$ is in the conditions of Claim~\ref{assslabprop}, which
ensures that $f(\Sigma )$ is contained in a smallest slab-type region $\R^2\rtimes_A
[a,b]$ with $a\leq 0\leq b$, $a\neq b$, $f(\Sigma)$ intersects each of the planes $\R^2\rtimes_A
\{a\}$, $\R^2\rtimes_A
\{b\}$ in nonempty sets whose components
are integral curves of $K_{\Sigma}$, and $f(\Sigma )$ transversely
intersects every intermediate plane $\R^2\rtimes _A\{ z\} $ with $a<z<b$.
Let $Y$ be the flat Riemannian product $\R^2\times [a,b]$. Thus, we can consider
$f\colon \Sigma \la Y$ to be a complete
flat immersion, which is ruled by straight lines parallel
to $K_{\Sigma}$. In particular, $\Sigma $ has quadratic area growth
with respect to the pullback metric of $Y$ through $f$. Since the coefficients
of the canonical metric on $\R^2\rtimes _A\R $ with respect to the $(x,y,z)$-coordinates
only depend on $z$ by equation~(\ref{eq:13}), we conclude that
the identity map $\mbox{Id}\colon Y\to \R^2\rtimes_A
[a,b]$ is a quasi-isometry.  From here,
we deduce that
$\Sigma $ has at most quadratic area growth with respect to the
pullback metric of $X$ through $f$.

Let $V$ be any horizontal right invariant vector field on $X$ that is linearly independent from
$K_{\Sigma}$. As the coefficients of the canonical metric are bounded  in every
horizontal slab of finite width by equation (\ref{eq:13}), then $V$ is
bounded in $\R^2\rtimes_A[a,b]$. Denoting by $N$ the unit normal vector to $f$, we deduce that
$J:=\langle N,V \rangle$ is a bounded Jacobi function on $\Sigma $ that changes sign
(changes of sign of $J$ occur at points of $f^{-1}(\R^2\rtimes_A \{ a,b\} )$).
As $f\colon \Sigma \la X$ is a complete stable $h_0(X)$-surface
with at most quadratic area growth, then the conformal structure of $\Sigma $ is parabolic,
and Corollary~1 in Manzano, P\'erez and Rodr\'\i guez~\cite{mper1} implies 
that $J$ cannot change sign, which is a contradiction.
This contradiction completes the proof of the lemma.
\end{proof}

\subsection{Case (S2.3) is impossible when $K_{\Sigma}$ is not horizontal}
\label{sec:F3}
Assume that $K_{\Sigma }\notin {\rm Span}\{ F_1,F_2\} $. The next lemma reduces this case
to the specific nonhorizontal, right invariant vector field
$K_{\Sigma}=F_3$ given by (\ref{eq:6}),
which is generated by the 1-parameter subgroup of $X$ corresponding to the $z$-axis.
In proving the next lemma, we will be making use of the assumption that $A$ is a regular matrix,
as discussed just before the statement of Theorem~\ref{semigraph}.

\begin{lemma}
\label{lem:F3}
If $K_{\Sigma }\notin {\rm Span}\{ F_1,F_2\} $, then after
a horizontal left translation of $f$ and a scaling of $K_{\Sigma}$,
the following statements hold:
\ben[(1)]
\item $K_{\Sigma}=F_3$.
\item Case  (S2.3) does not happen.
\een
\end{lemma}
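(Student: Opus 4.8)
The plan is to exploit the special structure of the nonunimodular/unimodular semidirect product together with the fact that $A$ is regular, in order to reduce an arbitrary nonhorizontal right invariant Killing field $K_\Sigma$ to the canonical vertical one $F_3$ by means of a horizontal left translation and a scaling. First I would write $K_\Sigma = \lambda F_3 + (c_1\partial_x + c_2\partial_y)$ for some constants $\lambda \neq 0$ (using the hypothesis $K_\Sigma\notin \mathrm{Span}\{F_1,F_2\}$) and $c_1,c_2\in\R$; after scaling we may take $\lambda=1$. Using the formula \eqref{eq:6} for $F_3$, the field becomes $K_\Sigma = \partial_z + h_1(x,y)\partial_x + h_2(x,y)\partial_y$ where $\binom{h_1}{h_2}=A\binom{x}{y}+\binom{c_1}{c_2}$. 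The key algebraic point is that since $A$ is regular, the affine map $\binom{x}{y}\mapsto A\binom{x}{y}+\binom{c_1}{c_2}$ has a unique fixed zero, i.e. there is a unique horizontal point $(x_0,y_0)$ with $A\binom{x_0}{y_0}+\binom{c_1}{c_2}=0$. I would then show that conjugating $K_\Sigma$ by the horizontal left translation $l_{a}$ with $a=(x_0,y_0,0)$ sends $K_\Sigma$ exactly to $F_3$. Concretely, right invariant vector fields are generated by $1$-parameter subgroups, and pushing forward $K_\Sigma$ by $(l_a)_*$ (which, by the first paragraph of the proof of Lemma~\ref{lemma3.6}, preserves right invariance) amounts to conjugating the associated $1$-parameter subgroup; the fixed point $(x_0,y_0)$ is precisely the horizontal translation that centers the subgroup on the $z$-axis. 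This proves item~(1): after a horizontal left translation of $f$ and rescaling, $K_\Sigma=F_3$.

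For item~(2), I would argue by contradiction assuming case (S2.3) holds: there is $a_\Sigma\in X-\Gamma_\Sigma$ such that left translation by $a_\Sigma$ leaves $f(\Sigma)$ invariant, where now $\Gamma_\Sigma$ is the $z$-axis (the $1$-parameter subgroup generated by $F_3=E_3=\partial_z$). The strategy is to analyze how $l_{a_\Sigma}$ interacts with the horizontal foliation $\mathcal F=\{\R^2\rtimes_A\{z\}\}$ and with the right invariant field $K_\Sigma=F_3$. Since $K_\Sigma=F_3$ is everywhere tangent to $f(\Sigma)$ and $f(\Sigma)$ is invariant under $l_{a_\Sigma}$, the push-forward $(l_{a_\Sigma})_*(F_3)$ must again be a right invariant field tangent to $f(\Sigma)$; by the uniqueness (up to scaling) of $K_\Sigma$ among right invariant fields tangent to $f(\Sigma)$, we get $(l_{a_\Sigma})_*(F_3)=\kappa F_3$ for some $\kappa\neq 0$. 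Writing $a_\Sigma=(x_1,y_1,z_1)$ and computing $(l_{a_\Sigma})_*(F_3)$ from \eqref{eq:6} and the group law, this forces strong constraints: I expect it forces $z_1=0$ (so $a_\Sigma$ is horizontal) together with a compatibility between $a_\Sigma$ and the matrix $A$.

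The hard part will be extracting the geometric contradiction once these algebraic constraints are in place. The idea is that $a_\Sigma\in X-\Gamma_\Sigma$ being horizontal means $l_{a_\Sigma}$ acts as a nontrivial horizontal translation preserving each leaf $\R^2\rtimes_A\{z\}$, so $f(\Sigma)$ would be invariant under a genuinely two-dimensional group of isometries (the flow of $K_\Sigma=\partial_z$ together with the discrete or one-parameter horizontal translation by $a_\Sigma$). Combined with the fact that $f(\Sigma)$ is ruled by integral curves of $F_3$ (vertical lines are geodesics, being integral curves of $E_3=\partial_z$), this extra horizontal periodicity should force $\gamma_f$ to degenerate or $f(\Sigma)$ to be a coset of a two-dimensional subgroup, contradicting our standing assumption that $f(\Sigma)$ is not a two-dimensional subgroup (equivalently, that $\gamma_f$ is a nondegenerate regular Jordan curve). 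Making this last step rigorous — showing that the combination of vertical tangency to $K_\Sigma=F_3$ and the horizontal $l_{a_\Sigma}$-invariance is incompatible with $\gamma_f$ being an embedded closed curve that is not a point — is where the real content lies, and I would carry it out by examining the induced action on the Gauss map image $\gamma_f\subset\S^2$ and invoking Theorem~\ref{g_f closed} together with the embeddedness of $\gamma_f$.
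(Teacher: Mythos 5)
Your plan for item (1) is essentially the paper's, but it contains a sign slip worth flagging. Since $d(l_{(\mathbf{p}_0,0)})$ is the identity in the frame $\{\partial_x,\partial_y,\partial_z\}$ (horizontal left translations are Euclidean translations), the push-forward $(l_{(\mathbf{p}_0,0)})_*(K_\Sigma)$ evaluated at the translated point $(\mathbf{p}_0+\mathbf{p},z)$ still has horizontal part $A\mathbf{p}+\mathbf{q}$, where $\mathbf{q}=(c_1,c_2)$, while $F_3$ at that point has horizontal part $A\mathbf{p}_0+A\mathbf{p}$. Equality therefore requires $A\mathbf{p}_0=+\mathbf{q}$, i.e.\ $\mathbf{p}_0=A^{-1}\mathbf{q}$, which is the \emph{negative} of your chosen point $(x_0,y_0)=-A^{-1}\mathbf{q}$ (that point is the axis of $K_\Sigma$; translating by it moves the axis to $-2A^{-1}\mathbf{q}$, not to the $z$-axis). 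This is easily repaired, and regularity of $A$ enters exactly as you intend.

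The genuine gap is in item (2). You correctly derive the key identity $(l_{a_\Sigma})_*(F_3)=\kappa F_3$ from the invariance of $f(\Sigma)$ and the uniqueness, up to scaling, of the tangent right invariant field. But you then predict that this identity is \emph{satisfiable}, forcing ``$z_1=0$ together with a compatibility between $a_\Sigma$ and $A$,'' and you defer the contradiction to a geometric argument (a two-dimensional symmetry group forcing $\gamma_f$ to degenerate) which you yourself identify as the real content and do not supply. That geometric route is both unnecessary and, as sketched, unsound: invariance of $f(\Sigma)$ under a left translation outside $\Gamma_\Sigma$ is precisely what case (S2.3) asserts, and it is perfectly compatible with $\gamma_f$ being a regular Jordan curve --- indeed (S2.3) actually occurs when $X$ is isomorphic to $\sl$ (Theorem~\ref{th:aes} and Lemma~\ref{hyperbolicnew}) --- so no general principle about Gauss map images can rule it out; only the matrix algebra you stopped short of can. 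Carrying it out: writing $a_\Sigma=(\mathbf{p}_1,z_1)$, the group law gives $d(l_{a_\Sigma})=\mathrm{diag}(e^{z_1A},1)$ in the coordinate frame, so $(l_{a_\Sigma})_*(F_3)$ at the translated point has horizontal part $e^{z_1A}A\mathbf{p}$ and $\partial_z$-component $1$, while $F_3$ there has horizontal part $A\mathbf{p}_1+Ae^{z_1A}\mathbf{p}$ and $\partial_z$-component $1$. Since $A$ and $e^{z_1A}$ commute, the difference is the constant horizontal field with value $-A\mathbf{p}_1$. Hence $(l_{a_\Sigma})_*(F_3)=\kappa F_3$ forces $\kappa=1$ (compare $\partial_z$-components) and $A\mathbf{p}_1=0$, whence $\mathbf{p}_1=0$ by regularity of $A$, i.e.\ $a_\Sigma\in\Gamma_\Sigma$ --- the desired contradiction, immediately and with no geometric input. (Note also that the constraint does \emph{not} force $z_1=0$ as you expected; it forces the horizontal part to vanish and leaves $z_1$ free, consistent with vertical translations satisfying $(l_a)_*(F_3)=F_3$. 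The paper's normalization to $z_1=0$, obtained by multiplying $a_\Sigma$ by an element of $\Gamma_\Sigma$, is a convenience, not a conclusion.)
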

\begin{proof}
Since $K_{\Sigma}\notin {\rm Span}\{ F_1,F_2\} $, then after scaling and using
(\ref{equationgenA}) and (\ref{eq:6}),
\[
K_{\Sigma}=F_3+s\partial_x +t\partial_y
=(ax+by+s)\partial_x +(cx+dy+t)\partial_y +\partial_z\equiv \left( \begin{array}{c}
A{\bf p}+{\bf q}
\\
1
\end{array}\right) ,
\]
for some $s,t\in \R$, where ${\bf p}=(x,y)$, ${\bf q}=(s,t)$.
Given ${\bf P}_0=({\bf p}_0,0)\in \R^2\rtimes _A\{ 0\} $, the right invariant vector field
$(l_{{\bf P}_0})_*(K_{\Sigma })$ is everywhere tangent to the
left translation of $f$ by ${\bf P}_0$. Hence, to prove item~1 of the lemma
it suffices to find ${\bf p}_0\in \R^2$ such that $(l_{{\bf P}_0})_*(K_{\Sigma })=F_3$.
By direct computation using the group operation in $\R^2\rtimes_A \R$, see Section~\ref{subsecsemdirprod},
we have that $l_{{\bf P}_0}({\bf p},z)=({\bf p}_0+{\bf p},z)$
is an Euclidean translation, hence its differential is the identity after identifying the tangent spaces of $X$
at both points $({\bf p},z)$, $({\bf p}_0+{\bf p},z)$ with $\R^3$ in the basis $\partial _x,\partial _y,\partial_z$.
Therefore,
the value of  $(l_{{\bf P}_0})_*(K_{\Sigma })$ at $l_{{\bf P}_0}({\bf p},z)$ is given by
\[
(dl_{{\bf P}_0})_{\binom{\bf p}{z}}\left( \begin{array}{c}
A{\bf p}+{\bf q}
\\
1
\end{array}\right) =\left( \begin{array}{c}
A{\bf p}+{\bf q}
\\
1
\end{array}\right) ,
\]
while
\begin{equation}
\label{eq:F3a}
(F_3)_{l_{{\bf P}_0}\binom{\bf p}{z}}=\left( \begin{array}{c}
A({\bf p}_0+{\bf p})
\\
1
\end{array}\right) ;
\end{equation}
hence we only need to define ${\bf p}_0$ such that $A{\bf p}_0={\bf q}$,
which can be done since $A$ is regular.
 This proves the first statement in the lemma.

To prove the second assertion, assume that case (S2.3) holds with $K_{\Sigma }=F_3$.
Hence, there exists  $a\in X-\G_{\Sigma}=X-\{ z\mbox{-axis}\} $
 such that the left translation by $a$ leaves
$f(\Sigma )$ invariant. Writing $a=({\bf p}_0,z_0)\in \R^2\rtimes _A\R $ as
the product $({\bf p}_0,0)*({\bf 0},z_0)$ and using that $({\bf 0},z_0)\in \G_{\Sigma }$, we
conclude that $a$ can be chosen so that $z_0=0$. As $l_a$ leaves $f(\Sigma )$ invariant
and $F_3$ is unique up to rescaling among right invariant vector fields
which are nonzero and everywhere tangent to $f(\Sigma )$,
then $(l_a)_*(F_3)=\l \, F_3$ for some $\l \in \R $. But
 the value at $l_{a}({\bf p},z)=({\bf p}_0+{\bf p},z)$
of $(l_{a})_*(F_3)$ is
\[
(dl_{a})_{\binom{\bf p}{z}}\left( \begin{array}{c}
A{\bf p}
\\
1
\end{array}\right) =\left( \begin{array}{c}
A{\bf p}
\\
1
\end{array}\right) ,
\]
while $(F_3)_{l_{a}\binom{\bf p}{z}}$ is given by (\ref{eq:F3a}).
Since $A$ is a regular matrix, then we deduce that $(l_a)_*(F_3)$ cannot be
a multiple of $F_3$. This completes the proof.
\end{proof}

\subsection{Case  (S2.2) is impossible when $K_{\Sigma}$ is not horizontal}
\label{sec:nos}
Assume that $K_{\Sigma }\notin {\rm Span}\{ F_1,F_2\} $
and that case (S2.2) above holds. By item~1 of Lemma~\ref{lem:F3},
we can assume after a horizontal left translation that $K_{\Sigma}=F_3$. This
of course may produce a change of base point for $f$, of the form $f(p):=Q\in \R^2\rtimes_A \{0\}$.
Recall from the beginning of Section~\ref{sec:semidirect} that
$f\colon (\Sigma,p)\la (X,e)$ is obtained as a limit of pointed limit immersions
$f_n\colon (S_n, p_n)\la (X,e)$ of spheres $S_n\in \mathcal{C}$.
Applying to the $f_n$ the same horizontal left translation that we applied
to $f$ in order to have $K_{\Sigma }=F_3$, we have
$f_n(p_n)=Q$ for all $n$.

By Proposition~\ref{propsu2}, each sphere $S_n\in \mathcal{C}$ is Alexandrov embedded.
In this way, for each $n\in \N$ there exists a three-dimensional Riemannian manifold
$(B_n,g_n)$ which is topologically a closed ball and a Riemannian submersion
$\wt{f}_n\colon (B_n,g_n)\flecha X$ such that $\parc B_n=S_n$, $\wt{f}_n |_{S_n}=f_n$ and
$\parc B_n$ is mean convex.

\begin{lemma}\label{c2no}
In the conditions above, there is some $R^*>0$ and points $q_n\in S_n$ such that the
volumes of the Riemannian metric balls $B_{g_n} (q_n,R^*)$ in $(B_n,g_n)$ of radius
$R^*$ centered at $q_n$ tend to $\8$ as $n \to \8$.
\end{lemma}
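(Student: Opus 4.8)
Looking at this lemma, I need to prove that the volumes of certain metric balls in the abstract mean-convex balls $B_n$ (bounded by the Alexandrov-embedded spheres $S_n$) diverge to infinity. Let me think about the geometric setup and strategy.

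The plan is to exploit the very rigid structure that case (S2.2) imposes on the limit immersion $f$, together with the uniform geometric control on the spheres $S_n$, to manufacture a solid region inside each abstract ball $B_n$ that forces a fixed-radius intrinsic ball to have unboundedly large volume. First I would record what the standing hypotheses give: by Lemma~\ref{lem:F3} we may assume $K_{\Sigma}=F_3$, so the flow of $K_{\Sigma}$ consists of the vertical translations $l_{(0,0,t)}$ and its orbits are exactly the vertical lines $\{(x_0,y_0,t):t\in\R\}$; thus $X/\G_{\Sigma}\cong\R^2$ with projection $(x,y,z)\mapsto(x,y)$. Since in case (S2.2) the immersion $f$ factors through an annulus $\wh f\colon\wh\Sigma\la X$, the image $f(\Sigma)$ is invariant under the vertical flow and descends to a \emph{closed} immersed curve $\be$ in $X/\G_{\Sigma}$; consequently $f(\Sigma)$ is an immersed vertical cylinder over $\be$, contained in a region $K\times\R$ with $K\subset\R^2$ compact. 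This is the key qualitative feature I intend to use.

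Next I would transfer this picture to the spheres. By the convergence in (G1)–(G2) and the uniform second fundamental form bound of item~5 of Proposition~\ref{propsu2}, for large $n$ the sphere $S_n$ contains a compact domain $\Omega_n\subset S_n$ that is a small normal graph over a vertical piece $f(\Sigma)\cap\{-L_n\le z\le L_n\}$ of the cylinder, with $L_n\to\8$. Each $S_n$ is Alexandrov embedded (item~1 of Proposition~\ref{propsu2}), so I have the Riemannian submersion $\wt f_n\colon(B_n,g_n)\flecha X$ with mean-convex boundary $\partial B_n=S_n$; being an equidimensional Riemannian submersion, $\wt f_n$ is a local isometry, and $g_n=\wt f_n^{\,*}\langle,\rangle$. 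Choosing $q_n\in\Omega_n$ with $z(f_n(q_n))=0$, I would use mean-convexity of $\partial B_n$ together with the uniform curvature bound to push the graphical collar a fixed distance into $B_n$ along the inward normal, producing inside $B_n$ a controlled solid slab modeled on the inner side of the tall, horizontally bounded cylinder $K\times[-L_n,L_n]$.

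The heart of the argument, and the step I expect to be the main obstacle, is the volume lower bound for $B_{g_n}(q_n,R^*)$ with $R^*$ fixed independent of $n$. The mechanism I would pursue is that, because the cylinder is \emph{horizontally bounded} (contained in $K\times\R$) while extending vertically over a range $[-L_n,L_n]$ growing without bound, the local isometry $\wt f_n$ must wrap the intrinsic $R^*$-ball over a fixed target region of $X$ with multiplicity tending to infinity: within a fixed intrinsic radius one penetrates completely across the bounded horizontal width of the solid cylinder, and the many vertical sheets that the (simply connected) ball $B_n$ must fold into the bounded region $K\times\R$ all contribute, so that $\mathrm{Vol}(B_{g_n}(q_n,R^*))=\int_{B_{g_n}(q_n,R^*)}\wt f_n^{\,*}(dV_X)\to\8$. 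Making this rigorous is delicate: I would need to quantify the folding via the periodicity/quadratic-area-growth statement of case (S2.2) and Corollary~\ref{thm6.11}, and to verify that distinct sheets genuinely lie within a \emph{single} fixed intrinsic radius of $q_n$ while accumulating disjoint volume in $B_n$; the uniform geometry bounds and the compactness of $K$ are what let me fix $R^*$ once and for all. Once the divergence $\mathrm{Vol}(B_{g_n}(q_n,R^*))\to\8$ is established, the lemma is proved; I note that this conclusion is precisely what will later contradict the uniform upper bound from Bishop's theorem (valid since $(B_n,g_n)$ inherits the bounded Ricci curvature of the homogeneous space $X$), thereby ruling out case (S2.2).
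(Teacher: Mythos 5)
Your setup is faithful to the paper's (the reduction to $K_{\Sigma}=F_3$, the picture of $f(\Sigma)$ as a vertical cylinder over a closed curve, the convergence of compact wrapped domains of $S_n$ to domains of $f(\Sigma)$, and the final collar--volume count), but the step you yourself flag as ``the main obstacle'' is exactly the missing idea, and it cannot be filled by ``uniform geometry bounds and the compactness of $K$''. The crux is to show that the many sheets --- the points $q_n^1,\dots,q_n^j$ lying in pairwise disjoint intrinsic disks of $S_n$ which $f_n$ sends into one fixed small transversal segment --- are at \emph{uniformly bounded distance from one another in the abstract ball} $(B_n,g_n)$. This is not a consequence of $\wt{f}_n$ being a local isometry with horizontally bounded boundary image: an immersed mean-convex ball can perfectly well realize points that are close in $X$ as intrinsically distant points (think of a spiral-staircase ramp winding around the axis $\Gamma$; consecutive sheets are a bounded distance apart in $X$ but arbitrarily far apart in the intrinsic metric). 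Nothing in the curvature estimates of Proposition~\ref{propsu2}, the mean convexity of $\partial B_n$, or the periodicity/quadratic area growth of case (S2.2) rules this out; ruling it out is in effect the entire content of the lemma, so the argument as proposed is circular at its key point.

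The paper's mechanism for this step is a Plateau argument inside $B_n$. For each $k$, one flows the arc $J_n^k\subset \a_n=f_n^{-1}(\R^2\rtimes_A\{0\})$ joining $q_n^1$ to $q_n^k$ under the pullback of $F_3$ to build a disk $D_n^k\subset S_n$ whose boundary $\g_n^k=A_1^n\cup B_1^n\cup A_2^n\cup B_2^n$ consists of two short vertical arcs $A_i^n$ (length $\leq D_0$) and two arcs $B_i^n$ in the planes $z=\pm(R+1)$; one then solves the least-area problem of Meeks--Yau in the mean-convex ball $B_n$ to obtain a \emph{stable} embedded minimal disk $M_n^k$ with $\partial M_n^k=\g_n^k$, and invokes the radius estimate for compact stable minimal surfaces with boundary in the solid cylinder $\mathcal{W}_0$ (\cite[Theorem~1.1]{mmp1}, property $(\star)$ in the paper) to conclude that every point of $M_n^k$ lies within intrinsic distance $R$ of $\g_n^k$. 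Since the arc $\be_n^k=M_n^k\cap\Lambda_n$ stays near $z=0$, hence at distance $>R$ from $B_1^n\cup B_2^n$, its distance to $\partial M_n^k$ is realized by the short arcs $A_i^n$, and a midpoint argument along $\be_n^k$ yields $d_{B_n}(q_n^1,q_n^k)\leq 2D_0+2R$ for every $k$. Only after this uniform bound do the disjoint collar cylinders around the $q_n^k$ accumulate volume inside a single fixed ball $B_{g_n}(q_n^1,R^*)$. Note that stability is essential (the radius estimate fails for general minimal disks), which is why least-area disks are needed; your proposal contains no substitute for this construction, so as written it does not prove the lemma.
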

\begin{proof}
As explained in Remark~\ref{rem7.1}, the surface
$f(\Sigma )$ can be obtained by pulling back via
$\Pi _{K_{\Sigma }}$ a closed immersed curve $\a $ contained in $X/K_{\Sigma }$.
Since in our case $K=K_{\Sigma }=F_3$ and every integral curve of $F_3$ intersects any
horizontal plane $\R^2\rtimes_A \{z_0\}$ transversely at a single point,
we can identify $X/K_{\Sigma}$ with $\R^2\rtimes_A \{0\}$.

Consider the vertical geodesic of $X$
\begin{equation}
\label{eq:z-axis}
\Gamma=\{ \G (t)=(0,0,t)\mid t\in \R\}.
\end{equation}
Given $r>0$,
let $\mathcal{W}(\Gamma,r)$ denote the solid metric cylinder of radius $r$ around $\G $,
i.e., the set of points of $X$ whose distance to $\Gamma$ is at most $r$.
Since $f(\Sigma)$ is an annulus invariant under the flow of $F_3$, and
since all cylinders $\mathcal{W}(\Gamma,r)$ are also invariant under this flow, it
is clear that $f(\Sigma)$ is contained in a solid metric cylinder
$\mathcal{W}_0 := \mathcal{W}(\Gamma,r_0)$ for some $r_0>0$,
such that $\parc \mathcal{W}_0$ is at a positive distance from $f(\Sigma)$.
By~\cite[Theorem~1.1]{mmp1}, there exists some $R=R(r_0)>0$ associated to
$\mathcal{W}_0$ with the following property:
\begin{enumerate}[$(\star )$]
\item If $M$ is any compact, stable minimal surface in $X$ whose boundary
is contained in $\mathcal{W}_0$, then the
intrinsic radius\footnote{The radius of a
compact Riemannian manifold $M$ with boundary is the maximum distance of points in $M$
to its boundary $\partial M$.} of $M$
is less than $R$.
\end{enumerate}																																	
In particular, since the mean curvature of the complete
stable surface $f(\Sigma)$ is $h_0(X)$ and $f(\Sigma )$ is contained in $\mathcal{W}_0$,
it follows that $f(\Sigma)$ cannot be minimal, thus, $h_0(X)>0$.

Let us also consider the positive number
$D_0=D_0(r_0)$ defined in the following way.
Observe that $\mathcal{W}_0$ is foliated by integral curves of $F_3$. We define
$D_0$ as the maximum length
of all the arcs of integral curves of $F_3$ that are contained in the compact piece of
$\mathcal{W}_0$ that lies in the slab determined by the planes $\R^2\rtimes_A \{R+2\}$
and $\R^2\rtimes_A \{-R-2\}$; note that $D_0$ exists since $\mathcal{W}_0\cap
[\R^2\rtimes _A\{ 0\} ]$ is compact.

As $\Sigma $ is simply connected and
$f\colon \Sigma \la X$ is invariant under the flow of $F_3$, then by the discussion in Remark~\ref{rem7.1}
we may parameterize $f$ as
\begin{equation}
\label{eq:fst}
f(s,t)=l_{\Gamma(t)} (\alfa(s)),\quad (s,t)\in \R^2,
\end{equation}
where $\alfa=\a (s)\colon\R\flecha \R^2\rtimes _A\{ 0\}$
is a curve parameterized by arc length in $\R^2\rtimes _A\{ 0\}$, and so that under the natural identification
$\Sigma\equiv \{(s,t)\in \R^2\}$, the base point $p\in \Sigma$ for $f$ corresponds to $(0,0)$. In particular,
$f(0,0)=\alfa(0)=Q$. Note that since we are in the conditions of case (S2.2), the image $\alfa(\R)$
is an immersed closed curve and $\alfa(s)$ is $L$-periodic, where $L$ is the length of $\alfa(\R)$.

For each $s_0\in \R$ we may define numbers $c^-(s_0)<0$, $c^+(s_0)>0$ such that
\begin{equation}
\label{arki}
A(s_0):=\{l_{\Gamma(t)} (\alfa(s_0)) \ | \  c^-(s_0)\leq t\leq c^+(s_0)\}
\end{equation}
is the arc of the integral curve of $F_3$ that passes through $\alfa(s_0)$ whose
 endpoints lie in $\R^2\rtimes_A \{R+2\}$ and $\R^2\rtimes_A \{-R-2\}$. Note that $A(s_0)\subset f(\Sigma)$,
and that the length of $A(s_0)$ is at most $D_0$.

Take some $a\in (0,L)$ and consider for each $j\in \N$ the compact region
 of $\Sigma $ (viewed as $\R^2$ with coordinates $(s,t)$)
given by
\begin{equation}
\label{Kj}
K^j =\{ (s,t) \ | \ 0\leq s \leq jL +a, \ c^-(s)\leq t\leq c^+(s)\} ,
\end{equation}
see Figure~\ref{fig16}.
\begin{figure}
\begin{center}
\includegraphics[height=6cm]{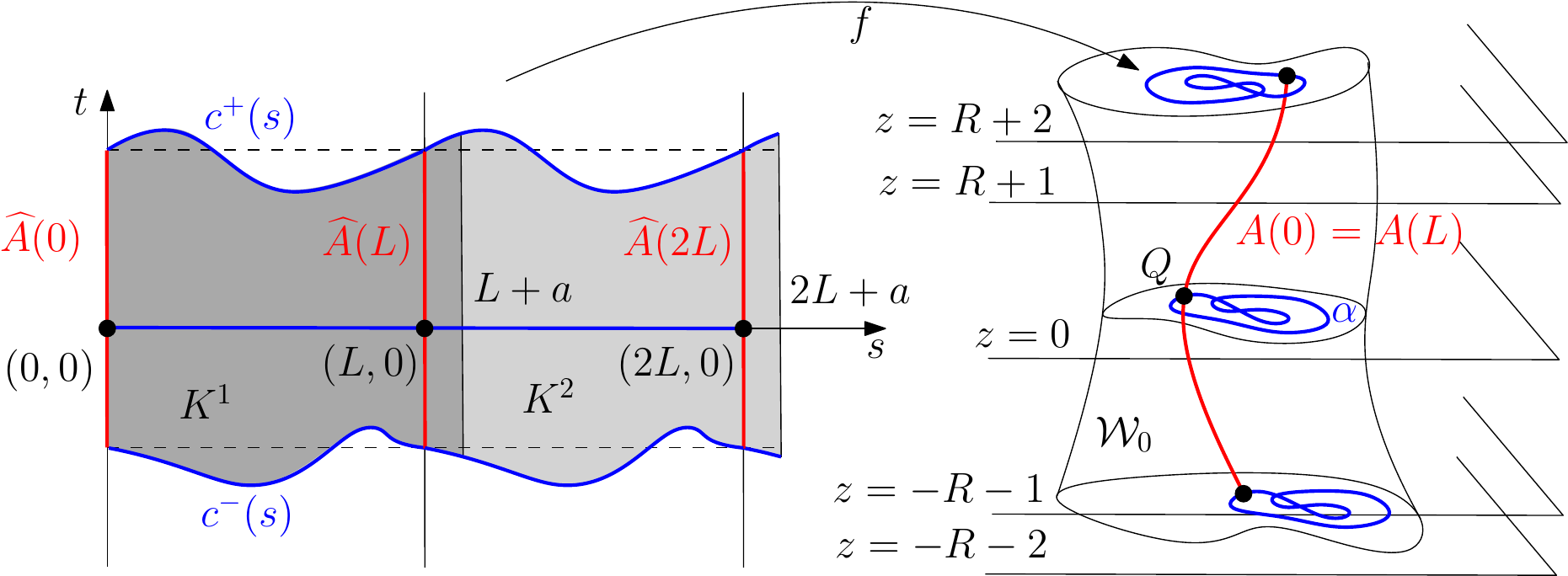}
\caption{Left: the compact regions $K^1\subset K^2$ in the $(s,t)$-plane.
The vertical segments $\widehat{A}(0)$,
$\widehat{A}(L),\widehat{A}(2L)$ apply through $f$ into the compact arc $A(0)$,
which is part of the integral curve of $F_3$ passing through $Q$.
Each $K^j$ wraps $j$ times through $f$ around its image, due to the $L$-periodicity of $f$ in the $s$-variable.}
\label{fig16}
\end{center}
\end{figure}

Since $f$ is a limit surface of the $f_n$, it follows that for $n$ large enough, there exist
compact simply connected domains $K_n^j\subset S_n$ such that
 $f_n(K_n^j)\subset \mathcal{W}_0$ and the sequence $\{ f_n |_{K_n^j}\} _n$ converges to
 $f|_{K^j}$ uniformly as $n\to \8$. For $\ve >0$ sufficiently small and
 less than the injectivity radius of $\Sigma $, denote by $\cU^{k}$ the closed geodesic disk in $\Sigma $
 of radius $\ve$ centered  at $(kL,0)$ with $k\in \{1,\dots, j\}$. After choosing $\ve>0$ sufficiently small,
 we may assume that $\{ \cU^1,\ldots ,\cU^j\} $
 forms a pairwise disjoint collection, and the intrinsic distance
 between $\cU^{i}$, $\cU^k$  for  $i,k\in \{1,\dots, j\}$, $i\neq k$, is greater than $2\de $ for some $\de >0$
 independent of $j$.
Also, there exist closed geodesic disks of radius $\ve $, $\cU_n^{k} \subset K_n^j$,
such that $\{ f_n |_{\cU_n^k} \} _n$ converges to
$f|_{\cU^{k}}$ uniformly as $n\to \8$.
Observe that for each $j$, there exists $n_0(j)\in \N$ such that if $n\geq n_0(j)$,
 the related compact disks
 $\cU_n^k\subset K_n^j$ satisfy that the intrinsic
 distance between $\cU_n^i$ and $\cU_n^k$ inside $K_n^j$ is greater than $\delta$
 for every $i,k\in \{1,\dots, j\}$, $i\neq k$.
It is important to notice that $\delta$ does not depend on $j$, since $f(s,t)$ is $L$-periodic with
respect to $s$.

Let $I$ denote a small compact segment in $\R^2\rtimes _A\{ 0\}$ centered at $Q$
and transversal to $f(\Sigma )$ at~$Q$.
Let $\alfa_n \subset S_n$ denote the set $f_n^{-1} (\R^2\rtimes _A\{ 0\})$.
Since $f$ is a limit of the $f_n$ and $f(\Sigma)$ intersects $\R^2\rtimes_A \{0\}$ transversely, then
by the Transversality Lemma (Lemma~3.1 in~\cite{mmp2}),
$\alfa_n$ is a closed simple curve in $S_n$.
As $f(s,t)$ is $L$-periodic in the variable $s$, the previous convergence properties imply that for $n=n(j)$
large enough there exist $j$ points $q_n^1,\dots, q_n^j \in \alpha_n\cap K_n^j$ such that, for every $k=1,\dots, j$, we have
$q_n^k\in \cU_n^k$ and $f_n(q_n^k)\in I$. In particular, we note for later use the following property:
\begin{enumerate}[$(\star \star)$]
\item For $n$ large enough, the intrinsic compact metric disks $D_{S_n} (q_n^k,\delta/2)$
in $S_n$ of radius $\de /2$ and center $q_n^k$,
$k=1,\dots, j$, are pairwise disjoint.
\end{enumerate}		

Let $J_n^k$ be the compact connected arc of $\alfa_n$ contained
in $K_n^j$ whose endpoints are $q_n^1$ and $q_n^k$, for each $k\in \{1,\dots, j\}$.
Let $W_n$ be the vector field on $S_n$ obtained by pulling back via $f_n$ the
tangent part to $f_n(S_n)$ of the right invariant vector field $K_{\Sigma}=F_3$; clearly,
$W_n$ has no zeros on $K_n^j\subset S_n$ for $n$ large enough, since $K_{\Sigma}$ has no zeros, is
 everywhere tangent to $f(\Sigma)$ and $\{ f_n |_{K_n^j}\} _n$ converges to $f|_{K^j}$
 uniformly as $n\to \8$. Moreover, again for $n$ large enough,
the integral curves of $W_n$ that start at any point of  any of the arcs $J_n^k$ satisfy that their images
by $f_n$ intersect both planes $\R^2\rtimes_A \{-R-1\}$ and $\R^2\rtimes_A \{R+1\}$.
In particular, we can define the compact disks $D_n^k\subset S_n$
obtained by letting the arc $J_n^k$ flow under $W_n$ between the
sets $f_n^{-1}(\R^2\rtimes_A \{-R-1\})$ and $f_n^{-1}(\R^2\rtimes_A \{R+1\})$ of $S_n$.
In fact, $D_n^k\subset K_n^j$ for all $k\in \{ 1,\ldots ,j\} $ and for $n$ large.

Let $\gamma_n^k :=\parc D_n^k$, which is a Jordan curve in $S_n$. Observe that
$\gamma_n^k$ can be written as a union $A_1^n\cup A_2^n \cup B_1^n \cup B_2^n$, where:
\begin{enumerate}[(T1)]
\item
$A_1^n,A_2^n$ are compact arcs of integral curves of $W_n$ that pass through $q_n^1$ and
$q_n^k$, respectively. In particular, the endpoints of both $f_n(A_1^n)$ and $f_n(A_2^n)$ lie
on the planes $\R^2\rtimes _A\{ -R-1\} $, $\R^2\rtimes _A\{ R+1\} $.
Moreover, since $f_n(A_1^n)$ and $f_n(A_2^n)$ converge uniformly as $n\to \8$
to proper subarcs of the compact arc $A(0)$  defined by equation \eqref{arki},
then the lengths of $A_1^n$ and $A_2^n$ are smaller than the constant $D_0$ for $n$ large enough.
\item
The arc $f_n(B_1^n)$ (resp. $f_n(B_2^n)$) lies in the plane $\R^2\rtimes_A \{ -R-1\}$ (resp. $\R^2\rtimes_A \{R+1\}$).
\end{enumerate}

Recall that $(B_n,g_n)$ is the abstract Riemannian three-ball that $S_n$ bounds, and
$\wt{f}_n\colon (B_n,g_n)\flecha X$ is a Riemannian submersion with mean convex boundary $\wt{f}_n |_{\parc B_n}=f_n$.
In particular, $S_n$ is a good barrier to solve Plateau problems in $B_n$ in the sense of
Meeks-Yau~\cite{my2}. More precisely,
for each $k\in \{ 1,\ldots, j\} $ (and $n$ large enough),
there exists a compact stable embedded minimal disk $M_n^k\subset B_n$ of least
area, with $\parc M_n^k =\gamma_n^k$. Furthermore, also by~\cite{my2}, $M_n^k$ is an immersion
transverse to $\parc B_n$ at the regular points of its boundary $\gamma_n^k$. In particular, $M_n^k$ is an
immersion transverse to $\parc B_n$ at the interior points of the arcs $A_1^n$ and $A_2^n$.
Therefore, if we denote by $\Lambda_n\subset B_n$ the embedded surface $\{q\in B_n \mid \wt{f}_n(q)\in \R^2\rtimes _A\{ 0\}\}$
and $\beta_n^k:=M_n^k\cap \Lambda_n$, then $\beta_n^k\cap \parc M_n^k=\{q_n^1,q_n^k\}$, and the intersection
of $M_n^k$ with $\Lambda_n$ is clearly transverse at $q_n^1,q_n^k$ since the arcs $A_1^n,A_2^n$ are transverse to~$\Lambda_n$.
Moreover, up to a small perturbation of $\Lambda_n$ in $B_n$ that fixes its boundary, we may assume that
the intersection of $M_n^k$ with $\Lambda_n$ is transverse.
Therefore, $\beta_n^k$ is a (possibly disconnected) compact $1$-dimensional manifold with boundary, and a
connected component of $\beta_n^k$ is a compact arc in $B_n$ joining $q_n^1$ with $q_n^k$.
We will keep denoting this compact arc by $\beta_n^k$, see Figure~\ref{fig17}.
\begin{figure}
\begin{center}
\includegraphics[height=10cm]{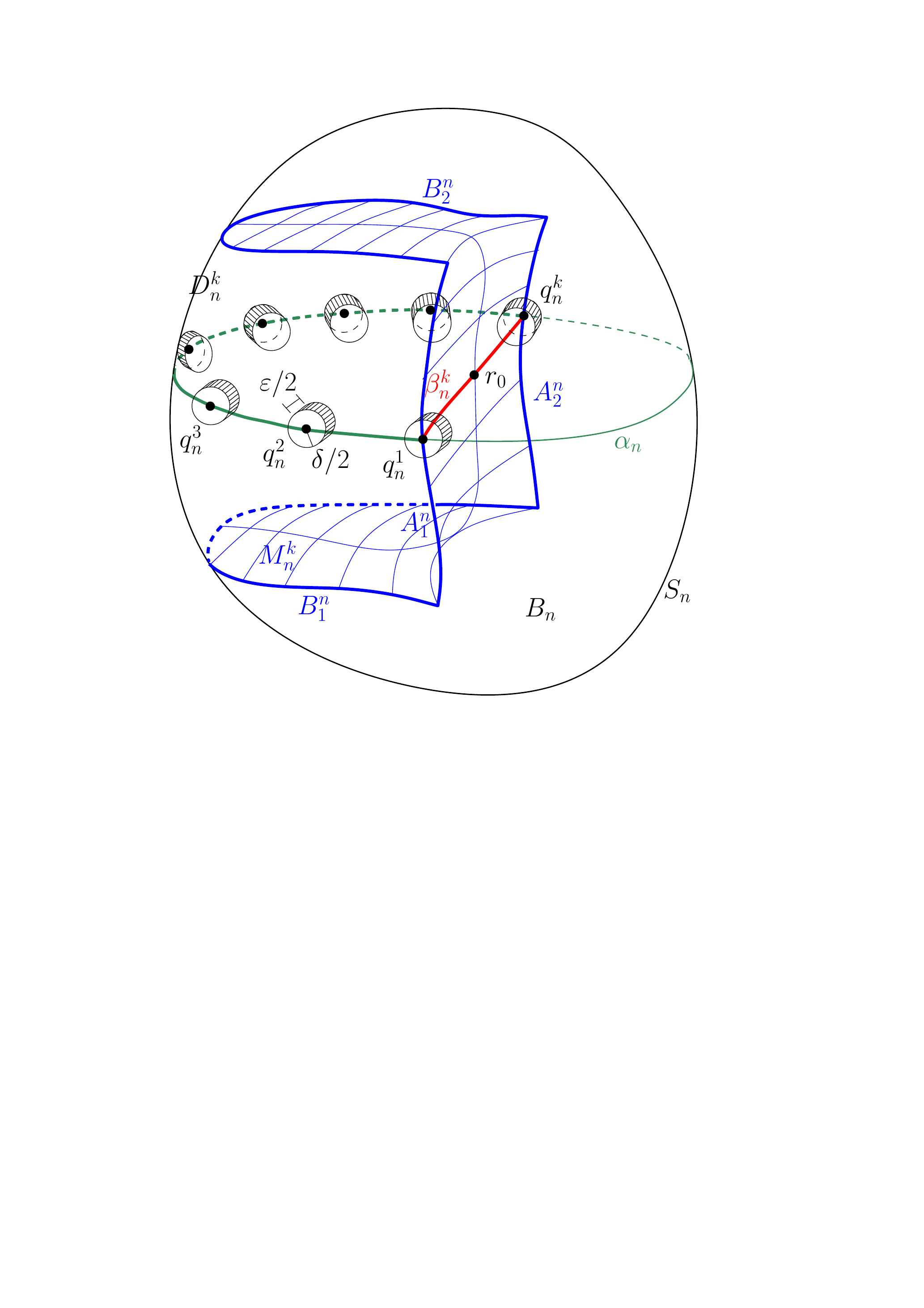}
\caption{The least area disk $M_n^k$ inside the abstract Riemannian ball $B_n$
has the same boundary $\g _n^k$ as the disk $D_n^k\subset S_n=\partial B_n$.
$\g _n^k$ consists of four consecutive arcs $A_1^n,B_1^n,A_2^n,B_2^n$ satisfying properties (T1), (T2).}
\label{fig17}
\end{center}
\end{figure}

Take $r\in \be _n^k$. We next give a lower bound of
$d_X(\wt{f}_n(r) ,f_n(B_1^n\cup B_2^n))$,
the distance in $X$ from $\wt{f}_n(r)$ to $f_n(B_1^n\cup
B_2^n)$. Since $f_n(B_1^n\cup B_2^n)$ is contained in the union of
planes $\R^2\rtimes _A\{ \pm (R+1)\} $, then
$d_X(\wt{f}_n(r) ,f_n(B_1^n\cup B_2^n))\geq d_X(\wt{f}_n(r), \R^2\rtimes _A\{ \pm R_1\})$.
As $\wt{f}_n(r)$ lies at an arbitrarily small distance of $\R^2\rtimes _A\{ 0\}$ and
$d_X(\R^2\rtimes _A\{ 0\} , \R^2\rtimes _A\{ \pm (R+1)\})=R+1$
 (this follows, for instance, from~\cite[Lemma~3.9]{mpe11}, since
$\R^2\rtimes _A\{ \pm (R+1)\}$ are right cosets of
$\R^2\rtimes _A\{ 0\}$), then $d_X(\wt{f}_n(r) ,\R^2\rtimes _A\{ \pm (R+1)\} )$
 can be taken arbitrarily close to $R+1$. In particular,
\begin{equation}
\label{eq:d>R}
d_X(\wt{f}_n(r),f_n(B_1^n\cup B_2^n))>R,\quad \mbox{for all $r\in \beta_n^k$.}
\end{equation}

As $\wt{f}_n|_{M_n^k}\colon M_n^k \la X$ is
by construction a compact immersed stable minimal disk in $X$
whose boundary is contained in the solid metric cylinder $\mathcal{W}_0$,
we deduce from the radius estimate $(\star )$ above
that for any $r\in \beta_n^k$
the distance $d_{M_n^k}(r,\g _n^k)$ in $M_n^k$ from
$r$ to $\gamma_n^k=\parc M_n^k$ is not greater than $R$.
Note that $d_{M_n^k}(r,B_1^n\cup B_2^n)\geq d_X(\wt{f}_n(r),f_n(B_1^n\cup B_2^n))$,
which by (\ref{eq:d>R}) is greater than $R$. Therefore,
\begin{equation}
\label{eq:7.4}
d_{M_n^k} (r,\gamma_n^k) = d_{M_n^k} (r, A_1^n\cup A_2^n),
\quad \mbox{for all $r\in \beta_n^k$.}
\end{equation}
Moreover, since $q_n^1\in A_1^n$ and $q_n^k\in A_2^n$, we deduce by the continuity
and connectedness of the arc $\beta_n^k$ that there exists some \emph{midpoint}
$r_0\in \beta_n^k$ such that $d_{M_n^k} (r_0,A_1^n) = d_{M_n^k} (r_0, A_2^n)$.
Thus, we can estimate the distance in $B_n$ from $r_0$ to $A_i^n$, $i=1,2$, by
 \begin{equation}
 \label{eq:7.5}
 \def\arraystretch{1.3}
 \begin{array}{llll}
d_{B_n} (r_0,A_i^n) & \leq & d_{M_n^k} (r_0,A_i^n)& \\
  & = & d_{M_n^k} (r_0,A_1^n\cup A_2^n) & \mbox{($r_0$ is a midpoint)}
  \\ & = &d_{M_n^k} (r_0,\gamma_n^k) & \mbox{(by equation (\ref{eq:7.4}))}
  \\ & \leq  &R. &
\end{array}
\end{equation}
 Therefore,
 \begin{equation}
 \label{tridi}
\def\arraystretch{1.3}
\begin{array}{llll}
d_{B_n} (q_n^1,q_n^k) & \leq & d_{B_n} (q_n^1,r_0)+d_{B_n} (r_0,q_n^k)&
 \mbox{(triangle inequality)}\\
& \leq & D_0 + d_{B_n} (r_0,A_1^n) + D_0 + d_{B_n} (r_0,A_2^n) & \mbox{(triangle inequality and (T1))}\\
& \leq & 2D_0 + 2R.& \mbox{(inequality (\ref{eq:7.5}))}
\end{array}\end{equation}

Recall that, by item 5 of Proposition~\ref{propsu2}, the norms of the second fundamental
forms of the spheres $S_n$ are uniformly bounded by some constant $C>0$. Also, note that
since we have already proved that $h_0(X)>0$ in this case, the values $H_n$ of the
mean curvatures of $S_n$ are uniformly bounded away from zero (since $H_n>h_0(X)$ for
every $n$). Hence, by~\cite{mt3}, there exists some $\ep_1>0$ smaller than the injectivity
radius of $B_n$ such that $S_n=\parc B_n$ has a regular neighborhood
$\mathcal{V}_n^{\ep_1}$ in $B_n$ of fixed size $\ep_1$ (independent of $n$).
In other words, we have a diffeomorphism
\[
\Phi_n \colon S_n \times [0,\ve _1)\to \mathcal{V}_n^{\ep_1},
\quad\Phi (x,t)=\exp ^n_{x} (t N_n(x))
\]
where $N_n$ is the inward pointing unit normal in $(B_n,g_n)$ of the mean convex
sphere $S_n=\parc B_n$ and $\exp^n$ denotes the exponential map in $(B_n,g_n)$.

By Property $(\star\star )$ above, for $n$ large enough,
the solid cylinders $C_n^1,\dots, C_n^j$ in $\mathcal{V}_n^{\ep_1}$ given by the image under
$\Phi _n$ of the compact cylinders $D_{S_n} (q_n^k, \delta/2)\times [0,\ep_1/2]$
are mutually disjoint, see Figure~\ref{fig17}.
Also, note that the volume of each $C_n^k$ in $(B_n,g_n)$
is greater than some $V>0$,
independent of $n,j$ and $k$ (again this follows from the uniform bound of the second fundamental
forms of the $S_n$, as well as from the fact $\ve_1$ does not depend on $n$).
In particular, for $n$ large enough, the total volume in $(B_n,g_n)$
of the (disjoint) union $C_n^1\cup \cdots \cup C_n^j$ is at least $jV$. Note that this volume
can be made arbitrarily large, as $j\in \N$ was chosen fixed but arbitrary.

Also, by the triangle inequality we have for any $x\in C_n^k$, $k\in \{ 1,\ldots ,j\} $, that
$d_{B_n}(q_n^k,x)\leq \frac{\ep_1+\delta}{2}$. This inequality and (\ref{tridi}) imply that
\[
d_{B_n}(q_n^1,x) \leq d_{B_n} (q_n^1,q_n^k) + d_{B_n}( q_n^k,x)
\leq 2(D_0+R) +\frac{\ep_1 +\delta}{2},
\]
from where we deduce that $C_n^1\cup \cdots \cup C_n^j$ is contained in the Riemannian ball
of $(B_n,g_n)$ centered at $q_n^1$ and of radius $R^*:= 2(D_0+R) +\frac{\ep_1+\delta}{2}$.

In particular, we have proved the following: given a fixed but arbitrary $j\in \N$,
then for all $n\in \N$ large enough, the metric ball $B_{g_n}(q_n^1,R^*)$ in the Riemannian
manifold $(B_n,g_n)$ centered at $q_n^1\in \parc B_n$ of radius $R^*$ has volume at least $jV$,
where $V,R^*>0$ do not depend on $j,n$.
This clearly proves Lemma~\ref{c2no}.
\end{proof}

We will next obtain a contradiction with Lemma~\ref{c2no}, that will prove that case (S2.2) is
impossible when $K_{\Sigma }\notin {\rm Span}\{ F_1,F_2\} $, which was the objective of the present
Section~\ref{sec:nos}. First, observe that as the second fundamental
forms of the $f_n$ are uniformly bounded, there exists $\de >0$ such that, for all $n$, the following properties hold:
\begin{enumerate}[(U1)]
\item $(B_n,g_n)$ can be extended to a compact Riemannian three-manifold with
boundary $(\wt{B}_n,\wt{g}_n)$, in the sense that $B_n$ is a compact subset of $\wt{B}_n$
and $\wt{g}_n|_{B_n}=g_n$.
\item The map $(x,t)\in S_n\times [-\de ,0]\to  \exp ^n_{x}(tN_n(x))\in \wt{B}_n-\mbox{Int}(B_n)$
is a diffeomorphism, where $\exp ^n$ denotes the exponential map of
$(\wt{B}_n,\wt{g}_n)$ and $N_n$ is the unit normal vector to $S_n$ at $x$ with respect to
$\wt{g}_n$ such that $H_nN_n$ is the mean curvature vector of $S_n$.
\item The metrics $\wt{g}_n$ are uniformly bounded in the $C^2$-topology.
\end{enumerate}

 Let  $\varphi \colon (-\de ,0]\to (0,\infty )$ be a smooth positive function such that
$\varphi =1$ in $[-\de /3,0]$ and $\varphi (t)=\frac{1}{t+\de }$  in $(-\de ,-2\de /3]$.
We define the complete Riemannian metric
$\wh{g}_n$ on $\wt{B}_n-\partial \wt{B}_n$ by
\[
\wh{g}_n=\left\{ \begin{array}{ll}
g_n & \mbox{in $B_n$,}
\\
\varphi ^2\wt{g}_n & \mbox{in $S_n\times (-\de ,0]$}
\end{array}\right.
\]
(we are identifying $S_n\times [-\de ,0]$ with $\wt{B}_n-\mbox{Int}(B_n)$ through
the diffeomorphism that appears in property (U2)).
Observe that
\begin{equation}
\label{Bishop1}
\wh{g}_n=\frac{1}{(t+\de )^2}\wt{g}_n \hspace{0.2cm} \mbox{in } \hspace{0.2cm} S_n\times (-\de ,-2\de/3].
\end{equation}

As $\wt{g}_n|_{\partial \wt{B}_n}$ is a Riemannian metric on the closed surface
$\partial \wt{B}_n$, then (\ref{Bishop1}) implies that given $\ve >0$,
there exists $\de _1=\de _1(\ve )\in (2\de /3,\de )$ such that the restriction of $\wh{g}_n$
to $S_n\times (-\de ,-\de _1]$ is $\ve $-close in the $C^2$-topology to the hyperbolic
metric of constant sectional curvature $-1$. In particular, the Ricci curvature of $\wh{g}_n$ satisfies
\begin{equation}
\label{Bishop2}
\left| \mbox{Ric}_{\wh{g}_n}+2\right| _{L^{\infty }(S_n\times (-\de ,-\de _1])}<b(\ve ),
\end{equation}
where $b(\ve )$ can be made arbitrarily small for $\ve $ small.

As the coefficients of the Ricci tensor of $\varphi ^2\wt{g}_n$ are smooth expressions of
the coefficients of $\wt{g}_n$, of those of the Ricci tensor of $\wt{g}_n$ and of
$\varphi,\varphi',\varphi''$, then property (U3) and the smoothness of $\varphi $ imply
that  the Ricci curvature of $\varphi ^2\wt{g}_n$ satisfies for all $n$:
 \begin{equation}
\label{Bishop3}
\mbox{Ric}_{\varphi ^2\wt{g}_n} \mbox{ is uniformly bounded in } S_n\times [-\de _1,0].
\end{equation}

As $g_n$ is locally homogeneous, (\ref{Bishop2}) and (\ref{Bishop3}) imply that
there exist $c<0$ independent of~$n$
such that Ric$_{\wh{g}_n}\geq 2c$ in $\wt{B}_n-\partial \wt{B}_n$ for all $n$.
Since $(\wt{B}_n-\partial \wt{B}_n,\wh{g}_n)$ is a complete Riemannian manifold, then
Bishop's comparison theorem gives the following upper bound for the
volume of the metric ball $B_{\wh{g}_n}(q_n,R^*)$ in
$(\wt{B}_n-\partial \wt{B}_n,\wh{g}_n)$ of radius $R^*>0$ centered at $q_n$:
\[
\mbox{Vol}\left( B_{\wh{g}_n}(p_n,R^*) \right) \leq V(c,R^*),
\]
where $V(c,R^*)$ denotes the volume of any metric ball of radius $R^*$ in the three-dimensional
space form of constant sectional curvature $c$. This is a contradiction with Lemma~\ref{c2no}, as desired.

\subsection{Case (S2.1) is impossible when $K_{\Sigma}$ is not horizontal}
\label{sec:noa}

Assume that case (S2.1) holds and $K_{\Sigma }\notin \mbox{\rm Span}\{ F_1,F_2\} $.
We start with the same normalizations as in the previous Section~\ref{sec:nos}. So, $K_{\Sigma}=F_3$,
and $f\colon(\Sigma,p)\la (X,Q)$ (here $Q\in \R^2\rtimes _A\{ 0\} $)
is obtained as a limit of pointed immersions $f_n\colon(S_n, p_n)\la (X,Q)$
of spheres $S_n\in \mathcal{C}$. 
Also, we will consider the compact Riemannian balls $(B_n,g_n)$ with $\parc B_n=S_n$, and
Riemannian submersions $\wt{f}_n\colon (B_n,g_n)\flecha X$ such that $\parc B_n$ is mean convex
and $\wt{f}_n |_{\parc B_n}=f_n$. Let
$\a_n:=f_n^{-1}(\R^2\rtimes _A\{ 0\})\subset S_n$, which is a simple closed curve in $S_n$ by the
Transversality Lemma (Lemma~3.1 in~\cite{mmp2}).
Also, let $K_{\Sigma}^n$ be the Killing field on $B_n$ that is induced by $K_{\Sigma}$
via the pullback by the submersion $\wt{f}_n$.

Recall that in this case (S2.1), $\Sigma $ is an annulus.
Observe that $f(\Sigma )$ is the immersed annulus obtained by letting the immersed
closed curve $f(\Sigma )\cap (\R^2\rtimes _A\{ 0\} )$ flow under $K_{\Sigma}$. 
In particular, $\a _{\infty }=f^{-1}(\R^2\rtimes _A\{ 0\} )$ is a  simple closed curve in $\Sigma $, and
the sequence $\{ f_n|_{\a _n}\} _n$ converges uniformly to the immersion
$f|_{\a _{\infty }}$.

By Theorems~1 and 2 in~\cite{my2}, $\a_n$ is the boundary of a smooth, embedded
least-area disk $M(n)$ in $B_n$ and this disk is transverse to $S_n=\partial  B_n$ along
its boundary $\a_n$. Note that the intrinsic radii of the disks $M(n)$ are bounded by
a universal constant $R>0$ by Theorem 1.1 in~\cite{mmp1},
since their compact image boundary curves $f_n|_{\partial M(n)}=f_n|_{\a_n}$ converge
uniformly as $n\to \infty $ to $f|_{\alpha _{\infty }}$, and $f(\alpha _{\infty })$
is contained in a solid metric cylinder around $\Gamma=\{ (0,0,t)\ | \ t\in \R \} $ (so the same holds
for  $f_n(\partial M(n))$ for all $n\in \N$, for a slightly larger solid metric cylinder around $\Gamma$).
It follows that a subsequence of the
immersed stable minimal disks $\wt{f}_n(M(n))$, which lie locally in the mean convex side of
$f_n(S_n)$ near $f_n(\a_n)$, converge to an immersed stable minimal disk $M(\infty)$ in $X$
with boundary $f(\a _{\infty })$.

We claim that $M(n)$ lies in the piecewise smooth,
closed upper half-ball $B_n^+=\wt{f}_n^{-1}(\R^2\rtimes_A [0,\8))$ determined
by the disk $\wt{f}_n^{-1}(\R^2\rtimes _A\{ 0\})$. To see this, first recall that
for every $z_0\in \R$, the halfspace of the form $\R^2\rtimes _A[z_0,\infty )$ is mean convex in $X$,
and the constant mean curvature of its boundary is equal to $H(X)$. 
Since $ f_n(\a_n)\subset \R^2\rtimes _A\{ 0\}$ is the boundary of the immersed minimal disk
$f_n(M(n))$, then the maximum principle (or the mean curvature comparison principle)
applied to $M(n)$ and to the smooth surfaces $\wt{f}_n^{-1}(\R^2\rtimes_A \{z\})$, $z\in \R$, implies
that $M(n)\subset B_n^+$ as desired.

We next claim that  there exists an $\eta\in(0,\pi/4)$ such that the
angle that $M(n)$ makes with $S_n$ along $\a _n$ is greater than $\eta$ for all $n\in \N$ large enough.
Assume that the claim does not hold; in this case, 
$M(\infty )$ is tangent to $f(\Sigma )$ at some boundary point $Q'\in \partial M(\infty )$.
As $M(\infty )$ lies locally in the mean convex side of $f(\Sigma )$ along $f(\a _{\infty })$,
then the Hopf boundary maximum principle implies that $f(\Sigma )$ coincides with $M(\infty )$ nearby $Q'$.
This is a contradiction since we know that $h_0(X)>0$ in this case, as it was explained just after
property $(\star )$ in the previous section.
Thus, the claim at the beginning of this paragraph holds.

Let $N_{M(n)}$ denote the unit normal vector field to
$M(n)$ in $(B_n,g_n)$. We claim that for $n$ sufficiently large,
$g_n(N_{M(n)},K_{\Sigma}^n)$ has no zeros along $\a_n$. Arguing by
contradiction, suppose that after passing to a subsequence,
 $K_{\Sigma}^n$ is tangent to $M(n)$ at some point $q_n\in \a _n$ for all $n$.
 Then $K_{\Sigma}^n(q_n)$ and the unit tangent vector $w_n$ to $\a _n$ at $q_n$ generate the
 tangent space $T_{q_n}M(n)$. By the last paragraph, the angle between $T_{q_n}M(n)$
 and $T_{q_n}S_n$ is greater than $\eta $. Since $K_{\Sigma}$ 
is everywhere tangent to
$f(\Sigma)$, then the angle that $K_{\Sigma}^n(q_n)$ makes with $T_{q_n}S_n$ tends
to zero as $n\to \infty $. Since $K_{\Sigma}^n(q_n)$ is tangent to $M(n)$ at $q_n$,
then the angle that $K_{\Sigma}^n(q_n)$ makes with $w_n$
becomes arbitrarily small if $n$ is large enough
(note that $w_n\in T_{q_n} S_n\cap T_{q_n} M(n)$). Applying the differential of $\wt{f}_n$ at $q_n$,
we deduce that the angle between $(K_{\Sigma})_{f_n(q_n)}=(F_3)_{f_n(q_n)}$
and $(df_n)_{q_n}(w_n)$ tends to zero as
$n\to \infty $. Since $(df_n)_{q_n}(w_n)$ is a nonzero horizontal vector, this only can occur
provided that $(F_3)_{f_n(q_n)}$ becomes horizontal as $n\to \8$, or equivalently, the ratio between
the component of $(F_3)_{f_n(q_n)}$ in the direction of
$\partial _z$ and
its horizontal component tends to zero. By equation (\ref{eq:6}), this only can occur
if $f_n(q_n)$ diverges in $\R^2\rtimes _A\{ 0\}$. But this contradicts the fact that
$f(\a _{\infty })$ lies at a finite distance from the origin.
Hence the claim follows.

Therefore, after replacing by a subsequence and choosing $N_{M(n)}$ so that the
Jacobi function $J_n=g_n(N_{M(n)},K_{\Sigma}^n)$ is  positive along $\a_n$, we
deduce from the stability of the minimal disk $M(n)$ that $J_n$ is positive on $M(n)$.

Consider the Jacobi function $J_\infty=\langle N_{M(\infty )},K_{\Sigma}\rangle $
defined on the stable minimal disk $M(\infty)$. Observe that $J_{\infty }$ does not vanish
at any point of $\partial M(\infty )=f(\a _{\infty})$ (otherwise $M(\infty )$ would become tangent
to $f(\Sigma )$ at some boundary point of $M(\infty )$, which we have seen that contradicts
the Hopf boundary maximum principle). Once we know that $J_{\infty }$ does not vanish
at any point of $\partial M(\infty )$, then $J_{\infty }|_{\partial M(\infty )}$ is positive (because
$J_n|_{\a_n}$ is positive for $n$ large), and thus, $J_{\infty }>0$ on $M(\infty )$ because
$M(\infty )$ is stable.

Consider the mapping $T_n\colon M(n)\times \R \to X$ given by
\begin{equation}
\label{eq:Tn}
T_n(q,t)=l_{\G(t)}(\wt{f}_n(q)),
\end{equation}
where $\G(t)$ is given by (\ref{eq:z-axis}). As $J_n$ is positive  on $M(n)$,  $T_n$
is a proper submersion of $M(n)\times \R $ into $X$ (note that the projection $(x,t)\mapsto t$
is proper). 
Our next goal is to take limits of these immersions $T_n$.
To do this, we use a three-point condition. Let $\overline{\D }$ be the closed unit disk in $\R^2$ and parameterize each
$M(n)$ by a conformal diffeomorphism $\phi_n\colon \ov{\D} \to M(n)$ in such a way
that the points $(1,0), (0,1), (-1,0)$ of $\partial \D$ have as images by $f_n\circ \phi _n$
three ordered points in $f_n(\a _n)$ which converge to three ordered points in the limit curve
$f(\a_\infty)=\partial M(\infty )$.
In this case, after passing to a subsequence, the conformal immersions
$\wt{f}_n\circ \phi_n \colon \ov{\D} \la X$ converge smoothly to a limit conformal immersion
$\wt{f}_\infty\colon \ov{\D}\la X$ that parameterizes $M(\infty )$; for this standard type of limit result,
see~\cite{mor2} and~\cite{my1} for details.

Note that as $n\to \infty $, the proper immersions {$T_n\circ(\phi _n\times 1_{\R }) \colon
\ov{\D}\times \R \to X$} converge to the submersion
$T_{{\infty }}\colon \ov{\D}\times \R \to X$ given by
\begin{equation}
\label{eq:Tinfty}
T_{\infty }(x,t)=l_{\G(t)}(\wt{f}_{\infty }(x)),
\end{equation}
which again is a proper submersion since $(x,t)\in \ov{\D}\times \R\to \R \mapsto
t\in \R $ is proper (because $J_{\infty }$ is positive on $M(\infty )$).
Consider the codimension-one foliation of $\ov{\D}\times \R$ given by
the family of compact disks
\begin{equation}
\label{fol}
\cF =\{ D_t=T_\infty^{-1}(\R^2\rtimes_A \{ t\} ) \ | \ t\in \R\} .
\end{equation}
Let $K_1$
be the nowhere zero, smooth vector field on $\ov{\D}\times \R$ obtained
after pulling back $K_{\Sigma }$ through $T_{\infty }$. Clearly,  each of the
integral curves of $K_1$ in $\ov{\D}\times \R $ 
intersects the disk $D_0$ in a single point.
Thus, we can view $\ov{\D}\times \R$ in a natural manner as being the
topological product $W:=D_0 \times \R$, and we
endow this three-manifold with boundary with the pulled-back metric by $T_{\infty }$. We
will find the desired contradiction as an application of Theorem~\ref{flux} below, as we explain next.

Observe that the boundary $\partial W=(\partial D_0)\times \R$ of $W$ satisfies
\begin{equation}
\label{eq:bdry}
T_{\infty}(\partial W)=\{ l_{\G (t)}(T_{\infty }(x))\ | \ x\in \partial D_0, \ t\in \R\}
=\{ l_{\G (t)}(f(x))\ | \ x\in \a _{\infty }, \ t\in \R\} =f(\Sigma ).
\end{equation}
Recall that $f(\Sigma )$ has constant mean curvature $h_0(X)>0$ in $X$ with respect
to the unit normal field obtained as the limit of the inward pointing unit normal fields
to the spheres $S_n$; see the paragraph after property $(\star)$ in Section~\ref{sec:nos}. Also note that the inward
pointing unit normal field on $\partial W$  has positive inner product with the inner
conormal to $M(\infty)$ by the Hopf boundary maximum principle,
and that we can view $M(\8)$ as an embedded minimal disk in $W$.
Since $M(\infty)$ can be viewed as the limit of the disks $M(n)$, each of which
lies in the mean convex ball $B_n$ bounded by $S_n$, then we conclude that $\partial W$ has
constant mean curvature $h_0(X)$ with respect to the inward pointing normal field to its boundary
(thus, $W$ has mean convex boundary of constant mean curvature).

The vector field $K_1$ 
is clearly a Killing field on $W$ endowed with
the pulled-back metric by $T_{\infty }$. Note that $K_1$ 
arises from the proper action of $\G=\G (t)$, which we can view as a $1$-parameter
subgroup of isometries of $W$.

We claim that there exists a nowhere zero Killing field $K_2$ in {$W$ which is tangent to
the foliation $\cF$ defined in \eqref{fol}, bounded on the end of $W$ given by the end representative
$D_0\times [0,\infty )$} 
and such that the closure in $W$ of every integral curve of $K_2$ in $\Int(W)$
is a compact arc with end points in $\partial W$.   To see this,
first note that by Lemma~\ref{cor:bounded}
there exists a nonzero horizontal
right invariant vector field $V$  in $X$ that is bounded in the halfspace
$\R^2\rtimes _A[0,\infty )$. Let $K_2$ be the Killing field in $W$ defined as the
pullback by $T_\infty$ of $V$. Since $V$ is {everywhere horizontal and}
bounded in $\R^2\rtimes _A[0,\infty )$, then $K_2$ is {tangent
to $\cF$ and} bounded on the end representative $T_\infty ^{-1}(\R^2\rtimes _A[0,\infty ))$ of $W$.
Clearly, both noncompact domains $D_0\times [0,\infty )$, $T_\infty ^{-1}(\R^2\rtimes _A[0,\infty ))$
represent the same end of the solid cylinder $W$. Finally,
since $T_\infty $ is proper, then the closure of every integral curve of $K_2$ in $\Int(W)$
is a compact arc in a disk $D_t$ for some $t\in \R$, with end points in
$\partial W$. Thus our claim in this paragraph is proved.

By Theorem~\ref{flux}, the CMC flux (see Definition~\ref{def:flux})
\[
\mbox{Flux}(\partial W,\partial D_0
\times \{ 0\} ,K_1)\neq 0.
\]
We will arrive to a contradiction by showing that the above CMC flux is actually zero. Note
that $\a _n$ is homologous to zero in $S_n$, and so, the homological invariance of
the CMC flux gives that $\mbox{Flux}(\partial B_n,\a _n,K_{\Sigma}^n)=0$ for all $n$. But this
sequence of numbers converges as $n\to \infty $
to $\mbox{Flux}(\partial W,\partial D_0
\times \{ 0\} ,K_1)$, which gives the desired contradiction.

\section{Proof of Theorem~\ref{topR3case} when $X$ is isomorphic to $\sl $}
 \label{sec:sl2R}
In this section we will prove Theorem~\ref{topR3case} when $X$ is a metric Lie group isomorphic to $\sl$.
As explained at the beginning of Section~\ref{sec:semidirect}, to prove Theorem~\ref{topR3case}
it suffices to prove that if $S_n$ is a sequence of constant mean curvature spheres in $X$ that lie
in the connected component $\mathcal{C}$ of the space of index-one spheres in $X$ with $H>h_0(X)$
defined in Proposition~\ref{propsu2}, and for
which ${\rm Area}(S_n)\to \8$ as $n\to \8$, then there exists a pointed limit immersion
$f\colon (\Sigma,p)\la (X,e)$ of this sequence such that $f(\Sigma)$ is an entire Killing graph in $X$
with respect to a right invariant Killing vector field in $X$.
Also by the discussion at the beginning of Section~\ref{sec:semidirect}, we can also assume that
there exists a pointed limit immersion $f\colon (\Sigma,p)\la (X,e)$ of the spheres $S_n$ such that
properties (S1), (S2) hold.

By item~5 of Lemma~\ref{lemma4.1}, $H(X)>0$.
Therefore, by \eqref{hoche} we have $h_0(X)>0$, which implies that
no immersions in $\Delta(f)$ have constant left invariant Gauss map
(because  two-dimensional subgroups of $X$ are minimal, see Corollary 3.17 in~\cite{mpe11}).
In particular, $f(\Sigma )$ is not a two-dimensional subgroup of $X$
and the right invariant vector field $K_{\Sigma}$ to which $f(\Sigma)$ is everywhere tangent is unique up to scaling.

Theorem~\ref{topR3case} when $X$ is isomorphic to $\sl$ will follow directly from the next result.

\begin{theorem}
\label{th:aes}
In the above conditions, $f(\Sigma)$ is an entire Killing graph with respect to a nonzero right invariant vector field on $X$.
Moreover, $f$ satisfies the periodicity condition (S2.3) stated at the beginning of Section~\ref{sec:semidirect}.
\end{theorem}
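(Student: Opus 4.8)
The plan is to follow the same four-step strategy used in Section~\ref{sec:semidirect} for metric semidirect products, adapting each step to the $\sl$ setting using the structural results from Section~\ref{subsecsl}. The role played there by the horizontal right invariant fields $F_1,F_2$ will now be played by \emph{parabolic} right invariant vector fields, and the role of the nonhorizontal field $F_3$ will be played by \emph{hyperbolic} right invariant vector fields. By Lemma~\ref{lemma3.6} the character (elliptic, parabolic, hyperbolic) of $K_\Sigma$ is preserved under left translations, so these three cases are intrinsic and exhaustive. First I would dispose of the case where $K_\Sigma$ is elliptic or parabolic: in that situation Assertion~\ref{ass6.2} (case~2) already gives that $\g_f$ is a closed curve, and I would argue — in parallel with Lemma~\ref{lem:horizontal} — that $f(\Sigma)$ must be an entire Killing graph. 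The key analytic input here is again the parabolicity of $\Sigma$ together with a sign-of-Jacobi-function argument via the Manzano--P\'erez--Rodr\'\i guez result~\cite{mper1}, using the last sentence of Lemma~\ref{lemma3.6} to guarantee that the relevant inner product $\langle K_\Sigma, E_3\rangle$ has constant sign, which controls whether $\g_f$ can meet $\Upsilon\cup(-\Upsilon)$.

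The substantive case is $K_\Sigma$ hyperbolic, which is the content of Assertion~\ref{ass6.5} and the Slab Property Claim~\ref{assslabprop1}. Here I would first reduce to the situation where $\g_f$ does not intersect $\Upsilon\cup(-\Upsilon)$: if it did, the Slab Property forces $f(\Sigma)$ into a smallest slab bounded by two leaves $L_a,L_b$ of the foliation $\cF$ by right cosets of a two-dimensional subgroup $\Hip$, tangent to both, and since these leaves are minimal (they are cosets of two-dimensional subgroups, hence have mean curvature $H(X)$, but actually minimal only when $h_0=H$) one derives a contradiction via the mean curvature comparison principle exactly as in the singular-matrix case of Section~\ref{sec:semidirect}. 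Once $\g_f$ avoids $\Upsilon\cup(-\Upsilon)$, Assertion~\ref{ass6.5} directly yields that $f(\Sigma)$ is an entire Killing graph with respect to the parabolic right invariant field $P$ tangent at $e$ to $\Hip$. It then remains to rule out cases (S2.1) and (S2.2), i.e.\ to show that $f(\Sigma)$ cannot be an immersed annulus, so that the periodicity (S2.3) must hold.

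For ruling out (S2.2) I would replicate the volume-growth argument of Section~\ref{sec:nos}: construct compact domains $K_n^j$ in the Alexandrov-embedded spheres $S_n$ that wrap $j$ times around the image annulus, use the Meeks--Yau least-area disks together with the uniform radius estimate from~\cite{mmp1} (applicable since minimal surfaces in $X$ satisfy such estimates) to bound intrinsic distances in the filling balls $B_n$, and thereby produce metric balls $B_{g_n}(q_n,R^*)$ of unbounded volume, contradicting Bishop's comparison theorem after the same metric-extension construction. The main obstacle I anticipate is that the geometry of cosets in $\sl$ is genuinely different from the flat-slab geometry of $\R^2\rtimes_A\R$: the foliation $\cF$ is only \emph{periodic} under the central $\Z$-action (observation (P1)), not a product foliation, so the solid cylinder $\mathcal{W}(\Gamma,r)$ containing $f(\Sigma)$ and the bound $D_0$ on integral-curve arc lengths must be set up using the compactness of parabolic fields (observation (P2)) and the distance function $\wt d$ to $\Hip$ rather than a coordinate $z$. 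For ruling out (S2.1) I would construct the abstract solid cylinder $W=D_0\times\R$ submersing into $X$ with $T_\infty(\partial W)=f(\Sigma)$, using a Killing field $K_2$ obtained by pulling back the bounded parabolic field $P$ (whose boundedness on a halfspace replaces Lemma~\ref{cor:bounded}), and then invoke the nonvanishing CMC flux result Theorem~\ref{flux} against the homological triviality of $\a_n$ in $S_n$. The delicate point will be verifying that $K_2$ is tangent to the foliation $\cF$ and bounded on one end of $W$, which requires precisely the tangency property (P5) of parabolic fields to the leaves of $\cF$; this is the step where the $\sl$ arguments depart most from the semidirect case and must be carried out with care.
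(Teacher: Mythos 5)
Your overall architecture---splitting according to the character of $K_\Sigma$ and then disposing of cases (S2.1) and (S2.2) by adapting the volume-growth and flux arguments of Sections~\ref{sec:nos} and~\ref{sec:noa}---is indeed the paper's strategy, but two of your pivotal steps fail as stated. First, the elliptic case cannot end with ``$f(\Sigma)$ must be an entire Killing graph''; it has to be shown to be \emph{impossible}. If $K_\Sigma$ is elliptic, then in cases (S2.1)/(S2.2) the surface $f(\Sigma)$ is the $K_\Sigma$-flow of a closed curve, hence contained in a fixed-radius metric cylinder $\mathcal{W}(\Gamma^E,r)$ around the elliptic subgroup $\Gamma^E$, and such a surface can never be an entire Killing graph (there are integral curves of any right invariant field avoiding the cylinder). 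What the paper does (Lemma~\ref{elliptic}) is use that $\mathcal{W}(\Gamma^E,r)$ is invariant under the center $Z$ and has compact quotient, so \emph{every} right invariant field is bounded on it; a bounded, sign-changing Jacobi function on the stable surface of at most quadratic area growth then contradicts \cite[Corollary~1]{mper1}, while case (S2.3) for elliptic $K_\Sigma$ is excluded by a normalizer argument. Second, your reduction in the hyperbolic case rests on a false contradiction: the leaves of the coset foliation are \emph{minimal} (Lemma~\ref{lemma4.1}), whereas $f(\Sigma)$ has constant mean curvature $h_0(X)\geq H(X)>0$, so tangency to $L_a$ and $L_b$ from the mean-convex side is perfectly compatible with the mean curvature comparison principle. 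This is unlike the singular-matrix semidirect case, where the slab boundary leaves have exactly the mean curvature $h_0(X)$ of the surface and the strong maximum principle forces coincidence; here the comparison principle is used in Claim~\ref{assslabprop1} only \emph{constructively}, to locate the mean curvature vector. Consequently you cannot reduce to the hypothesis of Assertion~\ref{ass6.5}. The paper instead rules out (S2.1) and (S2.2) directly (Propositions~\ref{notc2} and~\ref{notc1}), and in case (S2.3) it kills the slab alternative by a purely algebraic idea you are missing: Lemma~\ref{lemmanew2}, which says that $a\Gamma^H a^{-1}=\Gamma^H$ forces $\mathbf{P}(a)\in \mathbf{P}(\Gamma^H)$ or $a^2\in Z$, whence a suitable power of the period $a$ left-translates the slab off itself, contradicting invariance (Lemma~\ref{hyperbolicnew}).

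Two further gaps are smaller but real. In your treatment of (S2.2) you invoke the radius estimate of \cite[Theorem~1.1]{mmp1} as if it applied in $\sl$; it does not, and the paper must introduce the ``infinite boxes'' $\mathcal{B}(\Gamma^H,r)$ (Definition~\ref{def12.7}) and prove the radius estimate from scratch (Lemma~\ref{ass:box}), via curvature estimates for stable minimal surfaces, a left-translation limit along $\Gamma^H$, and a maximum-principle argument against the walls of the box; this is a substantive ingredient, not a routine transplant. In the parabolic case, your Jacobi-function argument requires the function $\langle K^H,N\rangle$ to be \emph{bounded} on $\Sigma$, and this is not what the last sentence of Lemma~\ref{lemma3.6} provides (that sentence only gives the constant sign of $\langle F,E_3\rangle$, used to prove closedness of $\g_f$ in Assertion~\ref{ass6.2}). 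The boundedness is the content of Lemma~\ref{hyper-para}: there exists $h\in C^\infty(\sl)$ with $K^H-h\,K^P$ bounded, which is the genuine $\sl$-replacement for the boundedness of horizontal fields in slabs of $\R^2\rtimes_A\R$, and it is what makes the nodal-domain argument of Lemma~\ref{parabolic} (via Lemma~\ref{lemmanew}) run. Without these four repairs---the impossibility (not graphicality) of the elliptic case, the algebraic exclusion of the hyperbolic slab in case (S2.3), the infinite-box radius estimate, and the bounded-difference Lemma~\ref{hyper-para}---your outline does not close.
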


As explained in the paragraph just after the statement of
Theorem~\ref{semigraph}, case (S2.3) must occur for $f$ provided that $f(\Sigma )$ is
an entire Killing graph. So we only need to prove that $f(\Sigma)$ is an entire Killing graph
with respect to some nonzero right invariant vector field.
The proof of this property will be divided into four steps, depending on the character of
the right invariant vector field $K_{\Sigma}$ (elliptic, hyperbolic or parabolic), and on the situations
(S2.1), (S2.2) and (S2.3). In Section~\ref{sec:notel} we will show that $K_{\Sigma}$
cannot be elliptic. In Section~\ref{paraok} we will prove that Theorem~\ref{th:aes} holds if
$K_{\Sigma}$ is parabolic. In Section~\ref{sec:c3ok} we will prove that
Theorem~\ref{th:aes}
holds if $K_{\Sigma}$ is hyperbolic and case (S2.3) occurs. In Sections~\ref{sec:c2no}
and~\ref{sec:c1no} we will respectively show that cases (S2.2) and  (S2.1) are impossible
when $K_{\Sigma }$ is hyperbolic. This discussion exhausts all possible cases,
and thus proves Theorem~\ref{th:aes}, and therefore
Theorem~\ref{topR3case} when $X$ is isomorphic to $\sl$.

\subsection{$K_{\Sigma}$ cannot be elliptic}
\label{sec:notel}
\begin{lemma}
\label{elliptic}
 $K_{\Sigma}$ is not generated by an elliptic
$1$-parameter  subgroup of $\sl$.
\end{lemma}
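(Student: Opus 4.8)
The plan is to argue by contradiction: assuming $K_{\Sigma}$ elliptic, I would produce a rotationally invariant mean-convex family of comparison surfaces that the limit surface $f(\Sigma)$ cannot cross, and derive a contradiction at a point of maximal distance via the mean curvature comparison principle.

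First I would record the structural consequences of ellipticity. Since $\Gamma_{\Sigma}$ is an elliptic $1$-parameter subgroup through $e$, Lemma~\ref{caracsl} shows that its image $\Pi(\Gamma_{\Sigma})$ under the submersion $\Pi\colon \sl\to \Hip^2(-4)$ of \eqref{eq:PI} is either the point $\Pi(e)$ or a geodesic circle through $\Pi(e)$; in both cases there is a point $c\in \Hip^2(-4)$ such that the flow of $K_{\Sigma}$ (which consists of left translations by the elements of $\Gamma_{\Sigma}$) projects to the $1$-parameter group of hyperbolic rotations of $\Hip^2(-4)$ fixing $c$. I would also use the crucial fact that an elliptic subgroup is contained in no two-dimensional subgroup of $\sl$, since the subgroups $\Hip^2_{\theta}$ contain only hyperbolic and parabolic $1$-parameter subgroups, and that $\Gamma_{\Sigma}$ contains the center $Z$ of $\sl$. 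Because $f(\Sigma)$ is everywhere tangent to $K_{\Sigma}$, it is invariant under the flow of $K_{\Sigma}$, hence $\Pi(f(\Sigma))$ is invariant under rotation about $c$; equivalently, the radial distance $\rho:=d_{\Hip^2(-4)}(\Pi(\cdot),c)$ is constant along the integral curves of $K_{\Sigma}$ and descends to a function $\widehat{\rho}$ on the orbit space $\Sigma/\widetilde{K}_{\Sigma}$.

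The comparison family consists of the tubes $T_r:=\Pi^{-1}(C_r)$, where $C_r$ is the geodesic circle of radius $r$ centered at $c$. Each $T_r$ is invariant under the elliptic rotation group and bounds the mean-convex solid tube $U_r=\Pi^{-1}(\{d(\cdot,c)<r\})$ around the axis $\Pi^{-1}(c)$, with mean curvature vector pointing into $U_r$; moreover the mean curvature of $T_r$ is strictly larger than $H(X)=h_0(X)$ for every $r>0$ and decreases to $H(X)$ as $r\to\infty$ (for the $\E(-4,1)$ normalization this is the footnote computation in Lemma~\ref{lemma4.1}, the mean curvature being half the geodesic curvature $\coth(2r)$ of $C_r$; for a general left invariant metric the tubes remain invariant under a $1$-parameter isometry group and the same strict inequality with $H(X)$ persists). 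I would then locate a point of $f(\Sigma)$ at which $\widehat{\rho}$ attains a positive maximum $r_{\max}<\infty$: there $f(\Sigma)$ is internally tangent to $T_{r_{\max}}$ and lies in the closed solid tube $\overline{U_{r_{\max}}}$, so the mean curvature comparison principle forces $f(\Sigma)$ to coincide locally, hence globally (both being tangent surfaces of constant mean curvature), with $T_{r_{\max}}$. This is absurd, because $T_{r_{\max}}$ has mean curvature strictly greater than $h_0(X)$.

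The hard part is exactly guaranteeing that $\widehat{\rho}$ attains a finite maximum, i.e., that $\Pi(f(\Sigma))$ is a bounded annular region of $\Hip^2(-4)$ rather than spreading over the whole plane. When $\Sigma$ is diffeomorphic to an annulus, the orbit space $\Sigma/\widetilde{K}_{\Sigma}$ is a circle and $\widehat{\rho}$ automatically attains its maximum, so the comparison closes the argument. When $\Sigma$ is simply connected I would invoke Corollary~\ref{thm6.11}: either $f$ factors through an immersion of an annulus, reducing to the previous situation, or $f$ is periodic under some $l_{a_{\Sigma}}$ with $a_{\Sigma}\notin\Gamma_{\Sigma}$. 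In the periodic case the symmetry $l_{a_{\Sigma}}$ must preserve the rotationally symmetric surface $f(\Sigma)$; I would show that this forces $\overline{a}_{\Sigma}$ to fix $c$, since otherwise $f(\Sigma)$ would be invariant under rotations about two distinct points of $\Hip^2(-4)$, whence $\Pi(f(\Sigma))=\Hip^2(-4)$ and the surface would approach the axis $\Pi^{-1}(c)$, which is ruled out either by comparison with the small tubes $T_r$ of unbounded mean curvature as $r\to 0$, or by the half-space property of Corollary~\ref{cor:unique}. With $\overline{a}_{\Sigma}$ fixing $c$, the period renders $\widehat{\rho}$ a bounded function on the line orbit space, so it attains a maximum and the comparison principle again yields the contradiction, completing the proof that $K_{\Sigma}$ cannot be elliptic.
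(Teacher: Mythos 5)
Your proposal has a genuine gap at its central step, and the gap is of a circular nature. The whole comparison argument rests on the claim that the rotational tubes $T_r=\Pi^{-1}(C_r)$ have mean curvature (with respect to the inward normal) strictly greater than $h_0(X)$ for every $r>0$; without this, internal tangency of $f(\Sigma)$ with $T_{r_{\max}}$ at the maximum of $\widehat{\rho}$ only gives, via the comparison principle, $h_0(X)\geq H_{T_{r_{\max}}}(p)$, which is no contradiction at all. You justify the needed inequality by writing $H(X)=h_0(X)$, but this identity is precisely a consequence of Theorem~\ref{topR3case}, whose proof Lemma~\ref{elliptic} is part of: in Section~\ref{sec3.1} the equality $h_0(X)=H(X)$ is derived \emph{assuming} Theorem~\ref{topR3case}, and at this stage of the paper only the inequality $h_0(X)\geq H(X)$ of \eqref{hoche} is available. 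So your argument is circular. Moreover, even the weaker statement that the tubes have mean curvature $>H(X)$ is unestablished for an arbitrary left invariant metric on $\sl$: the tubes are invariant under the elliptic flow but are not constant mean curvature surfaces for a generic metric (only the $\E(-4,1)$ case admits the footnote computation of Lemma~\ref{lemma4.1}), and item~5 of Lemma~\ref{lemma4.1} bounds $H(X)$ \emph{from below} by mean curvatures of equidistant cylinders --- an inequality in the wrong direction for your purposes. Since $h_0(X)$ could a priori exceed the tube curvatures at large radius (tube curvatures decrease as $r\to\infty$), the tangency at $r_{\max}$ simply yields no information.

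Two further remarks. First, in your periodic case the argument can be finished without any comparison: once uniqueness of $K_{\Sigma}$ up to scaling (item~2 of Proposition~\ref{DeltaF}) gives $(l_{a_{\Sigma}})_*K_{\Sigma}=\lambda K_{\Sigma}$, the element $a_{\Sigma}$ normalizes $\G_{\Sigma}$ and hence projects to a rotation about $c$; since the full lift of that rotation group is $\G_{\Sigma}$ itself (recall $Z\subset \G_{\Sigma}$), one gets $a_{\Sigma}\in\G_{\Sigma}$, contradicting $a_{\Sigma}\notin\G_{\Sigma}$ outright. This is essentially the paper's treatment of case (S2.3), and you should have stopped there rather than feeding this case back into the comparison machinery. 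Second, for the remaining cases (S2.1) and (S2.2) the paper's route is entirely different from yours and avoids any curvature comparison: since $f(\Sigma)$ lies in a solid cylinder $\mathcal{W}(\G^E,r)$ around the elliptic axis and $\mathcal{W}(\G^E,r)/Z$ is compact, \emph{every} right invariant vector field $V$ is bounded on $\mathcal{W}(\G^E,r)$; choosing $V$ tangent to $f(\Sigma)$ at $e$ but linearly independent from $K_{\Sigma}$ produces a bounded, sign-changing, not identically zero Jacobi function $J=\langle V,N\rangle$ on the complete stable surface $\Sigma$, which has at most quadratic area growth and is therefore parabolic; this contradicts the Liouville-type theorem of~\cite[Corollary~1]{mper1} (the same mechanism as in Lemma~\ref{lem:horizontal}). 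If you want to repair your write-up, that stability/parabolicity argument is the missing ingredient replacing the unprovable tube inequality.
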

\begin{proof}
Let $\G^E$ be any elliptic 1-parameter subgroup of
$\sl$ and let $K^E$ be the right invariant, elliptic vector
field on $\sl $ generated by the left action of $\G^E$
on $\sl $.  Each elliptic
1-parameter subgroup of $\sl $ consists of the set of
liftings to $\sl $ of all rotations of $\Hip ^2$ around a given point; see Section \ref{subsecsl}.
If $a\in \sl $ satisfies $(l_a)_* (K^E)=\l \, K^E$ for some $\l \in \R-\{ 0\} $, then $a \G^E a^{-1}=\G^E$,
see e.g., the proof of Lemma~\ref{lemma3.6}. Once here,
it is well known that on an elliptic subgroup this
is only possible if $a \in \G^E$.

Suppose now that $K_{\Sigma}$ is generated by the left action on $\sl $
of an elliptic 1-parameter subgroup $\G_{\Sigma }=\G ^E$ of $\sl $ and we will obtain a contradiction.
If case (S2.3) holds, then there exists $a\in X-\G_{\Sigma}$
 such that 
 $l_{a}(f(\Sigma ))=f(\Sigma )$.
As the right invariant vector field
$(l_{a})_*( K_{\Sigma })$ is everywhere tangent to $l_{a}(f(\Sigma ))$
and $K_{\Sigma }$ is unique up to scaling among nonzero right invariant vector
fields that  are everywhere tangent  to $f(\Sigma )$, then we conclude that
$(l_{a})_*( K_{\Sigma })=\l \, K_{\Sigma }$ for some $\l \in \R $, $\l \neq 0$.
Thus, the first paragraph
in this proof ensures that $a\in \G^E$, a contradiction. Therefore,
case (S2.3) cannot hold.

In both cases (S2.1) and (S2.2),
Remark~\ref{rem7.1} implies that
$f(\Sigma )$ is obtained by the flow under the elliptic right invariant vector
field $K_{\Sigma }$ of an immersed closed curve contained in $\sl /K_{\Sigma }$.
In particular, since $K_{\Sigma}$ is Killing, the distance in $X$ of $f(\Sigma )$ to $\G ^E$ is bounded,
or equivalently,
$f({\Sigma})$ is contained in a solid metric cylinder in $X$ of fixed radius $r>0$
centered along  $\Gamma^E$,
\[
\mathcal{W}(\G^E,r)=\{ x\in X \ | \ d_X(x,\G^E )\leq r\} ,
\]
where $d_X$ denotes distance in $X$. As every elliptic 1-parameter subgroup of
$X$ contains the center $Z$ of $\sl $, then so does $\Gamma^E$, from where
we deduce that $\mathcal{W}(\G^E,r) $
is invariant under every left translation in $Z$.
Also, it is clear that every right invariant vector field of $\sl $ descends to
the quotient group $\sl /Z$ given by the action on $\sl$ of the group of left translations by elements of $Z$.
As $\mathcal{W}(\G^E,r) /Z$ is compact,
the restriction to $\mathcal{W}(\G^E,r)$ of every right
invariant vector field in $\sl $ is bounded in $\mathcal{W}(\G^E,r)$.

 Let $V$ be any nonzero right invariant 
vector field in $X$
tangent to $f(\Sigma)$ at ${f}({p})=e$ and linearly independent from
$K_{\Sigma }$. As $V$ is not
everywhere tangent  to $f(\Sigma )$ and $V$
is bounded in $\mathcal{W}(\G^E,r)$, then the Jacobi function
$J=\langle V, N \rangle $ is not identically zero, vanishes at $p$
 and it is bounded on $\Sigma $ (here $N$ is the unit
normal vector field to $f$). Note that $J$ changes sign on
$\Sigma$ by the maximum principle.
As $f:\Sigma \mapsto X$ is a complete stable $h_0(X)$-surface
with at most quadratic area growth, the existence of the function $J$ contradicts~\cite[Corollary~1]{mper1}
(see the last paragraph of the proof of Lemma~\ref{lem:horizontal} for a similar argument).
This contradiction proves the lemma.
\end{proof}

\subsection{Proof of Theorem~\ref{th:aes} when $K_{\Sigma}$ is parabolic}
\label{paraok}
In order to prove Theorem~\ref{th:aes} if  $K_{\Sigma}$ is parabolic, we will use the following auxiliary result:

\begin{lemma}
\label{hyper-para}
Let $\G ^P,\G ^H$ be two 1-parameter subgroups of $\sl $ such that $\G^P$ is parabolic,
$\G ^H$ is hyperbolic and they generate a two-dimensional  subgroup  of
$\sl $. Let $K^P,K^H$ denote the right invariant vector fields on $\sl $ generated by
$\G ^P,\G ^H$. Then, there exists a function $h\in C^{\infty }(\sl )$ such that
the vector field $K^H-h\, K^P$ satisfies:
\begin{enumerate}[(1)]
\item $K^H-h\, K^P$ is bounded in $X=(\sl,\esiz,\esde)$ (regardless of the left invariant metric on $\sl $).
\item If the isometry group of $X$ has dimension four,
then $K^H-h\, K^P$ has constant length.
\end{enumerate}
\end{lemma}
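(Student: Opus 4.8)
The goal is to produce, given a parabolic and a hyperbolic one-parameter subgroup $\G^P,\G^H$ generating a two-dimensional subgroup of $\sl$, a function $h$ so that $K^H-h\,K^P$ is bounded (and of constant length in the four-dimensional isometry group case). The plan is to work in the concrete model provided by the submersion $\Pi\colon\sl\to\Hip^2(-4)$ of equation~(\ref{eq:PI}) and the description of one-parameter subgroups in Lemma~\ref{caracsl}. Since $\G^P$ and $\G^H$ generate a two-dimensional subgroup, that subgroup must be $\Hip^2_\theta$ for the common endpoint $\theta\in\partial_\infty\Hip^2$ of the parabolic fixed point and one endpoint of the hyperbolic axis; thus $K^P$ is (up to scale) the parabolic right invariant field tangent to $\Hip^2_\theta$, and $K^H$ is a hyperbolic field whose projected axis terminates at $\theta$.

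First I would normalize by an ambient isometry (a left translation, which by Lemma~\ref{lemma3.6} preserves the characters and, by the last sentence of that lemma, the relevant sign conditions) so that $\theta$ is a fixed standard point at infinity, say using an upper-half-plane model of $\Hip^2(-4)$ where $\theta=\infty$. In that model the parabolic subgroup fixing $\theta$ acts by horizontal translations and the hyperbolic subgroup with an endpoint at $\theta$ acts by dilations; the horocycles based at $\theta$ are the horizontal lines, and the leaves $\Hip^2_\theta\,t$ of the right-coset foliation are at constant distance from $\Hip^2_\theta$. The key computational step is then to express $K^H$ and $K^P$ in a left invariant frame adapted to $\Hip^2_\theta$ and to compute how their lengths and inner product vary: $K^P$ will have length uniformly comparable to a single exponential weight $e^{s}$ in the ``Busemann'' coordinate $s$ transverse to the horocycles, while $K^H$ will have an unbounded component precisely in the $K^P$-direction that grows like the same weight. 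The function $h$ is then defined to subtract off exactly this growing component, i.e.\ $h:=\langle K^H,K^P\rangle/\langle K^P,K^P\rangle$ or an explicitly chosen variant thereof, so that $K^H-h\,K^P$ is the component of $K^H$ orthogonal (or transverse in a controlled way) to $K^P$, whose length stays bounded.

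For item~(1) I would verify boundedness by the observation preceding Lemma~\ref{cor:bounded} together with the explicit metric: since the sign of inner products of tangent vectors at a point is independent of the left invariant metric (the paragraph before Lemma~\ref{lemma4.1}), it suffices to check boundedness for one convenient metric, and the orthogonal projection construction makes the residual field manifestly bounded on each coset and invariant enough to be bounded globally. I would also note that $h$ is smooth because $K^P$ is nowhere zero (a nonzero right invariant field on $\sl$ vanishes nowhere), so the denominator $\langle K^P,K^P\rangle$ never vanishes. For item~(2), when the isometry group has dimension four the metric is the $\E(-4,1)$-metric with $\l_1=\l_2$, which carries the extra rotational symmetry about the $E_3$-direction; in this rotationally symmetric case the horocyclic and radial fields fit together so that the orthogonal residual field has length depending only on $s$, and I would choose the scaling in the definition of $h$ (or rescale $K^H$) so that this length is in fact constant, using the homogeneity under the hyperbolic flow.

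The main obstacle I anticipate is the bookkeeping in item~(2): producing merely a bounded field is a soft projection argument, but arranging \emph{constant} length requires exploiting the precise structure of the four-dimensional isometry group and checking that the hyperbolic flow acts on the residual field by isometries that preserve its norm, which forces a careful and metric-specific computation of $\langle K^H,E_3\rangle$ and $\langle K^P,E_3\rangle$ along the lines of Lemma~\ref{lemma3.6}. I expect that the cleanest route is to pass to the $\E(-4,1)$-model via $\phi$, compute the two fields there where the rotational symmetry is explicit, and transport the conclusion back by the isometry, rather than computing directly in an arbitrary left invariant frame.
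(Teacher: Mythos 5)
Your orthogonal--projection construction, $h:=\langle K^H,K^P\rangle/\langle K^P,K^P\rangle$, is a genuinely different route from the paper's (the paper defines $h$ purely algebraically and never projects), and it can in fact be pushed through; but as written your proposal has two real gaps, one of them sitting exactly where you declare the argument to be ``soft''.

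First, boundedness of $K^H-h\,K^P$ is \emph{not} a soft consequence of projecting: the component of an unbounded vector field orthogonal to another field has no reason to be bounded. What saves the day is a structural identity that you assert (``the unbounded part of $K^H$ is exactly in the $K^P$-direction'') but never establish, and which is the entire content of the lemma. Concretely, identifying $\Hip^2_\theta$ with the affine group $\R\rtimes_{(1)}\R$, the right invariant fields satisfy $K^P=\l\,\partial_x$ and $K^H=\mu_1K^P+\mu_2(x\,\partial_x+\partial_y)$, i.e.\ $K^H$ is a function times $K^P$ plus a constant multiple of the \emph{left invariant} field $\partial_y$, whose length is constant in any left invariant metric; this is what the paper proves using the basis $\{\partial_x,\,x\,\partial_x+\partial_y\}$ of right invariant fields, and your half-plane/Busemann computation, if carried out, would be re-deriving precisely this. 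Relatedly, your reduction of item (1) to one convenient metric cites the wrong fact: sign-invariance of inner products gives no control on norms; what is needed (and what the paper uses to reduce item (1) to item (2)) is that any two left invariant metrics on $\sl$ are uniformly comparable, so boundedness of a fixed field is metric-independent.

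Second, and more seriously, your mechanism for item (2) fails as described. The rotations about the $E_3$-direction do not preserve $\Hip^2_\theta$ (they move $\theta$, hence replace $K^P,K^H$ by the fields of rotated subgroups), and the hyperbolic flow has one-dimensional orbits, so ``homogeneity under the hyperbolic flow'' cannot give constancy of $|K^H-h\,K^P|$ even on the two-dimensional leaf $\Hip^2_\theta$, let alone on all of $\sl$. The constancy argument needs two inputs: (i) on $\Hip^2_\theta$ itself, the residual field is a constant-coefficient combination of the intrinsic left invariant fields of that subgroup (by the identity above), hence of constant length in the induced metric --- equivalently, one can use equivariance of the projection under left translation by \emph{all} of $\Hip^2_\theta$, but that requires knowing that conjugating $\G^H$ by parabolic elements changes $K^H$ only by adding constant multiples of $K^P$, which the projection kills; and (ii) constancy must then be transported from the single leaf $\Hip^2_\theta$ to the rest of $\sl$ along the right cosets $\Hip^2_\theta\,c(t)$, $c(t)\in\G_3$, using that right invariant fields are invariant under right translations and that right translations by $\G_3$ are isometries exactly when the isometry group of $X$ is four-dimensional. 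This right-coset step is where the dimension-four hypothesis actually enters the proof, and it is absent from your proposal.
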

\begin{proof}
First note that if we decompose $K^H$ as $K^H=h\, K^P+V$ for some $h\in
C^{\infty }(\sl )$ and some smooth vector field $V$ on $\sl $, then the
boundedness of $V$ in $X=(\sl,\esiz,\esde)$ does not depend on the left invariant metric $\esiz,\esde$ on $\sl $,
because all such metrics are quasi-isometric by the identity map in $\sl$.
Also, note that the particular bound for $V$ on each specific $X$ might depend on the metric $\esiz,\esde$.
Thus, it suffices to prove item~2 of the lemma.

We next prove item~2.
Let $\Hip ^2=\Hip ^2_{\theta }$ be the 2-dimensional subgroup of $\sl $ that contains
$\G^P,\G^H$ (see the description of $\Hip ^2_{\theta }$ before Lemma~\ref{lemma5.3}).
Consider the Lie group given by the semidirect product $\R \rtimes _{(1)}\R =
\{ (x,y)\ | \ x,y\in \R \} $, endowed with the group operation
\begin{equation}\label{gop}
(x,y)*(x',y')=(x+e^yx',y+y').
\end{equation}
Let $\Phi_1 \colon \Hip^2 \to \R \rtimes _{(1)}\R$ be a group isomorphism, which exists
since both $\Hip^2,\R \rtimes _{(1)}\R $ are noncommutative Lie groups of dimension two.
As $\Phi _1$ is an isomorphism, the push-forward vector fields $\wt{K}^P=(\Phi _1)_*(K^P)$,
 $\wt{K}^H=(\Phi _1)_*(K^H)$ are nowhere zero and right invariant on $\R \rtimes _{(1)}\R $.
Furthermore, as the unique parabolic one-dimensional subgroup of $\R \rtimes _ {(1)}\R $ is $\{ (x,0)\ | \ x\in \R \} $,
then we conclude that there exists $\l \in \R -\{ 0\} $ such that $\wt{K}^P=\l \,\partial _x$.
Since $\{ \partial _x, x\, \partial _x+\partial _y\} $ is a basis of right invariant vector fields on
$\R \rtimes _{(1)}\R$ and $\l\neq 0$, there exist $\mu _1,\mu _2\in \R $ such that
\begin{equation}
\label{eq:lem7.3}
\wt{K}^H=\mu _1\wt{K}^P+\mu _2\, (x\, \partial _x+\partial _y)=
\left( \mu _1+\frac{\mu _2}{\l }x\right) \wt{K}^P+\mu _2\partial _y.
\end{equation}
Consider the left invariant metric $\langle ,\rangle $ on $\R\rtimes _{(1)}\R $ that makes
$\Phi_1$ an isometry (recall that $\H^2$ has the induced metric from $\sl $).
As $\partial _y$ is 
left invariant on $\R \rtimes _{(1)}\R$, then the length of $\partial _y$ with
respect to $\langle ,\rangle $ is constant (nonzero).
Hence, equation (\ref{eq:lem7.3}) implies that  the length of
$\wt{K}^H-\wt{h}\, \wt{K}^P$ with respect to $\langle,\rangle $
is 
constant, where $\wt{h}= \mu _1+\frac{\mu _2}{\l }x
\in C^{\infty }(\R \rtimes _{(1)}\R)$.
This implies that if we call $h=\wt{h}\circ \Phi _1\in C^{\infty }(\Hip^2)$, then the vector field
$(K^H)|_{\Hip^2}-h\, (K^P)|_{\Hip^2}$ has constant length on $\Hip^2$.

Parameterize by $\{ c(t)\ | \ t\in \R\} $ the 1-parameter subgroup $\G_3$
of $\sl $ generated by $E_3$.
As the family $\{ r_{c(t)}(\Hip^2)\ | \ t\in \R \} $ of right cosets of $\Hip^2$
is a foliation of $\sl $, we can
extend $h$ to a smooth function, also called $h\in C^{\infty }(\sl)$, so that
$h$ is constant along the orbits of the right action by elements in $\G_3$.
Suppose now that the left invariant metric on
$X$ has isometry group of dimension four. Thus, the right translations
$r_{c(t)}$ by elements in $\G_3$ are isometries of the ambient metric
(because $\G_3'(0)=(E_3)_e$ given by (\ref{eq:lframesl}) and $E_3$
is a left invariant Killing vector field of every left invariant metric $\sl $ whose
isometry group has dimension four).
This property and the right invariance of $K^H,K^P$
imply that the length of $K^H-hK^P$ is constant along the orbits of
the right action by elements of $\G_3$. As
$(K^H)|_{\Hip^2}-h\, (K^P)|_{\Hip^2}$ has constant length on $\Hip^2$,
we conclude that $K^H-h\, K^P$ has constant length on $X$
and item~2 of Lemma~\ref{hyper-para} is proved.
\end{proof}

The next lemma implies that Theorem~\ref{th:aes} holds in the case that $K_{\Sigma}$ is parabolic,
which is the objective of the present Section~\ref{paraok}.
\begin{lemma}
\label{parabolic}
Suppose that $K_{\Sigma }$ is parabolic. Let $\Hip ^2_{\t }$
be the two-dimensional subgroup of $\sl $ that contains the 1-parameter subgroup $\G _{\Sigma }$
generated by $K_{\Sigma }$.
Let $\G^H$ be any hyperbolic $1$-parameter subgroup of $\Hip ^2_{\t }$ and
$K^H$ the nonzero right invariant vector field on $\sl $ generated by $\G^H$.
Then, $f(\Sigma )$ is a smooth entire Killing $K^H$-graph in $X=(\sl ,\langle,\rangle )$.
\end{lemma}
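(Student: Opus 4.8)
The plan is to produce a globally defined, nowhere-vanishing Jacobi function that measures the angle between $f(\Sigma)$ and the integral curves of $K^H$, and then to promote the resulting transversality to a genuine entire graph. Write $\Hip^2_\theta\cong \R\rtimes_{(1)}\R$ as in the proof of Lemma~\ref{hyper-para}, and let $\mathcal{F}=\{\Hip^2_\theta\, t\}$ be the foliation of $X$ by the right cosets of $\Hip^2_\theta$. Since $\Gamma_\Sigma$ and $\Gamma^H$ are both one-parameter subgroups of $\Hip^2_\theta$, the left actions of $K_\Sigma=K^P$ and of $K^H$ leave each leaf of $\mathcal F$ invariant, so both fields are everywhere tangent to the leaves; moreover, in the $\R\rtimes_{(1)}\R$ model each integral curve of $K^P$ meets each integral curve of $K^H$ in exactly one point.

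First I would set up the Jacobi function. By Lemma~\ref{hyper-para} there is $h\in C^{\infty}(\sl)$ with $V:=K^H-h\,K^P$ bounded on $X$. Since $K^P=K_\Sigma$ is everywhere tangent to $f(\Sigma)$ we have $\langle K^P,N\rangle\equiv 0$, and hence the Killing-field Jacobi function $J:=\langle K^H,N\rangle$ coincides with $\langle V,N\rangle$; in particular $J$ is a \emph{bounded} Jacobi function on $\Sigma$. Because $f$ is a complete stable $h_0(X)$-surface with at most quadratic area growth (this holds in cases (S2.1) and (S2.2) by Corollary~\ref{thm6.11}, and in case (S2.3) after checking that the periodicity group acts cocompactly with quadratic growth), its induced conformal structure is parabolic, so Corollary~1 in~\cite{mper1} applies and forces $J$ to have constant sign. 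Since $f(\Sigma)$ is not a two-dimensional subgroup, $K^H$ cannot be everywhere tangent to $f(\Sigma)$ (otherwise $K^P$ and $K^H$ would span an integrable two-dimensional subalgebra and $f(\Sigma)$ would be a coset of $\Hip^2_\theta$, as in item~2 of Proposition~\ref{DeltaF}); thus $J\not\equiv 0$, and the strong maximum principle upgrades the constant sign to $J>0$ everywhere after fixing the orientation.

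The inequality $J=\langle K^H,N\rangle>0$ says precisely that $f(\Sigma)$ is transverse to every integral curve of $K^H$; since $N\perp K^P$ always and now $N$ is never orthogonal to $K^H$, it also shows that $f(\Sigma)$ is transverse to every leaf of $\mathcal F$. Consequently each nonempty intersection $f(\Sigma)\cap L$ with a leaf $L$ is a one-dimensional submanifold which, being invariant under the flow of $K^P$, is a union of integral curves of $K^P$ inside $L$. To finish I would show that this union is a single $K^P$-orbit and that every leaf is met, which by the one-point intersection property of $K^P$- and $K^H$-orbits in the $\R\rtimes_{(1)}\R$ model is equivalent to $f(\Sigma)$ being an entire Killing $K^H$-graph. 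The clean way to package this is to note that transversality makes $g:=\Pi_{K^H}\circ f\colon\Sigma\to X/K^H$ a local diffeomorphism into the quotient surface $X/K^H$, which is diffeomorphic to $\R^2$; I would then prove that $g$ is a covering map (hence a diffeomorphism, $\Sigma$ being connected and $X/K^H$ simply connected) by showing that $g^{\ast}(\text{quotient metric})$ is complete on $\Sigma$, using completeness of the induced metric together with the boundedness of $V=K^H-h\,K^P$ to keep the transverse directions uniformly controlled.

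I expect the last step --- upgrading pointwise transversality to a global entire graph --- to be the main obstacle, since it requires simultaneous injectivity and surjectivity of $g$, and hence a quantitative, uniform lower bound on the transversality angle rather than the mere positivity $J>0$. A secondary technical point I would need to dispatch carefully is the at-most-quadratic area growth in the periodic case (S2.3), which is needed to invoke the parabolicity criterion behind~\cite{mper1}; here I would exploit that the element $a_\Sigma$ normalizes $\Gamma_\Sigma$ inside $\Hip^2_\theta$, so that the subgroup of $\mathrm{Iso}(\Sigma)$ generated by the $K_\Sigma$-flow and by $l_{a_\Sigma}$ acts cocompactly with polynomial (quadratic) growth.
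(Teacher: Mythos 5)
Your opening steps coincide with the paper's: both proofs begin by writing $J=\langle K^H,N\rangle=\langle K^H-h\,K^P,N\rangle$ and invoking Lemma~\ref{hyper-para} to conclude that $J$ is a \emph{bounded} Jacobi function on $\Sigma$. The genuine gap comes next, where you need global parabolicity of $\Sigma$ (via at most quadratic area growth) in order to apply Corollary~1 of \cite{mper1}. That growth bound is available in cases (S2.1) and (S2.2) by Corollary~\ref{thm6.11}, but not in case (S2.3) --- and the parabolic case is precisely the one where (S2.3) cannot be dismissed: for elliptic $K_\Sigma$ the paper rules out (S2.3) because only elements of $\G^E$ normalize $\G^E$ (Lemma~\ref{elliptic}), whereas the normalizer of the parabolic subgroup $\G_\Sigma=\G^P$ is, up to the center, the whole two-dimensional group $\Hip^2_{\theta}$. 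Your proposed fix --- ``$a_\Sigma$ normalizes $\G_\Sigma$, hence the group generated by the $\wt{K}_\Sigma$-flow and $l_{a_\Sigma}$ has quadratic growth'' --- is incorrect: since $\G^P$ is normal in $\Hip^2_{\theta}$, conjugation by the hyperbolic part of $a_\Sigma$ acts on $\G^P$ as a genuine dilation, so $(l_{a_\Sigma})_*K_\Sigma=\lambda K_\Sigma$ with possibly $|\lambda|\neq 1$, and the group generated is then of the form $\R\rtimes_{\lambda}\Z$, which has \emph{exponential} growth; a surface carrying a cocompact isometric action of such a group can have exponential area growth and be conformally hyperbolic, in which case \cite{mper1} gives nothing. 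The paper itself flags exactly this danger in the proof of Corollary~\ref{thm6.11} (``$\Delta$ is not necessarily isomorphic to $\R\times\Z$ \dots $\Sigma$ would have at most quadratic area growth provided that $\Delta$ is isomorphic to $\R\times\Z$'').

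The paper's proof is built to avoid this: it never uses parabolicity of all of $\Sigma$. It first establishes a dichotomy through the Slab Property (Claim~\ref{assslabprop1}, adapted to the foliation $\mathcal{F}$ by right cosets of $\Hip^2_{\theta}$, which is tangent to both $K^P$ and $K^H$): either (V1) the angle between $f(\Sigma)$ and the leaves of $\mathcal{F}$ is bounded away from zero, in which case the entire-graph conclusion follows from the case-(O1)-type argument cited just before Claim~\ref{assslabprop1}; or (V2) $f(\Sigma)$ lies in a slab between two leaves $L,L'$ and is tangent to them along integral curves of $K_\Sigma$. In case (V2) the zero set of $J$ is exactly the set of tangency curves $f^{-1}(L\cup L')$ (since $K^P$ and $K^H$ span the tangent spaces of the leaves), and each nodal domain of $J$ is a strip invariant under the $\wt{K}_\Sigma$-flow ($J$ is flow-invariant because conjugation by parabolic elements changes $K^H$ only by a multiple of $K^P$, which is orthogonal to $N$); such a strip has \emph{linear} area growth and parabolic closure regardless of how fast $\Sigma$ itself grows. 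Lemma~\ref{lemmanew}, a nodal-domain version of the \cite{mper1} result, then forces $J$ to be a nowhere-vanishing multiple of the positive Jacobi function supplied by stability, contradicting the existence of the tangency curves. This localization to flow-invariant nodal domains is the missing idea in your proposal. It also disposes of the second obstacle you flag: once $J$ has no zeros, $f(\Sigma)$ has no tangencies with leaves of $\mathcal{F}$, so by the Slab Property case (V2) cannot occur, case (V1) holds, and the uniform transversality needed for the entire-graph conclusion comes from the dichotomy for free --- no covering-space or completeness-of-quotient-metric argument is required.
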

\begin{proof}
Consider the foliation $\mathcal{F}$
of $\sl $ by right
cosets of $\Hip ^2_{\t }$. As $\G ^H$ (resp. $\G_{\Sigma }$) is a 1-parameter
subgroup of $\Hip ^2_{\t }$, then
the integral curves of $K^H$ (resp. $K_{\Sigma }$)
are entirely contained in right cosets of $\Hip ^2_{\t }$; see property (P5) just before Claim~\ref{assslabprop1}.
Since $f(\Sigma )$ is ruled by integral curves of $K_{\Sigma }$,
we have that if $f(\Sigma )$ intersects a leaf $r_x(\Hip ^2_{\theta })$ of $\mathcal{F}$ at a point $x$,
then the integral curve $r_x(\G _{\Sigma })$ of $K_{\Sigma }$ passing through $x$ is also contained in
$f(\Sigma )\cap r_x(\Hip ^2_{\theta })$, and the angle that $f(\Sigma )$ makes
with $r_x(\Hip ^2_{\theta })$ along $r_x(\G _{\Sigma })$ is constant,
since left translations by elements of $\Gamma_{\Sigma}$ are isometries of $X$.

Fix one of the leaves of $\mathcal{F}$, say $r_x(\Hip ^2_{\t })$, $x\in \sl $.
Since each of the integral curves of $K^H$ contained
in $r_x(\Hip ^2_{\t })$ intersects exactly once the unique integral curve
$r_x(\G _{\Sigma })$ of $K_{\Sigma }$ contained in $r_x(\Hip ^2_{\t })$, then it
follows that Lemma~\ref{parabolic} will hold provided that we show that
$f(\Sigma )$ intersects each of the right cosets $r_x(\Hip ^2_{\t })$ transversely.

By straightforward modifications of the arguments in Assertion~\ref{ass6.5}
(note that in that assertion, $K_{\Sigma }$
was assumed to be {\it hyperbolic} while we are now assuming that
$K_{\Sigma }$ is parabolic; nevertheless
the arguments in Assertion~\ref{ass6.5}
still hold since they will be applied to the foliation
by right cosets of $\Hip ^2_{\t }$, which are tangent to both $K_{\Sigma }$ and $K^H$),
we deduce that one of the two following possibilities holds for $f$:
\begin{enumerate}[(V1)]
\item $f(\Sigma)$ makes angles with the leaves of $\mathcal{F}$
that are bounded away from zero.
\item $f(\Sigma )$ is contained in a smallest slab-type region between two different
leaves $L:= r_x(\Hip^2_{\t })$, $L':=r_y(\Hip ^2_{\t })$ of $\cF$,
for some $x,y\in \sl$, and $f(\Sigma)$ is tangent to $L$ (resp. $L'$)
along an integral curve $\G_x$ (resp. $\G_y)$ of $K_{\Sigma}$
(in other words, the conclusions of Claim~\ref{assslabprop1} hold).
\end{enumerate}
A straightforward modification of the arguments
in the paragraph just before Claim~\ref{assslabprop1} implies
that if case (V1) holds, then Lemma~\ref{parabolic} also holds.
Therefore, we will assume that case~(V2) holds and we will find a contradiction.

Consider the Jacobi function $J=\langle K^H, N\rangle$ on $\Sigma$, where $N$ stands for
the unit normal vector to $f$.
As $K_{\Sigma}$ is parabolic and everywhere tangent to $f(\Sigma )$, then
Lemma~\ref{hyper-para} applied to $K^P:=K_{\Sigma}$ and $K^H$
ensures that $J$ is bounded on $\Sigma $.
Also, note that $J$ vanishes along the curves in $f^{-1}(L\cup L')$,
which exist by condition (V2). Since each nodal domain $\Omega \subset \Sigma $ of $J$
is a smooth infinite strip invariant under a $1$-parameter group of isometries 
(the ones generated by the flow of $K_{\Sigma}$), then $\Omega $ has linear area growth.
Hence, the closure $\ov{\Omega}$ of $\Omega $ is a parabolic Riemannian surface with boundary, 
in the sense that there exists a sequence
$\{ \varphi _j\} _j\subset C^{\infty }_0(\ov{\Omega })$ such that $0\leq \varphi _j\leq 1$, the
$\varphi _j^{-1}(1)$  form an increasing exhaustion of $\ov{\Omega}$
and $\int _{\Omega }|\nabla \varphi _j|^2\leq \frac{1}{j}$ for each $j$. As $f$ is stable,
there exists $v\in C^{\infty }(\Sigma )$ such that $v>0$ and $Lv=0$ in $\Sigma $, where
$L$ is the Jacobi operator of $f$. Since $J$ is bounded in $\Sigma$ and 
changes sign, then we have a contradiction with Lemma~\ref{lemmanew}
below.
This contradiction proves the lemma.
\end{proof}

\begin{lemma}
\label{lemmanew}
Suppose $\Sigma $ is a complete Riemannian surface without boundary. Suppose that 
$L=\Delta +P$ is a Schr\"{o}dinger operator with potential $P\in C^{\infty }(\Sigma )$.
Let $u,v\in C^{\infty }(\Sigma )$ satisfying $Lu=Lv=0$, with $u$ not identically zero and $v>0$ in $\Sigma $. If 
$\Omega \subset \Sigma$ is a nodal domain of $u$ such that 
$\ov{\Omega }$ is parabolic and $u$ is bounded in $\Omega$, then $u$ is a nonzero multiple of $v$
and $\Omega =\Sigma $.
\end{lemma}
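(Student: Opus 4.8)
The plan is to use the classical ground-state (Doob) transform relative to the positive Jacobi function $v$. Since $v>0$ everywhere and $Lu=Lv=0$, I would set $w=u/v\in C^{\infty}(\Sigma)$ and check that $w$ solves the divergence-form equation
\[
\Div(v^2\,\nabla w)=0 \quad \text{on } \Sigma .
\]
Indeed, writing $u=vw$ and expanding $0=Lu=\Delta(vw)+Pvw=w\,Lv+2\langle\nabla v,\nabla w\rangle+v\,\Delta w$, the hypothesis $Lv=0$ gives $v\,\Delta w+2\langle\nabla v,\nabla w\rangle=0$; multiplying by $v$ yields the displayed identity. Up to replacing $u$ by $-u$ (which preserves all hypotheses and the conclusion), I may assume $u>0$ on the nodal domain $\Omega$, so that $w>0$ in $\Omega$ and, since $v>0$ is continuous up to the boundary, $w=0$ along $\partial\Omega$.

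Next I would run a Caccioppoli-type energy estimate adapted to the weight $v^2$, exploiting the parabolicity of $\ov{\Omega}$. Let $\{\varphi_j\}_j\subset C_0^{\infty}(\ov{\Omega})$ be the cut-off functions given by parabolicity, with $0\le\varphi_j\le 1$, $\int_{\Omega}|\nabla\varphi_j|^2\le \frac1j$, and $\varphi_j^{-1}(1)$ increasing to $\ov{\Omega}$. Testing $\Div(v^2\nabla w)=0$ against $\varphi_j^2 w$ and integrating by parts over $\Omega$ (the boundary integral on $\partial\Omega$ vanishes precisely because $w=0$ there) gives
\[
\int_{\Omega}\varphi_j^2\,v^2\,|\nabla w|^2 = -2\int_{\Omega}\varphi_j\, w\, v^2\,\langle\nabla\varphi_j,\nabla w\rangle .
\]
The decisive observation is that $w^2 v^2=u^2$, which is bounded on $\Omega$ by hypothesis, say $u^2\le M$. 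Applying the Cauchy--Schwarz inequality to the right-hand side and absorbing one factor, I obtain
\[
\int_{\Omega}\varphi_j^2\,v^2\,|\nabla w|^2 \le 4\int_{\Omega} u^2\,|\nabla\varphi_j|^2 \le 4M\int_{\Omega}|\nabla\varphi_j|^2\le \frac{4M}{j}.
\]
Letting $j\to\infty$, and using that $\varphi_j\equiv 1$ on any fixed compact subset of $\ov\Omega$ for $j$ large, I conclude $\int_{\Omega}v^2|\nabla w|^2=0$. As $v>0$, this forces $\nabla w\equiv 0$, so $w$ is constant on the connected set $\Omega$.

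Finally I would close the argument topologically. Since $w$ equals a positive constant on $\Omega$ but $w=0$ on $\partial\Omega$, continuity forces $\partial\Omega=\emptyset$; as $\Sigma$ is connected and $\Omega$ is a nonempty subset that is both open and closed, we get $\Omega=\Sigma$. Then $u=w\,v$ is a nonzero constant multiple of $v$, as claimed. I expect the only delicate points to be the justification of the integration by parts (legitimate because each $\varphi_j$ has compact support and all integrands are smooth on that support) and the vanishing of the boundary term, which relies exactly on $w=0$ along $\partial\Omega$. The substitution $w=u/v$ together with the elementary identity $w^2v^2=u^2$ is what converts the boundedness of $u$ and the parabolic cut-offs into the vanishing of the weighted energy.
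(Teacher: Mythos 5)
Your proof is correct and follows essentially the same route as the paper: the paper's proof of this lemma consists of deferring to Theorem~1 of \cite{mper1}, whose argument is precisely your ground-state substitution $w=u/v$, the divergence identity $\Div (v^2\nabla w)=0$, and the parabolicity cut-off (Caccioppoli) estimate exploiting that $w^2v^2=u^2$ is bounded, with the boundary term killed by $w=0$ along $\partial \Omega $. The only technical point---justifying the integration by parts over $\Omega $, whose boundary is smooth away from the isolated singular points of the nodal set of $u$---is the one you already flagged, and it is handled exactly as you indicate.
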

\begin{proof}
The arguments in the proof of Theorem~1 in~\cite{mper1} are still valid in our current setting
and we leave the details to the reader.
\end{proof}

\subsection{Proof of Theorem~\ref{th:aes} when $K_{\Sigma}$ is hyperbolic and case
(S2.3) holds}
\label{sec:c3ok}
\begin{lemma}
\label{hyperbolicnew}
Suppose that $K_{\Sigma}$ is generated by a
1-parameter hyperbolic subgroup $\G^H$ of $\sl$, and that case
(S2.3) holds. Let $\Hip ^2_{\t }$
be one of the two two-dimensional subgroups of $\sl $ containing $\G^H$.
Let  $\G^P, K^P$ be the parabolic subgroup of $\H^2_{\t }$ and
 the right invariant vector field on $\sl $ generated by $\G^P$, respectively.
Then, $f(\Sigma )$ is a smooth entire Killing $K^P$-graph.
\end{lemma}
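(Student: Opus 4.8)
The plan is to deduce the statement from Assertion~\ref{ass6.5} by ruling out, via the periodicity hypothesis (S2.3), the one configuration that would prevent $f(\Sigma)$ from being a graph. Since $K_{\Sigma}$ is hyperbolic, Assertion~\ref{ass6.5} already applies: it gives that $\g_f$ is a simple closed curve, and that \emph{if} $\g_f$ is disjoint from $\Upsilon\cup(-\Upsilon)$, then $f(\Sigma)$ is precisely the desired entire Killing graph with respect to the parabolic field $K^P$ tangent to $\Hip^2_{\theta}$. By the Slab Property (Claim~\ref{assslabprop1}), the only alternative is that $\g_f$ meets both $\Upsilon$ and $-\Upsilon$, in which case $f(\Sigma)$ is contained in a smallest topological slab bounded by two distinct leaves $L_a,L_b$ of the foliation $\mathcal{F}$ by right cosets of $\Hip^2_{\theta}$, and is tangent to both. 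Thus it suffices to show that, under hypothesis (S2.3), this slab configuration is impossible.

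First I would extract from (S2.3) a nontrivial \emph{central} symmetry of $f(\Sigma)$. By (S2.3) there is $a\in X-\G_{\Sigma}$ with $l_a(f(\Sigma))=f(\Sigma)$, and since $K_{\Sigma}$ is the unique (up to scaling) right invariant field tangent to $f(\Sigma)$, one has $(l_a)_*(K_{\Sigma})=\lambda K_{\Sigma}$ for some $\lambda\neq 0$; as in the proof of Lemma~\ref{lemma3.6}, this forces $a\,\G_{\Sigma}\,a^{-1}=\G_{\Sigma}$, i.e. $a$ normalizes the hyperbolic subgroup $\G^H=\G_{\Sigma}$. Projecting by the covering $\Lambda\colon\sl\to\mathrm{PSL}(2,\R)$, the element $\Lambda(a)$ stabilizes the unoriented geodesic $\Pi(\G^H)\subset\Hip^2(-4)$, so it either fixes both ideal endpoints or interchanges them. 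In the first case $\Lambda(a)$ lies in the hyperbolic subgroup $\Lambda(\G^H)$, so $a=\gamma z$ with $\gamma\in\G^H$ and $z\in Z=\ker\Lambda$; in the second case $\Lambda(a)^2=e$, so $a^2\in Z$. Using that left translation by any element of $\G^H$ is a time map of the flow of $K_{\Sigma}$ and hence preserves $f(\Sigma)$, and that $\sl$ has no nontrivial elements of finite order (so $a^2\neq e$), in either case I obtain a nontrivial central element $z\in Z\setminus\{e\}$ with $l_z(f(\Sigma))=f(\Sigma)$.

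Finally I would derive the contradiction with the slab. By property (P1) in Assertion~\ref{ass6.5}, $\mathcal{F}$ is $Z$-periodic; since $z$ is central, $l_z(\Hip^2_{\theta}\,t)=\Hip^2_{\theta}(zt)$ for every $t$ in the elliptic subgroup $\G_3$ parameterizing $\mathcal{F}$. As $\G_3\cap\Hip^2_{\theta}=\{e\}$ (an elliptic element cannot lie in the solvable group $\Hip^2_{\theta}$, by Lemma~\ref{caracsl}), the map $t\mapsto\Hip^2_{\theta}\,t$ is injective, so $l_z$ shifts the leaf parameter by a fixed nonzero amount $c$. Writing the leaf-parameter of $f(\Sigma)$ as a bounded continuous function with range the compact interval $[t_a,t_b]$, $t_a\neq t_b$, determined by the two tangency leaves $L_a,L_b$, the invariance $l_z(f(\Sigma))=f(\Sigma)$ forces $[t_a,t_b]=[t_a+c,t_b+c]$ and hence $c=0$, a contradiction. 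Therefore the slab case cannot occur, and by Assertion~\ref{ass6.5} the surface $f(\Sigma)$ is a smooth entire Killing $K^P$-graph. I expect the main obstacle to be the algebraic step of the second paragraph: correctly analyzing the normalizer of a hyperbolic one-parameter subgroup of $\sl$ and producing a genuine central symmetry $z\neq e$; once this ``vertical periodicity'' of $f(\Sigma)$ is established, its incompatibility with the bounded slab is immediate.
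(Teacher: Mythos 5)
Your proposal follows essentially the same route as the paper's proof: a dichotomy between the transversal (graph) case and the slab case, followed by an algebraic analysis of the elements $a$ normalizing the hyperbolic subgroup $\G^H$ (the paper isolates exactly your two alternatives as Lemma~\ref{lemmanew2}), and a contradiction obtained by translating the compact slab by central elements. Your packaging is slightly cleaner in two respects: you merge the two algebraic cases into a single nontrivial central element $z$ with $l_z(f(\Sigma))=f(\Sigma)$ (the paper instead takes large powers of $a$ in each case and moves the slab off itself), and you conclude with the one-line observation that a bounded, nonempty leaf-parameter range cannot be invariant under a nonzero shift.

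However, one step in your reduction does not follow as written. From ``$\g_f$ meets $\Upsilon\cup(-\Upsilon)$'' you invoke Claim~\ref{assslabprop1} for the foliation $\mathcal{F}$ of right cosets of the \emph{chosen} subgroup $\Hip^2_{\t}$. But a point $G(q_0)=\pm G_{\theta'}$ of $\g_f\cap(\Upsilon\cup(-\Upsilon))$ only says that $f(\Sigma)$ is tangent at $f(q_0)$ to the left coset $f(q_0)\Hip^2_{\theta'}$, which is a right coset of the conjugate subgroup $f(q_0)\Hip^2_{\theta'}f(q_0)^{-1}$. Because $K_{\Sigma}$ is right invariant and everywhere tangent to $f(\Sigma)$, this tangency forces $(K_{\Sigma})_e$ to lie in the Lie algebra of that conjugate, so the conjugate must be $\Hip^2_{\theta_1}$ or $\Hip^2_{\theta_2}$ --- but it may be the one different from $\Hip^2_{\t}$, in which case the angle hypothesis of Claim~\ref{assslabprop1} is verified for the \emph{other} foliation, not for $\mathcal{F}$. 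The paper sidesteps this by running the dichotomy directly on the angles between $f(\Sigma)$ and the fixed foliation $\mathcal{F}$: if those angles are bounded away from zero, the graph conclusion follows from the argument preceding Claim~\ref{assslabprop1} (which needs only that angle bound, not disjointness of $\g_f$ from $\Upsilon\cup(-\Upsilon)$), and otherwise Claim~\ref{assslabprop1} applies to $\mathcal{F}$ itself. Your argument is repaired without new ideas: either adopt that angle dichotomy, or use the tangency observation above to conclude that the slab configuration arises for one of the two foliations associated to $\Hip^2_{\theta_1},\Hip^2_{\theta_2}$, and run your central-element contradiction for whichever one occurs, since that argument is symmetric in the two.
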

\begin{proof}
We argue in a similar way as in the proof of Lemma~\ref{parabolic}.
Consider the foliation $\mathcal{F}=\{ r_t(\Hip _{\t }^2)\ | \ t\in \G_3\} $ of $\sl$ by
right cosets of $\Hip _{\t }^2$, where $\G_3$ is the elliptic $1$-parameter subgroup associated to $E_3$
given by equation~(\ref{eq:lframesl}). Recall that
the leaves of $\mathcal{F}$ are equidistant surfaces to $\Hip _{\t }^2$.

Suppose first that the angles that $f(\Sigma)$ makes with $\cF$
are bounded away from zero. Then, $f(\Sigma )$ is an entire Killing graph with respect to $K^P$
as explained just before Claim~\ref{assslabprop1},  and so Lemma~\ref{sec:c3ok} holds in this case.
Otherwise, we can apply the Slab Property in $\sl $ (Claim~\ref{assslabprop1}) to conclude that
$f(\Sigma )$ is contained in a smallest slab-type region $S$ between two different
leaves $L,L'$ of $\cF$.
As we are in the conditions of case (S2.3), there is an element
$a\in \sl -\G^H$ such that $l_{a}(f(\Sigma ))=f(\Sigma )$.
As $f(\Sigma )$ is everywhere tangent
to the right invariant vector field $K_{\Sigma }$ and $f(\Sigma )$ is not a two-dimensional
 subgroup of $\sl $, then there exists $\l \in \R -\{ 0\} $ such that
$(l_a)_*(K_{\Sigma })=\l \, K_{\Sigma }$.
Observe that the 1-parameter subgroup of $\sl$ generated by $(l_a)_*(K_{\Sigma })$ is
$a\G_{\Sigma}a^{-1}=(l_a\circ r_{a^{-1}})(\G_{\Sigma})$
(see e.g., the proof of Lemma~\ref{lemma3.6}),
hence this last 1-parameter subgroup must be a reparameterization of
$\G_{\Sigma}$. By Lemma~\ref{lemmanew2} below, either $a^2$ lies in the center $Z$ of $\sl $,
or $a$ is the product of an element $h\in \G^H$ with an element $b\in Z$. In the first case,
$a^2=e$ since otherwise the left translation by a large power $a^{2k}$ of $a^2$ 
sends the topological slab $S$ into another topological slab disjoint from $S$, which contradicts that
$f(\Sigma )$ is invariant under the left translation by $a^{2k}$ and $f(\Sigma )$ is contained in $S$.
But $a^2=e$ implies $a=e$, which contradicts our hypothesis that $a\in \sl -\G^H$. Therefore,
$a=bh$ with $h\in \G^H$ and $b\in Z$. Observe that $b\neq e$ since $a\notin \G^H$. Therefore,
a large power $a^n=b^nh^n$ of $a$ sends the topological slab $S$ into another topological slab disjoint from $S$
and we find a contradiction as before.
This proves Lemma~\ref{hyperbolicnew}.
\end{proof}

\begin{lemma}
\label{lemmanew2}
Suppose $a\in  \sl $ satisfies $a\G^Ha^{-1}=\G^H$ for some hyperbolic 1-parameter subgroup $\G^H$ of $\sl $.
Let ${\bf P}\colon \sl \to \psl=\sl/Z$ be the quotient homomorphism where $Z$ is the center of $\sl$.
Then ${\bf P}(a)\in {\bf P}(\G^H)$ or $a^2\in Z$.
\end{lemma}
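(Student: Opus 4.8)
The plan is to push the normalizing relation down to $\psl=\sl/Z$ and then read off the two alternatives from the elementary structure of the normalizer of a hyperbolic one-parameter subgroup of $\psl$, viewed as the group $\mathrm{Isom}^+(\H^2)$ of orientation-preserving isometries of the hyperbolic plane. Since ${\bf P}$ is a homomorphism, the hypothesis $a\G^Ha^{-1}=\G^H$ gives at once ${\bf P}(a)\,{\bf P}(\G^H)\,{\bf P}(a)^{-1}={\bf P}(\G^H)$. I first record that ${\bf P}|_{\G^H}$ is injective: as $\G^H$ is hyperbolic, Lemma~\ref{caracsl} and the discussion of the submersion $\Pi$ show that ${\bf P}(\G^H)$ is a \emph{noncompact} (hyperbolic) subgroup of $\psl$, whereas $\G^H\cong\R$ and $Z\cong\Z$ is discrete, so the only discrete subgroup $\G^H\cap Z$ that could produce a compact quotient image is trivial. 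Writing $g:={\bf P}(a)$ and $T:={\bf P}(\G^H)$, we therefore have a hyperbolic one-parameter subgroup $T$ of $\psl$ with axis a geodesic $\gamma\subset\H^2$ whose endpoints at infinity I denote $\{\t_1,\t_2\}=\partial_{\infty}\gamma\subset\partial_{\infty}\H^2$; every nontrivial element of $T$ fixes exactly these two boundary points.

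Next I would show that $g$ preserves the unordered pair $\{\t_1,\t_2\}$. Indeed, for each $h\in T$ the element $ghg^{-1}\in T$ fixes $\{\t_1,\t_2\}$, which is equivalent to saying that $h$ fixes $g^{-1}(\{\t_1,\t_2\})$. Since $\{\t_1,\t_2\}$ is precisely the common fixed-point set on $\partial_{\infty}\H^2$ of all the elements of $T$, this forces $g^{-1}(\{\t_1,\t_2\})=\{\t_1,\t_2\}$, so $g$ permutes $\t_1$ and $\t_2$. Two cases then exhaust the possibilities.

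If $g$ fixes both $\t_1$ and $\t_2$, then $g$ is an orientation-preserving isometry of $\H^2$ fixing two distinct points at infinity; such an isometry preserves the geodesic $\gamma$ joining them and is hence either the identity or hyperbolic with axis $\gamma$. As $T$ consists exactly of the identity together with all hyperbolic isometries with axis $\gamma$, we conclude $g\in T={\bf P}(\G^H)$, the first alternative. If instead $g$ interchanges $\t_1$ and $\t_2$, then $g$ maps $\gamma$ onto itself reversing its orientation, so $g|_{\gamma}$ is an orientation-reversing isometry of the line $\gamma$ and therefore has a unique fixed point $x_0\in\gamma$. Thus $g$ is elliptic with center $x_0$, and because it reverses the direction of $\gamma$ at $x_0$ its rotation angle is $\pi$; an elliptic element of $\psl=\mathrm{Isom}^+(\H^2)$ of angle $\pi$ has order two, so $g^2=e$ in $\psl$, i.e. ${\bf P}(a^2)=e$ and $a^2\in\ker{\bf P}=Z$, the second alternative.

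The only delicate point is the order-two claim in the swapping case, which geometrically is just the statement that an orientation-preserving isometry reversing a geodesic is a half-turn. The hard part is therefore to present this cleanly, and I expect to make it fully rigorous by a concrete $2\times2$ computation that simultaneously verifies that the normalizer description above is exhaustive. Since both the hypothesis and the conclusion are invariant under simultaneous conjugation of $a$ and $\G^H$, I may normalize $T$ to be the image of the diagonal subgroup $\{\diag(e^t,e^{-t})\}$ and lift $g$ to $\mathrm{SL}(2,\R)$; an element conjugating this split torus to itself must permute the two eigenlines, hence is diagonal (giving $g\in T$) or antidiagonal of the form $\left(\begin{smallmatrix}0 & b\\ -1/b & 0\end{smallmatrix}\right)$, whose square equals $-I$ and so is trivial in $\psl$, yielding $a^2\in Z$. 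This recovers the dichotomy and completes the proof.
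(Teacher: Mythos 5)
Your proof is correct and follows essentially the same route as the paper: project to $\psl$ via ${\bf P}$, observe that ${\bf P}(a)$ normalizes the hyperbolic one-parameter subgroup ${\bf P}(\G^H)$, and use the dichotomy that this normalizer consists of ${\bf P}(\G^H)$ itself together with the order-two elliptic elements (half-turns) about points of its axis, the latter case giving $a^2\in Z$. The only difference is that the paper cites this normalizer dichotomy as ``well known,'' whereas you actually prove it --- both geometrically, via the action on the two fixed points at infinity, and algebraically, via the diagonal/antidiagonal computation in ${\rm SL}(2,\R)$ --- thereby filling in the one detail the paper leaves to the reader.
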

\begin{proof}
We identify ${\bf P}(\G^H)$ with the set of hyperbolic translations of the Poincar\'e disk $\D$ along a geodesic $\g $.
Since ${\bf P}(a){\bf P}(\G^H){\bf P}(a^{-1})={\bf P}(\G^H)$, we conclude that ${\bf P}(a)$ lies in the
normalizer subgroup of ${\bf P}(\G^H)$ in $\psl$. In this case, it is well known that ${\bf P}(a)\in {\bf P}(\G^H)$ or
${\bf P}(a)$ is an elliptic isometry of order two around some point in $\g$. In the second case, ${\bf P}(a)^2$
is the identity transformation of $\D$, or equivalently $a^2$ lies in the kernel of ${\bf P}$ which is $Z$.
\end{proof}

\subsection{Case (S2.2) is impossible if $K_{\Sigma }$ is hyperbolic}
\label{sec:c2no}
The proof that $f(\Sigma)$ cannot be in the conditions of case (S2.2) if $K_{\Sigma }$ is
hyperbolic  will be an adaptation to
the $\sl$ setting of our arguments in Section~\ref{sec:nos}. 
In order to do this, we prove first a radius estimate (Lemma~\ref{ass:box}) for compact stable
minimal surfaces with boundary contained in some special regions of $\sl$, similar in spirit to
Theorem 1.1 in~\cite{mmp1}. To start, we next associate to every
hyperbolic 1-parameter subgroup $\G ^H$ of $\sl $ a cylindrical-type
neighborhood of $\G ^H$ in $X$, which depends on the left invariant metric
and on a number $r>0$. We will call this neighborhood of $\G^H$ an {\it infinite box.}

\begin{definition}
\label{def12.7}
  {\rm
  Let $\G^H$ be a hyperbolic 1-parameter subgroup of $\sl $, and let
$\g \subset \Hip ^2(-4)$ be the geodesic with the same extrema as the
circle arc $\Pi (\G ^H)\subset \Hip ^2(-4)$ (recall that $\Pi $ was defined in~(\ref{eq:PI})).
Note that $\Pi (\G ^H)$ is at constant hyperbolic distance
$r_0$ from $\g $, for some $r_0\geq 0$. Given $r>r_0$, let
$\g _r^+,\g _r^-\subset \Hip ^2(-4)$ be the two arcs at distance $r$ from
$\Pi (\G ^H)$ and let $P^+(r)=\Pi ^{-1}(\g _r^+)$, $P^-(r)=\Pi ^{-1}(\g _r^-)$
be the topological planes inside $\sl $ obtained after lifting these arcs through
$\Pi $. Let $\H=\Hip _{\theta}^2$ be one of the two two-dimensional subgroups of $\sl $ that
 contain $\G^H$, and call $\Hip \, a_r^+, \Hip \, a_r^-$ the right cosets of $\Hip $ 
at distance $r$ from~$\Hip $ (here $a_r^+,a_r^-$ are some elements in
$\sl $). Observe that each of the surfaces $P^+(r),P^-(r)$ intersects transversely
both $\Hip \, a_r^+$ and $\Hip \, a_r^-$, along integral curves of the hyperbolic
right invariant vector field $K^H$ generated by the left action of $\G^H$ on $\sl $.

With these ingredients,
we define the {\it infinite box} $\mathcal{B}(\G^H,r)$ to be the closure of the component of
$\sl -[P^+(r)\cup P^-(r)\cup (\Hip \, a_r^+)\cup (\Hip \, a_r^-)]$
that has portions of  all four surfaces in its boundary.
Observe that $\G^H$ is contained in the interior of $\mathcal{B}(\G^H,r)$.
}
\end{definition}

\begin{lemma}
\label{ass:box}
Consider an infinite box $\mathcal{B}(\G^H,r)$ as before. Then:
\begin{enumerate}[(1)]
\item
$\mathcal{B}(\G^H,r)$ is invariant under the left translations by elements in $\G^H$.
\item
Every compact, immersed minimal surface $M\subset X$ whose boundary lies in $\mathcal{B}(\G^H,r)$
  satisfies that $M\subset \mathcal{B}(\G^H,r)$.
\item
There exists a constant $R(r)>0$ such that for every compact, stable, immersed minimal surface
$h\colon M \la X$ with $h(\partial M)\subset \mathcal{B}(\G^H,r)$, the intrinsic radius of $M$ is less than $R(r)$.
\end{enumerate}
\end{lemma}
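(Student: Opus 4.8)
The plan is to treat the three items in turn: items~1 and~2 follow from symmetry and the maximum principle, while item~3 reduces to the radius estimate of~\cite{mmp1} after controlling the global geometry of the box.

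For item~1, I would check that each of the four faces bounding $\mathcal{B}(\G^H,r)$ is invariant under left translation $l_g$ by an arbitrary $g\in \G^H$. Since the fibers of $\Pi$ (see~(\ref{eq:PI})) are the left cosets of the elliptic subgroup $\G_3$, every left translation of $\sl$ permutes these fibers and hence descends to an isometry of $\Hip^2(-4)$; for $g\in \G^H$ this descended isometry fixes the curve $\Pi(\G^H)$ setwise, because $g\,\G^H=\G^H$, so it is a hyperbolic translation along $\g$ and preserves each equidistant arc $\g_r^{\pm}$. Therefore $l_g(P^{\pm}(r))=l_g(\Pi^{-1}(\g_r^{\pm}))=\Pi^{-1}(\g_r^{\pm})=P^{\pm}(r)$. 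For the faces $\Hip\, a_r^{\pm}$, the inclusion $\G^H\subset \Hip$ gives $g\,\Hip=\Hip$, whence $l_g(\Hip\, a_r^{\pm})=\Hip\, a_r^{\pm}$. Consequently $l_g$ permutes the components of the complement of the four faces; as $\mathcal{B}(\G^H,r)$ is the unique such component containing $\G^H=l_g(\G^H)$ in its interior, it is preserved.

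For item~2, I would use the two pairs of faces as barriers. By item~4 of Lemma~\ref{lemma4.1}, each $P^{\pm}(r)=\Sigma(\g_r^{\pm})$ is a mean-convex barrier whose mean curvature vector points into the band in $\Hip^2(-4)$ bounded by $\g_r^-$ and $\g_r^+$ (which contains $\g$, since $r>r_0$), while each coset $\Hip\, a_r^{\pm}$ is a leaf of the minimal foliation $\mathcal{F}=\{\Hip\, t\mid t\in\G_3\}$, because two-dimensional subgroups of $\sl$ are minimal by~\cite[Corollary~3.17]{mpe11} and hence so are all their left and right cosets (see property~(C)). Given a compact immersed minimal $M$ with $\partial M\subset\mathcal{B}(\G^H,r)$, I would sweep $M$ from outside the box by the surfaces $\Sigma(\g_s^{\pm})$ with $s\ge r$ and by the leaves $\Hip\, t$ of $\mathcal{F}$: a first interior point of contact would produce a minimal surface tangent to a strictly mean-convex (respectively minimal) barrier from the side towards which the barrier is convex, contradicting the mean curvature comparison principle. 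Hence $M$ crosses none of the four faces and, being connected with boundary inside $\mathcal{B}(\G^H,r)$, it remains in $\mathcal{B}(\G^H,r)$.

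The substance of the lemma is item~3, and my first step would be to prove that $\mathcal{B}(\G^H,r)$ lies within a bounded distance of $\G^H$, that is $\mathcal{B}(\G^H,r)\subset \mathcal{W}(\G^H,R')=\{x\in X\mid d_X(x,\G^H)\le R'\}$ for some $R'=R'(r)$. By item~1 the closed set $\mathcal{B}(\G^H,r)$ is $\G^H$-invariant, and the $\G^H$-action projects under $\Pi$ to hyperbolic translation along $\g$; the quotient of the band bounded by $\g_r^-$ and $\g_r^+$ by this translation is a compact annulus, and over it the box is fibered by the $\G_3$-fiber segments cut out between $\Hip\, a_r^-$ and $\Hip\, a_r^+$, whose lengths descend to a continuous function on that compact annulus and are therefore uniformly bounded. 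Thus $\mathcal{B}(\G^H,r)/\G^H$ is compact, so $d_X(\cdot,\G^H)$ is bounded on $\mathcal{B}(\G^H,r)$, which gives the asserted inclusion. With this reduction in hand, item~3 follows from the uniform radius estimate of~\cite[Theorem~1.1]{mmp1} applied to the solid metric cylinder $\mathcal{W}(\G^H,R')$: every compact stable immersed minimal surface with boundary in $\mathcal{W}(\G^H,R')$ has intrinsic radius bounded by some $R(r)$, and since $h(\partial M)\subset\mathcal{B}(\G^H,r)\subset\mathcal{W}(\G^H,R')$, this bound applies to $M$.

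The main obstacle I anticipate is item~3, and within it the verification that $\mathcal{B}(\G^H,r)/\G^H$ is compact. Unlike the semidirect-product setting of Section~\ref{sec:nos}, where the confining region is literally a solid metric cylinder invariant under a right-invariant flow, here the box is only $\G^H$-invariant, since its faces $\Hip\, a_r^{\pm}$ are not invariant under the center $Z$; so the compactness of the quotient must be extracted from the transversality of the faces to the $\G_3$-fibers (Lemma~\ref{lemma3.6} guarantees that the parabolic direction of $\Hip$, and hence $\Hip$ itself, is transverse to $E_3$) together with the hyperbolic translation structure of the $\G^H$-action on the base. Some care is also needed to ensure that the estimate of~\cite{mmp1} indeed applies to solid metric cylinders around hyperbolic one-parameter subgroups of $\sl$; should a direct citation not be available, the alternative is a compactness-and-contradiction argument, in which a sequence of stable minimal surfaces whose interior points escape to infinity in intrinsic distance would, after left translations keeping the base point near $\G^H$ and invoking the curvature estimates for stable minimal surfaces, converge to a complete stable minimal surface confined in $\mathcal{W}(\G^H,R')$, which the barrier structure of item~2 forces to be a minimal leaf, contradicting its confinement.
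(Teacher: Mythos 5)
Your proofs of items 1 and 2 are correct and essentially the paper's: the paper runs the item-2 sweep with the nested boxes $\cB(\G^H,r')$, $r'\geq r$, whose piecewise smooth boundaries are mean convex with interior angles less than $\pi$, rather than with the two families of faces separately, but the barrier/maximum-principle content is the same. Your reduction in item 3 is also sound as far as it goes: the box is indeed cocompact under the left $\G^H$-action (in fact the $\Pi$-fiber segments cut out between $\H\,a_r^-$ and $\H\,a_r^+$ are left translates of a fixed segment of $\G_3$, hence of constant length), so $\cB(\G^H,r)\subset\cW(\G^H,R')$ for some $R'=R'(r)$. The genuine gap is the citation you use to finish: Theorem~1.1 of \cite{mmp1} is a statement about metric semidirect products — in the present paper it is invoked only for solid cylinders about the $z$-axis of $\R^2\rtimes_A\R$ — and Lemma~\ref{ass:box} exists precisely because that estimate is \emph{not} available in $\sl$: the authors introduce the lemma in Section~\ref{sec:c2no} as ``similar in spirit'' to Theorem~1.1 of \cite{mmp1}, and in Section~\ref{sec:c1no} they speak of \emph{replacing} that theorem by item~3 of Lemma~\ref{ass:box}. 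So your primary route rests on a result whose hypotheses do not cover this situation.

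That puts all the weight on your fallback, which is indeed the paper's actual strategy: translate base points by elements of $\G^H$ into the compact slice $Y=\Pi^{-1}(\be)\cap\cB(\G^H,r)$, apply the curvature estimates for stable minimal surfaces of Schoen~\cite{sc3} and Ros~\cite{ros9}, and extract a complete stable minimal limit $h_\infty(M_\infty)\subset\cB(\G^H,r)$ with bounded second fundamental form. But your closing step — ``the barrier structure of item~2 forces it to be a minimal leaf'' — is not justified. Item~2 gives confinement, not a tangency: the limit surface is noncompact, so the supremum along it of the distance to $\H$ need not be attained, and without an attained extremum there is no point at which to apply the maximum principle against a coset $\H\,a_{s}^{\pm}$ (the faces $P^{\pm}(s)$ cannot produce a tangency at all, being strictly mean convex towards the interior of the box). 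The paper closes exactly this gap by passing to the closure $\cM$ of the full $\G^H$-orbit of $h_\infty(M_\infty)$: since the limit has bounded second fundamental form, through each point of $\cM$ there passes a compact embedded minimal disk contained in $\cM$; compactness of $\cM\cap Y$ together with $\G^H$-invariance yields a largest $r'\leq r$ with $[\H\,a_{r'}^+\cup\H\,a_{r'}^-]\cap\cM\neq\emptyset$, and the maximum principle applied to such a disk tangent to that (minimal) coset forces $\cM$ to contain the entire coset — impossible, since neither $\H\,a_{r'}^+$ nor $\H\,a_{r'}^-$ is contained in $\cB(\G^H,r)$. Without this orbit-closure/attainment step, or some substitute producing an honest one-sided tangency, your fallback does not close.
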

\begin{proof}
Item 1 follows from the definitions of the surfaces in the boundary of
$\mathcal{B}(\G^H,r)$, all of which are invariant under left translations by elements in $\G ^H$.
Item 2 is a direct consequence of the maximum principle and the following facts:
 \begin{itemize}
 \item For every $r'\geq r$, $\mathcal{B}(\G^H,r')$ has piecewise smooth
mean convex boundary (this follows from item~4 of
 Lemma~\ref{lemma4.1} and the fact that every right coset $\Hip \, a$
 is a left translation of some two-dimensional
 subgroup of $X$, hence minimal), and the inner angles between adjacent
smooth surfaces in this boundary are less than $\pi $. Furthermore,
$\mathcal{B}(\G^H,r)\subset \mathcal{B}(\G^H,r')$.
\item $\bigcup _{r'>r}\mathcal{B}(\G ^H,r')=X$.
\end{itemize}

Suppose item~3 of the lemma fails to hold. Then, there exists a sequence of
compact, stable, immersed minimal surfaces $h_n\colon M_n \la X$ such
that $h_n(\partial M_n)\subset \mathcal{B}(\G^H,r)$ and points $p_n\in
M_n$ such that the intrinsic distances from $p_n$ to the boundary of
 $M_n$ satisfy $d_{M_n}(p_n,\partial M_n)>n$ for all $n\in \N $.
Next we show that after appropriate left translations and passing to
a subsequence, we can take limits of the $h_n$. Let $\be $ be the
geodesic in $\Hip ^2(-4)$ that passes through $\Pi(e)$ and is orthogonal to $\g$. Note that each
integral curve in $\sl $ of the hyperbolic right invariant vector field $K^H$ generated
by $\G ^H$  intersects the surface $\Pi^{-1}(\be )$ exactly
once; in particular, the integral curve $\G ^H\, h_n(p_n)=r_{h_n(p_n)}(\G ^H)$ of $K^H$ passing
through the point $h_n(p_n)$ intersects the compact surface $Y=\Pi^{-1}(\be )\cap
\mathcal{B}(\G^H,r)$ just once (to see this, observe that by the already proven items 1, 2 of this lemma,
$\cB(\G^H,r)$ is invariant under the left action of $\G^H$
and $h_n(M_n)\subset \mathcal{B}(\G ^H,r)$). Hence, after replacing $h_n$ by a
left translation by some element in $\G^H$, we will have that
$h_n(p_n)\in Y$ and $h_n(M_n)$ still lies in $\mathcal{B}(\G ^H,r)$.
As $Y$ is compact, after choosing a subsequence we may assume that
$h_n(p_n)$ converges to a point $q_{\infty}\in Y$.

By curvature estimates for stable minimal surfaces (Schoen~\cite{sc3}, Ros~\cite{ros9}),
the immersions $h_n$ restricted to the intrinsic balls
\[
B_{M_n}(p_n,n/2)=\{ x\in M_n\ | \ d_{M_n}(p_n,x)<{\textstyle \frac{n}{2}}\}
\]
have uniformly bounded second fundamental forms.
Hence, after choosing a subsequence, we
obtain a complete minimal immersion
$h_{\infty}\colon M_{\infty} \la \mathcal{B}(\G^H,r)$
with bounded second fundamental form that is a limit of the restriction of
the $h_n$ to compact domains in $M_n$, and such that $h_{\infty }(p_{\infty })=
q_{\infty }$ for some point $p_{\infty }\in M_{\infty }$.
Consider the set
\[
\cM=\ov{\{ a \, h_{\infty }(M_{\infty})=l_a\left( h_{\infty }(M_{\infty})\right) \mid a \in \G^H\}} \subset \; \mathcal{B}(\G^H,r),
\]
where for any $A\subset X$, $\ov{A}$ is the closure of $A$.  Since
$h_{\infty }(M_{\infty})$ has bounded second fundamental form, then
given any point $q\in \cM $, there exists a compact, embedded minimal
disk $D(q) \subset \cM$ with $q \in \Int(D(q))$; specifically, the disk $D(q)$ is
a limit of embedded geodesic disks of fixed small geodesic radius
centered at points $q_n$ in some left translate $l_{a_n}\left( h_{\infty}
(M_{\infty})\right) $ such that $\lim_{n\to \infty} q_n=q$, where $a_n$ is
some element in $\G^H$.  Since $\cM \cap Y$ is a compact set, there
exists a largest positive $r'\leq r$ such that the set
\[
W=[\Hip \, a_{r'}^+\cup\Hip \, a_{r'}^-]\cap \cM \cap Y
\]
is nonempty. Note that the $\G ^H$-invariance of $[\Hip \,
a_{r'}^+\cup\Hip \, a_{r'}^-]\cap \cM $ implies that $r'$ is also the
largest positive number such that $[\Hip \, a_{r'}^+\cup\Hip
\, a_{r'}^-]\cap \cM $ is nonempty. Let $q\in W$. Then by the maximum
principle for minimal surfaces, the set  $\Hip \, a_{r'}^+\cup\Hip \,
a_{r'}^-$ must contain the aforementioned embedded minimal disk
$D(q)$.  But in this case, $\mathcal{M}$ would have to contain one
of the surfaces  $\Hip \, a_{r'}^+,\, \Hip \, a_{r'}^-$ which is false
since neither of these surfaces is entirely contained in
$\mathcal{B}(\G^H,r)$. This contradiction proves the lemma.
\end{proof}

With Lemma~\ref{ass:box} at hand, we can now rule out case (S2.2).

\begin{proposition}
\label{notc2}
If $K_{\Sigma}$ is hyperbolic, then case (S2.2) does not occur for $f$.
\end{proposition}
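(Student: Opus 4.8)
The plan is to transplant the argument of Section~\ref{sec:nos}, which ruled out case (S2.2) for non-horizontal $K_{\Sigma}$ in a metric semidirect product, to the present $\sl$ setting; the solid metric cylinder $\mathcal{W}_0$ used there is replaced by an infinite box $\mathcal{B}(\G^H,r_0)$ from Definition~\ref{def12.7}, and the radius estimate of~\cite{mmp1} by Lemma~\ref{ass:box}. First I would note that, in case (S2.2), the image $f(\Sigma)$ is an immersed annulus invariant under the flow of the hyperbolic field $K_{\Sigma}=K^H$; projecting modulo $K_{\Sigma}$ yields a compact immersed curve, so $f(\Sigma)$ lies at bounded distance from the axis $\G^H$. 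Since the boxes $\mathcal{B}(\G^H,r)$ are invariant under left translation by $\G^H$ (item~1 of Lemma~\ref{ass:box}) and exhaust $X$ as $r\to\8$, the surface $f(\Sigma)$ is contained in some $\mathcal{B}_0:=\mathcal{B}(\G^H,r_0)$ whose boundary is at positive distance from $f(\Sigma)$. Item~3 of Lemma~\ref{ass:box} then provides a uniform intrinsic radius bound $R=R(r_0)$ for compact stable immersed minimal surfaces in $X$ with boundary in $\mathcal{B}_0$; as $f(\Sigma)$ has mean curvature $h_0(X)>0$, this is consistent with its non-minimality.

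The core of the argument is the analogue of Lemma~\ref{c2no}: I would produce a fixed radius $R^{*}>0$ and points $q_n$ on the spheres $S_n$ so that the volumes of the metric balls $B_{g_n}(q_n,R^{*})$ in the abstract mean convex balls $(B_n,g_n)$ bounded by the $S_n$ diverge. The ruling direction of $f(\Sigma)$ is now the $K^H$-flow, whose integral curves lie inside single right cosets of $\Hip^2_{\t}$ and project under $\Pi$ to hypercycles equidistant from the geodesic $\g$ sharing the endpoints of $\Pi(\G^H)$. The role played in Section~\ref{sec:nos} by the horizontal planes $\R^2\rtimes_A\{z\}$, transverse to $F_3$, is here played by the transverse foliation $\{\Pi^{-1}(\be_t)\}$ by lifts of the geodesics $\be_t\subset\Hip^2(-4)$ orthogonal to $\g$, each of which meets every $K^H$-integral curve exactly once (as in the proof of Lemma~\ref{ass:box}). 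The periodicity needed to wrap $j$ times is supplied, exactly as before, by the annular factorization of $f$ in case (S2.2), i.e. by the isometry $\wt{l}_q$ of Corollary~\ref{thm6.11}; iterating it gives, for each fixed $j\in\N$, $j$ uniformly separated points $q_n^{1},\dots,q_n^{j}$ on the section $\alpha_n=f_n^{-1}(\Pi^{-1}(\be_0))$. Flowing short subarcs of $\alpha_n$ under the tangential part $W_n$ of $K_{\Sigma}$ between two leaves $\Pi^{-1}(\be_{\pm T})$ chosen at ambient distance greater than $R$ from $\Pi^{-1}(\be_0)$ produces disks $D_n^{k}\subset S_n$ whose boundary Jordan curves $\gamma_n^{k}$ consist of two $K^H$-integral arcs of length at most a constant $D_0$ (the finite length of such arcs inside the compact region of $\mathcal{B}_0$ between the two leaves) together with two arcs on $\Pi^{-1}(\be_{\pm T})$. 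Solving least-area Plateau problems in $B_n$ by Meeks--Yau~\cite{my2} gives stable minimal disks $M_n^{k}$ with $\partial M_n^{k}=\gamma_n^{k}$ whose images lie in $\mathcal{B}_0$, so Lemma~\ref{ass:box} bounds their intrinsic radii by $R$. A midpoint estimate along $\beta_n^{k}=M_n^{k}\cap\Lambda_n$, where $\Lambda_n=\wt{f}_n^{-1}(\Pi^{-1}(\be_0))$, then bounds $d_{B_n}(q_n^{1},q_n^{k})$ by $2D_0+2R$, and the uniform curvature bound (item~5 of Proposition~\ref{propsu2}) with $H_n>h_0(X)>0$ yields a collar of fixed width carrying $j$ disjoint solid cylinders $C_n^{k}$ of volume at least a fixed $V>0$, all inside a ball $B_{g_n}(q_n^{1},R^{*})$. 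Thus $\mbox{Vol}\,B_{g_n}(q_n^{1},R^{*})\geq jV\to\8$.

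Finally, I would derive the contradiction exactly as at the end of Section~\ref{sec:nos}: extend $(B_n,g_n)$ to $(\wt{B}_n,\wt{g}_n)$, conformally modify the metric in a collar of the new boundary so that it becomes asymptotically hyperbolic while staying uniformly $C^2$-bounded, and use the local homogeneity of $g_n$ to obtain a uniform lower Ricci bound $\mbox{Ric}_{\hat{g}_n}\geq 2c$ with $c<0$ independent of $n$. Bishop's comparison theorem then caps $\mbox{Vol}\,B_{\hat{g}_n}(q_n,R^{*})$ by a constant $V(c,R^{*})$, contradicting the divergence established above. The main obstacle I anticipate is purely geometric: because the hyperbolic field $K^H=K_{\Sigma}$ is everywhere \emph{tangent} to the cosets of $\Hip^2_{\t}$ (property~(P5) preceding Claim~\ref{assslabprop1}), and the box faces $P^{\pm}(r)$ and $\Hip\,a_r^{\pm}$ are likewise swept out by $K^H$-integral curves, none of the surfaces bounding $\mathcal{B}_0$ can cut a $K^H$-integral curve into a finite arc. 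One must therefore replace the horizontal-plane cut-offs of the semidirect case by the lifts $\Pi^{-1}(\be_{\pm T})$ of orthogonal geodesics, and check carefully that these are transverse to both $K^H$ and $f(\Sigma)$, that the corresponding $K^H$-arcs inside $\mathcal{B}_0$ have uniformly bounded length $D_0$, and that the abstract least-area disks $M_n^{k}$ have boundaries landing in $\mathcal{B}_0$ so that the radius estimate of Lemma~\ref{ass:box} applies. This transversality and compactness bookkeeping, rather than any new analytic input, is the delicate point distinguishing the present proof from its semidirect counterpart.
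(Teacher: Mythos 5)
Your proposal is correct and takes essentially the same route as the paper's proof: replacing the solid cylinder by the infinite box $\mathcal{B}(\G^H,r_0)$, invoking item~3 of Lemma~\ref{ass:box} in place of the radius estimate of~\cite{mmp1}, substituting the foliation by lifts $\Pi^{-1}(\be_t)$ of the geodesics orthogonal to $\g$ for the horizontal planes (precisely because the box faces are everywhere tangent to $K^H$, the delicate point you correctly identify), and then running the wrapping/Plateau/midpoint/Bishop argument of Section~\ref{sec:nos} verbatim. The only bookkeeping detail the paper flags that you pass over silently is that $f_n^{-1}(P_0)$ need no longer be a simple closed curve (the Transversality Lemma does not apply, since $P_0$ is not a coset of a two-dimensional subgroup), so one must work instead with the connected compact embedded arc $f_n^{-1}(P_0)\cap K_n^j$ meeting each disk $\cU_n^k$; this is harmless for your construction, which only uses local subarcs.
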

\begin{proof}
We will adapt the arguments of Section~\ref{sec:nos} to our current setting, focusing
on the differences with the case of $X$ being a semidirect product. Suppose
that $K_{\Sigma }=K^H$ is hyperbolic and that case (S2.2) holds.
Recall that $f\colon (\Sigma,p)\la (X,e)$ is a complete stable  pointed immersion
with constant mean curvature $h_0(X)$, that is
obtained as a limit of pointed immersions
$f_n\colon (S_n, p_n)\la (X,e)$ of spheres $S_n\in \mathcal{C}$.
Since each sphere $S_n\in \mathcal{C}$ is Alexandrov embedded,
for each $n\in \N$ there exists a three-dimensional Riemannian manifold
$(B_n,g_n)$ which is topologically a closed ball, together with a Riemannian submersion
$\wt{f}_n\colon (B_n,g_n)\flecha X$ such that $\parc B_n=S_n$, $\wt{f}_n |_{S_n}=f_n$ and
$\parc B_n$ is mean convex. The key step in the proof of Proposition~\ref{notc2}
will be proving that Lemma~\ref{c2no} holds in our situation.
Observe that if we admit the validity of Lemma~\ref{c2no} in our current setting, then
all the arguments in Section~\ref{sec:noa} after the proof of Lemma~\ref{c2no} remain
valid in $\sl $ and the proof of Proposition~\ref{notc2} will be finished.
Therefore, we will only concentrate on the proof of the statement of Lemma~\ref{c2no} in our current $\sl$-setting.

Consider the hyperbolic $1$-parameter subgroup $\G^H$ of $\sl $ generated by $K_{\Sigma}$.
Since $f(\Sigma)$ is an annulus invariant under the flow of $K^{\Sigma }$, we can find
$r_0>0$ such that $f(\Sigma)$ is contained in an infinite box $\mathcal{B}(\G^H,r_0)$
and $\parc \mathcal{B}(\G^H,r_0)$ is at a positive distance from $f(\Sigma)$; in what follows,
$\mathcal{B}(\G^H,r_0)$ will play the role of the solid cylinder $\mathcal{W}(\G ,r_0)$ of
the original proof of Lemma~\ref{c2no} for metric semidirect products.

By item 3 of Lemma~\ref{ass:box}, there exists some $R=R(r_0)>0$ with the following
property:
\begin{enumerate}[$(\star )'$]
\item If $M$ is any compact, stable immersed minimal surface in $X$ whose boundary $\parc M$
is contained in $\mathcal{B}(\G ^H,r_0)$, then the intrinsic radius of $M$
is less than $R$.
\end{enumerate}	

The role of the horizontal planes in the proof of the semidirect product case of  Lemma~\ref{c2no}
will be now played by the leaves of the foliation $\cF' =\{P_t= l_t(\Pi^{-1}(\be))\mid  \ t\in \G^H \equiv \R\} $,
where  $\be $ is the
geodesic in $\Hip ^2(-4)$ that passes through $\Pi(e)$ and is orthogonal to $\g$, where both
$\g$ and $\be$ were previously defined in the proof of Lemma~\ref{ass:box}.
Notice that $\cF'$ is also the foliation of $X$ whose projection by $\Pi$ to $\Hip(-4)$ is the foliation
of $\Hip(-4)$ by the set of geodesics  orthogonal to $\g$, and the leaves of $\cF'$ are minimal topological
planes by Lemma~\ref{lemma4.1}.

As $\Sigma $ is simply connected and $f\colon \Sigma \la X$ is invariant under the
flow of $K_{\Sigma}$, then by the discussion in Remark~\ref{rem7.1}
we may parameterize $f$ as in equation~(\ref{eq:fst}) where
$\G $ is replaced by $\G^H$, and $\a =\a (s)\colon \R\to P_0$
is an immersed $L$-periodic curve parameterized by arc length in the plane $P_0$,
so that $f(0,0)=\alfa(0)=e$, $L>0$ being the length of $\alfa(\R)$.

Observe that $\mathcal{B}(\G ^H,r_0)$ is foliated by integral curves of $K^H$.
As $\mathcal{B}(\G ^H,r_0)\cap P_0$ is compact,
there exists the maximum length $D_0$ of all the arcs of integral curves of $K^H$
that are contained in the compact
piece of $\mathcal{B}(\G ^H,r_0)$ that lies in the topological slab of $\sl $
determined by the leaves $P_{-R-2}$ and $P_{R+2}$ of $\mathcal{F}'$.
Given $s_0\in \R$, the definition
(\ref{arki}) of the compact arc $A(s_0)\subset f(\Sigma)$ of the integral curve of $\G^H$
that passes through $\alfa(s_0)$ whose endpoints lie in $P_{-R-2}$ and $P_{R+2}$, remains valid
after replacing $\G $ by $\G^H$, as well as the property that the length of $A(s_0)$ is at most $D_0$.

For each $j\in \N$, consider the compact region $K^j\subset \Sigma $ defined by
equation (\ref{Kj}). Repeating the arguments in the proof of Lemma~\ref{c2no} verbatim,
each $K^j$ produces compact simply connected domains $K_n^j\subset S_n$
for $n$ large, such that $f_n(K_n^j)\subset \mathcal{B}(\G ^H,r_0)$ 
 and the sequence $\{ f_n |_{K_n^j}\} _n$ converges to $f|_{K^j}$ uniformly as $n\to \8$,
and we can also
find for each $k\in \{1,\dots, j\}$ a small compact disk $\cU^{k}$ centered at $(kL,0)$ in $K^j$
and related compact disks $\cU_n^{k} \subset K_n^j$ such that $\{ f_n |_{\cU_n^k} \} _n$
converges to $f|_{\cU^{k}}$ uniformly as $n\to \8$. Furthermore, the $\cU_n^k$ satisfy
that the intrinsic distance between $\cU_n^i$ and $\cU_n^k$ inside $K_n^j$
is greater than $\delta$ for some $\delta>0$ independent of $n,j,k$.  Notice however, that in the present
setting, while the plane $P_0$ intersects $f(\Sigma)$ transversely along the closed immersed curve $\a(\R)$,
we no longer know that $f_n^{-1}(P_0)$ is a simple closed curve (since $P_0$ is not a two dimensional 
subgroup). Instead,
for any fixed $j$ and for $n$ sufficiently large,
we have that $f_n^{-1}(P_0)\cap K_n^j$ is a connected compact embedded
arc which intersects each of the disks $\cU_n^{k} \subset K_n^j$, where $k\in \{1,\ldots,j\}$.
Let $I$ denote a small compact segment in $P_0$ centered at $\a(0)=e$ and
transversal to $f(\Sigma )$ at this point. The $L$-periodicity of $f(s,t)$
in the variable $s$ and the previous convergence properties allow to find for $n=n(j)$
large enough, $j$ points $q_n^k\in \cU_n^k$ with $f_n(q_n^k)\in I$, for every $k=1,\dots, j$.
In particular, property $(\star \star)$ in the proof of Lemma~\ref{c2no} holds.

Let $J_n^k$ be the compact connected arc of $f_n^{-1}(P_0)\cap K_n^j$
whose endpoints are $q_n^1$ and $q_n^k$, for each $k\in \{1,\dots, j\}$.
Pulling back via $f_n$ the tangent part to $f_n(S_n)$ of $K_{\Sigma}$, we obtain a
nowhere zero vector field $W_n$ on $K_n^j$ for $n$ large enough.
Letting the arc $J_n^k$ flow under $W_n$ between the sets $f_n^{-1}(P_{-R-1})$ and $f_n^{-1}(P_{R+1})$
we obtain a compact disk $D_n^k\subset K_n^j$ which is foliated by  compact arcs
of integral curves of $W_n$, each of which joins points in $(\partial D_n^k)\cap f_n^{-1}(P_{-R-1})$ and
$(\partial D_n^k)\cap f_n^{-1}(P_{R+1})$, for $n$ large. Then, the Jordan curve
$\gamma_n^k :=\parc D_n^k$, can be decomposed
as $\gamma_n^k=A_1^n\cup A_2^n \cup B_1^n \cup B_2^n$,
where properties (T1), (T2) hold after replacing the planes $\R^2\rtimes _A\{ \pm(R+1)\} $
by the topological planes $P_{\pm (R+1)}$.

We next solve for $k\in \{ 1,\ldots, j\} $ fixed (and $n$ large enough) the Plateau problem with boundary $\g _n^k$
in $(B_n,g_n)$, finding a compact stable embedded minimal disk $M_n^k\subset B_n$ of least
area, with $\parc M_n^k =\gamma_n^k$. Furthermore, $M_n^k$ is an immersion
transverse to $\parc B_n$ at the interior points of $A_1^n$ and $A_2^n$. Let
$\Lambda_n$ be the embedded surface $\{q\in B_n \mid \wt{f}_n(q)\in P_0\}$ and $\beta_n^k:=
M_n^k\cap \Lambda_n$. Following the same reasoning as in the proof of Lemma~\ref{c2no} we deduce that
after a small perturbation of $\Lambda_n$ in $B_n$ that fixes its boundary, we may assume that
$\beta_n^k$ contains a compact arc joining $q_n^1$ with $q_n^k$ along which
$M_n^k$ intersects $\Lambda_n$ transversally. Equations (\ref{eq:d>R}), (\ref{eq:7.4}),
(\ref{eq:7.5}) and (\ref{tridi}) hold in our current situation with the only changes of
$\R^2\rtimes _A\{ \pm (R+1)\}$
by $P_{\pm (R+1)}$, $\R^2\rtimes _A\{ 0\} $ by $P_0$ and Property~$(\star)$ by Property~$(\star)'$.
Finally, the arguments in the last five paragraphs of the proof Lemma~\ref{c2no} remain valid without changes,
by noticing that in our $\sl $-setting, the mean curvature $h_0(X)$ of $f$ is positive as
explained just before the statement of Theorem~\ref{th:aes}.
This finishes the sketch of proof of
Lemma~\ref{c2no} when $X$ is isomorphic to $\sl $ and $K_{\Sigma}$ is hyperbolic, and also finishes the proof of
Proposition~\ref{notc2}.
\end{proof}

\subsection{Case (S2.1) is impossible if $K_{\Sigma}$ is hyperbolic}
\label{sec:c1no}
\begin{proposition}
\label{notc1}
If $K_{\Sigma}$ is hyperbolic, then $f(\Sigma)$ cannot be in the conditions of case (S2.1).
\end{proposition}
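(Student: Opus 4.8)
The plan is to prove that case (S2.1) is impossible when $K_{\Sigma}$ is hyperbolic by adapting the flux argument of Section~\ref{sec:noa} to the $\sl$ setting, in close parallel to how Proposition~\ref{notc2} adapted Section~\ref{sec:nos}. Recall that in case (S2.1) the surface $\Sigma$ is an annulus, and $f(\Sigma)$ is the immersed annulus obtained by letting the immersed closed curve $f(\Sigma)\cap L_0$ flow under $K_{\Sigma}$, where now $L_0=\Pi^{-1}(\beta)$ (with $\beta$ the geodesic of $\Hip^2(-4)$ through $\Pi(e)$ orthogonal to the axis $\g$ of $\G^H$) plays the role formerly played by the horizontal plane $\R^2\rtimes_A\{0\}$. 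The curve $\a_{\infty}=f^{-1}(L_0)$ is a simple closed curve in $\Sigma$ and $\{f_n|_{\a_n}\}_n$ converges to $f|_{\a_{\infty}}$, where $\a_n=f_n^{-1}(L_0)\subset S_n$ is a simple closed curve by the Transversality Lemma~\cite[Lemma~3.1]{mmp2}.

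First I would solve the Plateau problem in each Alexandrov ball $(B_n,g_n)$: by~\cite{my2}, $\a_n$ bounds a smooth embedded least-area disk $M(n)\subset B_n$ transverse to $S_n$ along $\a_n$. The crucial radius estimate now comes from item~3 of Lemma~\ref{ass:box} (the infinite box replacing the solid metric cylinder of Section~\ref{sec:noa}): since $f(\a_{\infty})$ lies in an infinite box $\mathcal{B}(\G^H,r_0)$, the boundary curves $f_n(\partial M(n))$ lie in a slightly larger box, so the intrinsic radii of the $M(n)$ are uniformly bounded and a subsequence converges to a stable minimal disk $M(\infty)$ with boundary $f(\a_{\infty})$. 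Because $h_0(X)>0$ in the $\sl$ case, the Hopf boundary maximum principle gives a uniform lower bound $\eta\in(0,\pi/4)$ on the angle $M(n)$ makes with $S_n$ along $\a_n$, exactly as before. Then I would show that the Jacobi function $g_n(N_{M(n)},K_{\Sigma}^n)$ has no zeros along $\a_n$ for $n$ large: if $K_{\Sigma}^n$ were tangent to $M(n)$ at some $q_n\in\a_n$, the angle analysis forces $(K^H)_{f_n(q_n)}$ to become tangent to $L_0$ in the limit, which would force $f_n(q_n)$ to diverge, contradicting that $f(\a_{\infty})$ stays in a box of finite radius. Stability of $M(n)$ then yields $J_n>0$ on $M(n)$, and passing to the limit gives $J_{\infty}=\langle N_{M(\infty)},K_{\Sigma}\rangle>0$ on $M(\infty)$.

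With positivity of $J_{\infty}$ in hand, I would build the proper submersion $T_{\infty}(x,t)=l_{\G^H(t)}(\wt{f}_{\infty}(x))$ on $\ov{\D}\times\R$ as the limit of $T_n(q,t)=l_{\G^H(t)}(\wt{f}_n(q))$, reparameterizing the $M(n)$ by conformal diffeomorphisms normalized by a three-point condition so that $\wt{f}_n\circ\phi_n$ converges smoothly to a conformal parameterization $\wt{f}_{\infty}$ of $M(\infty)$, see~\cite{mor2,my1}. This realizes a solid cylinder $W=D_0\times\R$ with pulled-back metric whose boundary $\partial W$ maps onto $f(\Sigma)$ and is mean convex of constant mean curvature $h_0(X)$, and which carries the Killing field $K_1$ generated by the proper action of $\G^H$. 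The main obstacle — and the step that differs most from the semidirect case — is producing the second Killing field $K_2$: I cannot simply invoke Lemma~\ref{cor:bounded}, since that is a statement about semidirect products. Instead I would use Lemma~\ref{hyper-para}, applied to $K^P:=K_{\Sigma}$'s companion and the hyperbolic $K^H=K_{\Sigma}$: decomposing an appropriate right invariant field as $K^H-h\,K^P$ yields a field bounded in the relevant region, whose pullback by $T_{\infty}$ gives a nowhere-zero Killing field $K_2$ on $W$, tangent to the foliation $\cF=\{D_t\}$, bounded on the end $D_0\times[0,\infty)$, and with each interior integral curve having compact closure with endpoints on $\partial W$. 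Once $K_1,K_2$ are in place, I conclude by the flux dichotomy exactly as in Section~\ref{sec:noa}: Theorem~\ref{flux} forces $\mbox{Flux}(\partial W,\partial D_0\times\{0\},K_1)\neq 0$, while the homological invariance of the CMC flux together with $\a_n$ being null-homologous in $S_n$ forces $\mbox{Flux}(\partial B_n,\a_n,K_{\Sigma}^n)=0$ for all $n$, and these converge to the flux on $W$, giving the contradiction. I expect the delicate points to be checking that the boundedness conclusion of Lemma~\ref{hyper-para} applies on the correct end of $W$ and verifying the compact-closure property of the integral curves of $K_2$, since the geometry of right cosets of $\Hip^2_{\theta}$ in $\sl$ is less transparent than that of horizontal halfspaces in a semidirect product.
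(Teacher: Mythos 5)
Your overall skeleton matches the paper's proof: adapt Section~\ref{sec:noa}, replace the solid cylinder and the radius estimate of \cite{mmp1} by the infinite box and item~3 of Lemma~\ref{ass:box}, build the proper submersion $T_\infty$ and the solid cylinder $W=D_0\times\R$ carrying the Killing field $K_1$ (pullback of $K_\Sigma$), and conclude by playing Theorem~\ref{flux} against the homological vanishing of $\mbox{Flux}(\partial B_n,\a_n,K_\Sigma^n)$. However, there is a genuine gap at the one step where the $\sl$ case really differs from the semidirect case: the construction of the second Killing field $K_2$. The field $K^H-h\,K^P$ supplied by Lemma~\ref{hyper-para} is \emph{not} a Killing field: it is a right invariant field minus a nonconstant function times another right invariant field, and one computes $\mathcal{L}_{K^H-hK^P}\langle,\rangle=-\bigl(dh\otimes (K^P)^\flat+(K^P)^\flat\otimes dh\bigr)\neq 0$ since $dh\neq 0$ and $K^P$ is nowhere zero (concretely, in the model $\Hip^2_\t\cong\R\rtimes_{(1)}\R$ this field is $\mu_2\partial_y$, which is not Killing for any left invariant metric there). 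Lemma~\ref{hyper-para} asserts only boundedness, resp.\ constant length; it is used in the paper (proof of Lemma~\ref{parabolic}) merely to bound the Jacobi function $\langle K^H,N\rangle$, never to produce a Killing field. Since Theorem~\ref{flux} needs $K_2$ to be Killing (its proof requires $\langle K_2,N_{L(T_0)}\rangle$ to be a Jacobi function), your choice of $K_2$ cannot be fed into it. Worse, $K^H-hK^P$ has the same component transversal to the relevant foliations as $K^H$ does, so it is tangent neither to the preimages of the planes $P_t$ nor to the preimages of horocylinders; and you never actually specify what your foliation $\{D_t\}$ is in the $\sl$ setting, which is itself a nontrivial choice.

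The paper resolves this differently: $K_2$ is the pullback by $T_\infty$ of the \emph{parabolic} right invariant field $V$ of $\Hip^2_\t$ itself (genuinely right invariant, hence Killing), and the foliation \eqref{fol} is replaced not by preimages of the minimal planes $P_t$ but by $D_t=T_\infty^{-1}(\Pi^{-1}(c_t))$, where $c_t$ are the horocycles based at $\t$; $V$ is tangent to these horocylinders, and its boundedness on one end is proved directly (reduce by quasi-isometry to a four-dimensional isometry group, check $V=\partial_x$ is bounded on $\R\rtimes_{(1)}(0,\infty)\subset\Hip^2_\t$, then propagate over all of $\Pi^{-1}(A_0)$ using that right translations by the elliptic subgroup generated by $(E_3)_e$ are isometries of the $\E(\kappa,\tau)$-metric). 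Two further, smaller corrections. First, you cannot invoke the Transversality Lemma to conclude that $f_n^{-1}(L_0)$ is a simple closed curve: $L_0=\Pi^{-1}(\be)$ is not a coset of a two-dimensional subgroup (the paper flags exactly this in the proof of Proposition~\ref{notc2}), so one must instead take the simple closed curve component of $f_n^{-1}(P_0)$ through $p_n$, which is still null-homologous in the sphere $S_n$. Second, your tangency argument for $J_n\neq 0$ along $\a_n$ can be repaired, but only after stating explicitly that $K^H$ is everywhere transversal to $P_0$ (property (W) in the paper) and using compactness of $f(\a_\infty)$; the paper's route is simpler and avoids the angle analysis: the maximum principle against the minimal foliation $\cF'$ forces $\wt{f}_n(M(n))\subset P_0$, so a zero of $J_n$ would make $K_\Sigma$ tangent to $P_0$, contradicting (W) outright.
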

\begin{proof}
Again the proof of this proposition is basically an adaptation of the contents of Section~\ref{sec:noa} to the setting
of $\sl$. As we did when proving Proposition~\ref{notc2}, we will only focus on the differences with the case
of $X$ being a semidirect product, and we will also use part of the notation established in the proof of
Proposition~\ref{notc2}.

Suppose that case (S2.1) holds and $K_{\Sigma }$ is hyperbolic, with associated 1-parameter
subgroup $\G ^H\colon \R \to \sl $. We use the same notation $f\colon (\Sigma ,p) \la (X,e)$,
$f_n\colon (S_n,p_n)\la (X,e)$ and $\wt{f}_n\colon B_n\to X$ as in the first paragraph of
the proof of Proposition~\ref{notc2}. The desired contradiction will come from application of a
flux argument  (Theorem~\ref{flux}) that we explain next.

Let $\Hip =\Hip ^2_{\t }$ be one of the two two-dimensional subgroups of $\sl $ that contain
$\G^H$. Associated to $\Hip $ and to $\G^H$, we have the infinite boxes
$\mathcal{B}(\G^H,r)$ for all $r>0$.
Since $f(\Sigma)$ is an annulus invariant under the flow of $K_{\Sigma }$, we can find
$r_0>0$ such that $f(\Sigma)$ is contained in $\mathcal{B}(\G^H,r_0)$ and
$\parc \mathcal{B}(\G^H,r_0)$ is at a positive distance from $f(\Sigma)$.
Consider the related foliation $\cF'=\{ P_t\ | \ t\in  \G^H\equiv \R \} $ by topological minimal planes
defined in the proof of Proposition~\ref{notc2}.

For $n$ sufficiently large, consider the simple closed curve component
$\a_n$ of $f_n^{-1}(P_0)\subset S_n$ containing $p_n$ and let
$K_{\Sigma }^n$ be the Killing vector field on $B_n$ obtained by pulling-back $K_{\Sigma }$
 through $\wt{f}_n$. As we are assuming that case (S2.1) holds, $\Sigma $ is an annulus
and $f(\Sigma )$ is the immersed annulus obtained by letting the
immersed closed curve $f(\Sigma )\cap P_0$ flow under $K_{\Sigma}$. Let $\a _{\infty }
=f^{-1}(P_0)$, which is a simple closed curve in $\Sigma $ such that the sequence
$\{ f_n|_{\a _n}\} _n$ converges uniformly to $f|_{\a _{\infty }}$.

As $\cF'$ is a foliation by minimal planes, then the maximum principle implies that the smooth, embedded
least-area disk $M(n)$ obtained by solving the Plateau problem in $B_n$ with boundary $\a _n$,
satisfies that $\wt{f}_n$ restricts to $M(n)$ producing an immersion of $M(n)$ into $P_0$.
As the mean curvatures of the $S_n$ are positive,
then $M(n)$ is transverse to $\partial B_n$ along $\a _n$.

The same argument as in the fifth paragraph of Section~\ref{sec:noa} proves that
there exists an $\eta\in(0,\pi/4)$ such that the angle that $M(n)$ makes with $S_n$ along
$\a _n$ is greater than $\eta$ for all $n\in \N$ large enough, after replacement of the intrinsic
radius estimate Theorem 1.1 in~\cite{mmp1} by item~3 of Lemma~\ref{ass:box}, changing
$\G $ by $\G^H$, and noticing that $h_0(X)\geq H(X)>0$.

The next step consists of proving that if $N_{M(n)}$ stands for the unit normal field to
$M(n)$ in $(B_n,g_n)$, then for $n$ sufficiently large the function $J_n=
g_n(N_{M(n)},K_{\Sigma }^n)\colon M(n)\to \R $ has no zeros along $\a_n$. The proof of
the same property in Section~\ref{sec:noa} does not work now, since it uses
equation~(\ref{eq:6}) which is only valid in a semidirect product.
Instead, we will argue in $\sl $ as follows. Observe that 
\begin{enumerate}[(W)]
\item $K_{\Sigma }$ is everywhere transversal to $P_0$.
\end{enumerate}
Now, if $J_n$ vanishes at some point $q_n\in \a _n$, then $K_{\Sigma }^n(q_n)$ is
tangent to $M(n)$ at $q_n$. Applying the differential of $\wt{f}_n$ at $q_n$, we deduce that
$K_{\Sigma }(f_n(q_n))$ is tangent to $P_0$ at $f_n(q_n)$, which contradicts~(W).
Therefore, $J_n$ has no zeros along $\a _n$ for all $n\in \N$.

Once we know that $J_n$ has no zeros along $\a_n$ for all $n$, we can follow the
arguments in Section~\ref{sec:noa} with the following changes (we are using the same notation as in Section~\ref{sec:noa}):
\begin{itemize}
\item Replace $\G $ by $\G^H$ in equations~(\ref{eq:Tn}) and (\ref{eq:Tinfty}). In this way, we can
obtain a submersion $T_{\infty }\colon \ov{\D}\times \R\to X$.

\item Replace $\R^2\rtimes _A\{ t\} $ in equation~(\ref{fol})
by the horocylinder $\Pi^{-1}(c_t)$ where $c_t$ is the horocircle in $\Hip^2(-4)$
whose end point at $\partial _{\infty }\Hip ^2$ is $\theta $ defined by $\Hip=\Hip^2_{\theta }$,
such that $c_t$ passes through $\Pi (t)$, $t\in \G^H$.
That is, define $D_t=T_{\infty}^{-1}(\Pi ^{-1}(c_t))$,
and then define the foliation $\cF$ as in equation~(\ref{fol}).

\item Define $K_1$ as the nowhere zero, smooth vector field on $\ov{\D}\times \R $ obtained after pulling
$K_{\Sigma}$ by the submersion $T_{\infty }~$. Observe that  each of the integral curves of $K_1$ in $\ov{\D}\times \R $
intersects $D_0$ in a single point.

\item Define $K_2$ as  the nowhere zero vector field on $W:=D_0\times \R$ which is the pullback by
$T_{\infty }$ of a parabolic right invariant vector field $V$ on $X$ associated to $\H$ ($V$ is defined
up to a multiplicative nonzero constant). Observe that $K_2$ is tangent to the foliation $\cF$
and that once we endow $W$ with the pulled-back metric by $T_{\infty }$, $K_2$ is a Killing field. We claim that
$K_2$ is bounded on one of the two ends of $W$: to do this, first note that to prove this
boundedness property, it suffices to assume that $X$ has isometry group of dimension four (as every
left invariant metric on $\sl $ is quasi-isometric to any fixed left invariant metric on $\sl$).
Also notice that
$\Pi ^{-1}(c_0)\cap \H$ is the 1-parameter parabolic subgroup of $\H$, and that 
the restriction of $V$ to $\Pi ^{-1}(A_0)\cap \H$ is bounded, where $A_0$ is the open horodisk in $\Hip ^2$ bounded by $c_0$
(in fact, if we identify $\H$ with the semidirect product
$\R\rtimes _{(1)}\R=\{ (x,y)\ | \ x,y\in \R\} $ as explained after \eqref{gop}, then $V$ identifies with $\partial _x$ up to a multiplicative
constant, $\Pi ^{-1}(A_0)\cap \H$ is the open halfspace $\R\rtimes_{(1)}(0,\infty )$ and $\partial _x$ is
bounded in $\R\rtimes_{(1)}(0,\infty )$). As the right translations by elements in the elliptic subgroup of $\sl $
generated by the vector $(E_3)_e$ given by equation (\ref{eq:lframesl}) are isometries of any $\E (\kappa,\tau )$-metric in $\sl$
and $V$ is right invariant, we conclude that $V$ is bounded in $\Pi^{-1}(A_0)$. Therefore, our claim follows.
\end{itemize}

The above changes allow us to apply Theorem~\ref{flux} to find the same contradiction
with CMC flux as in Section~\ref{sec:noa}. This contradiction
finishes the proof of Proposition~\ref{notc1}.
\end{proof}

\section{Appendix: A nonvanishing result for the CMC flux}

Let $W$ be a Riemannian $(n+1)$-manifold and $M\subset
W$ a two-sided $n$-hypersurface with constant mean curvature $H\in \R $
(throughout this appendix we define the mean curvature of $M$ to be the trace of its second fundamental form divided by $n$).
The CMC flux of $M$ associated to a Killing vector field $K$ on $W$ and to a piecewise smooth
$(n-1)$-chain $\a $ in $M$ which is the boundary of a piecewise smooth two-sided
$n$-chain $\be $ in $W$, is the real number
\begin{equation}
\label{eq:flux}
\mbox{Flux}(M,\a ,K)=\int_{\a }\langle K,\eta\rangle + nH
\int_{\beta }\langle K,N\rangle,
\end{equation}
where $\eta $ is a unit conormal vector to $M$ along $\a $ and $N$ is a
unit normal vector field along $\be $. In order for (\ref{eq:flux}) not to depend on
orientation choices, certain compatibility between $H,\eta $ and $N$ must be satisfied,
see a more precise definition in Definition~\ref{def:flux}.
A key property of Flux$(M,\a ,K)$ is that it does not depend on the homology class of $\a $ in $M$.

The goal of this section is to prove the following general result, in which the CMC flux of an
$n$-hypersurface $M=\partial \Sigma \times \R$ with respect to a
certain Killing vector field $K=K_1$ is nonzero. Theorem~\ref{flux} has been used in
Sections~\ref{sec:noa} and \ref{sec:c1no}.

\begin{theorem}
 \label{flux}
Let $\Sigma$ be a smooth, compact oriented $n$-manifold with nonempty
boundary, $n\leq 6$.  Let $W$ be $\Sigma\times \R$
equipped with a Riemannian metric such that:
\ben[(1)]
\item   The boundary $\partial \Sigma \times \R$ of $W$ is
mean convex, with constant mean curvature $h_0\geq 0$.
\item  There exists  a nowhere zero Killing field $K_1$ of $W$ arising
from a proper action\footnote{This means that if we denote by $\{ \phi _s\ | \ s\in \R \} $
the 1-parameter subgroup of isometries of $W$ corresponding to the flow of the Killing
field $K_1$, then for each $x\in W$ the map $s\in \R \mapsto \phi _s(x)$ is proper.}
of an $\R$-subgroup of the isometry group of $W$.
\een Suppose that there exists a  Killing field $K_2$ in $W$ which is tangent to
the foliation $\{\Sigma\times\{s\} \mid s \in \R\}$, bounded on
either   $\Sigma\times [0,\infty)$ or on  $\Sigma\times (-\infty,
0]$ and such that every integral curve of $K_2$ that intersects $\Int(W)$ is a compact arc with boundary in
$\partial W$. Then: $$\mbox{\rm Flux}(\partial \Sigma \times \R ,
\partial \Sigma \times \{0\},K_1)\neq 0.$$
\end{theorem}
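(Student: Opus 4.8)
The plan is to argue by contradiction: I would assume $\mathrm{Flux}(\partial\Sigma\times\R,\,\partial\Sigma\times\{0\},K_1)=0$ and extract from this vanishing a minimal or $h_0$-hypersurface in $W$ that cannot coexist with the mean convex boundary $\partial W$ and the proper $K_1$-flow. First I would isolate the two formal properties of the flux that reduce the problem to a single convenient representative. Since $K_1$ is Killing it is divergence free, so $\int_\beta\langle K_1,N\rangle$ depends only on $\alpha=\partial\beta$ and not on the bounding chain $\beta$; and since $M=\partial\Sigma\times\R$ has constant mean curvature $h_0$, the identity $\mathrm{div}_M(K_1^{T})=-n h_0\langle K_1,\nu\rangle$ (with $\nu$ the unit normal of $M$ in $W$ and $K_1^{T}$ the part of $K_1$ tangent to $M$) shows, via Stokes' theorem, that $\mathrm{Flux}(M,\alpha,K_1)$ depends only on the homology class of $\alpha$ in $M$. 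Choosing $\beta=\Sigma\times\{0\}$, I may then write the flux as $\int_{\partial\Sigma\times\{0\}}\langle K_1,\eta\rangle+n h_0\int_{\Sigma\times\{0\}}\langle K_1,N\rangle$, and the same number is obtained from every slice $\Sigma\times\{t\}$. The orientation compatibility built into Definition~\ref{def:flux} is what guarantees that the contributions of the several components of $\partial\Sigma$ add with a coherent sign rather than cancelling.

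The heart of the argument is to show this common value is nonzero. Assuming it is zero, I would solve a sequence of Plateau-type problems: for each $t$, minimize the functional $\mathrm{Area}(\beta)-n h_0\,V(\beta)$, with $V$ the signed volume bounded together with a fixed reference slice, among $n$-chains $\beta\subset W$ with $\partial\beta=\partial\Sigma\times\{t\}$ in the slice homology class. Because $\dim W=n+1\leq 7$, the resulting minimizer $\hat\beta_t$ is a smooth embedded hypersurface of constant mean curvature $h_0$ meeting $\partial W$ along $\partial\Sigma\times\{t\}$; this is the only place where the hypothesis $n\leq 6$ enters, through interior regularity of area (and $h_0$-)minimizing hypersurfaces. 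The vanishing of the flux is precisely the balancing condition ensuring that these minimizers do not gain area under the $K_1$-flow, so that a suitable sequence of their $K_1$-translates stays in a fixed region transverse to the orbits.

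The two special hypotheses on $K_2$ provide exactly the geometric control needed to run this minimization and the ensuing limit. The hypersurface swept out by the compact $K_2$-arcs issuing from $\partial\Sigma\times\{t\}$ is a comparison chain of uniformly bounded area — finite because each such arc is compact and, on the end where $K_2$ is bounded, the sweep has controlled mass — which both bounds $\mathrm{Area}(\hat\beta_t)$ from above and, through the maximum principle, confines $\hat\beta_t$ to a compact portion of $W$ modulo $K_1$. With these uniform bounds I would extract, from a subsequence of translates $\phi_{s_t}(\hat\beta_t)$ under the flow $\{\phi_s\}$ of $K_1$, a limit $L$: a properly embedded $h_0$-hypersurface in $W$ homologous to a slice. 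Using again that the flux vanishes, $L$ and its $K_1$-translates either coincide, so that $L$ is $K_1$-invariant and descends to a compact $h_0$-hypersurface in the quotient $W/\langle K_1\rangle$, or they are pairwise disjoint and foliate a $K_1$-invariant region.

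In either case the contradiction comes from the mean convexity of $\partial W$ together with the properness of the $K_1$-action. Since $K_1$ generates a proper $\R$-action, its translates push any compact piece of $L$ off to infinity, forcing the $K_1$-orbit of $L$ to approach $\partial\Sigma\times\R$; comparing $L$ with $\partial W$, which has the same constant mean curvature $h_0$ with inward-pointing mean curvature vector, the maximum principle for $h_0$-hypersurfaces forces $L$ to coincide with a component of $\partial W$, which is impossible for a hypersurface homologous to a transverse slice. I expect the main obstacle to be exactly the construction and control of the minimizers $\hat\beta_t$: obtaining uniform area bounds and, crucially, preventing them from either collapsing into $\partial W$ or escaping to infinity along the $K_1$-orbits. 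This is where both properties of $K_2$ are indispensable — the compact-arc property supplies interior barriers and bounded comparison surfaces, while the boundedness of $K_2$ on one end controls the noncompact direction — and where the restriction $n\leq 6$ is needed so that the limit $L$ is a smooth embedded hypersurface to which the maximum principle may be applied.
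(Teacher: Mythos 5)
Your proposal has genuine gaps at its two load-bearing points. First, the claim that vanishing of the flux is ``precisely the balancing condition'' ensuring that the $h_0$-minimizers $\hat\beta_t$ do not drift under the $K_1$-flow and that suitable translates stay in a fixed compact region is unsupported: the flux identity is a boundary/volume integral relation and gives no compactness whatsoever for minimizers. Indeed, in the actual geometry of this theorem the opposite behavior occurs and must be confronted: the paper's proof produces a family of constant mean curvature graphs $\Delta(t)$ (with \emph{fixed} boundary) and proves (Assertion~\ref{ass11.3}) that their union is \emph{not} contained in any compact region of $W$, so the analysis is forced to deal with escape to infinity rather than being able to exclude it. Second, the endgame fails as stated: properness of the $K_1$-action means orbits leave every compact set, but ``infinity'' in $W=\Sigma\times\R$ is along the $\R$-ends, not towards $\partial W$, so nothing forces the translates of your limit hypersurface $L$ to ``approach $\partial\Sigma\times\R$''; and since $L$ and $\partial W$ have the \emph{same} constant mean curvature $h_0$, the maximum principle yields a contradiction only at an actual interior or boundary tangency point with compatibly oriented mean curvature vectors, which your argument never produces. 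A related structural problem is that your proposal never uses the hypotheses on $K_2$ in any essential way; in particular the compact-arc condition appears only as a vague source of ``barriers,'' whereas it is needed for something much more specific.

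For comparison, the paper's proof does not assume ${\rm Flux}=0$ at all. It takes an area-minimizing $(n-1)$-cycle $\Gamma\subset\partial W$ representing $[\partial\Sigma]$, spans it by an area-minimizing hypersurface $\Delta$ (this is where $n\le 6$ enters), shows $\Delta$ is a global $K_1$-graph so that $\{\phi_s(\Delta)\}_s$ foliates $W$, and then ``blows a bubble'': a maximal family $\{\Delta(t)\}_{t\in[0,T_0)}$ of $K_1$-graphs with the same boundary $\Gamma$ and constant mean curvature $-t$. If $T_0>h_0$, a direct conormal comparison between $\Delta(t_1)$ (with $h_0<t_1<T_0$, whose own flux vanishes by homological triviality of its boundary) and the portion of $\partial W$ above $\Gamma$ shows the flux is strictly negative, hence nonzero. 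The case $T_0\le h_0$ is then ruled out: the bubbles escape every compact set, one translates back by $\phi_{s(t)}$ to obtain a complete limit leaf $L(T_0)$ that touches some slice $\Sigma\times\{t_0\}$ tangentially, one proves $L(T_0)$ has linear volume growth (via the flux's homological invariance) and hence is parabolic, and finally $u=\langle K_2,N_{L(T_0)}\rangle$ is a bounded Jacobi function that vanishes at the tangency yet is not identically zero --- precisely because every integral curve of $K_2$ meeting $\Int(W)$ is a compact arc with endpoints on $\partial W$, so $K_2$ cannot be everywhere tangent to $L(T_0)$ --- contradicting the fact that bounded Jacobi functions on complete, stable, parabolic CMC hypersurfaces cannot change sign. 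This stability--parabolicity--Jacobi-function mechanism is the heart of the theorem and is absent from your proposal; without it, or a genuine substitute, the hypotheses on $K_2$ and the mean-convexity of $\partial W$ cannot be brought to bear.
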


\begin{definition}[CMC flux] \label{def:flux}
 {\rm
Suppose $M$ is a hypersurface with constant mean curvature $H\in \R $ in an oriented  Riemannian
$(n+1)$-manifold $X$, and let $K$ be a  Killing field on $X$.
For each $[\a ]\in H_{n-1}(M)$ in the kernel of the induced
inclusion homomorphism $i_*\colon H_{n-1}(M)\to H_{n-1}(X)$,
consider two homologous piecewise smooth $(n-1)$-cycles $\a,\a _1
\subset M$ representing $[\a ]$ and let $\beta,\be _1$ be piecewise
smooth $n$-chains in $X$ with boundaries $\partial \be =\a $,
 $\partial \be _1=\a _1$ (which exist since
$[\a ]\in \ker (i_*)$). We assume that if $M(\a ,\a _1)$ is the $n$-chain in $M$
whose boundary is $\a _1-\a $, then  $\be -\be _1-M(\a,\a _1)$ is the
boundary of an $(n+1)$-chain $\Omega $ in $X$.
Applying the Divergence Theorem to $K$ in
$\Omega $, we obtain
\begin{equation}
\label{eq:flux2a}
0=-\int _{\be }\langle
 K,N\rangle +\int _{\be _1}\langle K,N_1\rangle -\int _{M(\a ,\a _1)}\langle K,N^M\rangle ,
\end{equation}
where $-N,N_1$ are unit normal vector fields to
$\be ,\be _1$ (defined almost everywhere) pointing outward
$\Omega $, and $N^M$ is the unit normal vector field to
$M$ pointing inward $\Omega $ (see Figure~\ref{flux1}).
\begin{figure}
\begin{center}
\includegraphics[height=5cm]{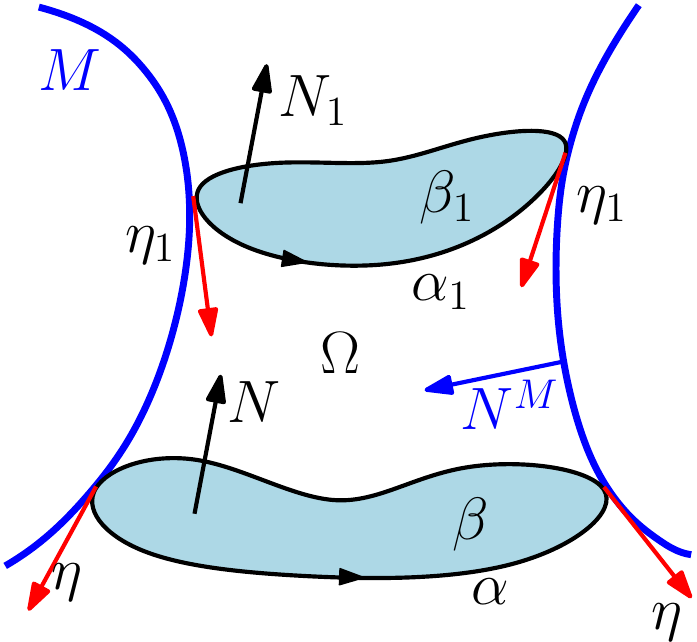}
\caption{Interpretation of the CMC flux homological invariant. }
\label{flux1}
\end{center}
\end{figure}
By the Gauss equation, the divergence in $M$ of the tangential part
$K^{T}$ of $K$ is given by div$^M(K^T)=nH\langle K,N^M\rangle $,
where $H$ is the
 mean curvature of $M$ with respect to $N^M$. Applying the
Divergence Theorem in $M(\a ,\a _1)$,
\begin{equation}
\label{eq:fluxb}
nH\int _{M(\a ,\a _1)}\langle K,N^M\rangle =\int _{\a }\langle K,\eta \rangle
-\int _{\a _1} \langle K,\eta _1\rangle ,
\end{equation}
where $\eta ,-\eta _1$ are the unit conormal vector fields to $M$
 along $\a ,\a _1$ pointing outward $M(\a ,\a _1)$, which are
 defined almost everywhere on $\a ,\a _1$. Equations~(\ref{eq:flux2a}),
 (\ref{eq:fluxb}) show that the real number
\begin{equation}
\label{eq:10.2}
\mbox{Flux}(M,\a ,K)=\int_{\a }\langle K,\eta\rangle + nH
\int_{\beta }\langle K,N\rangle
 \end{equation}
is independent of the choice of the representative $\a $ in a given homology class
$[\a ]\in \ker (i_*)\subset H_{n-1}(M)$ and of the
$n$-chain $\be $ as above. This number is usually
called  the {\it CMC flux} of $M$ along $[\a]$ associated to $K$.
Observe that the mean curvature of $M$ is computed with
respect to the unit normal vector that, along  $\a$, points to the
same closed component of $M\cup \be $ as $N$, and that $\langle N,\eta \rangle
\leq 0$ along $\a $.
The above argument shows the homological invariance of the CMC
flux; we refer the reader to the papers~\cite{hlr,kkms1,kks1,maz4}
 for applications of this CMC flux.
}
\end{definition}

\begin{proof}[Proof of Theorem~\ref{flux}]
Without loss of generality,  we will assume that the Killing field
$K_2$ is bounded on the end $\Sigma\times (-\infty,0]$ of $W$. Let
$\{\phi_s\mid s\in \R\}$ be the 1-parameter subgroup of isometries of
$X$ corresponding to the flow of the Killing field $K_1$. Since the action
$(s,x)\in \R \times W\mapsto \phi _s(x)$ is proper, then after possibly
changing $K_1$ by $-K_1$, we may suppose that $\phi _s(\Sigma
\times \{ 0\} )$ lies eventually in the upper end $\Sigma \times
[0,\infty )$ of $W$ as $s\to +\infty $.
 Consider the
quotient map $\Pi\colon W\to W/G=\wh{W}$, where $G=\{ \phi _s\
| \ s\in 2\pi \Z \} $ (note that the discrete group of isometries
$G$ acts proper discontinuously on $W$). We will denote by $\esf^1$ the related
1-parameter subgroup of isometries of $\wh{W}$.

Let $\wh{\G }$ be an area-minimizing oriented $(n-1)$-dimensional hypersurface in
$\partial \wh{W}$ that represents $[\partial \Sigma ]\in H_{n-1}(\partial \wh{W})$ ($\wh{\G}$ is an area-minimizing
integral current, which is smooth in this dimension and it exists since $\partial \wh{W}$ is a closed Riemannian manifold).
By cut-and-paste type arguments and the maximum principle for
minimal hypersurfaces, any two distinct solutions to this area-minimization  problem
in $\partial \wh{W}$ are disjoint or equal.  It follows that
$\partial \wh{W}$ is foliated by the area-minimizing oriented
$(n-1)$-dimensional submanifolds $\phi _s(\wh{\G})$, $\phi _s\in \esf^1$.
For homological reasons, $\wh{\G}$ has a lift through $\Pi$ which is
a compact $(n-1)$-dimensional hypersurface $\G$ in $\partial W$
that is area-minimizing in $\partial W$,
and when considered to lie in $W$, it is the boundary of an integral
$n$-chain.
By the properness of the $\R$-action giving rise to $K_1$ and the compactness of $\G$,
we deduce that
$K_1$ cannot be everywhere tangent to $\G$. On the other hand, since for $s\neq 0$
it holds $\phi _s (\G)\cap \G=\mbox{\rm \O}$,
then the  Jacobi function  $J_{\G }=\langle K_1,N_{\G}\rangle
\colon \G\to \R$,
is nonnegative on $\G$ (after choosing an
appropriate orientation of $\G $, or equivalently a
unit normal $N_{\G }$ along $\G $, i.e., $N_{\G }$ is tangent to $\partial W$,
orthogonal to $\G $ and unitary); note
that $J_{\G }$ is positive at
the   points of $\G$ where  $K_1$ is not tangent to $\G$.  The
maximum principle (see e.g.,~\cite[ Assertion~2.2]{mpr19}) implies that $J_{\G }>0$ on $\G $;
in particular, $K_1$ is transverse to $\G$ and each integral curve of
$K_1$ intersects $\G$ in a single point.

Let $\Delta\subset W$ be an area-minimizing hypersurface with boundary $\G$, which in
 this dimension is smooth by standard interior and boundary regularity results
 for minimizing integral varifolds; see for example, Hardt and Simon~\cite{hrs1}
 for boundary regularity results and also see~\cite{my4,my2} for
 related barrier arguments.
 By the mean curvature comparison principle,   $\Delta $ is never tangent to $\partial W$ along $\G $.
 Since $J_{\G }>0$ by the previous paragraph, then  we can choose a unit
 normal vector $N_{\Delta }$ to $\Delta $ such that
$\langle (N_{\Delta })|_{\partial \G },N_{\G }\rangle $ is positive along $\G$.
As $K_1$ is tangent to $\partial W$, $(N_{\Delta })|_{\G}$ is orthogonal to $\G $
and both $\langle K_1,N_{\G }\rangle $,
$\langle (N_{\Delta })|_{\partial \G },N_{\G }\rangle $ are positive
along $\G$, then we conclude that $\langle K_1,N_{\Delta }\rangle >0$ along $\G $.
As $\Delta $ is stable and $\langle K_1,N_{\Delta }\rangle >0$ along $\partial \Delta $, then
$\langle K_1,N_{\Delta }\rangle  $ is positive on $\Delta $ and so, $\Delta$
intersects each of the integral curves of $K_1$ transversely in a single point.
Therefore, the set $\cF=\{ \phi_s(\Delta)\mid s \in \R\}$ is a smooth minimal foliation of $W$,
where each leaf $\phi_s(\Delta)$ of this foliation intersects every integral curve of $K_1$
transversely in a single point.

We will now deform the minimal surface $\Delta$ by a 1-parameter family
of surfaces with constant mean curvature and the same boundary as $\Delta$.
By the "blowing a bubble" technique described
in~\cite{kkms1}, for $\delta>0$ sufficiently small there exists a
unique 1-parameter family of hypersurfaces $t\in [0,\de )\mapsto
\Delta(t)\subset W$ depending smoothly on $t$ such that for all
$t\in [0,\de )$:

\begin{enumerate}[(X1)]
\item $\Delta (t)$ is the graph of a smooth
function $f_t\colon \Delta\to \R$
(in the direction of $K_1$), with $\Delta (0)=\Delta $ and $f_0=0$.

\item  $\Delta (t)$ has constant mean curvature $-t$ with
respect to the unit normal vector field $N_{\Delta (t)}$
to $\Delta (t)$ that
satisfies $\langle K_1, N_{\Delta (t)}\rangle >0$.

\item $f_t=0$ on $\partial \Delta $.

\item $f_t(x)>f_{t'}(x)$ whenever $t>t'$
and $x\in {\rm Int}(\Delta)$.
\end{enumerate}

We now study the maximal half-open interval $[0, T_0)$ of $t$-values
in which the family $\Delta (t)$ can be defined (here $T_0\in (0,\infty ]$).

First consider the case in which the constant mean curvature $h_0$ of $\partial W $
 satisfies $h_0<T_0$. Then, there exists a hypersurface $\Delta (t_1)\subset X$ with
constant mean curvature $-t_1$, which is the graph of a function
$f_{t_1}\colon \Delta \to \R$ such that $f_{t_1}=0$ on $\partial \Delta $ and $t_1>h_0$. Since $\partial \Delta
(0)=\partial \Delta (t_1)$, then the CMC flux of $\Delta (t_1)$ along
$[\partial \Delta (t_1)]$ associated to $K_1$ is given by
\[
 \mbox{Flux}(\Delta (t_1),\partial \Delta (t_1),K_1)=
\int_{\partial \Delta (0)} \langle K_1,\eta_{\Delta (t_1)}\rangle
+nt_1\int_{\Delta(0)}\langle K_1, N_{\Delta (0)}\rangle,
 \]
where $\eta_{\Delta (t_1)}$ is the outward pointing unit conormal
vector field to $\Delta(t_1)$ along $\partial \Delta (t_1)$ and
$N_{\Delta (0)}$ is the unit normal vector field to $\Delta (0)$
pointing towards the compact region of $W$ bounded by $\Delta(0)$
and $\Delta(t_1)$. Note that
\[
\mbox{Flux}(\Delta (t_1),\partial
\Delta (t_1),K_1)=0
\]
 by the homological invariance of the CMC flux
 ($\partial \Delta (t_1)$ is homologically trivial in
 $\Delta (t_1)$). On the other hand, along $\partial \Delta (0)$ the
 inequality
\begin{equation}
\label{eq:FLUX}
\langle K_1,\eta _{(\partial W)^+}\rangle \leq
 \langle K_1,\eta _{\Delta (t_1)}\rangle
\end{equation}
holds, where $\eta _{(\partial W)^+}$ denotes the outward pointing
unit conormal field to the portion $(\partial W)^+$ of $\partial W$
which lies above $\partial \Delta (0)$. This last inequality
together with $h_0<t_1$ imply that
\[
\begin{array}{rcl}
\mbox{Flux}\left( (\partial W)^+,\partial \Delta (0),K_1\right) &=&
{\displaystyle \int_{\partial \Delta (0)} \langle K_1,\eta_{(\partial
W)^+}\rangle
 + nh_0\int_{\Delta(0)}\langle K_1, N_{\Delta (0)}\rangle}
 \\
& < & {\displaystyle
\int_{\partial \Delta (0)} \langle K_1,\eta_{(\partial
W)^+}\rangle
 + nt_1\int_{\Delta(0)}\langle K_1, N_{\Delta (0)}\rangle}
 \\
& \stackrel{(\ref{eq:FLUX})}{\leq } & \mbox{Flux}(\Delta (t_1),\partial \Delta (t_1),K_1)= 0.
\end{array}
 \]
Finally, the homological invariance of the CMC flux gives
\[
 \mbox{Flux}\left( \partial W,\partial \Sigma \times \{ 0\} ,K_1\right) =
 \mbox{Flux}\left( (\partial W)^+,\partial \Sigma \times \{ 0\} ,K_1\right) =
\mbox{Flux}\left( (\partial W)^+,\partial \Delta (0),K_1\right) < 0,
 \]
which completes the proof of the theorem in the case $h_0<T_0$.

Assume in the sequel that $T_0\leq h_0$ and we will find a contradiction,
which will finish the proof of the theorem. We first prove the following property.
\begin{assertion}
\label{ass11.3}
$\cup _{t\in [0,T_0)}\Delta (t)$ is not contained in any compact region of
$W$. In particular, for every $n\in \N$ there
exists a $t(n)\in [0,T_0)$ such that $\Delta(t)$ contains points at
distance at least $n$ from $\Delta(0)$ whenever $t\geq t(n)$.
\end{assertion}
\begin{proof}[Proof of the assertion.]
Suppose the assertion does not hold.
Then, the constant
mean curvature hypersurfaces $\{ \Delta (t)\ | \ t\in [0,T_0)\} $ have uniform height estimates (as $K_1$-graphs).
By the "blowing a bubble" technique
 in~\cite{kkms1}, these height estimates imply that the  $\{ \Delta (t)\ | \ t\in [0,T_0)\} $
 satisfy uniform curvature estimates,
i.e.,
they have {\it uniform}
bounds on the length of their second fundamental forms; these uniform curvature estimates
can also be seen to hold since $\Delta (t)$ minimizes the
functional $\mbox{\rm Volume}_n+nt\mbox{\rm Volume}_{n+1}$
with respect to compact hypersurfaces in $W$ with boundary
$\partial \Delta$ that are homologous to $\Delta$ relative to $\partial \Delta$ ,
where $\mbox{\rm Volume}_k$ is the volume of any
$k$-chain.
From here, standard compactness results show
that the top boundary component of $\cup_{t\in [0,T_0)} \Delta(t)$ is a
hypersurface $\Delta (T_0)$ with constant mean curvature $-T_0$.

Note that with respect to the unit normal vector field $N_{\Delta (T_0)}$ to $\Delta (T_0)$ that
is the limit of the $N_{\Delta (t)}$ as $t\nearrow T_0$, the function
$\langle K_1,N_{\Delta (T_0)}\rangle $ is nonnegative, and it is positive at some point
by previous arguments. Since $\langle K_1,N_{\Delta (T_0)}\rangle $ is a Jacobi function
on $\Delta (T_0)$, then the maximum principle (see e.g.,~\cite[Assertion~2.2]{mpr19})
implies that $\langle K_1,N_{\Delta (T_0)}\rangle >0$
at Int$(\Delta (T_0))$. In particular, $\Delta (T_0)$ is the $K_1$-graph of a continuous function $f_{T_0}\colon
\Delta \to \R $ that is  smooth at Int$(\Delta (T_0))$.
As $\Delta(T_0)$ is not contained in $\partial W$, then the mean
curvature comparison principle and the Hopf boundary maximum
principle imply that the gradient of $f_{T_0}$ is bounded as we approach $\partial \Delta(T_0)$;
hence, $\langle K_1,N_{\Delta (T_0)}\rangle $ is
positive along $\partial \Delta(T_0)$. It
follows from the maximum principle that $\langle K_1,N_{\Delta (T_0)}\rangle $ is
positive in $\Delta(T_0)$
and thus, $f_{T_0}$ is smooth in $\Delta $.
Therefore, items~(X1)-(X4) hold for $t=T_0$.
In this situation, the strict stability of $\Delta (T_0)$ implies that the
family of hypersurfaces $\{ \Delta (t)\ | \ t\in [0,T_0]\} $ can be
extended to a related family of hypersurfaces defined for $t$-values
in a larger interval $[0,T_0+\ve )$, $\ve
>0$, which contradicts the definition of the value $T_0$.
This contradiction proves the assertion.
\end{proof}

For each $t\in [0,T_0)$ fixed,
 consider the related foliation of $W$
 \[
 {\calf}(t)=\{ \phi_s(\Delta(t))\ | \ s\in \R \} ,
\]
all whose leaves have constant mean curvature $-t$, see
Figure~\ref{flux2} left.
\begin{figure}
\begin{center}
\includegraphics[width=13cm]{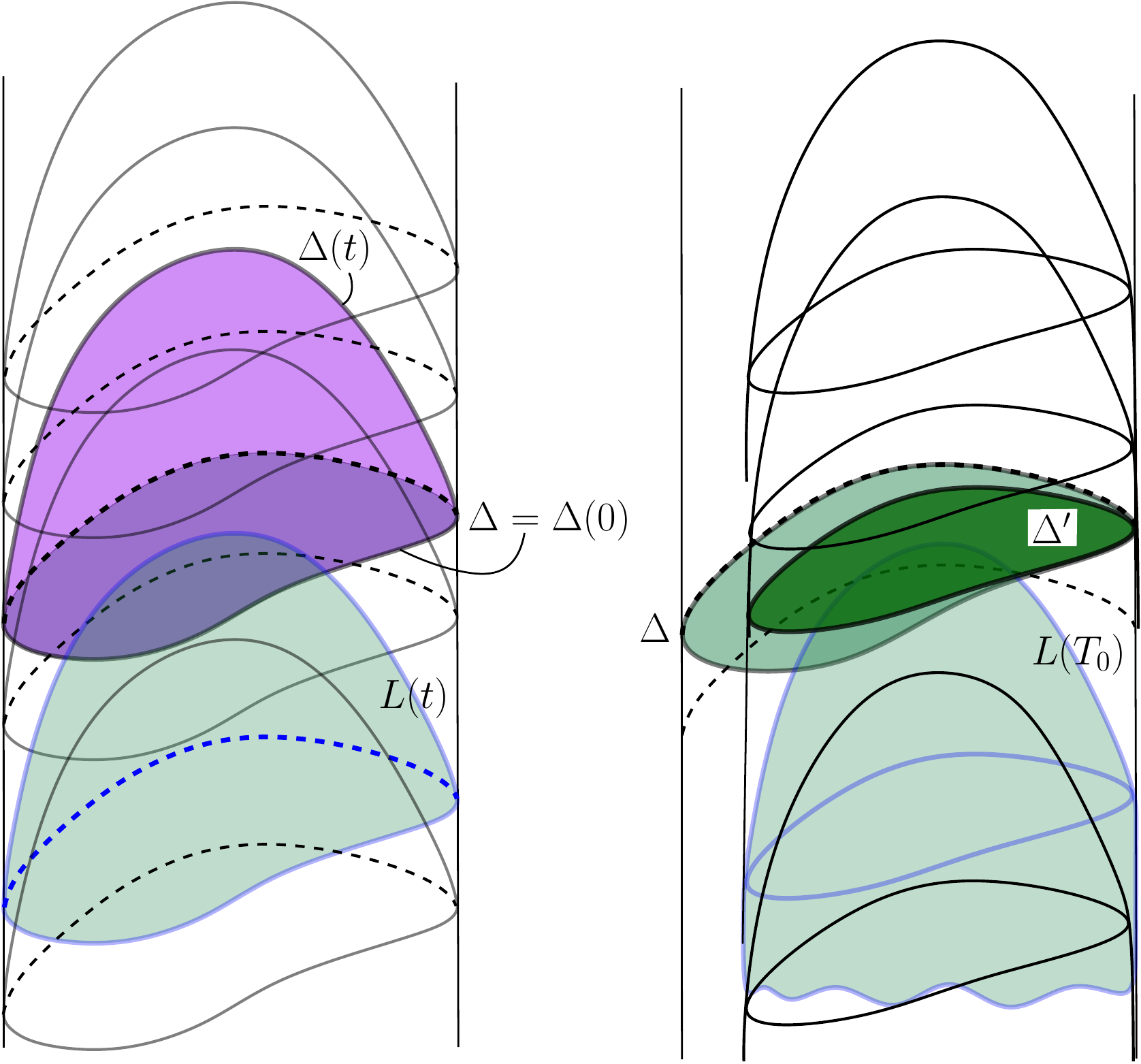}
\caption{
Left: After blowing a bubble $\Delta (t)$ with constant mean curvature $-t\in
(-T_0,0]$ (which is $K_1$-graphical over $\Delta =\Delta (0)$), we translate it with
$\{ \phi _s\} _{s\in \R}$ to produce a foliation ${\mathcal F}(t)$, and
we highlight the highest graphical leaf $L(t)$ of ${\mathcal F}(t)$ lying below
$\Delta (0)$ with $L(t)\cap \Delta (0)\neq \mbox{\O }$. Right: Since
the $\Delta (t)$ with $t\to T_0^-$ do not lie in a fixed compact
set, we produce a noncompact limit $L(T_0)=\lim _{t\to T_0^-}L(t)$
which is a graph over a domain $\Delta'$ of $\Delta $.
}
\label{flux2}
\end{center}
\end{figure}
Let
 \[
 L(t)=\phi_{s(t)}(\Delta(t))
 \]
  be the unique leaf of $\calf(t)$ such that $L(t)$ intersects $\Delta (0)$
and lies on the lower side of $\Delta (0)$. Assertion~\ref{ass11.3}
implies that $s(t)\to -\infty$
as $t\to T_0$. Let $L(T_0)$ be the limit of the hypersurfaces $L(t)$
as $t\nearrow T_0$. Such a limit exists and it is a complete graphical
hypersurface with constant mean curvature $-T_0$ by the stability of
$L(t)$ for all $t<T_0$, and the same arguments as in the proof of
Assertion~\ref{ass11.3}. Let $\Delta'$
be the open subdomain of $\Delta$ over which $L(T_0)$ is a
$K_1$-graph. Clearly, $L(T_0)$ intersects
$\Delta (0)$ and lies below $\Delta (0)$.
Define $t_0\in \R $ such that $L(T_0)$ lies in $\Sigma \times (-\infty ,t_0]$ and
$t_0$ is the smallest value with this property.
Observe that $L(T_0)$ intersects  $\Sigma \times \{ t_0\} $ tangentially at some interior point
of $\Sigma \times \{ t_0\} $.

\begin{assertion}
\label{ass11.4}
$L(T_0)$ has linear volume growth.  In particular,
$L(T_0)$ is parabolic. 
\end{assertion}
\begin{proof}[Proof of the assertion.]
A straightforward limit argument shows that $L(T_0)$ minimizes the functional
 $\mbox{\rm Volume}_n+nT_0\mbox{\rm Volume}_{n+1}$ on compact subsets of Int$(W)$.
 This property implies that $L(T_0)$ has bounded second fundamental form.
 Therefore, the gradient of the graphing function $h\colon \Delta'\to \R$ that produces $L(T_0)$
as a $K_1$-graph becomes unbounded as we approach
any point of the boundary of $\Delta'$
(otherwise we could enlarge $\Delta'$). Therefore
 the assertion holds provided that we check that the total $(n-1)$-dimensional
 volume function of the  submanifold
$\G_s:=L(T_0)\cap \phi _s(\Delta (0))$ of $L(T_0)$
is bounded as $s\to -\infty $.
To see this, for $s$ large ($s\ll -1$) let us denote by $\Omega (s)$ the portion of $L(T_0)$
lying above $\phi _s(\Delta (0))$, and let $\eta _s$ be the outward
pointing unit conormal vector field of $\Omega (s)$ along its
boundary $\G _s$. Note that for every $\ve >0$ small, there exists
$s_0(\ve )\ll -1$ such that for each $s\leq s_0$,
\begin{equation}
\label{eq:flux2}
\langle K_1,\eta _s\rangle < -\ve \quad \mbox{along }\G _s,
\end{equation}
for some $\ve >0$ independent of $s$.
On the other hand,
\begin{equation}
\label{eq:flux1}
 \mbox{Flux}(\Omega (s),\G _s,K_1)= \int _{\G
_s}\langle K_1, \eta _s\rangle +nT_0\int _{\phi _s(\Delta_s)}\langle K_1, N_{\phi_s(\Delta_s)}\rangle ,
\end{equation}
where $\Delta _s$ is the subdomain of $\Delta'$ over which
$\Omega (s)$ is a $K_1$-graph, and $N_{\phi_s(\Delta _s)}=(\phi _s)_*(N_{\Delta (0)})$.
By homological invariance of the flux, we have
$ \mbox{Flux}(\Omega (s),\G _s,K_1)=0$. As the second integral in the right-hand-side of
(\ref{eq:flux1}) is bounded above in absolute value  by $| K_1|_{\infty }$ times the
$n$-dimensional volume of $\Delta (0)$
(here $|K_1| _{\infty }=\max _{\Delta }|K_1|$, which exists since $\Delta$ is compact
and $K_1$ is generated by $\{ \phi _s\ | \ s\in \R \} $),
then the first integral in
the right-hand-side of (\ref{eq:flux1}) is bounded independently of $s\to -\infty$.
This property together with (\ref{eq:flux2}) ensure that the
$(n-1)$-dimensional volume of
$\G_s$ is bounded as $s\to -\infty $. Now the assertion is proved.
\end{proof}

Recall that by assumption, $K_2$ is bounded in $\Sigma \times
(-\infty ,0]$. Define $u=\langle K_2,N_{L(T_0)}\rangle \colon L(T_0)\to \R$,
where $N_{L(T_0)}$ denotes a unit normal vector field along $L(T_0)$.
$u$ is a bounded Jacobi function on $L(T_0)$, which vanishes at each of the
points of the nonempty set $L(T_0)\cap (\Sigma \times \{ t_0\} )$.  Since every integral curve of
$K_2$ that intersects $L(T_0)$ is a compact interval with its boundary in $\partial W$,
then $K_2$ is not everywhere tangent to $L(T_0)$, which implies $u$ is not identically
zero; by the maximum principle, $u$ must change sign in
a neighborhood of any of the points in $L(T_0)\cap (\Sigma \times \{ t_0\} )$.
As $L(T_0)$ is complete, stable, and is parabolic by Assertion~\ref{ass11.4}, we conclude that
every bounded nonzero Jacobi function on
$L(T_0)$ has constant sign~\cite[Corollary~1]{mper1},
which is a contradiction. This contradiction
finishes the proof of Theorem~\ref{flux}.
\end{proof}

\center{William H. Meeks, III at  profmeeks@gmail.com\\
Mathematics Department, University of Massachusetts, Amherst, MA 01003}
\center{Pablo Mira at  pablo.mira@upct.es\\
Departamento de Matem\'atica Aplicada y Estad\'\i stica, Universidad
Polit\'ecnica de Cartagena, E-30203 Cartagena, Murcia, Spain}
\center{Joaqu\'\i n P\'{e}rez at jperez@ugr.es \qquad\qquad Antonio Ros at aros@ugr.es\\
Department of Geometry and Topology and Institute of Mathematics
(IEMath-GR), University of Granada, 18071, Granada, Spain}

\bibliographystyle{plain}
\bibliography{bill}
\end{document}